\tikzset{
  commutative diagrams/.cd, 
  arrow style=tikz, 
  diagrams={>=stealth}
}
\tikzset{
  arrow/.pic={\path[tips,every arrow/.try,->,>=#1] (0,0) -- +(0,4pt);},
  pics/arrow/.default={triangle 90}
}
\tikzset{->-/.style={decoration={
  markings,
  mark=at position .6 with {\arrow{latex}}},postaction={decorate}}
  }
\tikzset{
  c/.style={every coordinate/.try}
}
\theoremstyle{definition}
\newenvironment{customthm}[1]
  {\innercustomthm}
  {\endinnercustomthm}
\theoremstyle{definition}
\newenvironment{customcor}[1]
  {\innercustomcor}
  {\endinnercustomcor}
  \theoremstyle{definition}
\newenvironment{customprop}[1]
  {\innercustomprop}
  {\endinnercustomprop}
\def\@tocline#1#2#3#4#5#6#7{\relax
  \ifnum #1>\c@tocdepth 
  \else
    \par \addpenalty\@secpenalty\addvspace{#2}%
    \begingroup \hyphenpenalty\@M
    \@ifempty{#4}{%
      \@tempdima\csname r@tocindent\number#1\endcsname\relax
    }{%
      \@tempdima#4\relax
    }%
    \parindent\z@ \leftskip#3\relax \advance\leftskip\@tempdima\relax
    \rightskip\@pnumwidth plus4em \parfillskip-\@pnumwidth
    #5\leavevmode\hskip-\@tempdima
      \ifcase #1
       \or\or \hskip 1em \or \hskip 2em \else \hskip 3em \fi%
      #6\nobreak\relax
    \dotfill\hbox to\@pnumwidth{\@tocpagenum{#7}}\par
    \nobreak
    \endgroup
  \fi}
\newcounter{marginnote}
\DeclareMathAlphabet{\mathpzc}{OT1}{pzc}{m}{it}
\pgfplotsset{compat=1.18}
\theoremstyle{definition}
\newtheorem{theorem}{Theorem}[section]
\newtheorem{corollary}[theorem]{Corollary}
\newtheorem{lemma}[theorem]{Lemma}
\newtheorem{proposition}[theorem]{Proposition}
\newtheorem*{runningexample*}{Running example}
\newtheorem*{aside*}{Aside}
\newtheorem{definition}[theorem]{Definition}
\newtheorem{example}[theorem]{Example}
\newtheorem*{notation*}{Notation} 
\newtheorem{proposition-definition}[theorem]{Proposition-Definition}
\newtheorem{theorem-definition}[theorem]{Theorem-Definition} 
\DeclareMathOperator{\id}{id}
\newcommand{\bcd}{\begin{center}\begin{tikzcd}}
\newcommand{\ecd}{\end{tikzcd}\end{center}}
\newcommand{\e}{\mathrm{e}}
\newcommand{\calQ}{\mathcal{Q}}
\newcommand{\calM}{\mathcal{M}}
\newcommand{\calP}{\mathcal{P}} 
\newcommand{\calR}{\mathcal{R}} 
\newcommand{\calT}{\mathcal{T}} 
\newcommand{\calH}{\mathcal{H}} 
\newcommand{\calK}{\mathcal{K}} 
\newcommand{\calG}{\mathcal{G}} 
\newcommand{\calW}{\mathcal{W}} 
\newcommand{\calX}{\mathcal{X}} 
\newcommand{\calY}{\mathcal{Y}} 
\newcommand{\raag}{A_\Gamma} 
\newcommand{\sal}{\mathbb{S}_{\Gamma}} 
\newcommand{\uag}{U(A_{\Gamma})} 
\newcommand{\vcduag}{\textsc{vcd}(U(A_{\Gamma}))} 
\newcommand{\link}[1]{\operatorname{lk}({#1})} 
\newcommand{\str}[1]{\operatorname{st}({#1})} 
\newcommand{\mx}[1]{\operatorname{max}({#1})}
\newcommand{\salb}[1]{\mathbb{S}_\Gamma^{#1}} 
\newcommand{\spine}{K_{\Gamma}} 
\newcommand{\aut}{\operatorname{Aut}(A_\Gamma)} 
\newcommand{\autraag}[1]{\operatorname{Aut}(A_{#1})} 
\newcommand{\symaut}{\operatorname{\Sigma Aut}(A_\Gamma)} 
\newcommand{\out}{\operatorname{Out}(A_\Gamma)} 
\newcommand{\symout}{\operatorname{\Sigma Out}(A_\Gamma)} 
\newcommand{\ssslash}{\smash{\sslash}} 
\newcommand{\symspine}{K_\Gamma^\Sigma} 
\newcommand{\Kmin}[1]{\smash{K_\Gamma^{\operatorname{min}(#1)}}} 
\newcommand{\vcdsymout}{\textsc{vcd}(\Sigma \operatorname{Out}(A_{\Gamma}))} 
\newcommand{\outw}{\operatorname{Out}_{\mathcal{W}}(A_\Gamma)} 
\begin{document}

\subjclass[2020]{
    20F65, 
                20F28, 
                20F36. 
}
\keywords{Right-angled Artin groups, Outer space, symmetric automorphisms.}

\title[Outer space for symmetric automorphisms of RAAGs]{Outer space and finiteness properties for symmetric automorphisms of RAAGs, and generalisations}
\author{Gabriel Corrigan}

\begin{abstract}
    We define the symmetric (outer) automorphism group of a right-angled Artin group and construct for it a (spine of) Outer space. This `symmetric spine' is a contractible cube complex upon which the symmetric outer automorphism group acts properly and cocompactly. One artefact of our technique is a strengthening of the proof of contractibility of the untwisted spine, mimicking the original proof that Culler--Vogtmann Outer space is contractible, which may be of independent interest. We apply our results to derive finiteness properties for certain subgroups of outer automorphisms. In particular, we prove that the subgroup consisting of those outer automorphisms which permute any given finite set of conjugacy classes of a right-angled Artin group is of type \emph{VF}, and we show that the virtual cohomological dimension of the symmetric outer automorphism group is equal to both the dimension of the symmetric spine and the rank of a free abelian subgroup.
\end{abstract}

\maketitle

\tableofcontents

\section{Introduction}\label{sec:intro}

\subsection{Overview of results}\label{subsec:overview of results}

Given a finite simplicial graph~$\Gamma$, define the associated \emph{right-angled Artin group (RAAG)}~$\raag$ by the presentation 
\[\raag \coloneqq \left\langle V(\Gamma) \;\;\vert\;\; [a, b] = 1 \; \text{if} \; \{a, b\} \in E(\Gamma) \right\rangle.\]
Hence, when~$\Gamma$ has no edges, the corresponding right-angled Artin group is simply the free group of rank~$|V(\Gamma)|$, and when~$\Gamma$ is a complete graph,~$\raag \cong \smash{\mathbb{Z}^{|V(\Gamma)|}}$.

We define a \emph{symmetric automorphism} of a right-angled Artin group~$\raag$ to be one which sends each generator to a conjugate of a generator, or a conjugate of the inverse of a generator. These form the \emph{symmetric automorphism group}~$\symaut \leq \autraag{\Gamma}$, and its image under the quotient by inner automorphisms is the \emph{symmetric outer automorphism group}~$\symout \leq \out$.

Conjectured and partially proved by H. Servatius \cite{ServatiusAutRAAGs89}, and confirmed by Laurence \cite{LaurenceAutRAAGs95}, is a well-known generating set of~$\out$. One family of these generators is the so-called \emph{twists}. Any outer automorphism which can be written as a product of generators which are not twists is called \emph{untwisted}, and these form the \emph{untwisted subgroup}~$\uag \leq \out$. Every symmetric outer automorphism is untwisted, so~$\symout \leq \uag$.

This is germane, as Charney--Stambaugh--Vogtmann \cite{CharneyStambaughVogtmann17} have constructed an \emph{untwisted Outer space}~$\mathcal{O}_\Gamma^U$ (denoted~$\Sigma_\Gamma$ in \cite{CharneyStambaughVogtmann17}), which is a contractible complex with a proper~$\uag$-action, and generalises the influential \emph{Culler--Vogtmann Outer space}~$CV_n$ \cite{CullerVogtmann86}. Just like~$CV_n$,~$\mathcal{O}_\Gamma^U$ has a deformation retract called its \emph{spine}~$\spine$, which naturally has the structure of a cube complex and has a proper and cocompact~$\uag$-action. Our first theorem is an analogous result for the symmetric outer automorphism group.

\begin{customthm}{A}[\ref{thm:symspine is a retract of Kmin}]\label{customthm:there exists a symmetric spine}
    There exists a contractible cube complex upon which~$\symout$ acts properly and cocompactly.
\end{customthm}

We call this complex the \emph{symmetric spine} for~$\raag$ and denote it by~$\symspine$. It is a cube complex and is a natural~$\symout$-equivariant deformation retract of a \emph{symmetric Outer space}~$\mathcal{O}_\Gamma^\Sigma$. This is obtained by considering the cubes of~$\symspine$ to be arbitrary rectilinear parallelepipeds, in precisely the same way that~$\spine$ embeds into~$\mathcal{O}_\Gamma^U$ as an equivariant deformation retract.

This is a generalisation of a result of Collins \cite{CollinsSymSpine89}, who provided a subcomplex~$K_n^\Sigma$ of the spine~$K_n$ of~$CV_n$. This allowed Collins to determine the \emph{virtual cohomological dimension} ($\textsc{vcd}$) of the symmetric (outer) automorphism group of the free group~$F_n$. Analogously, our second theorem shows that the~$\textsc{vcd}$ of~$\symout$ matches the dimension of the symmetric spine~$\symspine$.

\begin{customthm}{B}[\ref{thm:vcdsymout = dim(symspine)}]\label{customthm:vcdsymout = dim(symspine)}
    The virtual cohomological dimension~$\vcdsymout$ is equal to the dimension of the symmetric spine~$\symspine$.
\end{customthm}

\subsection{Context and discussion of proof strategy}\label{subsec:discussion of proof strategy}

The proof of \emph{Theorem \ref{customthm:there exists a symmetric spine}} transfers Collins's work on the symmetric spine for free groups \cite{CollinsSymSpine89} to the setting of Charney--Stambaugh--Vogtmann's spine of untwisted Outer space for right-angled Artin groups \cite{CharneyStambaughVogtmann17}. Both these papers rest heavily on Culler--Vogtmann's seminal paper \cite{CullerVogtmann86} in which the original Outer space~$CV_n$ was introduced. We now briefly mention the similarities and differences of these papers with the present document; in \S\ref{sec:discussion of proof} we expand on this discussion more carefully. 

The cube complex structure of~$\spine$ is profitably viewed as a union of stars of \emph{marked Salvetti complexes} (see \S\ref{subsec:RAAGs} and \emph{Definition \ref{def:(symmetric) marked Gamma complex}}). For any graph~$\Gamma$, the \emph{Salvetti complex}~$\sal$ is a cube complex with fundamental group isomorphic to~$\raag$ (generalising the~$n$-petalled rose, which is the Salvetti complex for the free group~$F_n$), while a \emph{marking} on a Salvetti complex can be thought of as a labelling of the generators of~$\pi_1(\sal)$ with the elements of a generating set of~$\raag$. An automorphism~$\varphi$ of~$\raag$ acts on~$\spine$ by changing the marking. The symmetric spine~$\symspine$ is built using \emph{symmetric marked Salvetti complexes} (\emph{Definitions \ref{def:symmetric Gamma-complex} \& \ref{def:(symmetric) marked Gamma complex}}). 

Our strategy for proving the symmetric spine~$\symspine$ is contractible may be summarised as follows:

\begin{enumerate}[(i)]
    \item define a norm~$\lVert \cdot \rVert'$ on marked Salvetti complexes such that every symmetric marked Salvetti complex has norm less than every non-symmetric marked Salvetti complex;
    \item adapt \S6 of \cite{CharneyStambaughVogtmann17}, which proves contractibility of the untwisted spine~$\spine$, to our new norm~$\lVert \cdot \rVert'$. The construction with this norm yields an intermediary complex~$\smash{K_\Gamma^{\text{sym}}}$, which we can deduce is contractible;
    \item adapt \S4 of \cite{CollinsSymSpine89}, which retracts~$K_n$ to~$K_n^\Sigma$, to the theory of~$\Gamma$-complexes, defining a deformation retraction~$\smash{K_\Gamma^{\text{sym}}} \to \symspine$.
\end{enumerate}

The overall architecture of our argument is heavily influenced by Collins's proof of contractibility of the symmetric spine in the free group case, which this work generalises. However, we are forced to make one significant departure from Collins's paper \cite{CollinsSymSpine89}, as we now describe.

A key part of Collins's original argument relies on the original proof, and not just the statement, of contractibility of the spine~$K_n$ of Culler--Vogtmann Outer space given in \cite{CullerVogtmann86}. Along the way, Culler and Vogtmann actually proved contractibility of a host of intermediary subcomplexes of~$K_n$ which are not strictly necessary for the proof that~$K_n$ is contractible. However, Collins observed that one of these by-products,~$K_{\operatorname{min}(W)}$, was an ideal complex from which to define a deformation retraction to the symmetric spine~$K_n^\Sigma$. In our setting, the analogue of~$K_{\operatorname{min}(W)}$ is the complex~$\smash{K_\Gamma^{\text{sym}}}$ as in Step (ii) above. However, the proof of contractibility of the untwisted spine~$\spine$ by Charney--Stambaugh--Vogtmann \cite{CharneyStambaughVogtmann17} is more direct than the analogous proof of Culler--Vogtmann, so we do not have a contractible~$\smash{K_\Gamma^{\text{sym}}}$ for free.

Consequently, aside from the generalisation of Collins's work to the setting of the untwisted spine, our main technical contribution is in defining, and proving contractibility of, intermediary subcomplexes of~$\spine$ analogous to those that appear in Culler--Vogtmann's original proof. One of these intermediary subcomplexes is~$\smash{K_\Gamma^{\text{sym}}}$ as in Step (ii) above. 

This is achieved by the following theorem. Fix a finite simplicial graph~$\Gamma$. For an arbitrary finite set~$\calW$ of conjugacy classes of~$\raag$, we define a norm~$\lVert \cdot \rVert_\calW$ which orders the marked Salvetti complexes lexicographically with respect to a certain ordered abelian group. We define the complex~$\Kmin{\calW}$ to be the union of the stars of all marked Salvetti complexes with minimal first entry in this lexicographical ordering.

\begin{customthm}{C}[\ref{thm:Kmin is contractible}]\label{customthm:Kmin is contractible}
    Let~$\calW$ be an arbitrary finite set of conjugacy classes of~$\raag$. Then~$\Kmin{\calW}$ is contractible.
\end{customthm}

The complex~$\smash{K_\Gamma^{\text{sym}}}$ in Step (ii) above is realised as one of these~$\Kmin{\calW}$.

Aside from the proof of \emph{Theorem \ref{customthm:there exists a symmetric spine}}, we give another application of \emph{Theorem \ref{customthm:Kmin is contractible}}. The proof of (\cite{CullerVogtmann86}, Corollary 6.1.4) generalises seamlessly from free groups to any right-angled Artin group, once \emph{Theorem \ref{customthm:Kmin is contractible}} is established. For any finite set~$\calW$ of conjugacy classes of a right-angled Artin group~$\raag$, let~$\outw$ be the subgroup consisting of those outer automorphisms which permute the elements of~$\calW$. Recall that a group is of \emph{type VF} if it has a finite-index subgroup~$G$ which has a finite~$K(G, 1)$-complex (see \emph{Definition \ref{def:type F}} for more detail).

\begin{customcor}{D}[\ref{cor:outw is type VF}]\label{customcor:outw is of type VF}
    For any finite set~$\calW$ of conjugacy classes of a right-angled Artin group~$\raag$, the group~$\outw$ is type \emph{VF}.
 \end{customcor}

It is worth remarking on the calculation of~$\vcdsymout$. A case readily available for analogy is that of the untwisted subgroup,~$\uag$. Millard--Vogtmann \cite{MillardVogtmann2019} found free abelian subgroups of~$\uag$ of rank~$\rho_\Gamma$, the \emph{principal rank}; hence this is a lower bound for~$\vcduag$. On the other hand, it is a standard theorem that if a discrete group~$G$ acts properly and cocompactly on a proper contractible complex~$X$, then~$\textsc{vcd}(G) \leq \dim(X)$. Therefore~$\dim(\spine)$ is an upper bound for~$\vcduag$. However, Millard--Vogtmann also showed that for some graphs~$\Gamma$, the principal rank $\rho_\Gamma$ is strictly less than~$\dim(\spine)$. Hence at least one of two somewhat surprising things occurs:~$\vcduag$ is not algebraically realised by the (naturally-generated) free abelian subgroups of~$\uag$, or not geometrically realised by the (also natural) untwisted spine. It has also been shown that the gap between~$\vcduag$ and~$\dim(\spine)$ can be arbitrarily large \cite{CorriganRetractionsVCD25}. [At time of writing, we know of no graph~$\Gamma$ for which~$\vcduag \neq \rho_\Gamma$.] However, we show that there are no such intricacies in the case of the symmetric outer automorphism group~$\symout$. Adapting techniques from \cite{MillardVogtmann2019}, we have the following.

\begin{customprop}{E}[\ref{prop:symout contains free abelian subgp of rank MSigma(L)} \& \ref{prop:MSigma(L) = MSigma(V)}]\label{customprop:symout contains free abelian subgroup of rank MSigma(L)}
    Fix a right-angled Artin group~$\raag$. Then
    \begin{enumerate}[(i)]
        \item~$\vcdsymout$ is bounded below by the \emph{symmetric principal rank}~$\rho_\Gamma^\Sigma$, which is the rank of a (naturally-generated) free abelian subgroup;
        \item the dimension of the symmetric spine~$\symspine$ is equal to the principal rank~$\rho_\Gamma$.
    \end{enumerate}
\end{customprop}

Combining these two statements with the bound~$\vcdsymout \leq \dim(\symspine)$ yields \emph{Theorem \ref{customthm:vcdsymout = dim(symspine)}}. In particular,~$\vcdsymout$ is realised both algebraically, by a free abelian subgroup, and geometrically, by the dimension of the relevant `spine' (as is the case for free groups, for example). 

From private correspondence, we believe that independent upcoming work of Peio Ardaiz Gale and Conchita Mart\'inez P\'erez also shows, using different methods, that~$\vcdsymout$ is realised as the rank of a free abelian subgroup.

There exists an algorithm, due to Day--Sale--Wade \cite{DaySaleWadeVCDAlgorithm19}, which computes the \textsc{vcd} of any \emph{relative outer automorphism group (RORG)} of a right-angled Artin group. Examples of RORGs include~$\out$,~$\uag$ (virtually), and the \emph{pure symmetric outer automorphism group}~$P\Sigma\operatorname{Out}(\raag)$, which consists of those symmetric outer automorphisms that send each generator to a conjugate of itself. In the notation of \cite{DaySaleWadeVCDAlgorithm19},~$P\Sigma\operatorname{Out}(\raag)$ is the relative outer automorphism group~$\operatorname{Out}\left(\raag;\;\calH^t\right)$ where~$\calH = \{\langle v\rangle \;\colon\; v \in V(\Gamma)\}$. Since~$P\Sigma\operatorname{Out}(\raag)$ is a finite index subgroup of~$\symout$, we have~$\vcdsymout = \textsc{vcd}\left(P\Sigma\operatorname{Out}(\raag)\right)$. Hence~$\vcdsymout$ (and by \emph{Theorem \ref{customthm:vcdsymout = dim(symspine)}},~$\dim(\symspine)$) can be calculated using Day--Sale--Wade's algorithm. We remark also that this algorithm has been implemented in Python by Yutong Dai; a script can be found on Wade's website \cite{RicWadeWebsite}. Neither that algorithm, nor our work, gives a closed form for~$\vcdsymout$ purely in terms of the properties of~$\Gamma$.

Much study has been made of automorphisms of free products. With groundwork laid by Fouxe-Rabinovitch (\cite{FouxeRabinovitchAutsFreeProductsI40}, \cite{FouxeRabinovitchAutsFreeProductsII41}) and by work (\cite{CollinsZieschangFreeProductsI84}, \cite{McCoolBasisConjugatingFreeGroups86}, \cite{CollinsSymSpine89}) stimulated by Culler--Vogtmann's Outer space~$CV_n$ \cite{CullerVogtmann86} in the context of work on symmetric automorphisms of free groups, McCullough and Miller \cite{McCulloughMillerSymAutsFreeProducts96} built a complex~$K(G)$, somewhat analogous to~$CV_n$, for studying the (outer) symmetric automorphism groups of a free product~$G = G_1 \ast \cdots \ast G_k$. This allowed them to conclude various cohomological finiteness results, among others. A generalisation of this is Guirardel--Levitt's \emph{Outer space for a free product}~$G_1 \ast \cdots \ast G_k \ast F_r$, where each~$G_i$ is freely indecomposable and not infinite cyclic \cite{GuirardelLevittOuterSpaceFreeProduct07}. In another direction, Griffin \cite{GriffinDiagonalComplexes13} introduced a moduli space of combinatorial objects called `cactus products' and uses this to compute the integral homology of~$\Sigma\operatorname{Aut}(G_1 \ast \cdots \ast G_k)$. If a right-angled Artin group can be written as a free product, all of these results will apply; however, note that a right-angled Artin group~$\raag$ splits as a free product if and only if~$\Gamma$ is disconnected. 

\subsection*{Organisation}

In \S\ref{sec:preliminaries ((symmetric) auts of RAAGs and finiteness properties)} we gather the necessary preliminaries regarding right-angled Artin groups and their (symmetric) automorphisms, as well as the necessary technology of~$\Gamma$-Whitehead automorphisms and~$\Gamma$-partitions. In sections \S\ref{sec:symmetric Gamma-complexes} and \S\ref{sec:symmetric spine} we construct the symmetric spine~$\symspine$, prove it is connected, and has a proper and cocompact~$\symout$-action. In \S\ref{sec:discussion of proof} we discuss in detail the strategy of our proof of contractibility of~$\symspine$ and its relation to the existing literature. In \S\ref{sec:contractibility of Kmin} we prove \emph{Theorem \ref{customthm:Kmin is contractible}} and deduce \emph{Corollary \ref{customcor:outw is of type VF}}, and then in \S\ref{sec:retraction of Kmin to symspine} we complete the proof of \emph{Theorem \ref{customthm:there exists a symmetric spine}}; that is, contractibility of the symmetric spine. Finally, in \S\ref{sec:dimension of sym spine}, we prove \emph{Theorem \ref{customthm:vcdsymout = dim(symspine)}}, that the virtual cohomological dimension of the symmetric outer automorphism group is equal to the dimension of the symmetric spine.

\subsection*{Acknowledgements}

I would like to thank my doctoral supervisors, Rachael Boyd and Tara Brendle, for their guidance and support while completing this project. I would also like to thank Karen Vogtmann for enlightening discussions (and essentially planting the seed that became this project by making me aware of Collins's inspiring work \cite{CollinsSymSpine89}). I am very grateful to Ric Wade for many helpful comments regarding the pure symmetric outer automorphism group and virtual duality groups, including pointing out \cite{WiedmerRAAGsCommensurateOutRAAG24} to me. In particular, Ric suggested using \emph{Theorem \ref{customthm:Kmin is contractible}} to obtain the result that became \emph{Corollary \ref{customcor:outw is of type VF}}, for which I am indebted. Finally, I thank all those mentioned here for reading an earlier draft of this paper.

\section{(Symmetric) Automorphisms of RAAGs and finiteness properties}\label{sec:preliminaries ((symmetric) auts of RAAGs and finiteness properties)}

In this section we provide the necessary preliminaries regarding right-angled Artin groups and their (symmetric) automorphisms, introduce basic graph-theoretic terminology and notation which will be employed throughout the paper, and recall some relevant finiteness properties of discrete groups.

\subsection{Right-angled Artin groups}\label{subsec:RAAGs}

\begin{definition}
    Let~$\Gamma$ be a finite simplicial graph (that is, it has no loops and no multi-edges) whose vertices are labelled. Write~$V(\Gamma)$ and~$E(\Gamma)$ for its vertex and edge sets, respectively. The associated \emph{right-angled Artin group (RAAG)}, denoted~$\raag$, is defined by the presentation 
    \[\raag = \langle V(\Gamma) \;\;\vert\;\; [a, b] = 1 \;\text{if}\; \{a, b\} \in E(\Gamma)\rangle.\]
    \noindent$\Gamma$ is the \emph{defining graph} for~$\raag$. We will blur the distinction between a vertex of~$\Gamma$ and the corresponding generator of~$\raag$, and will communicate the data of a RAAG simply by its defining graph.
\end{definition}

Every Artin group has an associated cell complex called the \emph{Salvetti complex} \cite{Salvetti87}. For a RAAG~$\raag$, the Salvetti complex~$\sal$ is a cube complex with a particularly simple description, as follows. There is one 0-cell, to which is attached a directed 1-cell for each generator. Then for each edge~$\{a, b\}$ in~$\Gamma$, we glue in a 2-torus along the corresponding commutator~$aba^{-1}b^{-1}$. Continuing inductively, for each~$k$-clique in~$\Gamma$ (which corresponds to a set of~$k$ mutually commuting generators), we glue in a~$k$-torus whose constituent~$(k-1)$-tori correspond to the subcliques of our~$k$-clique which have~$k-1$ vertices. 

For example, the Salvetti complex of the free group of rank~$n$ is simply the~$n$-petalled rose graph, while the Salvetti complex of~$\mathbb{Z}^n$ is an~$n$-torus. The Salvetti complex of a RAAG associated to a triangle-free graph is the presentation complex of the RAAG. Note that~$\pi_1(\sal) \cong \raag$. Charney and Davis proved \cite{CharneyDavisFiniteKpi1sforArtingroups} that~$\sal$ is a~$K(\raag, 1)$ space, resolving the~$K(\pi, 1)$ conjecture for RAAGs; it follows that RAAGs are torsion-free and biautomatic (as shown by Niblo and Reeves \cite{NibloReeves97biautomaticity}), for instance. 

We refer the reader to Charney's introduction to RAAGs for further details \cite{CharneyRAAGsSurvey}. 

\subsection{Graph-theoretic preliminaries}\label{subsec:graph-theoretic preliminaries}

In this section, we collect some notation and terminology regarding defining graphs of RAAGs which we shall use throughout. For more details and proofs, we refer to \cite{CharneyVogtmannFinitenessProperties09}.

\begin{definition}\label{def:link and star}
    For~$\Gamma$ a simplicial graph, the \emph{link}~$\link{v}$ of a vertex~$v \in V(\Gamma)$ is the full subgraph spanned by the vertices adjacent to~$v$. The \emph{star} of~$v$ is the full subgraph spanned by~$v$ and all vertices adjacent to it; denote it~$\str{v}$.
\end{definition}

We define a relation~$\leq$ on the vertices of~$\Gamma$ by saying that~$v \leq w$ if~$\link{v}$ is contained in~$\str{w}$. We may define an equivalence relation~$\sim$ on~$V(\Gamma)$ by saying that~$v \sim w$ if both~$v \leq w$ and~$w \leq v$. Then~$\leq$ is a partial order on the set of equivalence classes. We will write~$[v]$ for the equivalence class of~$v$. 

A vertex~$v$ is said to be \emph{maximal} if its equivalence class is maximal with respect to this partial order, i.e., for all~$v' \in [v]$, there is no vertex~$w \notin [v]$ with~$v' \leq w$.

One way to have~$v \leq w$ is if~$\link{v} \subseteq \link{w}$; in this case we write~$v \leq_\circ w$. We write~$v <_\circ w$ if there is a strict containment~$\link{v} \subsetneq \link{w}$; in this case we say that~$w$ \emph{dominates}~$v$. 

\begin{definition}\label{def:principal vertex}
    We say that a vertex~$v \in V(\Gamma)$ is \emph{principal} if there is no~$w \in V(\Gamma)$ with~$v <_\circ w$. Otherwise, we say that~$v$ is \emph{non-principal}.
\end{definition}

All maximal vertices are principal. As observed in \cite{MillardVogtmann2019}, not all principal vertices are maximal. For example, in the triangle with leaves at two of its vertices, the third vertex of the triangle is principal but not maximal.

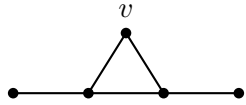
\begin{figure}[H]
    \begin{center}
        \begin{tikzpicture}
            \fill (-1.5, 0) circle (2pt);
            \fill (-0.5, 0) circle (2pt);
            \fill (0.5, 0) circle (2pt);
            \fill (1.5, 0) circle (2pt);
            \fill (0, 0.8) circle (2pt);
            \draw[black, thick] (-1.5, 0) -- (-0.5, 0);
            \draw[black, thick] (-0.5, 0) -- (0.5, 0);
            \draw[black, thick] (0.5, 0) -- (1.5, 0);
            \draw[black, thick] (-0.5, 0) -- (0, 0.8);
            \draw[black, thick] (0.5, 0) -- (0, 0.8);
            \node at (0, 0.8) [above = 2pt, black, thick]{$v$};
        \end{tikzpicture}
        \caption{$v$ is a principal vertex but is not maximal.}
        \label{fig:example of principal but not maximal}
    \end{center}
\end{figure}

\subsection{Culler--Vogtmann Outer space~$CV_n$ and its spine~$K_n$}\label{subsec:Culler-Vogtmann Outer space}

Let~$F_n$ denote the free group of rank~$n$. In \cite{CullerVogtmann86}, Culler and Vogtmann introduced what is now called \emph{Culler--Vogtmann Outer space}, a contractible complex~$CV_n$ which has a proper action of~$\operatorname{Out}(F_n)$.~$CV_n$ has a natural deformation retract~$K_n$ called its \emph{spine}, which is a contractible cube complex with a proper and cocompact action of~$\operatorname{Out}(F_n)$. The spine~$K_n$ is a very good `picture' of~$\operatorname{Out}(F_n)$: for example, Bridson and Vogtmann \cite{BridsonVogtmann01} proved that the group of simplicial automorphisms of~$K_n$ is precisely~$\operatorname{Out}(F_n)$.

For surveys of some of the applications of~$CV_n$, we refer the reader to \cite{VogtmannOuterSpaceSurvey}, \cite{VogtmannAutFnsurvey18}, \cite{VogtmannOuterspaceCIRMnotes}.

Generalising~$CV_n$, Charney and Vogtmann (initially with Crisp \cite{CharneyCrispVogtmann07}, then with Stambaugh \cite{CharneyStambaughVogtmann17}, and finally the full construction with Bregman \cite{BregmanCharneyVogtmann23}) have produced an `Outer space'~$\mathcal{O}_\Gamma$ for any RAAG~$\raag$. In particular, there is an \emph{untwisted Outer space}~$\mathcal{O}^U_\Gamma$, first constructed in \cite{CharneyStambaughVogtmann17}, which is a contractible complex with a proper action of~$\uag$. In direct analogy with~$CV_n$ retracting onto its spine~$K_n$, untwisted Outer space also has a \emph{spine}, denoted~$\spine$. The construction of~$\spine$ shall be closely mirrored in \S\ref{subsec:symmetric marked Gamma-complexes and the symmetric spine}, when we build the symmetric spine~$\symspine$.

\subsection{Finiteness properties of groups}\label{subsec:finiteness properties}

We briefly recall the definition of \emph{virtual cohomological dimension} and that of a group being of \emph{type F} or \emph{type VF}; the reader is referred to \cite{BrownCohomologyofGroups} and \cite{GeogheganTopologicalGroupsBook08} for further details.

\subsubsection{Virtual cohomological dimension}\label{subsubsec:vcd}

Cohomological dimension may be defined for any unital nonzero commutative ring~$R$, but we shall only be interested in the case~$R = \mathbb{Z}$.

\begin{definition}[cf. ~\cite{BrownCohomologyofGroups}, \emph{Lemma VIII.2.1}]
    Let~$G$ be a discrete group, and consider~$\mathbb{Z}$ as the trivial~$\mathbb{Z}G$-module, where~$\mathbb{Z}G$ is the group ring. Define~$\textsc{cd}(G) = \textsc{cd}_{\mathbb{Z}}(G)$, the \emph{cohomological dimension} of~$G$, to be the least integer~$n$ such that~$\mathbb{Z}$ admits a projective resolution of~$\mathbb{Z}G$-modules of length~$n$, if such~$n$ exists, or to be infinite otherwise.
\end{definition}

Equivalently (and hence the nomenclature),~$\textsc{cd}(G)$ is the supremum of the set of~$n$ such that the cohomology group~$H^n(G; M)$ does not vanish, for some~$\mathbb{Z}G$-module~$M$.

Observe that the cohomological dimension of a group with torsion is always infinite, as all the even cohomology groups with coefficients in~$\mathbb{Z}G$ are nonzero. However, we have the following result of Serre \cite{Serre71groupcohomology}, which can be found in \textit{Ch.~VIII.3} of \cite{BrownCohomologyofGroups}.

\begin{theorem}[Serre]
    Let~$G$ be virtually torsion-free. Then all finite-index subgroups of~$G$ have the same cohomological dimension.
\end{theorem}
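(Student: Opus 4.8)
The plan is to reduce to the following theorem of Serre and then deduce the stated version formally: \emph{if $G$ is torsion-free and $H \leq G$ has finite index, then $\textsc{cd}(G) = \textsc{cd}(H)$.} Granting this, when $G$ is torsion-free every finite-index subgroup is torsion-free, and for any two such subgroups $H_1, H_2$ the intersection $H_1 \cap H_2$ has finite index in each, so applying the theorem to $H_1 \cap H_2 \leq H_1$ and to $H_1 \cap H_2 \leq H_2$ gives $\textsc{cd}(H_1) = \textsc{cd}(H_1 \cap H_2) = \textsc{cd}(H_2)$; restricting the same argument to torsion-free finite-index subgroups yields well-definedness of $\textsc{vcd}$ for an arbitrary virtually torsion-free group.

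First I would dispatch the inequality $\textsc{cd}(H) \leq \textsc{cd}(G)$, which holds for \emph{every} subgroup: if $\textsc{cd}(G) = n < \infty$, take a projective resolution $0 \to P_n \to \cdots \to P_0 \to \Z \to 0$ of the trivial $\Z G$-module; since $\Z G$ is free as a $\Z H$-module --- a basis is a transversal of $H$ in $G$ --- each $P_i$ is still projective over $\Z H$, so restriction gives a length-$n$ projective $\Z H$-resolution of $\Z$ and $\textsc{cd}(H) \leq n$.

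The substance is the reverse inequality $\textsc{cd}(G) \leq \textsc{cd}(H) =: n$, which we may assume finite by the previous step. I would first reduce to the case $H \triangleleft G$ by replacing $H$ with its normal core $N = \bigcap_{g \in G} gHg^{-1}$: this is normal of finite index, and $\textsc{cd}(N) \leq \textsc{cd}(H) = n$ by the easy direction, so once the normal case is known it gives $\textsc{cd}(G) = \textsc{cd}(N) \leq n$. With $H \triangleleft G$ and $Q = G/H$ finite, two facts come essentially for free: inducing a length-$n$ projective $\Z H$-resolution of $\Z$ up to $G$ and applying Shapiro's lemma shows $\Z[G/H] = \operatorname{Ind}_H^G \Z$ has projective dimension $\leq n$ over $\Z G$, while the norm element $\tfrac{1}{|Q|}\sum_{gH} gH$ splits the augmentation $\Z[1/|Q|][G/H] \to \Z[1/|Q|]$, exhibiting the trivial $\Z[1/|Q|]G$-module as a direct summand of $\Z[1/|Q|][G/H]$, which has projective dimension $\leq n$ over $\Z[1/|Q|]G$ by base-change; hence $\textsc{cd}_{\Z[1/|Q|]}(G) \leq n$. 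Dually, the transfer composite $H^{\ast}(G;M) \to H^{\ast}(H;M) \to H^{\ast}(G;M)$ is multiplication by $|Q|$, and $H^{>n}(H;M) = 0$, so $H^{>n}(G;M)$ is annihilated by $|Q|$ for every $\Z G$-module $M$ --- one also reads the $|Q|$-power-torsion of these groups off the Lyndon--Hochschild--Serre spectral sequence $H^p(Q;H^q(H;M)) \Rightarrow H^{p+q}(G;M)$, whose $E_2$-page vanishes for $q > n$ and is $|Q|$-torsion for $p \geq 1$.

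The one genuinely non-formal step is to pass from \textit{$H^{>n}(G;M)$ is $|Q|$-torsion} to \textit{$H^{>n}(G;M) = 0$} --- equivalently, to prove that the $n$-th syzygy of a free $\Z G$-resolution of $\Z$, which the preceding remarks identify as a $\Z H$-projective $\Z G$-module, is in fact $\Z G$-projective. This is precisely where torsion-freeness of $G$ is indispensable, and I expect it to be the main obstacle: without that hypothesis the statement is false (take $G$ a nontrivial finite group and $H = \{1\}$, so $\textsc{cd}(H) = 0$ but $\textsc{cd}(G) = \infty$). One reasonable route is to reduce further, using the subgroups of $G$ lying over the Sylow subgroups of $Q$, to the case where $Q$ is a finite $p$-group --- the $\Z[1/|Q|]$-bound above already handles all primes other than $p$ --- and then to run Serre's argument isolating the $p$-primary obstruction and killing it using the absence of $p$-torsion in $G$; for this last point I would follow Serre's original treatment as presented in Brown, Ch.~VIII.3, rather than attempt a new argument.
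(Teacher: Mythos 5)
The paper states this result with a citation to Serre and to Brown, Ch.~VIII.3, and gives no proof of its own, so there is no internal argument to compare against; what you have written is a correct outline of the standard proof in those references. The easy inequality $\textsc{cd}(H) \leq \textsc{cd}(G)$ by restricting a projective resolution (using freeness of $\Z G$ over $\Z H$), the reduction to a normal finite-index subgroup via the normal core, the two mutually reinforcing arguments (norm splitting over $\Z[1/|Q|]$, and restriction followed by corestriction being multiplication by $[G:H]$) showing $H^{>n}(G;M)$ is $|Q|$-torsion, and the Sylow reduction to the case of a $p$-group quotient are all correct as stated. You also correctly isolate the single non-formal step --- upgrading \emph{$|Q|$-torsion} to \emph{zero}, equivalently promoting the $\Z H$-projectivity of an $n$-th syzygy of $\Z$ over $\Z G$ to $\Z G$-projectivity --- and correctly identify that torsion-freeness of $G$ is precisely what is needed there, with the right counterexample (finite $G$, trivial $H$). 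Deferring this last step to Brown's Ch.~VIII.3 is appropriate given that the paper itself defers the entire proof to the same source.

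One further point you handled well: the statement as printed should be understood to concern torsion-free finite-index subgroups (or, equivalently, to be Serre's theorem with $G$ assumed torsion-free), since a virtually-torsion-free $G$ that itself contains torsion has $\textsc{cd}(G) = \infty$, while its torsion-free finite-index subgroups can have finite cohomological dimension. Your reduction --- prove Serre's version for torsion-free $G$, then apply it to $H_1 \cap H_2 \leq H_1$ and $H_1 \cap H_2 \leq H_2$ for torsion-free finite-index $H_1, H_2$ in the virtually-torsion-free case --- is exactly what is needed to make the paper's definition of $\textsc{vcd}$ well-posed.
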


Hence for a group~$G$ which contains a finite-index torsion-free subgroup~$H$, the \emph{virtual cohomological dimension} of~$G$, defined to be~$\textsc{vcd}(G) \coloneqq \textsc{cd}(H)$, is well-defined.

Charney and Vogtmann have shown \cite{CharneyVogtmannFinitenessProperties09} that for any finite simplicial graph~$\Gamma$,~$\out$ is virtually torsion-free and has finite virtual cohomological dimension. Hence~$\textsc{vcd}(\symout)$ is also finite.

We have the following geometric control over the virtual cohomological dimension.

\begin{theorem}[\cite{BrownCohomologyofGroups}, \emph{Theorem VIII.11.1}]\label{thm:vcd upper bound realisation theorem}
    Let~$G$ act properly and cocompactly on a proper contractible CW-complex~$X$. Then~$\textsc{vcd}(G) \leq \dim(X)$.
\end{theorem}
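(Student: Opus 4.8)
The statement is a standard fact, so the plan is to reduce it to homological algebra rather than to do anything novel. \textbf{Passing to a torsion-free subgroup.} Since $\textsc{vcd}(G)$ is assumed to be defined, $G$ is virtually torsion-free; fix a torsion-free subgroup $H \leq G$ of finite index, so that by definition $\textsc{vcd}(G) = \textsc{cd}(H)$. Because the $G$-action on $X$ is proper, every point stabiliser, hence every cell stabiliser, is finite, so the torsion-free group $H$ acts \emph{freely} on $X$; moreover $H$ still acts cocompactly since $[G:H] < \infty$. After replacing $X$ by a subdivision if necessary — which alters neither $\dim(X)$ nor contractibility — I may assume the action is cellular and that $H$ permutes the cells of $X$ freely, with no cell carried to itself by a nontrivial element. (For the regular CW-complexes that occur in practice, e.g. the cube complexes of this paper, this rigidity is automatic, since a closed cell preserved by an element would furnish a fixed point by Brouwer's theorem, contradicting freeness.)

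\textbf{Extracting a free resolution.} Next I would regard the cellular chain complex $C_\bullet(X)$ as a complex of $\mathbb{Z}H$-modules. Choosing one cell from each $H$-orbit of $k$-cells exhibits $C_k(X)$ as a \emph{free} $\mathbb{Z}H$-module; cocompactness of the action means there are finitely many such orbits, so $C_k(X)$ is finitely generated; and $C_k(X) = 0$ for $k > n \coloneqq \dim(X)$. Since $X$ is contractible it is acyclic, so the augmented complex
\[
0 \longrightarrow C_n(X) \longrightarrow \cdots \longrightarrow C_0(X) \longrightarrow \mathbb{Z} \longrightarrow 0
\]
is exact, i.e. a free resolution of the trivial $\mathbb{Z}H$-module $\mathbb{Z}$ of length $n$. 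Hence $\textsc{cd}(H) \leq n$, and therefore $\textsc{vcd}(G) = \textsc{cd}(H) \leq n = \dim(X)$.

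\textbf{Where the difficulty lies.} The only genuine content is the observation in the previous paragraph that the cellular chains of a contractible, free, cocompact $H$-complex assemble into a finite free resolution; the remaining steps are formal bookkeeping. Two points need a little care, but both are routine: deducing finiteness of cell stabilisers from properness of the action (an unwinding of the definition), and arranging that $H$ permutes cells rigidly (a subdivision argument in the worst case). I do not anticipate a serious obstacle here. The one thing that is genuinely an input rather than something to be proved is the existence of the torsion-free finite-index subgroup $H$ — which is exactly the hypothesis making $\textsc{vcd}(G)$ meaningful in the first place, and which for $\symout$ is supplied by the virtual torsion-freeness recorded above.
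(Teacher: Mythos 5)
Your proposal is correct and reproduces the standard argument from Brown's book (the reference the paper cites for this theorem; the paper itself gives no proof). The key steps — passing to a torsion-free finite-index subgroup $H$, noting that proper plus torsion-free gives a free $H$-action, and then reading off a finite free $\mathbb{Z}H$-resolution of $\mathbb{Z}$ from the cellular chain complex of the contractible $n$-dimensional $X$ — are exactly the standard ones, and your caveats about subdividing to make the action rigid are the right ones to flag.
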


\subsubsection{Type \emph{F} and type \emph{VF}}\label{subsubsec:type F and VF}

\begin{definition}\label{def:type F}
    A group~$G$ is of \emph{type F} if there exists a finite aspherical CW-complex with fundamental group isomorphic to~$G$. We say that a group is of \emph{type VF} if it has a finite-index subgroup which is type \emph{F}.
\end{definition}

Being of type \emph{F} is a very strong finiteness condition; in particular, it implies finite generation and finite presentability. Groups with torsion cannot be type \emph{F}, so the strongest we can ask of them is to be of type \emph{VF} (that is, `virtually type \emph{F}'). A group of type \emph{(V)F} always satisfies the homological finiteness properties of being \emph{types (V)FL} and \emph{(V)FP}; in particular a group of type \emph{VF} has finite virtual cohomological dimension.

The following appears as Corollary 5.12 in \cite{DayWadeRORGs19}; it follows from \emph{Theorem 7.3.4} of \cite{GeogheganTopologicalGroupsBook08}. 

\begin{lemma}\label{lem:sufficient condition for type F}
    If a group~$G$ acts simplicially and cocompactly on a contractible simplicial complex~$X$ such that all stabilisers are of type \emph{F}, then~$G$ is of type~$F$. 
\end{lemma}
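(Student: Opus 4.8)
The plan is to exhibit a finite aspherical CW-complex with fundamental group $G$ by assembling the finite classifying spaces of the cell stabilisers over the finite quotient $X/G$.

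\textbf{Reduction.} First I would pass to the barycentric subdivision of $X$, which is again a contractible simplicial complex with a cocompact simplicial $G$-action, but on which the action is now \emph{rigid} (an element stabilising a simplex fixes it pointwise), since a simplicial automorphism preserving a flag of simplices must preserve it term by term. The stabiliser of a simplex of the subdivision is the intersection of the (setwise) stabilisers of the simplices in the corresponding flag; using that the pointwise stabiliser of a simplex has finite index in its setwise stabiliser, such an intersection is a finite-index subgroup of one of the original cell stabilisers, hence still of type $F$ (a finite cover of a finite $K(\pi,1)$ is a finite $K(\pi,1)$). So we may assume the action is rigid; in particular, whenever $\tau$ is a face of $\sigma$ there is an inclusion $G_\sigma \hookrightarrow G_\tau$ of stabilisers.

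\textbf{Construction.} Form the Borel construction $B \coloneqq EG \times_G X = (EG \times X)/G$. Since $X$ is contractible, $EG \times X$ is contractible, and $G$ acts freely on it, so $B$ is a $K(G,1)$. The skeletal filtration of $X/G$ induces one on $B$: the part of $B$ lying over the orbit of an $n$-cell $\sigma$ of $X$ is a copy of $BG_\sigma \times D^n$ attached along $BG_\sigma \times S^{n-1}$ via the attaching map of $\sigma$ together with the maps $BG_\sigma \to BG_\tau$ induced by the inclusions $G_\sigma \hookrightarrow G_\tau$ over the faces $\tau$ of $\sigma$. Equivalently, $B$ is homotopy equivalent to the homotopy colimit, over the (finite) poset of cells of $X/G$, of the diagram sending $\sigma$ to $BG_\sigma$ with these structure maps. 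Because each $G_\sigma$ is of type $F$, one may choose a finite CW model for each $BG_\sigma$ and realise all the structure maps cellularly and compatibly; since $X/G$ has only finitely many cells (cocompactness), the resulting homotopy colimit is a finite CW-complex $Y$. As $Y \simeq B$, the complex $Y$ is aspherical with $\pi_1(Y)\cong G$, so $Y$ is a finite $K(G,1)$ and $G$ is of type $F$.

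\textbf{Main obstacle.} The delicate point is the coherence in the last step: choosing the finite models $BG_\sigma$ together with cellular structure maps so that they fit together over the whole poset of cells of $X/G$, and verifying that the resulting finite homotopy colimit is genuinely homotopy equivalent to the Borel construction $B$. This is exactly what is packaged in \emph{Theorem 7.3.4} of \cite{GeogheganTopologicalGroupsBook08} (equivalently Corollary 5.12 of \cite{DayWadeRORGs19}), so in the write-up I would invoke that. An alternative route avoiding explicit homotopy colimits is to build, by induction over the orbits of cells, a contractible CW-complex $\widehat{X}$ carrying a free cocompact $G$-action, obtained from $X$ by replacing each cell stabiliser with the universal cover of its finite $K(G_\sigma,1)$ and gluing along $G_\sigma$-equivariant maps (which exist and are unique up to equivariant homotopy by contractibility); then $\widehat{X}/G$ is the desired finite $K(G,1)$, and the same compatibility of the gluing data is the crux.
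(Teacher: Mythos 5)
The paper does not actually prove this lemma—it quotes the result from Corollary~5.12 of \cite{DayWadeRORGs19}, which in turn follows from Theorem~7.3.4 of \cite{GeogheganTopologicalGroupsBook08}. Your outline correctly expands what lies behind that citation (barycentric subdivision to make the action rigid and keep finite-index—hence type~\emph{F}—stabilisers, then the Borel construction and its decomposition as a homotopy colimit of $BG_\sigma$ over the finite poset of cells of $X/G$, assembling finite models of the $BG_\sigma$), and you explicitly defer the coherence of the gluing to the same reference the paper cites; so your argument is essentially the paper's route, simply unpacked.
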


In particular, if~$G$ acts freely, simplicially, and cocompactly on a simplicial complex, then~$G$ is of type \emph{F}.

In particular, the result that the untwisted subgroup~$\uag$ (see \S\ref{subsec:Gamma-Whitehead partitions and automorphisms}) acts properly and cocompactly on the untwisted spine~$\spine$ \cite{CharneyStambaughVogtmann17} proves that~$\uag$ is of type \emph{VF} (and so~$\out$ is too, when~$\uag$ is finite-index in~$\out$). More recently, Day--Wade \cite{DayWadeRORGs19} proved that~$\out$ is type \emph{VF} for every defining graph~$\Gamma$.

\subsection{Symmetric automorphisms}\label{subsec:symmetric automorphisms}

\begin{definition}\label{def:symmetric automorphisms}
    We say that an automorphism of a RAAG~$\raag$ is \emph{symmetric} if it sends every generator to a conjugate of a generator, or the conjugate of an inverse of a generator.
\end{definition}

Observe that the composition of two symmetric automorphisms is again symmetric. We denote the \emph{symmetric automorphism group} of~$\raag$ by~$\symaut$; this is the subgroup of~$\aut$ consisting of all symmetric automorphisms.

We say that an outer automorphism~$[\varphi] \in \out$ is \emph{symmetric} if it has at least one symmetric representative~$\varphi \in \aut$. The composition of two symmetric outer automorphisms will therefore have a symmetric representative, so we may form the \emph{symmetric outer automorphism group},~$\symout$; this is the image of~$\symaut$ under the quotient~$\aut \twoheadrightarrow \out$.

\subsection{$\Gamma$-Whitehead partitions and automorphisms}\label{subsec:Gamma-Whitehead partitions and automorphisms}

Conjecture and partially proved by H. Servatius \cite{ServatiusAutRAAGs89} and completely proved by Laurence \cite{LaurenceAutRAAGs95},~$\aut$ has the following set of generators (and hence~$\out$ is generated by their images).

\begin{theorem-definition}[\cite{LaurenceAutRAAGs95}, Theorem 6.9]\label{thmdef:generators for Aut(RAAG)}
    For any~$\Gamma$,~$\operatorname{Aut}(\raag)$ is generated by the following set of \emph{elementary transformations}.
    \begin{enumerate}
        \item \emph{graph symmetries}: an automorphism of~$\Gamma$ permutes the generators of~$\raag$ and thus induces an element of~$\aut$.
        \item \emph{Inversions}: a map which sends a generator~$a \mapsto a^{-1}$ and fixes all other generators.
        \item \emph{Transvections}: for two vertices~$v, w$ of~$\Gamma$ with~$v \leq w$, the map which sends~$v \mapsto vw$ or~$wv$ and fixes all other generators is an automorphism. These are split into two types:
        \begin{itemize}
            \item \emph{twists}, if~$v$ and~$w$ are adjacent (in which case~$vw = wv$);
            \item \emph{folds} if~$v, w$ are not adjacent. In particular,~$v \mapsto vw$ is a \emph{right fold}, while~$v \mapsto wv$ is a \emph{left fold}.
        \end{itemize}
        \item \emph{Partial conjugations}: for a vertex~$v$, let~$C$ be a connected component of~$\Gamma \setminus \str{v}$. The map which sends every generator~$c \in C$ to the conjugation~$vcv^{-1}$ and fixes all other generators is called a \emph{partial conjugation} (Laurence and Servatius refer to these as \emph{locally inner} automorphisms).  
    \end{enumerate}
\end{theorem-definition}

Folds and partial conjugations can be realised as products of \emph{$\Gamma$-Whitehead automorphisms}, which are defined using so-called \emph{$\Gamma$-Whitehead partitions}. 

Let~$V = V(\Gamma)$ and write~$V^\pm = V \cup V^{-1}$. Define the \emph{double} of~$\Gamma$ to be the graph~$\Gamma^\pm$ which has vertex set~$V^\pm$ and where two vertices are joined by an edge if they commute in~$\raag$ but are not inverses. A \emph{$\Gamma$-Whitehead partition}~$\calP$ based at a vertex~$m \in V$ is a partition of~$V^\pm$ into three sets: two \emph{sides}~$P$,~$\overline{P}$, and the \emph{link}~$\link{\calP}$. The link~$\link{\calP}$ contains all vertices adjacent to~$m$ in~$\Gamma^\pm$. We place~$m$ and~$m^{-1}$ in different sides. The connected components of~$\Gamma^\pm \setminus \str{m}^\pm$ are then distributed between the sides~$P$ and~$\overline{P}$, subject to the condition that~$P$ and~$\overline{P}$ must each contain at least two vertices. We call the pair~$\left(\calP, m\right)$ a~$\Gamma$-Whitehead pair, and write~$\calP = \left(P \;\vert\;\overline{P}\;\vert\;\link{\calP}\right)$. We will often refer to~$\Gamma$-Whitehead partitions as \emph{$\Gamma$-partitions} (or even just \emph{partitions} when~$\Gamma$ is clear). Examples of~$\Gamma$-Whitehead partitions are given in \emph{Figure \ref{fig:compatibility is not transitive}}.

If a vertex~$v$ and its inverse lie on different sides of a~$\Gamma$-Whitehead partition~$\calP$, then we say that~$\calP$ \emph{splits}~$v$. Note that if~$\calP$ splits~$v \neq m$, then~$v$ and~$v^{-1}$ must lie in different connected components of~$\Gamma^\pm \setminus \str{m}^\pm$, which means that~$\link{v} \subseteq \link{m}$.

Each~$\Gamma$-Whitehead pair~$\left(\calP, m\right)$ determines an automorphism~$\varphi\left(\calP, m\right)$ of~$\raag$ as follows. 
\begin{itemize}
    \item If~$\calP$ splits~$v \neq m$, then:
    \begin{itemize}
        \item if~$v \in P$, then~$\varphi\left(\calP, m\right) \colon v \mapsto vm^{-1}$;
        \item if~$v \in \overline{P}$, then~$\varphi\left(\calP, m\right) \colon v \mapsto mv$.
    \end{itemize}
    \item If~$\{v, v^{-1}\} \subseteq P$, then~$\varphi\left(\calP, m\right) \colon v \mapsto mvm^{-1}$.
    \item For all other~$v$ (including~$v = m$),~$\varphi\left(\calP, m\right)(v) = v$.
\end{itemize}

We call~$m$ the \emph{multiplier} of the~$\Gamma$-Whitehead automorphism~$\varphi(\calP, m)$. 

One may realise a fold~$v \mapsto vm^{-1}$ or~$v \mapsto mv$ by taking a~$\Gamma$-Whitehead partition based at~$m$ with positive side~$\{m, v\}$ or~$\{m, v^{-1}\}$ respectively. Similarly, a partial conjugation of the connected component~$C \subseteq \Gamma^\pm \setminus \str{m}^\pm$ by~$m$ is realised by taking the positive side~$\{m, C\}$. Every~$\Gamma$-Whitehead automorphism is a product of folds and partial conjugations.

The group generated by~$\Gamma$-Whitehead automorphisms along with inversions is studied in \cite{BregmanCharneyVogtmannFiniteSubgroupsI24}, where it is denoted~$U^0(\raag)$. Adding in automorphisms induced by graph symmetries generates~$\uag$, the \emph{untwisted subgroup}. Further adding twists to this generates the full outer automorphism group~$\out$.

\subsection{The pure symmetric automorphism group}\label{subsec:pure symmetric automorphism group}

The \emph{pure symmetric automorphism group} of~$\raag$, denoted~$P\Sigma\operatorname{Aut}(\raag)$, is the finite-index subgroup of~$\symaut$ which requires all generators to be sent to a conjugate of themselves---that is, it excludes graph symmetries and inversions. The image after taking the quotient by inner automorphisms is the \emph{pure symmetric outer automorphism group}~$P\Sigma\operatorname{Out}(\raag)$. Also known as the \emph{basis-conjugating (outer) automorphism group}, it has been studied by Toinet \cite{ToinetPresBasisConjugacyRAAGs11}, Koban--Piggott \cite{KobanPiggottBNSSymmetricAutosRAAGs13}, and Day--Wade \cite{DayWadeSubspaceBNSInvPSOutRAAG18}; in particular, a presentation is given by the first two of these (while a generating set was given by Laurence \cite{LaurenceAutRAAGs95}).

As discussed in \S\ref{subsec:discussion of proof strategy}, the pure symmetric outer automorphism group is an example of a \emph{relative outer automorphism group (RORG)}. The class of RORGs was introduced in full generality by Day and Wade \cite{DayWadeRORGs19}, although it encompassed many previously-studied examples. We refer the reader to (\cite{DayWadeRORGs19}, \S6) for a more complete survey of the literature surrounding RORGs.

In \cite{DayWadeSubspaceBNSInvPSOutRAAG18}, Day--Wade gave necessary and sufficient conditions for when~$P\Sigma\operatorname{Out}(\raag)$ is itself a RAAG, which gives a way to find examples of RAAGs whose outer automorphism groups are not \emph{virtual duality groups} (see also \cite{WadeBrueckVirtualDualityRAAG23} and \cite{WiedmerRAAGsCommensurateOutRAAG24}). This is in contrast to both ends of the RAAGs spectrum:~$\operatorname{Out}(F_n)$ and~$\operatorname{GL}_n(\mathbb{Z})$ are both virtual duality groups.

Since~$P\Sigma\operatorname{Out}(\raag)$ is a finite index subgroup of~$\symout$, these two groups have the same virtual cohomological dimension. 

\section{Symmetric~$\Gamma$-complexes}\label{sec:symmetric Gamma-complexes}

\subsection{Symmetric~$\Gamma$-partitions}\label{subsec:symmetric Gamma-partitions}

\begin{definition}\label{def:symmetric Gamma-partitions}
    We say that a~$\Gamma$-partition is \emph{symmetric} if it splits no vertices (other than its base).
\end{definition}

\begin{proposition}\label{prop:generating set for symmetric automorphism group}
   ~$\symout$ is generated by inversions, graph symmetries, and those~$\Gamma$-Whitehead automorphisms which correspond to symmetric~$\Gamma$-partitions.
\end{proposition}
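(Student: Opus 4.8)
The plan is to deduce the statement by reading a known generating set for the symmetric automorphism group into $\out$, the point being that partial conjugations are precisely the $\Gamma$-Whitehead automorphisms attached to symmetric $\Gamma$-partitions (up to inner automorphisms). First I would dispose of the easy inclusion: each of the three listed families consists of symmetric automorphisms, so the subgroup of $\out$ they generate lies in $\symout$. Inversions and graph symmetries are visibly symmetric. If $\calP$ is a symmetric $\Gamma$-partition based at $m$, then by \emph{Definition \ref{def:symmetric Gamma-partitions}} it splits no $v\neq m$, so each generator $v$ falls into one of the cases $v=m$, $v\in\link{\calP}$, or $\{v,v^{-1}\}\subseteq\overline{P}$ (all giving $\varphi(\calP,m)(v)=v$) or $\{v,v^{-1}\}\subseteq P$ (giving $\varphi(\calP,m)(v)=mvm^{-1}$); in every case the image of $v$ is a conjugate of $v$, so $\varphi(\calP,m)$ is symmetric (indeed pure symmetric). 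Thus the generated subgroup is contained in $\symout$, and the content is the reverse inclusion.

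\textbf{Reduction to the pure case.} Sending $[\varphi]\in\symout$ to the signed permutation of $V(\Gamma)$ that records, for each generator $v$, which generator-or-inverse $\varphi(v)$ is conjugate to, defines a homomorphism from $\symout$ to the group of signed permutations of $V(\Gamma)$ (well-defined on $\out$ since it is unchanged by inner automorphisms, and multiplicative by a direct check). Its kernel is exactly $P\Sigma\operatorname{Out}(\raag)$. Its image consists of the signed permutations that extend to automorphisms of $\raag$; inspecting the defining relations shows the underlying (unsigned) permutation of any such element must be a graph automorphism, so the image is generated by the images of graph symmetries together with the images of inversions, all of which lift to $\symout$. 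Consequently $\symout=\langle P\Sigma\operatorname{Out}(\raag),\ \text{graph symmetries},\ \text{inversions}\rangle$, and it remains to show that $P\Sigma\operatorname{Out}(\raag)$ is generated by the $\Gamma$-Whitehead automorphisms corresponding to symmetric $\Gamma$-partitions.

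\textbf{Pure symmetric automorphisms via partial conjugations.} By Laurence \cite{LaurenceAutRAAGs95}, $P\Sigma\operatorname{Aut}(\raag)$ is generated by partial conjugations, so $P\Sigma\operatorname{Out}(\raag)$ is generated by their images in $\out$. I would then show that each partial conjugation equals, modulo inner automorphisms, a $\Gamma$-Whitehead automorphism attached to a symmetric $\Gamma$-partition (or is itself inner). Given a vertex $m$ and a connected component $C$ of $\Gamma\setminus\str{m}$, let $\calP$ be the $\Gamma$-partition based at $m$ whose positive side is $\{m\}\cup C\cup C^{-1}$ (here $C^{-1}=\{c^{-1}:c\in C\}$, which together with $C$ forms an inversion-closed union of components of $\Gamma^\pm\setminus\str{m}^\pm$) and whose negative side consists of $m^{-1}$ together with all the remaining components of $\Gamma^\pm\setminus\str{m}^\pm$ and their inverses. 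Both sides are closed under inversion, so $\calP$ is symmetric; it is a legitimate $\Gamma$-Whitehead partition exactly when its negative side has at least two vertices, i.e. when $\Gamma\setminus\str{m}\neq C$, and in that case one checks directly from the definition of $\varphi(\calP,m)$ that it coincides with the partial conjugation of $C$ by $m$. In the excluded case $\Gamma\setminus\str{m}=C$, conjugating $C$ by $m$ is conjugation by $m$, hence inner and trivial in $\out$. Assembling the three steps proves the proposition.

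\textbf{Main obstacle.} The only step needing genuine care is the last one --- realising a partial conjugation as an honest $\varphi(\calP,m)$ for a symmetric $\calP$. One must respect the requirement that both sides of a $\Gamma$-Whitehead partition contain at least two vertices, correctly pass from a component $C\subseteq\Gamma$ to its possibly disconnected preimage $C\cup C^{-1}\subseteq\Gamma^\pm$, and single out the degenerate case in which the partial conjugation is inner; the remainder is a routine translation of Laurence's generating set into $\out$. (A more uniform but heavier alternative would invoke Day's peak-reduction theorem for $\aut(\raag)$ applied to the finite set of conjugacy classes $\{[v^{\pm1}]:v\in V(\Gamma)\}$, which any symmetric automorphism preserves setwise; since this set already has minimal length, a peak-reduced factorisation stays at minimal length throughout, which forces every Whitehead automorphism occurring to be a signed permutation or to correspond to a symmetric $\Gamma$-partition --- but this imports more machinery than the argument above requires.)
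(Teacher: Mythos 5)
Your proof is correct and follows the same fundamental route as the paper's: verify the easy inclusion, reduce modulo inversions and graph symmetries to the pure symmetric outer automorphism group $P\Sigma\operatorname{Out}(\raag)$, invoke Laurence's generating set by partial conjugations, and realise each partial conjugation as the $\Gamma$-Whitehead automorphism attached to a symmetric $\Gamma$-partition. Your packaging of the reduction as a homomorphism onto signed permutations with kernel $P\Sigma\operatorname{Out}(\raag)$ is a slightly more structural way of organising what the paper does by directly adjusting a representative, and you are right to flag the degenerate case $\Gamma\setminus\str{m}=C$ (where the would-be side $\{m^{-1}\}$ is a singleton and the partial conjugation is inner, hence trivial in $\out$) --- a case the paper's proof leaves implicit.
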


\begin{proof}
    It is clear that all inversions and graph symmetries are symmetric automorphisms. 

    Let~$(\calP, m)$ be a~$\Gamma$-Whitehead pair with~$\calP$ symmetric. By definition, a symmetric~$\Gamma$-partitions splits no vertices other than its base, so for every~$v \neq m$ with~$v \in P$, we have~$v^{-1} \in P$. Hence the corresponding~$\Gamma$-Whitehead automorphism~$\varphi\left(\calP, m\right)$ sends~$v \mapsto mvm^{-1}$ for each~$v \in P$, and fixes all other generators. Therefore~$\varphi\left(\calP, m\right)$ is also a symmetric automorphism. 

    Hence the subgroup~$H \leq \out$ generated by the (images of) inversions, graph symmetries, and~$\Gamma$-Whitehead automorphisms defined by symmetric~$\Gamma$-partitions consists entirely of symmetric automorphisms, so~$H \leq \symout$.

    Let~$[\psi] \in \symout$, and let~$\psi \in \symaut$ be a symmetric representative for~$[\psi]$. By composing~$\psi$ with a series of inversions, we obtain an automorphism~$\psi'$ which does not send any element of~$V$ to an element of~$V^{-1}$. 

    Writing~$V = \{v_1, \dots, v_n\}$, we may therefore write~$\psi'(v_i) = g_i v_{\pi(i)} g_i^{-1}$ for each~$i = 1, \dots, n$, where~$\pi$ is some element of the symmetric group~$S_n$.
    
    Note that if~$v$ and~$w$ are adjacent vertices in~$\Gamma$, then~$\psi'(v)$ and~$\psi'(w)$ must be (conjugates of) adjacent vertices of~$\Gamma$. In other words, the permutation of the vertices of~$\Gamma$ induced by~$\pi$ must respect the adjacency relations in~$\Gamma$---that is, it is realisable as a graph symmetry. Composing~$\psi'~$with the inverse of the automorphism of~$\raag$ induced by this graph symmetry yields an automorphism~$\psi''$ which sends each generator~$v \in V$ to a conjugate of itself. 

    Now~$\psi''$ is an element of the pure symmetric outer automorphism group~$P\Sigma\operatorname{Out}(\raag)$. By work of Laurence \cite{LaurenceAutRAAGs95} (see also Toinet \cite{ToinetPresBasisConjugacyRAAGs11} and Koban--Piggott \cite{KobanPiggottBNSSymmetricAutosRAAGs13}), this group has a standard generating set consisting of partial conjugations~$\varphi_C^m$, defined as follows:
    \begin{equation*}
        \varphi_C^m(v) = 
        \begin{cases}
            mvm^{-1} \quad \quad \;\text{if~$v \in C$;} \\
            v \quad \quad \quad \quad \quad \text{otherwise.}
        \end{cases}
    \end{equation*}
    where~$C$ is a connected component of~$\Gamma \setminus \str{m}$. Now,~$\varphi_C^m$ is precisely the~$\Gamma$-Whitehead automorphism defined by the~$\Gamma$-partition based at~$m$ which has positive side~$\{m, C, C^{-1}\}$. This is a symmetric~$\Gamma$-partition. Hence~$\psi$ is a product of inversions, graph symmetries, and~$\Gamma$-Whitehead automorphisms defined by symmetric~$\Gamma$-partitions, so~$[\psi] \in H$, whence we have~$H = \symout$ as required.  
\end{proof}

\subsection{$\Gamma$-complexes: a recap}\label{subsec:recap on Gammma-complexes}

We recall the construction of \emph{$\Gamma$-complexes} from \cite{CharneyStambaughVogtmann17}. This introductory exposition, until the end of \S\ref{subsec:Gamma-complexes}, closely follows \S3 of \cite{BregmanCharneyVogtmannFiniteSubgroupsI24}.

\subsubsection{Special cube complexes and hyperplane collapses}\label{subsubsec:sccs and hyperplane collapses}

$\Gamma$-complexes are certain \emph{special} cube complexes, with some extra structure. Recall that a \emph{special cube complex} is one which is locally~$\operatorname{CAT}(0)$ and has no hyperplanes which self-intersect or are \emph{one-sided}, \emph{self-osculating}, or \emph{inter-osculating}; we refer to the original article of Haglund and Wise for the definitions \cite{HaglundWiseSCCs08}.

For any hyperplane~$H$ of a special cube complex~$X$, we define the edges \emph{dual} to~$H$ to be those edges of~$X$ whose interior has non-empty intersection with~$H$. We can define an \emph{orientation} on~$H$ by giving a consistent choice of orientation to the edges dual to~$H$.

In a special cube complex~$X$, one may define the \emph{hyperplane collapse}~$X \ssslash H$ for any hyperplane~$H$. This is a map~$c_H \colon X \to X \ssslash H$ which collapses~$\kappa(H)$, the carrier of~$H$, orthogonally onto~$H$. For any collection~$\calH$ of hyperplanes, we write~$X \ssslash \calH$ for the cube complex obtained by collapsing all the hyperplanes of~$\calH$ (in any order). We will call~$\calH$ \emph{acyclic} if the collapse map~$c_\calH \colon X \to X \ssslash \calH$ is a homotopy equivalence. [The notation~$X\ssslash \mathcal{H}$ is not intended to evoke other notions using the double slash, such as homotopy quotients or GIT quotients; we take our lead from e.g. \cite{BregmanCharneyVogtmannFiniteSubgroupsI24}.]

\subsubsection{Blowups of the Salvetti complex}\label{subsubsec:blowups}

Let~$\calP = \left( P \;\vert\; \overline{P} \;\vert\; \link{\calP} \right)$ be a~$\Gamma$-partition based at a vertex~$m$, so~$\link{\calP} = \link{m}^\pm$. If~$\calP$ splits another vertex~$n$ with~$\link{m} = \link{n}$, then~$n$ may also serve as a base for~$\calP$. Changing the base changes the corresponding~$\Gamma$-Whitehead automorphism, but does not affect the combinatorial data of the~$\Gamma$-partition~$\calP$.

We say that~$\Gamma$-partitions~$\calP$ and~$\calQ$ are \emph{adjacent} if some (hence any) base of~$\calP$ commutes with some (hence any) base of~$\calQ$. We say that~$\calP$ and~$\calQ$ are \emph{compatible} if they are adjacent, or if there is some side of~$\calP$ which is disjoint from some side of~$\calQ$. We say that a set~$\Pi$ of~$\Gamma$-partitions is \emph{compatible} if its elements are pairwise compatible. We remark that compatibility is not transitive, as demonstrated by the example shown in \emph{Figure \ref{fig:compatibility is not transitive}}.

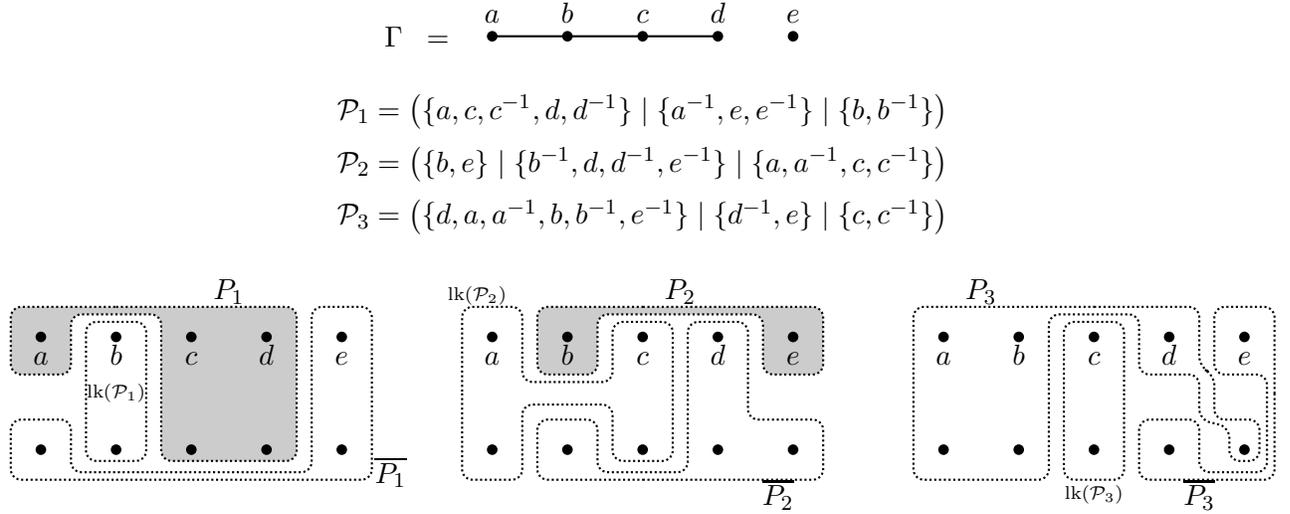
\begin{figure}
    \begin{center}
        \begin{tikzpicture}
            %
            %
            \fill (-2, 0.5) circle (2pt);
            \fill (-1, 0.5) circle (2pt);
            \fill (0, 0.5) circle (2pt);
            \fill (1, 0.5) circle (2pt);
            \fill (2, 0.5) circle (2pt);
            \draw[black, thick] (-2, 0.5) -- (-1, 0.5);
            \draw[black, thick] (-1, 0.5) -- (0, 0.5);
            \draw[black, thick] (0, 0.5) -- (1, 0.5);
            \node at (-2, 0.5) [above = 1pt, black, thick]{$a$};
            \node at (-1, 0.5) [above = 1pt, black, thick]{$b$};
            \node at (0, 0.5) [above = 1pt, black, thick]{$c$};
            \node at (1, 0.5) [above = 1pt, black, thick]{$d$};
            \node at (2, 0.5) [above = 1pt, black, thick]{$e$};
            \node at (-3, 0.5) [black, thick]{$\Gamma \;\; =$};
            %
            %
            \node at (0, -0.5) [black, thick]{$\calP_1 = \left( \{a, c, c^{-1}, d, d^{-1}\} \;|\; \{a^{-1}, e, e^{-1}\} \;|\; \{b, b^{-1}\} \right)$};
            \node at (0, -1.2) [black, thick]{$\calP_2 = \left( \{b, e\} \;|\; \{b^{-1}, d, d^{-1}, e^{-1}\} \;|\; \{a, a^{-1}, c, c^{-1}\} \right)$};
            \node at (0, -1.9) [black, thick]{$\calP_3 = \left( \{d, a, a^{-1}, b, b^{-1}, e^{-1}\} \;|\; \{d^{-1}, e\} \;|\; \{c, c^{-1}\} \right)$};
            %
            %
            \draw [black, thick, rounded corners, densely dotted, fill = black!20!white] (-7, -3.1) to (-4.6, -3.1) to (-4.6, -5.15) to (-6.4, -5.15) to (-6.4, -3.2) to (-7.6, -3.2) to (-7.6, -4) to (-8.4, -4) to (-8.4, -3.1) to (-7, -3.1);
            \draw [black, thick, rounded corners, densely dotted] (-7, -5.4) to (-3.6, -5.4) to (-3.6, -3.1) to (-4.4, -3.1) to (-4.4, -5.3) to (-7.6, -5.3) to (-7.6, -4.6) to (-8.4, -4.6) to (-8.4, -5.4) to (-7, -5.4);
            \draw [black, thick, rounded corners, densely dotted] (-7.4, -4.25) to (-7.4, -5.15) to (-6.6, -5.15) to (-6.6, -3.3) to (-7.4, -3.3) to (-7.4, -4.25);
            \fill (-8, -3.5) circle (2pt);
            \fill (-7, -3.5) circle (2pt);
            \fill (-6, -3.5) circle (2pt);
            \fill (-5, -3.5) circle (2pt);
            \fill (-4, -3.5) circle (2pt);
            \fill (-8, -5) circle (2pt);
            \fill (-7, -5) circle (2pt);
            \fill (-6, -5) circle (2pt);
            \fill (-5, -5) circle (2pt);
            \fill (-4, -5) circle (2pt);
            \node at (-8, -4.05) [above = 1pt, black, thick]{$a$};
            \node at (-7, -4.05) [above = 1pt, black, thick]{$b$};
            \node at (-6, -4.05) [above = 1pt, black, thick]{$c$};
            \node at (-5, -4.05) [above = 1pt, black, thick]{$d$};
            \node at (-4, -4.05) [above = 1pt, black, thick]{$e$};
            \node at (-7, -4.25) [black, thick]{\tiny$\link{\calP_1}$};
            \node at (-5.5, -2.9) [black, thick]{$P_1$};
            \node at (-3.35, -5.3) [black, thick]{$\overline{P_1}$};
            %
            %
            \draw [black, thick, rounded corners, densely dotted, fill = black!20!white] (0, -3.1) to (2.4, -3.1) to (2.4, -4) to (1.6, -4) to (1.6, -3.2) to (-0.6, -3.2) to (-0.6, -4) to (-1.4, -4) to (-1.4, -3.1) to (0, -3.1);
            \draw [black, thick, rounded corners, densely dotted] (0, -5.4) to (2.4, -5.4) to (2.4, -4.6) to (1.4, -4.6) to (1.4, -3.3) to (0.6, -3.3) to (0.6, -5.3) to (-0.6, -5.3) to (-0.6, -4.6) to (-1.4, -4.6) to (-1.4, -5.4) to (0, -5.4);
            \draw [black, thick, rounded corners, densely dotted] (-2.4, -4.25) to (-2.4, -5.4) to (-1.6, -5.4) to (-1.6, -4.4) to (-0.4, -4.4) to (-0.4, -5.15) to (0.4, -5.15) to (0.4, -3.3) to (-0.4, -3.3) to (-0.4, -4.1) to (-1.6, -4.1) to (-1.6, -3.1) to (-2.4, -3.1) to (-2.4, -4.25);
            \fill (-2, -3.5) circle (2pt);
            \fill (-1, -3.5) circle (2pt);
            \fill (0, -3.5) circle (2pt);
            \fill (1, -3.5) circle (2pt);
            \fill (2, -3.5) circle (2pt);
            \fill (-2, -5) circle (2pt);
            \fill (-1, -5) circle (2pt);
            \fill (0, -5) circle (2pt);
            \fill (1, -5) circle (2pt);
            \fill (2, -5) circle (2pt);
            \node at (-2, -4.05) [above = 1pt, black, thick]{$a$};
            \node at (-1, -4.05) [above = 1pt, black, thick]{$b$};
            \node at (0, -4.05) [above = 1pt, black, thick]{$c$};
            \node at (1, -4.05) [above = 1pt, black, thick]{$d$};
            \node at (2, -4.05) [above = 1pt, black, thick]{$e$};
            \node at (0.5, -2.9) [black, thick]{$P_2$};
            \node at (1.8, -5.6) [black, thick]{$\overline{P_2}$};
            \node at (-2.2, -2.95) [black, thick]{\tiny$\link{\calP_2}$};
            %
            %
            \draw [black, thick, rounded corners, densely dotted] (6, -3.1) to (7.4, -3.1) to (7.4, -3.9) to (7.6, -4) to (7.6, -4.6) to (8.2, -4.6) to (8.2, -5.15) to (7.8, -5.15) to (7.8, -4.7) to (7.4, -4.6) to (7.4, -4) to (6.6, -4) to (6.6, -3.2) to (5.4, -3.2) to (5.4, -5.4) to (3.6, -5.4) to (3.6, -3.1) to (6, -3.1);
            \draw [black, thick, rounded corners, densely dotted] (8.4, -4.25) to (8.4, -5.4) to (6.6, -5.4) to (6.6, -4.6) to (7.4, -4.6) to (7.4, -5.3) to (8.3, -5.3) to (8.3, -4) to (7.6, -4) to (7.6, -3.1) to (8.4, -3.1) to (8.4, -4.25);
            \draw [black, thick, rounded corners, densely dotted] (6.4, -4.25) to (6.4, -5.4) to (5.6, -5.4) to (5.6, -3.3) to (6.4, -3.3) to (6.4, -4.25);
            \fill (4, -3.5) circle (2pt);
            \fill (5, -3.5) circle (2pt);
            \fill (6, -3.5) circle (2pt);
            \fill (7, -3.5) circle (2pt);
            \fill (8, -3.5) circle (2pt);
            \fill (4, -5) circle (2pt);
            \fill (5, -5) circle (2pt);
            \fill (6, -5) circle (2pt);
            \fill (7, -5) circle (2pt);
            \fill (8, -5) circle (2pt);
            \node at (4, -4.05) [above = 1pt, black, thick]{$a$};
            \node at (5, -4.05) [above = 1pt, black, thick]{$b$};
            \node at (6, -4.05) [above = 1pt, black, thick]{$c$};
            \node at (7, -4.05) [above = 1pt, black, thick]{$d$};
            \node at (8, -4.05) [above = 1pt, black, thick]{$e$};
            \node at (4.5, -2.9) [black, thick]{$P_3$};
            \node at (7.4, -5.6) [black, thick]{$\overline{P_3}$};
            \node at (6, -5.6) [black, thick]{\tiny$\link{\calP_3}$};
        \end{tikzpicture}
        \caption{An example of a graph~$\Gamma$ and three~$\Gamma$-partitions,~$\calP_1$ (based at~$a$),~$\calP_2$ (based at~$b$), and~$\calP_3$ (based at~$d$).~$\calP_1$ is compatible with~$\calP_2$ since their respective positive sides (shaded) have empty intersection (in fact,~$\calP_1$ and~$\calP_2$ are also adjacent). On the other hand,~$\calP_3$ is compatible with neither~$\calP_1$ nor~$\calP_2$. Since~$d$ does not commute with~$a$ or~$b$,~$\calP_3$ is not adjacent to~$\calP_1$ or~$\calP_2$, and one can verify that each side of~$\calP_3$ has non-empty intersection with each of~$P_1$,~$\overline{P_1}$,~$P_2$,~$\overline{P_2}$.~$\calP_1$ is a symmetric~$\Gamma$-partition, while~$\calP_2$ and~$\calP_3$ are not.}
        \label{fig:compatibility is not transitive}
    \end{center}
\end{figure}

Given a compatible set~$\Pi$ of~$\Gamma$-partitions, Charney--Stambaugh--Vogtmann \cite{CharneyStambaughVogtmann17} construct a \emph{blowup}~$\salb{\Pi}$ of the Salvetti complex~$\sal$. This is a special cube complex with no separating hyperplanes, and has some extra structure in the form of labellings and orientations, as follows.

$\salb{\Pi}$ has one hyperplane~$H_\calP$ for each~$\Gamma$-partition~$\calP \in \Pi$, and one hyperplane~$H_v$ for each vertex~$v \in V$. Moreover, the hyperplanes~$H_v$ corresponding to vertices have an orientation. The set~$\calH_\Pi = \{H_\calP \colon \calP \in \Pi\}$ is acyclic, and the cube complex obtained by the corresponding collapse is isomorphic to the Salvetti complex~$\sal$.

\begin{example}
    \begin{enumerate}[(i)]
        \item If~$\Gamma$ has~$n$ vertices and no edges, so~$\raag \cong F_n$ is free, then the Salvetti complex~$\sal$ is the~$n$-petalled rose, with each edge labelled by a vertex of~$V$. A blowup~$\salb{\Pi}$ is a finite connected graph with no separating edges and no bivalent vertices. The edges dual to hyperplanes labelled by~$\Gamma$-partitions form a maximal tree~$T$. The orientations on the hyperplanes corresponding to vertices~$v \in V$ induce orientations on each edge of~$\salb{\Pi} \setminus T$.
        \item If~$\Pi$ is empty, then~$\salb{\Pi}$ is precisely the Salvetti complex for~$\raag$. It has one oriented hyperplane for each vertex of~$\Gamma$, which induces orientations on the edges (each of which is labelled by a vertex~$v \in V$), and this data determines an isomorphism~$\pi_1(\sal) \cong \raag$. Cube complex automorphisms of~$\sal$ correspond to graph symmetries of~$\Gamma$.
    \end{enumerate}
\end{example}

Suppose that~$\Pi = \{\calP\}$ consists of a single~$\Gamma$-partition~$\left( P \;\vert\; \overline{P} \;\vert\; \link{P} \right)$. Then~$\salb{\Pi} = \salb{\{\calP\}}$ has two vertices~$x$ and~$\overline{x}$, which are joined by one unoriented edge dual to the hyperplane~$H_\calP$, as well as at least one oriented edge labelled by vertices of~$\Gamma$. One can read off~$\calP$ from the orientations and labels as follows:

\begin{itemize}
    \item For~$v \in V$, if there are two edges dual to~$H_v$, then~$v \in \link{\calP}$.
    \item Otherwise, there is only one edge~$e_v$ dual to~$H_v$. Recall that~$H_v$ and~$e_v$ are oriented. Now:
    \begin{itemize}
        \item if~$e_v$ terminates at~$x$, then~$v \in P$, while if it terminates at~$\overline{x}$, then~$v \in \overline{P}$;
        \item if~$e_v$ originates at~$x$, then~$v^{-1} \in P$, while if~$e_v$ originates at~$\overline{x}$, then~$v^{-1} \in \overline{P}$.
    \end{itemize}
\end{itemize}
The hyperplane~$H_\calP$ is isomorphic to the Salvetti complex for the RAAG~$A_{\link{\calP}}$.

\subsection{$\Gamma$-complexes}\label{subsec:Gamma-complexes}

\begin{definition}\label{def:Gamma-cx}
    A cube complex~$X$ is called a \emph{$\Gamma$-complex} if it is isomorphic to the underlying cube complex of a (possibly empty) blowup~$\salb{\Pi}$. A \emph{blowup structure} on~$X$ is the data that specifies it as a particular blowup~$\salb{\Pi}$: that is, each hyperplane is labelled by a vertex of~$\Gamma$ or by a~$\Gamma$-partition, the hyperplanes labelled by vertices are oriented, and the~$\Gamma$-partitions form a compatible set. Any blowup structure on~$X$ determines a \emph{collapse map}~$c_\Pi \colon X \to \sal$ which collapses all hyperplanes labelled by~$\Gamma$-partitions.
\end{definition}

A~$\Gamma$-complex may admit many blowup structures. For example, a~$\Gamma$-complex for an edgeless~$\Gamma$ is a graph which may have many maximal trees---and for each maximal tree, the remaining edges can be labelled by vertices of~$\Gamma$ and oriented in any way.

\begin{definition}\label{def:treelike set of hyperplanes}
    A set of hyperplanes~$\calT$ in a~$\Gamma$-complex is called \emph{treelike} if the hyperplane collapse~$c_\calT$ yields a cube complex isomorphic to~$\sal$.
\end{definition}

Suppose we have a cube complex~$X$ which we do not know to be a~$\Gamma$-complex \emph{a priori}. To prove that~$X$ is in fact a~$\Gamma$-complex, we must find an acyclic collection~$\calT$ of hyperplanes for which the collapse~$X \ssslash \calT$ is isomorphic to~$\sal$. Choosing one such isomorphism gives labels and orientations to the remaining hyperplanes. Next we must prove that each hyperplane~$H \in \calT$ actually determines a valid~$\Gamma$-partition. To do this, we can perform a hyperplane collapse along all of~$\calT \setminus H$, which (if~$\calT$ is to be treelike) should leave us with a complex with exactly two vertices which corresponds to a single blowup along the~$\Gamma$-partition labelling~$H$. As at the end of \S\ref{subsubsec:blowups}, we can then read off the (purported)~$\Gamma$-partition from this complex, by looking at which edges originate and terminate at which vertex. 

Charney--Stambaugh--Vogtmann \cite{CharneyStambaughVogtmann17} gave a construction which ensures that if we can one find one such treelike set in a cube complex~$X$ then the~$\Gamma$-partitions which label the hyperplanes of~$\calT$ form a compatible set; moreover, any treelike set in~$X$ enjoys this property. We summarise this construction in the following proposition and its proof.

\begin{proposition}\label{prop:treelike sets in Gamma-complexes}
    Let~$X$ be a~$\Gamma$-complex and let~$\calT$ be a treelike set of hyperplanes in~$X$. Then there is some compatible set~$\Pi$ of~$\Gamma$-partitions and an isomorphism~$X \cong \calT$ such that~$\calT$ is the set of hyperplanes labelled by the elements of~$\Pi$.
\end{proposition}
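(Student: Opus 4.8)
The plan is to follow the recipe sketched just above the statement: use the treelike set $\calT$ to put a candidate blowup structure on $X$, verify that the resulting combinatorial data genuinely forms a compatible set of $\Gamma$-partitions, and then read off the isomorphism $X \cong \salb{\Pi}$. Most of the underlying technical work is already present in \cite{CharneyStambaughVogtmann17}, so the proof will mainly isolate and invoke the relevant statements there.

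First I would fix an isomorphism $\psi \colon X \ssslash \calT \xrightarrow{\sim} \sal$, which exists because $\calT$ is treelike. The hyperplanes of $X$ not lying in $\calT$ descend to hyperplanes of the collapse, so through $\psi$ they inherit a label in $V(\Gamma)$ and, being images of the oriented hyperplanes of $\sal$, an orientation; call this labelled and oriented family $\calH_V$. It remains to attach a $\Gamma$-partition to each $H \in \calT$. Fix $H \in \calT$ and collapse the remaining treelike hyperplanes, forming $Y_H \coloneqq X \ssslash (\calT \setminus \{H\})$. Using that $\calT$ is treelike, one checks as in \cite{CharneyStambaughVogtmann17} that $\{H\}$ is itself a treelike set in $Y_H$ --- equivalently $Y_H \ssslash \{H\} \cong \sal$ --- so that $Y_H$ is a single blowup: it has exactly two vertices $x$ and $\overline{x}$, the hyperplane $H$ is dual to a single unoriented edge joining them, and the hyperplanes of $\calH_V$ survive with their labels and orientations. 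One then reads off the triple $\calP_H = (P_H \mid \overline{P_H} \mid \link{\calP_H})$ from the edges originating and terminating at $x$ and $\overline{x}$, exactly as at the end of \S\ref{subsubsec:blowups}.

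Next I would check that each $\calP_H$ really is a $\Gamma$-partition: its link must equal $\link{m}^\pm$ for a suitable base $m$, the base $m$ and its inverse must lie on opposite sides, no connected component of $\Gamma^\pm \setminus \str{m}^\pm$ may be split between the sides, and each side must contain at least two elements. Each of these is equivalent to a structural feature of the single blowup $Y_H$ (being special, having no separating hyperplane, having no self-osculating or inter-osculating carriers), and these are precisely the properties guaranteed by the construction of \cite{CharneyStambaughVogtmann17}. Then I would verify that $\Pi \coloneqq \{\calP_H : H \in \calT\}$ is compatible: for distinct $H, H' \in \calT$, collapsing $\calT \setminus \{H, H'\}$ leaves a complex in which $H$ and $H'$ coexist, and the way their carriers meet --- either sharing a square, forcing the bases of $\calP_H$ and $\calP_{H'}$ to commute, or being disjoint, forcing some side of $\calP_H$ to be disjoint from some side of $\calP_{H'}$ --- yields compatibility. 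With $\Pi$ compatible, the blowup $\salb{\Pi}$ is defined, and the labelled and oriented data gathered above assemble into a blowup structure on $X$, yielding the isomorphism $X \cong \salb{\Pi}$ under which $\calT$ is precisely the set of hyperplanes labelled by the elements of $\Pi$.

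I expect the main obstacle to be the step showing that $Y_H$ has exactly two vertices and that the triple read off from it satisfies all the $\Gamma$-partition axioms. This is where one must genuinely use that $\calT$ is treelike --- so that the edges dual to $\calT$ connect all vertices of $X$ without redundant cycles --- together with the specialness of $X$, which rules out the osculating configurations that would otherwise create extra vertices in $Y_H$ or break the partition axioms. Since \cite{CharneyStambaughVogtmann17} carries out exactly this analysis, the proof will be a summary that extracts and cites these facts rather than reconstructing the argument from scratch.
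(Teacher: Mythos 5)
Your proposal is correct and matches the paper's own treatment: both are summaries deferring the real work to \S4 of \cite{CharneyStambaughVogtmann17}, starting from a chosen isomorphism $X\ssslash\calT\cong\sal$ and reading off a $\Gamma$-partition for each $H\in\calT$. The one cosmetic difference is that you first collapse $\calT\setminus\{H\}$ to a two-vertex blowup $Y_H$ (the recipe the paper sketches just before the statement), whereas the paper's written proof works directly inside $X$ via the subcomplex $C$ of cubes whose edge-labels lie in $\calT$, using that $H$ disconnects $C$ into two pieces $Z,\overline Z$; the two formulations are equivalent, since the collapse $c_{\calT\setminus\{H\}}$ carries $Z,\overline Z$ to the two vertices $x,\overline x$ of $Y_H$. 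One small slip: you attribute the axiom that no component of $\Gamma^\pm\setminus\str{m}^\pm$ is split between sides to specialness of the cube complex, but the paper (following CSV17) explains that this axiom is forced by treelikeness of $\calT$, not specialness; the two hypotheses play distinct roles and it is worth keeping them separate when filling in the details.
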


\begin{proof}
    For full details, we refer to Section 4 of \cite{CharneyStambaughVogtmann17}.

    A chosen isomorphism~$X \ssslash \calT \cong \sal$ orients and labels the hyperplanes not in~$\calT$ by vertices of~$\Gamma$. Label each edge dual to~$H \in \calT$ by~$H$. The cube subcomplex~$C \subseteq X$ formed of all cubes that have all edge-labels in~$\calT$ can be given the following explicit description: it is a union of products of graphs with subcomplexes of the Salvetti complex.~$\calT$ being treelike corresponds to a maximal forest in the union of these graphs.

   ~$C$ contains all the vertices of~$X$. A hyperplane~$H \in \calT$ disconnects~$C$ into two pieces---thus decomposing the vertices of~$X$ into two sets,~$Z$ and~$\overline{Z}$. Define a~$\Gamma$-partition~$\calP_H = \left( P \;\vert\; \overline{P} \;\vert\; \link{\calP}\right)$ as follows:
    \begin{itemize}
        \item if~$H_v \cap H \neq \emptyset$, then~$v, v^{-1} \in \link{\calP}$;
        \item if~$H_v \cap H = \emptyset$, and an edge dual to~$H_v$ terminates in~$Z$, then~$v \in P$, while if it terminates in~$\overline{Z}$, then~$v \in \overline{P}$;
        \item if~$H_v \cap H = \emptyset$, and an edge dual to~$H_v$ originates in~$Z$, then~$v^{-1} \in P$, while if it terminates in~$\overline{Z}$, then~$v^{-1} \in \overline{P}$.
    \end{itemize}

    It turns out that~$\calP_H$ \emph{is} a valid~$\Gamma$-partition. Roughly, one can find a vertex~$m \in V$ which is split by~$\calP_H$ for which~$\link{m}^\pm = \link{\calP_H}$ by showing that~$v \in \link{m}$ if and only if~$H_v \cap H \neq \emptyset$. Imposing treelikeness of~$\calT$ forces each connected component of~$\Gamma^\pm \setminus \str{m}^\pm$ to have its vertices all on one side of~$\calP_H$.~$\calP_H$ is forced to be thick by the isomorphism~$X \ssslash T \cong \sal$.

    Moreover, the set of~$\Gamma$-partitions constructed in this way form a compatible set~$\Pi$, and we have~$X \cong \salb{\Pi}$.
\end{proof}

So, any treelike set of hyperplanes in a~$\Gamma$-complex is the set of partitions labelled by~$\Gamma$-partitions in at least one blowup structure. The blowup structure is only specified once we additionally assign labels and orientations to the hyperplanes not in the treelike set, and this assignation can be changed by inverting vertices or according to a graph symmetry of~$\Gamma$. Moreover, changing these assignations will also change the~$\Gamma$-partitions labelling the hyperplanes in the treelike set, by the same signed permutation of vertices.

\subsection{Symmetric~$\Gamma$-complexes}

We are now ready to define \emph{symmetric}~$\Gamma$-complexes.

Fix~$\Gamma$ and a~$\Gamma$-complex~$X$. Give~$X$ a blowup structure~$X \cong \salb{\Pi}$, and let~$\calT$ be the (treelike) set of hyperplanes in~$\salb{\Pi}$ labelled by~$\Gamma$-partitions. As in the proof of \emph{Proposition \ref{prop:treelike sets in Gamma-complexes}}, any~$H \in \calT$ partitions the vertices of~$X$ into two sets, say~$\beta(H)$ and~$\overline{\beta}(H)$, and this can be used to read off the~$\Gamma$-partition~$\calP_H$.

Note that such a partition of the vertices of~$X$ into~$\beta(H)$~$\sqcup$~$\overline{\beta}(H)$ makes sense for any hyperplane~$H$ in any treelike set~$\calT$ of hyperplanes of~$X$, by \emph{Proposition \ref{prop:treelike sets in Gamma-complexes}}: for a given treelike set, pick a blowup structure on~$X$ for which~$\calT$ is precisely the set of partitions labelled by~$\Gamma$-partitions. 

\begin{definition}\label{def:symmetric Gamma-complex}
    We say that a~$\Gamma$-complex~$X$ is \emph{symmetric} if for any treelike set of hyperplanes~$\calT$ in~$X$, and for any~$H \in \calT$, the edges traversing between~$\beta(H)$ and~$\overline{\beta}(H)$ are dual to one of exactly two hyperplanes, one of which is~$H$ and the other of which is not in~$\calT$. That is, for each~$H \in \calT$, there is some fixed~$H' \notin \calT$, and the edges traversing between~$\beta(H)$ and~$\overline{\beta}(H)$ are dual to either~$H$ or~$H'$.
\end{definition}

Equivalently, we may define a~$\Gamma$-complex~$X$ to be \emph{symmetric} if every edge lies in a unique (up to homotopy) `simple' non-null-homotopic loop, where by a \emph{simple loop} we mean one that cannot be decomposed as a concatenation of more than one non-null-homotopic loop. We will prove this equivalence later, in \emph{Proposition \ref{prop:equivalence of definitions of symmetric Gamma-complex}}.

The following lemma justifies our terminology; its proof motivates the condition in \emph{Definition \ref{def:symmetric Gamma-complex}}.

\begin{lemma}\label{lem:symmetric Gamma-cx iff partitions are symmetric}
    A~$\Gamma$-complex~$X$ is symmetric if and only if every blowup structure~$X \cong \salb{\Pi}$ corresponds to a compatible set~$\Pi$ consisting only of symmetric~$\Gamma$-partitions.  
\end{lemma}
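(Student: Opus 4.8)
The plan is to reduce the condition in \emph{Definition~\ref{def:symmetric Gamma-complex}} (phrased in terms of hyperplanes and traversing edges) to the purely combinatorial statement about which vertices a $\Gamma$-partition splits. The bookkeeping fact that makes this work — which one extracts by unwinding the construction in the proof of \emph{Proposition~\ref{prop:treelike sets in Gamma-complexes}} together with the description at the end of \S\ref{subsubsec:blowups} — is the following. Suppose $X \cong \salb{\Pi}$ is a blowup structure with treelike set $\calT$, and let $H \in \calT$ be labelled by $\calP = \calP_H \in \Pi$. Then the set of hyperplanes of $X$ dual to at least one edge traversing between $\beta(H)$ and $\overline\beta(H)$ is exactly
\[
\{H\}\ \cup\ \{H_v : \calP \text{ splits } v\}.
\]
Moreover $\calP$ always splits its base $m$, so $H_m$ always appears in this union; and no hyperplane of $\calT$ other than $H$ appears, since collapsing $\calT\setminus\{H\}$ leaves a complex with exactly two vertices in which $H$ is the only remaining $\calT$-hyperplane (the treelike/forest property). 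Consequently, for a fixed pair $(\calT,H)$ the condition of \emph{Definition~\ref{def:symmetric Gamma-complex}} — that the traversing edges be dual to exactly two hyperplanes, one being $H$ and the other not in $\calT$ — holds if and only if $\{H_v : \calP \text{ splits } v\} = \{H_m\}$, i.e.\ if and only if $\calP$ splits no vertex other than its base, i.e.\ if and only if $\calP$ is a symmetric $\Gamma$-partition.

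Granting this, both implications are short. For the forward direction, assume $X$ is symmetric and let $X \cong \salb{\Pi}$ be an arbitrary blowup structure, with (treelike) set $\calT$ of hyperplanes labelled by $\Gamma$-partitions. For each $H \in \calT$, labelled by $\calP \in \Pi$, applying the hypothesis to the pair $(\calT,H)$ gives, via the equivalence above, that $\calP$ is symmetric; hence $\Pi$ consists only of symmetric $\Gamma$-partitions (and $\Pi$ is compatible, being part of a blowup structure). For the converse, assume every blowup structure of $X$ corresponds to a set of symmetric $\Gamma$-partitions, and fix a treelike set $\calT$ in $X$ and some $H \in \calT$. By \emph{Proposition~\ref{prop:treelike sets in Gamma-complexes}} there is a blowup structure $X \cong \salb{\Pi}$ for which $\calT$ is precisely the set of hyperplanes labelled by elements of $\Pi$; let $\calP \in \Pi$ label $H$. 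By hypothesis $\calP$ is symmetric, so the equivalence above shows the condition of \emph{Definition~\ref{def:symmetric Gamma-complex}} holds at $(\calT,H)$. Since $\calT$ and $H$ were arbitrary, $X$ is symmetric.

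The main obstacle is establishing the bookkeeping fact, and within it, pinning down exactly which hyperplanes \emph{not} in $\calT$ contribute a traversing edge; the delicacy is that a single hyperplane $H_v$ may be dual to several edges of $X$, only some of which traverse between $\beta(H)$ and $\overline\beta(H)$. I would handle this by first collapsing $\calT\setminus\{H\}$, reducing to the two-vertex complex $\salb{\{\calP\}}$ analysed at the end of \S\ref{subsubsec:blowups}: there the edges dual to each $H_v$, and their endpoints relative to the two sides of $\calP$, are completely explicit, and an edge dual to $H_v$ joins the two distinct vertices exactly when $v$ and $v^{-1}$ lie on opposite sides of $\calP$ — that is, exactly when $\calP$ splits $v$. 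The remaining ingredients (that $H$ itself contributes a traversing edge, and that no other $\calT$-hyperplane does) are immediate from the definition of $\calP_H$ and from treelikeness, respectively.
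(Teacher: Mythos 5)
Your proof is correct and takes essentially the same route as the paper's: both directions are driven by Proposition~\ref{prop:treelike sets in Gamma-complexes} together with the observation that a vertex $w$ is split by $\calP_H$ exactly when the edges labelled by $w$ traverse between $\beta(H)$ and $\overline\beta(H)$. Your version is slightly more explicit in isolating the bookkeeping fact (and in noting that no $\calT$-hyperplane other than $H$ contributes a traversing edge, a point the paper leaves implicit), but the argument structure is the same.
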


\begin{proof}
    The forward direction follows by construction. Put a blowup structure~$\salb{\Pi}$ on a symmetric~$\Gamma$-complex~$X$, and read off the corresponding~$\Gamma$-partitions. Each~$\Gamma$-partition~$\calP_H \in \Pi$ corresponds to a hyperplane~$H$ in a treelike set~$\calT$; now apply  to~$H$ the definition of symmetric~$\Gamma$-complex (\emph{Definition \ref{def:symmetric Gamma-complex}}) as follows. Let~$H' \notin \calT$ be as in \emph{Definition \ref{def:symmetric Gamma-complex}}; that is, all edges with one end in~$\beta(H)$ and the other in~$\overline{\beta}(H)$ are dual to either~$H$ or~$H'$. Since~$H' \notin \calT$,~$H'$ must be labelled by some vertex~$v \in V(\Gamma)$. Now, for any vertex~$w \in V(\Gamma)$,~$w$ is split by~$\calP_H$ if and only if the edges labelled by~$w$ have one end at a vertex in~$\beta(H)$ and the other end at a vertex in~$\overline{\beta}(H)$. Hence the only vertex split by~$\calP_H$ is~$v$. Hence~$\calP_H$ is symmetric.

    For the converse, let~$\calT$ be a treelike set of partitions in~$X$. By \emph{Proposition \ref{prop:treelike sets in Gamma-complexes}}, there is some blowup structure~$X \cong \salb{\Pi}$ such that~$\calT$ is precisely the set of hyperplanes labelled by elements of~$\Pi$. By hypothesis, each element of~$\Pi$ is a symmetric~$\Gamma$-partition. Let~$H \in \calT$; since~$\calP_H$ is symmetric, there is only one vertex~$v \in V(\Gamma)$ split by~$\calP_H$. This means that all edges with one end at a vertex in~$\beta(H)$ and the other end at~$\overline{\beta}(H)$ are labelled by either~$H$ or~$v$, so~$H$ satisfies the condition in \emph{Definition \ref{def:symmetric Gamma-complex}}. This is true for every element of~$\calT$, and every treelike set~$\calT$, so~$X$ is symmetric, as required.
\end{proof}

\section{The symmetric spine~$\symspine$}\label{sec:symmetric spine}

Fix a defining graph~$\Gamma$. In this section we build the \emph{symmetric spine}~$\symspine$. We emphasise that the construction of the spine of untwisted outer space~$\spine$ in \cite{CharneyStambaughVogtmann17} carries over to this setting with minimal modification.

\subsection{Preparatory results}\label{subsec:preparatory results}

For a~$\Gamma$-partition~$\calP$, let~$\operatorname{split}(\calP)$ be the set of vertices of~$V(\Gamma)$ which are split by~$\calP$. 

For~$\Pi$ a compatible set of~$\Gamma$-partitions, write~$c_\Pi \colon \salb{\Pi} \to \sal$ for the hyperplane collapse along all hyperplanes labelled by elements of~$\Pi$. Write~$c_\Pi^{-1}$ for a homotopy inverse of~$c_\Pi$. If~$\Pi = \{\calP\}$ is a singleton, we will write~$c_{\{\calP\}} = c_\calP$ (and~$\salb{\{\calP\}} = \salb{\calP}$).

We quote the following two results from \cite{CharneyStambaughVogtmann17}.

\begin{lemma}[\cite{CharneyStambaughVogtmann17}, Lemma 3.2]\label{lem:CSV17, 3.2}
    Let~$\calP$ be a~$\Gamma$-Whitehead partition. For each~$m \in \operatorname{split}(\calP)$ which is maximal in~$\operatorname{split}(\calP)$, there is an automorphism~$h_m$ of~$\salb{\calP}$.
    Then the composition 
    \[c_{\calP} \circ h_m \circ c_{\calP}^{-1} \; \colon \;\; \sal \to \salb{\calP} \smash{\xrightarrow{\cong}} \salb{\calP} \to \sal\] 
    induces the~$\Gamma$-Whitehead automorphism~$\varphi(\calP, m)$.
\end{lemma}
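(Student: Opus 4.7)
The plan is to define $h_m$ explicitly as a cube-complex automorphism of $\salb{\calP}$ and then verify the induced action on $\pi_1(\sal) \cong \raag$ generator-by-generator. Recall that $\salb{\calP}$ has two vertices $x, \overline{x}$, one edge $e_\calP$ dual to $H_\calP$, and for each $v \in V(\Gamma)$ either one or two edges dual to $H_v$, depending on whether $v$ is split by $\calP$, lies entirely on one fixed side of $\calP$, or lies in $\link{\calP}$. The crucial observation is that since $m$ is maximal in $\operatorname{split}(\calP)$, one has $\link{m}^{\pm} = \link{\calP}$, so the hyperplanes $H_m$ and $H_\calP$ are both isomorphic as labelled cube complexes to the Salvetti complex of $A_{\link{\calP}}$, and they participate in exactly the same cubes of $\salb{\calP}$.

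Accordingly I would define $h_m$ to be the map that swaps $e_m$ with $e_\calP$ (with appropriately matched orientations) and fixes every other edge of $\salb{\calP}$. The identity $\link{m}^{\pm} = \link{\calP}$ is precisely what is needed to extend this involution to a cube-complex automorphism: every higher-dimensional cube containing $e_m$ corresponds bijectively to a cube containing $e_\calP$ in its place, and all other cubes are left invariant. Once $h_m$ is in hand, I would compute $c_\calP \circ h_m \circ c_\calP^{-1}$ on each generator $v$ by choosing a basepoint (say $x$), lifting the loop representing $v$ to $\salb{\calP}$ through a fixed homotopy inverse to $c_\calP$, applying $h_m$, and then projecting back under $c_\calP$. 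The case analysis is short: if $v \in \link{\calP}$ then the lift uses no occurrence of $e_\calP$ and $h_m$ fixes it, which matches $\varphi(\calP, m)(v) = v$ (consistent since $v$ commutes with $m$); if $v = m$ the lift $e_\calP \cdot e_m$ becomes $e_m \cdot e_\calP$ and collapses to $m$; if $v \in \operatorname{split}(\calP) \setminus \{m\}$ the lift crosses $e_\calP$ exactly once and the swap converts this into an $e_m^{\pm 1}$-traversal in the correct position, producing $vm^{-1}$ or $mv$ after collapse; and if $\{v, v^{-1}\}$ lies entirely on one side of $\calP$ then the lift uses $e_\calP$ to travel out to the relevant loop and back, so the swap inserts $e_m^{\pm 1}$ on both sides and yields the conjugation $v \mapsto mvm^{-1}$.

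The main obstacle I anticipate is bookkeeping rather than conceptual content: one must arrange the orientation of $e_\calP$, the action of $h_m$ on $e_m$, and the chosen homotopy inverse $c_\calP^{-1}$ so that the outputs agree with $\varphi(\calP, m)$ as an element of $\aut$ rather than merely up to an inner automorphism coming from basepoint freedom. This parallels the classical free-group picture from \cite{CullerVogtmann86} and \cite{CollinsSymSpine89}, in which the corresponding automorphism of a rose with one edge subdivided realises a Whitehead automorphism; the new ingredient in the RAAG setting is precisely that the commutation constraints encoded by $\Gamma$, namely the structure of $\link{\calP}$, are exactly what make the swap $h_m$ cubical.
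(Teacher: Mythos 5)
The paper does not prove this lemma: it is quoted verbatim from \cite{CharneyStambaughVogtmann17} (see the sentence ``We quote the following two results from \cite{CharneyStambaughVogtmann17}'' preceding it), so there is no in-paper proof to compare against. Your proposal is, in outline, exactly the argument of the cited source: the hyperplane switch $h_m$ exists because maximality of $m$ in $\operatorname{split}(\calP)$ forces $\link{m}^{\pm} = \link{\calP}$, so the carriers of $H_m$ and $H_\calP$ are both of the form $(\text{edge}) \times \mathbb{S}_{\link{\calP}}$ and the exchange extends cubically (identically outside the union of carriers); one then reads off the induced map on $\pi_1$ generator by generator. Two points deserve more than the label ``bookkeeping,'' though. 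First, the extension forces $h_m$ to fix the two vertices $x, \overline{x}$ of $\salb{\calP}$ (loops $e_w$ for $\{w,w^{-1}\}$ on one side of $\calP$ sit at only one of the two vertices, so a vertex swap does not extend), and a vertex-fixing swap of the two edges of the bigon spanned by $e_\calP$ and $e_m$ necessarily reverses the boundary loop of that bigon; hence the bare swap sends $m \mapsto m^{-1}$, not $m \mapsto m$ as $\varphi(\calP,m)$ requires. So $h_m$ cannot literally be ``swap $e_m$ and $e_\calP$ and fix everything else''; it must incorporate the inversion reversing the orientation of the relevant hyperplane (equivalently, one only gets $\varphi(\calP,m)$ after composing with an inversion), which is how the source arranges matters. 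Second, your final case is stated too broadly: only the vertices $v$ with $\{v,v^{-1}\}$ on the side $P$ containing $m$ are conjugated ($v \mapsto mvm^{-1}$), while those with $\{v,v^{-1}\} \subseteq \overline{P}$ are fixed; this comes out correctly only if the basepoint is placed at the vertex corresponding to $\overline{P}$. Neither issue affects the viability of the strategy, but both must be resolved explicitly for the lemma to hold as stated in $\aut$ rather than merely up to inversions.
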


\begin{theorem}[\cite{CharneyStambaughVogtmann17}, Theorem 4.12]\label{thm:CSV17, 4.12}
    Let~$\salb{\Pi}$ be a blowup. Let~$\calH$ and~$\calK$ be two treelike sets of partitions in~$\salb{\Pi}$. For any~$K \in \calK$, there exists~$H \in \calH$ such that the set obtained from~$\calH$ by replacing~$H$ by~$K$ is again treelike.
\end{theorem}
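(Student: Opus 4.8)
The plan is to recognise this statement as the basis-exchange axiom for a matroid whose bases are the treelike sets, and to prove that axiom by reducing it to the classical spanning-tree exchange in a finite graph.

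The reduction rests on the structural picture behind \emph{Proposition~\ref{prop:treelike sets in Gamma-complexes}} and Section~4 of \cite{CharneyStambaughVogtmann17}. Let $C \subseteq \salb{\Pi}$ be the union of all cubes every edge of which is dual to a hyperplane that belongs to \emph{at least one} treelike set (a mild enlargement of the subcomplex appearing in the proof of \emph{Proposition~\ref{prop:treelike sets in Gamma-complexes}}). As in that proof, $C$ contains every vertex of $\salb{\Pi}$ and is assembled from products of finite graphs with subcomplexes of Salvetti complexes; let $\Theta = \Theta_1 \sqcup \dots \sqcup \Theta_k$ be the disjoint union of the graph factors. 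The key dictionary, again from \cite{CharneyStambaughVogtmann17}, is that a set of hyperplanes is treelike precisely when its set of dual edges is a maximal forest of $\Theta$ — that is, restricts to a spanning tree on each connected component $\Theta_i$ — since collapsing exactly such a forest of edges is what turns $\salb{\Pi}$ back into $\sal$. In particular every treelike set has the same cardinality, and both $\calH$ and $\calK$ have all their dual edges in $C$.

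Granting this, the argument is short. Let $K \in \calK$. If $K \in \calH$, take $H = K$. Otherwise let $e_K$ be the edge of $\Theta$ dual to $K$ and let $\Theta_i$ be the component containing it. The restrictions $T_\calH, T_\calK$ of (the dual-edge sets of) $\calH$ and $\calK$ to $\Theta_i$ are spanning trees, and $e_K \in T_\calK$; hence $T_\calH \cup \{e_K\}$ contains a unique embedded cycle, which, $T_\calH$ being a tree, must use at least one edge $f$ of $T_\calH$. Set $H$ to be the hyperplane dual to $f$. Then $(T_\calH \setminus \{f\}) \cup \{e_K\}$ is again a spanning tree of $\Theta_i$, so the dual-edge set of $(\calH \setminus \{H\}) \cup \{K\}$ — which agrees with that of $\calH$ on every component $\Theta_j$ with $j \neq i$ — is a maximal forest of $\Theta$, hence $(\calH \setminus \{H\}) \cup \{K\}$ is treelike.

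I expect the real work to lie entirely in the structural input rather than in the exchange step, which is classical. The delicate point is that the graph factors $\Theta_i$, and in particular which component contains $e_K$, must be intrinsic to $\salb{\Pi}$: one reads $\calH$ off through a blowup structure realising $\calH$ and $\calK$ off through a (generally different) blowup structure realising $\calK$, so one needs these two pictures to share the same underlying $\Theta$, and one needs the ``treelike $\Leftrightarrow$ maximal forest'' dictionary to hold for the enlarged $C$ used above. Establishing this is precisely what the preparatory results of \cite{CharneyStambaughVogtmann17} preceding their Theorem~4.12 supply; if one wants the present paper self-contained here, the alternative is to track how hyperplane collapses and the change-of-blowup-structure moves (inversions and graph symmetries) of \S\ref{subsec:Gamma-complexes} act on the decomposition, thereby showing the factorisation is preserved under passing between blowup structures.
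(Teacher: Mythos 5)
This statement is quoted verbatim from Charney--Stambaugh--Vogtmann \cite{CharneyStambaughVogtmann17}, and the present paper gives no proof of it, so the comparison is against CSV17's own argument. Your reconstruction captures the right idea: CSV17 do establish exchange essentially by observing that treelike sets are the bases of a matroid which is a direct sum of cycle matroids. Concretely, their \S4 shows that for each link class $L$ the hyperplanes of $\salb{\Pi}$ whose link equals $L$ have a union of carriers decomposing as $\Theta_L \times \mathbb{S}_L$, and that a set of hyperplanes is treelike precisely when its dual edges form a spanning tree in each $\Theta_L$. Once that dictionary is in hand, the exchange step is the classical one, exactly as you give it. So the two routes are essentially the same.

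The one place your write-up is slightly loose is the construction of $C$ and $\Theta$. You define $C$ as the cubes whose edges are dual to hyperplanes lying in \emph{some} treelike set, and claim $C$ inherits a product decomposition ``as in the proof of Proposition~\ref{prop:treelike sets in Gamma-complexes}.'' But that proof decomposes a smaller subcomplex (cubes labelled only by a \emph{fixed} treelike set $\calT$), and the decomposition does not extend verbatim to your enlargement. The correct framing in CSV17 is per-link: the intrinsic data is the collection $\{\Theta_L\}_L$, where $\Theta_L$ is the graph factor of the union of carriers of hyperplanes (partition- or vertex-labelled) with link $L$, and these graphs are intrinsic to $\salb{\Pi}$ precisely because ``having link $L$'' is a property of hyperplanes that is stable under changing the treelike set and the blowup structure. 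This is the delicate point you flagged; you are right that it carries the real weight, and you are right that CSV17's preparatory results supply it. So I would not call this a gap in the logic, only an imprecision in the set-up that would need tightening if the argument were to be made self-contained.
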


\begin{lemma}\label{lem:symmetric blowup-collapses induce symmetric automorphisms}
    Let~$\Pi = \{\calP_1, \dots, \calP_n\}$ be a compatible set of symmetric~$\Gamma$-partitions, and let~$H_i$ be the hyperplane in~$\salb{\Pi}$ labelled by~$\calP_i$ for each~$i = 1, \dots, n$. Let~$\calK$ be any other treelike set of hyperplanes in~$\salb{\Pi}$. Then the automorphism of~$\raag$ induced by the blowup collapse 
    \[\sal \xrightarrow{c_\Pi^{-1}} \salb{\Pi} \to \left(\salb{\Pi}\right)_\calK \cong \sal\]
    lies in~$\symout$.
\end{lemma}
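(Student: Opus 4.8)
The plan is to reduce to the case of a single ``exchange'' of treelike sets (\emph{Theorem \ref{thm:CSV17, 4.12}}), and to recognise the resulting change in the induced automorphism as a $\Gamma$-Whitehead automorphism attached to a \emph{symmetric} $\Gamma$-partition — possibly composed with inversions and a graph symmetry — which is symmetric by \emph{Proposition \ref{prop:generating set for symmetric automorphism group}}.

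Two preliminary observations would drive the argument. First, since every $\calP_i \in \Pi$ is symmetric, \emph{Lemma \ref{lem:symmetric Gamma-cx iff partitions are symmetric}} shows $\salb{\Pi}$ is a symmetric $\Gamma$-complex; hence, by \emph{Proposition \ref{prop:treelike sets in Gamma-complexes}} combined with the same lemma, \emph{any} treelike set $\calT$ in $\salb{\Pi}$ is the partition-labelled hyperplane set of some blowup structure all of whose $\Gamma$-partitions are symmetric. Second, by \emph{Theorem \ref{thm:CSV17, 4.12}}, iterated using the (symmetric) basis-exchange property — treelike sets being the bases of a matroid on the hyperplane set — there is a chain of treelike sets $\calH_\Pi = \calT_0, \calT_1, \dots, \calT_k = \calK$ with each $\calT_j$ obtained from $\calT_{j-1}$ by replacing a single hyperplane. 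Fixing isomorphisms $\psi_j \colon (\salb{\Pi})_{\calT_j} \xrightarrow{\cong} \sal$, the automorphism of $\raag$ in the statement is the outer class of $\psi_k \circ c_{\calK} \circ c_\Pi^{-1} \circ \psi_0^{-1}$; inserting copies of $c_{\calT_j}^{-1} c_{\calT_j} \simeq \id$ rewrites this as an ordered product of the outer classes $[\psi_j \circ c_{\calT_j} \circ c_{\calT_{j-1}}^{-1} \circ \psi_{j-1}^{-1}]$, one per exchange. Since $\symout \leq \out$ is a subgroup and the ambiguity in the $\psi_j$ is only by inversions and graph symmetries (themselves symmetric), it suffices to show each single-exchange class lies in $\symout$.

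For the single-exchange step, suppose $\calT' = (\calT \setminus \{H\}) \cup \{H'\}$ is treelike, with $\calT$ treelike; then $H' \notin \calT$ (as all treelike sets have the same cardinality), so $\calT \cap \calT' = \calT \setminus \{H\}$. Collapsing the common hyperplanes $\calT \setminus \{H\}$ — a subset of the treelike set $\calT$ — yields, by the blowup machinery of \cite{CharneyStambaughVogtmann17}, a $\Gamma$-complex $Y \cong \salb{\calP}$ on two vertices, where $\calP$ is the $\Gamma$-partition labelling $H$; by the first observation $\calP$ is symmetric, so $\operatorname{split}(\calP) = \{m\}$ for its base $m$ (\emph{Definition \ref{def:symmetric Gamma-partitions}}). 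Inspecting $\salb{\calP}$ via the read-off rules at the end of \S\ref{subsubsec:blowups}, the only edges joining its two vertices are the edge dual to $H$ and the unique $m$-labelled edge, so its only treelike sets are $\{H\}$ and $\{H_m\}$; hence $H' = H_m$, and the exchange automorphism is induced by $\sal \xrightarrow{c_\calP^{-1}} \salb{\calP} \to \salb{\calP}\ssslash H_m \cong \sal$. Now $m$ is vacuously maximal in $\operatorname{split}(\calP) = \{m\}$, so \emph{Lemma \ref{lem:CSV17, 3.2}} supplies an automorphism $h_m$ of $\salb{\calP}$ with $c_\calP \circ h_m \circ c_\calP^{-1}$ inducing $\varphi(\calP, m)$; since $h_m$ interchanges the roles of the tree-edge $H$ and the edge $H_m$, collapsing $H_m$ agrees, up to the identification of the quotient with $\sal$, with $c_\calP \circ h_m$. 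Thus the exchange automorphism equals $\varphi(\calP,m)^{\pm 1}$ up to an inner automorphism and up to inversions/graph symmetries, which is symmetric by \emph{Proposition \ref{prop:generating set for symmetric automorphism group}}. Assembling the exchanges proves the lemma.

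The main obstacle is the single-exchange step, precisely the verification that collapsing $H_m$ in $\salb{\calP}$ reproduces $c_\calP \circ h_m$ up to the canonical identifications; this requires unwinding the explicit description of $h_m$ from the proof of \emph{Lemma \ref{lem:CSV17, 3.2}} in \cite{CharneyStambaughVogtmann17}. An alternative that avoids $h_m$ is to read the induced automorphism directly off the two-vertex complex $\salb{\calP}$: it fixes every generator $v \neq m$ (up to conjugacy, using that $\calP$ splits nothing but $m$) and sends $m$ to the word carried by the $H$-edge, which is a conjugate of $m$; one then checks this coincides with $\varphi(\calP,m)^{\pm1}$ up to an inner automorphism. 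The remaining ingredients — that a partial collapse of a treelike set is again a blowup, that treelike sets have common cardinality, and that re-choosing the $\psi_j$ only introduces symmetric automorphisms — are routine and already implicit in \cite{CharneyStambaughVogtmann17}.
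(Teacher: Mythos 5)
Your proposal follows essentially the same route the paper does: the paper's proof is a one-line appeal to the proof of Corollary 4.13 of \cite{CharneyStambaughVogtmann17}, whose structure is exactly your decomposition into single hyperplane exchanges via \emph{Theorem \ref{thm:CSV17, 4.12}} followed by identification of each exchange with a $\Gamma$-Whitehead automorphism via \emph{Lemma \ref{lem:CSV17, 3.2}}, with the observation (spelled out in \emph{Proposition \ref{prop:generating set for symmetric automorphism group}}) that the Whitehead automorphisms attached to symmetric $\Gamma$-partitions lie in $\symout$. The reduction to the two-vertex blowup and the read-off that the exchanged-in hyperplane must be $H_m$ is correct, and your alternative avoiding $h_m$ (reading the automorphism directly off $\salb{\calP}$) is also sound.

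There is, however, a real mis-step in your ``first observation.'' You invoke \emph{Lemma \ref{lem:symmetric Gamma-cx iff partitions are symmetric}} to pass from ``$\Pi$ consists of symmetric $\Gamma$-partitions'' to ``$\salb{\Pi}$ is a symmetric $\Gamma$-complex,'' and hence to ``every treelike set carries a blowup structure with symmetric partitions.'' But that lemma is an equivalence between ``$X$ is symmetric'' and ``\emph{every} blowup structure of $X$ has symmetric partitions''; its converse direction takes the universally-quantified hypothesis as given and does not provide the implication from a \emph{single} blowup structure with symmetric partitions to the symmetry of $X$. That implication is true, but it is precisely the nontrivial content of \emph{Corollary \ref{cor:analogue of [Col89] Lem 4.2}} together with \emph{Lemma \ref{lem:analogue of [Col89] Lem 4.3}}(ii) (propagation of the ``symmetric relative to a treelike set'' property across all treelike sets). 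Those results are logically independent of the present lemma, so citing them would not be circular, but they appear later in the paper and are proved via the companion-hyperplane machinery of \S\ref{subsec:companion hyperplanes and edges}; without them the intermediate partitions $\calP_H$ at stages $j \geq 1$ of your exchange chain are not known to be symmetric, and the single-exchange step is only justified at the first step. You should either cite those results explicitly or give a direct argument for the propagation; as written, \emph{Lemma \ref{lem:symmetric Gamma-cx iff partitions are symmetric}} alone does not close the loop.
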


\begin{proof}
    The proof is exactly as that of (\cite{CharneyStambaughVogtmann17}, Corollary 4.13), using \emph{Lemma \ref{lem:CSV17, 3.2}}, and replacing the use of (\cite{CharneyStambaughVogtmann17}, Lemma 2.2) with the observation that~$\Gamma$-Whitehead automorphisms corresponding to symmetric~$\Gamma$-partitions lie in~$\symout$.
\end{proof}

\subsection{(Symmetric) marked~$\Gamma$-complexes and the (symmetric) spine}\label{subsec:symmetric marked Gamma-complexes and the symmetric spine}

In this subsection, we recall the definition of the (untwisted) spine~$\spine$, as in \cite{CharneyStambaughVogtmann17}, and analogously introduce the \emph{symmetric spine}~$\symspine$.

\begin{definition}\label{def:(symmetric) marked Gamma complex}
    A \emph{marked~$\Gamma$-complex} is a pair~$\sigma = (X, \alpha)$, where~$X$ is a~$\Gamma$-complex and~$\alpha \colon X \to \sal$ is a homotopy equivalence. We say that~$\alpha$ is an \emph{untwisted marking} if the composition 
    \[\sal {\xrightarrow{c_\Pi^{-1}}} \salb{\Pi} \cong X \xrightarrow{\alpha} \sal\] 
    induces an element of~$\uag$.

    A marked~$\Gamma$-complex~$(X, \alpha)$ is \emph{symmetric} if~$X$ is a symmetric~$\Gamma$-complex and~$\alpha$ is a \emph{symmetric marking}; that is, the composition 
    \[\sal {\xrightarrow{c_\Pi^{-1}}} \salb{\Pi} \cong X \xrightarrow{\alpha} \sal\] 
    induces an element of~$\symout$.
\end{definition}

Two marked~$\Gamma$-complexes~$\sigma = (X, \alpha)$,~$\sigma' = (X', \alpha')$ are \emph{equivalent} if there exists a cube complex isomorphism~$h \colon X \to X'$ such that~$\alpha' \circ h \simeq \alpha$. We will sometimes use the term `marked~$\Gamma$-complex' to refer to one of these equivalence classes.

For a (symmetric) marked~$\Gamma$-complex~$(X, \alpha)$ such that~$X$ is isomorphic to~$\sal$, we call the equivalence class of~$(X, \alpha)$ a \emph{(symmetric) marked Salvetti}.

We now define a partial order on marked~$\Gamma$-complexes. If~$\sigma = (X, \alpha)$ is a marked~$\Gamma$-complex,~$\calT$ is a treelike set of hyperplanes in~$X$ with some subset~$\calH \subseteq \calT$ and~$c_\calH \colon X \to X \ssslash \calH$ is the collapse map, denote~$\sigma_\calH \coloneqq \left( X \ssslash \calH, \alpha \circ c_\calH^{-1}\right)$. Now for two marked~$\Gamma$-complexes~$\sigma, \sigma'$, define~$\sigma' < \sigma$ if~$\sigma' = \sigma_\calH$ for some~$\calH$.

We can restrict this partial order to the set of symmetric~$\Gamma$-complexes. We are now ready to define the spine~$\spine$, and the symmetric spine~$\symspine$.

\begin{definition}\label{def:(symmetric) spine}
    The \emph{(untwisted) spine}~$\spine$ is the simplicial complex which is the geometric realisation of the poset of (equivalence classes of) marked~$\Gamma$-complexes.

    The \emph{symmetric spine}~$\symspine$ is the simplicial complex which is the geometric realisation of the poset of (equivalence classes of) symmetric marked~$\Gamma$-complexes.
\end{definition}

Identifying~$\out$ with the group of homotopy classes of maps~$\sal \to \sal$, define a left action of~$\uag$ (resp.~$\symout$) on~$\spine$ (resp.~$\symspine$) by~$\varphi \cdot (X, \alpha) = (X, \varphi \circ \alpha)$.

Charney--Stambaugh--Vogtmann \cite{CharneyStambaughVogtmann17} now prove that~$\spine$ is connected, and its~$\uag$-action is proper. We mimic their proofs to prove the analogous results in the symmetric case.

\begin{lemma}\label{lem:action on symmetric spine is proper}
    The action of~$\symout$ on~$\symspine$ is proper.
\end{lemma}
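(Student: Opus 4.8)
The plan is to imitate Charney--Stambaugh--Vogtmann's proof that the $\uag$-action on $\spine$ is proper, using that properness of a simplicial action follows from finiteness of stabilisers together with local finiteness of the complex near any fixed point, or more directly by showing that every vertex of $\symspine$ has finite stabiliser and that $\symspine$ is locally finite (in the relevant sense) at each vertex. Concretely, for a simplicial action of a group on a simplicial complex, it suffices to show that each vertex stabiliser is finite and that only finitely many vertices are connected by an edge to any given vertex, or — the form most convenient here — that the stabiliser of each marked Salvetti vertex is finite, since the stabiliser of an arbitrary simplex is contained in the stabiliser of its maximal marked Salvetti.

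First I would reduce to marked Salvettis. A symmetric marked $\Gamma$-complex $\sigma = (X,\alpha)$ sits above a unique marked Salvetti $\sigma_\calT = (X \ssslash \calT, \alpha \circ c_\calT^{-1})$ obtained by collapsing a treelike set $\calT$; any $\varphi \in \symout$ fixing $\sigma$ fixes $\sigma_\calT$, so $\operatorname{Stab}(\sigma) \leq \operatorname{Stab}(\sigma_\calT)$. Thus it is enough to bound stabilisers of symmetric marked Salvettis. Second, I would show $\operatorname{Stab}(X,\alpha)$ is finite when $X \cong \sal$: an element $\varphi$ fixing the equivalence class of $(X,\alpha)$ gives a cube-complex isomorphism $h \colon X \to X$ with $\alpha \circ h \simeq \varphi \circ \alpha$; since $X \cong \sal$, cube-complex automorphisms of $X$ correspond (as recalled after the blowup examples) to graph symmetries of $\Gamma$, of which there are finitely many. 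The map $\varphi \mapsto h$ is then an injective homomorphism from $\operatorname{Stab}(X,\alpha)$ into the finite automorphism group of $X$ — injective because if $h = \id$ then $\varphi \circ \alpha \simeq \alpha$, and since $\alpha$ is a homotopy equivalence this forces $\varphi = \id$ in $\out$. Hence $\operatorname{Stab}(X,\alpha)$ is finite.

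Third, I would assemble these facts into properness: for any simplex $\tau$ of $\symspine$, its vertices are marked $\Gamma$-complexes linearly ordered by the partial order $<$, with a unique maximal element which is a (symmetric) marked Salvetti $\sigma_{\max}$; then $\operatorname{Stab}(\tau) \leq \operatorname{Stab}(\sigma_{\max})$, which is finite by the previous step. Since the action is simplicial and cocompact (the cocompactness being addressed elsewhere), finiteness of all simplex stabilisers together with the discreteness of $\symout$ gives properness in the sense required; alternatively one notes directly that for any point $x \in \symspine$, lying in the interior of a simplex $\tau$, the set of $\varphi$ with $\varphi \cdot x$ close to $x$ is contained in $\operatorname{Stab}(\tau)$, which is finite. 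This is exactly the shape of the CSV argument, so I would cite (\cite{CharneyStambaughVogtmann17}) for the analogous statement and indicate that the same proof applies verbatim once one knows the relevant markings are symmetric and the relevant marked $\Gamma$-complexes form a sub-poset closed under the operations used.

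The main obstacle — though it is more a bookkeeping point than a genuine difficulty — is checking that the symmetric setting is genuinely closed under the operations invoked: that the maximal vertex of a simplex of $\symspine$ really is a \emph{symmetric} marked Salvetti (so one can apply the $X \cong \sal \Rightarrow \operatorname{Aut}(X)$ finite argument within the symmetric world), and that collapsing hyperplanes of a symmetric marked $\Gamma$-complex again yields a symmetric marked $\Gamma$-complex. The first follows because collapsing all of a treelike set $\calT$ in a symmetric $X$ gives something isomorphic to $\sal$, and the marking stays symmetric by \emph{Lemma \ref{lem:symmetric blowup-collapses induce symmetric automorphisms}}; the second follows from \emph{Lemma \ref{lem:symmetric Gamma-cx iff partitions are symmetric}}, since a partial collapse of a blowup along symmetric $\Gamma$-partitions is again such a blowup. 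With these in hand, the finiteness of stabilisers argument goes through unchanged, and properness follows.
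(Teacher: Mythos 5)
Your overall strategy is the same as the paper's (reduce to marked Salvettis, then note that the stabiliser of a marked Salvetti is the finite group of graph symmetries and inversions), and your argument that the stabiliser of a single marked Salvetti is finite is essentially correct, modulo the minor point that cube-complex automorphisms of $\sal$ realise inversions as well as graph symmetries. However, there are two related errors in your reduction step.

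First, the claim that a symmetric marked $\Gamma$-complex $\sigma=(X,\alpha)$ ``sits above a \emph{unique} marked Salvetti $\sigma_\calT$'' is false: a $\Gamma$-complex generically admits many treelike sets of hyperplanes, and collapsing different treelike sets gives distinct marked Salvettis below $\sigma$ (already for free groups, a blown-up rose has many maximal trees). Consequently $\operatorname{Stab}(\sigma) \leq \operatorname{Stab}(\sigma_\calT)$ does not hold verbatim, since an element of $\operatorname{Stab}(\sigma)$ may permute the marked Salvettis below $\sigma$ rather than fix each one. The paper's proof handles this by observing that the set of marked Salvettis below $\sigma$ is \emph{finite}; the stabiliser of $\sigma$ therefore maps to a finite symmetric group with kernel contained in each $\operatorname{Stab}(\sigma_\calT)$, and finiteness of the latter gives finiteness of the former.

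Second, you have the poset order backwards in the final assembly step: a simplex of $\symspine$ is a chain $\sigma_0 < \cdots < \sigma_k$ in which collapsing makes things \emph{smaller}, so the marked Salvetti, when present, is the unique \emph{minimal} vertex, not the maximal one. Moreover the minimal vertex of a simplex need not be a marked Salvetti at all (it is merely a partial collapse), which is another reason the ``finitely many marked Salvettis below'' argument is needed rather than a direct inclusion of stabilisers. Once these two points are corrected the argument matches the paper's.
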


\begin{proof}
    Each symmetric marked~$\Gamma$-complex can be collapsed to finitely many marked Salvettis, so it is sufficient to prove that the stabiliser of some (hence any) marked Salvetti is finite. Any isomorphism~$\sal \to \sal$ takes the 1-skeleton to the 1-skeleton, so on~$\left( \sal, \operatorname{id}\right)$ induces a permutation of~$V^\pm$. So~$\left( \sal, \alpha \right) \sim \left(\sal, \operatorname{id}\right)$ if and only if~$\alpha \in \Omega(\raag) \leq \symout$, the (finite) group generated by graph symmetries and inversions.
\end{proof}

Let~$(\calP, m)$ be a symmetric~$\Gamma$-Whitehead pair and~$\alpha = \varphi\left(\calP, m\right)$ the corresponding (symmetric)~$\Gamma$-Whitehead automorphism. By \emph{Lemma \ref{lem:CSV17, 3.2}},~$\alpha$ is realised by a blowup-collapse 
\[\alpha = c_\calP \circ h_m \circ c_\calP^{-1} \; \colon \;\; \sal \to \salb{\calP} \xrightarrow{\cong} \salb{\calP} \to \sal.\] 
This gives a path in~$\symspine$ between~$\left(\sal, \operatorname{\id}\right)$ and~$\left(\sal, \alpha\right)$: 
\begin{equation*}
    \left(\sal, \operatorname{id}\right) < \left(\salb{\calP}, c_\calP\right) \sim \left(\salb{\calP}, c_\calP \circ h_m\right) = \left(\salb{\calP}, \alpha \circ c_\calP\right) > \left(\sal, \alpha\right).
\end{equation*} 
Now for any~$\varphi \in \symout$, we can translate this path by~$\varphi$ to obtain a path from~$\left(\sal, \varphi\right)$ to~$\left(\sal, \varphi \circ \alpha\right)$. This is called the \emph{Whitehead move} at~$\left(\sal, \varphi\right)$ associated to~$\left(\calP, m\right)$. If~$\sigma = \left(\sal, \varphi\right)$, we will write~$\sigma_m^\calP = \left(\sal, \varphi \circ \alpha\right)$.

Following \cite{CharneyStambaughVogtmann17}, we can restate \emph{Lemma \ref{lem:symmetric blowup-collapses induce symmetric automorphisms}} using this terminology. 

\begin{corollary}[Factorisation Lemma]\label{cor:factorisation lemma (i.e. CSV17, 4.19)}
    Let~$\sigma = \left(\sal, \alpha\right)$ be a marked Salvetti,~$\Pi = \{\calP_1, \dots, \calP_n\}$ a compatible set of symmetric~$\Gamma$-partitions, and~$\calT$ a treelike set of hyperplanes in~$\salb{\Pi}$. We write~$\sigma^\Pi = \left(\salb{\Pi}, c_\Pi \circ \alpha\right)$. 

    Then with a suitable ordering of the elements of~$\calH$, there is a chain~$\sigma = \sigma_0, \sigma_1, \dots, \sigma_n = \sigma_\calH^\Pi$ such that each~$\sigma_i$ is connected to~$\sigma_{i-1}$ by a Whitehead move.
\end{corollary}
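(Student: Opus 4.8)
The plan is to follow the proof of (\cite{CharneyStambaughVogtmann17}, Corollary~4.19) essentially verbatim, the only substantive change being to verify at each step that we never leave the symmetric setting. First I would reduce to the case where $\sigma$ is the standard marked Salvetti $(\sal, \operatorname{id})$ by translating the whole picture by $\alpha$: a Whitehead-move chain from $(\sal, \operatorname{id})$ to $(\sal, \operatorname{id})_\calH^\Pi$ translates to a chain from $\sigma$ to $\sigma_\calH^\Pi$, and translation by an element of $\symout$ sends symmetric marked $\Gamma$-complexes to symmetric ones (this is how the $\symout$-action is defined), so it suffices to produce the chain for $\operatorname{id}$.

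Next I would set up the induction on $n = |\Pi|$. For $n = 0$ the statement is vacuous. For the inductive step, the key input is \emph{Theorem~\ref{thm:CSV17, 4.12}} (i.e.\ \cite{CharneyStambaughVogtmann17}, Theorem~4.12): given the treelike set $\calT = \calH$ of hyperplanes in $\salb{\Pi}$ and the distinguished treelike set $\calH_\Pi = \{H_{\calP_1}, \dots, H_{\calP_n}\}$ of hyperplanes labelled by the elements of $\Pi$, for any chosen $H_{\calP_n} \in \calH_\Pi$ there is some $H \in \calH$ such that $(\calH \setminus H) \cup \{H_{\calP_n}\}$ is again treelike. Collapsing along $H_{\calP_n}$ alone produces a smaller blowup $\salb{\Pi'}$ with $\Pi' = \{\calP_1, \dots, \calP_{n-1}\}$ (still a compatible set of symmetric $\Gamma$-partitions, since any subset of a compatible set of symmetric partitions is one), carrying a treelike set which is the image of $\calH \setminus H$; apply the inductive hypothesis there to get a chain from $\sigma$ to $\left(\salb{\Pi'}\right)_{\calH \setminus H}^{\Pi'}$, and then the final Whitehead move — the one blowing up $\calP_n$ and collapsing $H$ — connects $\left(\salb{\Pi'}\right)_{\calH \setminus H}^{\Pi'}$ to $\sigma_\calH^\Pi$. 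Here one uses \emph{Lemma~\ref{lem:CSV17, 3.2}} to see that this last step is genuinely a Whitehead move associated to a $\Gamma$-Whitehead pair $(\calP_n, m)$ for a suitable maximal $m \in \operatorname{split}(\calP_n)$, and one uses \emph{Lemma~\ref{lem:symmetric blowup-collapses induce symmetric automorphisms}} to see that every intermediate vertex in the chain has a symmetric marking, so the whole chain lies in $\symspine$.

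The main obstacle is bookkeeping rather than anything conceptual: one must keep careful track of the marking at each stage (each $\sigma_i$ should be $(\sal, \psi_i)$ with $\psi_i \in \symout$) and check that the Whitehead move at stage $i$ is associated to an honest symmetric $\Gamma$-Whitehead \emph{pair}, i.e.\ that the partition $\calP_n$ (and more generally each partition produced along the way) really does split a vertex $m$ with $\operatorname{lk}(m)^\pm = \operatorname{lk}(\calP_n)$ so that a valid multiplier exists — but this is exactly the content already secured by \emph{Proposition~\ref{prop:treelike sets in Gamma-complexes}} and \emph{Lemma~\ref{lem:symmetric Gamma-cx iff partitions are symmetric}}, which guarantee that every blowup structure on a symmetric $\Gamma$-complex corresponds to symmetric $\Gamma$-partitions, and that these are genuine $\Gamma$-Whitehead partitions. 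Since the argument of \cite{CharneyStambaughVogtmann17} for the non-symmetric statement goes through word for word once ``untwisted'' is replaced by ``symmetric'' and (\cite{CharneyStambaughVogtmann17}, Corollary~4.13) is replaced by our \emph{Lemma~\ref{lem:symmetric blowup-collapses induce symmetric automorphisms}}, I would in the write-up simply indicate the dictionary and refer the reader there for the combinatorial details.
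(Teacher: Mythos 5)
Your proposal is correct and takes the same route the paper does: the paper treats the Factorisation Lemma as a restatement of \emph{Lemma~\ref{lem:symmetric blowup-collapses induce symmetric automorphisms}} and defers to the inductive argument of (\cite{CharneyStambaughVogtmann17}, Corollary~4.19), which is precisely the induction on $|\Pi|$ via \emph{Theorem~\ref{thm:CSV17, 4.12}} and \emph{Lemma~\ref{lem:CSV17, 3.2}} that you spell out. Your only addition is making explicit the reduction to $(\sal, \operatorname{id})$ and the symmetry bookkeeping at each stage, which the paper leaves implicit.
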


\begin{proposition}\label{prop:symmetric spine is connected}
   ~$\symspine$ is connected.
\end{proposition}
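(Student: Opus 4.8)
The plan is to follow the Charney--Stambaugh--Vogtmann proof that $\spine$ is connected \cite{CharneyStambaughVogtmann17}: first collapse everything down to symmetric marked Salvettis, and then connect those to $\left(\sal, \operatorname{id}\right)$ by translates of Whitehead moves, using the generating set of \emph{Proposition~\ref{prop:generating set for symmetric automorphism group}}.

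First I would reduce to marked Salvettis. Given an arbitrary symmetric marked $\Gamma$-complex $\sigma = (X, \alpha)$, fix a blowup structure $X \cong \salb{\Pi}$ and let $\calT$ be the treelike set of hyperplanes labelled by the elements of $\Pi$. Collapsing all of $\calT$ yields $\sigma_\calT = \left(X \ssslash \calT, \alpha \circ c_\calT^{-1}\right)$, a marked Salvetti with $\sigma_\calT < \sigma$, so $\sigma$ and $\sigma_\calT$ span an edge of $\symspine$. Since $\alpha$ is a symmetric marking, the composite $\sal \xrightarrow{c_\Pi^{-1}} X \xrightarrow{\alpha} \sal$ represents an element of $\symout$, and up to homotopy this is exactly the marking of $\sigma_\calT$ read as a self-map of $\sal$; hence $\sigma_\calT$ is a symmetric marked Salvetti, necessarily of the form $\left(\sal, \varphi\right)$ with $[\varphi] \in \symout$. (For a non-canonical treelike set one would instead invoke the Factorisation Lemma, \emph{Corollary~\ref{cor:factorisation lemma (i.e. CSV17, 4.19)}}.) So it suffices to show that every $\left(\sal, \varphi\right)$ with $[\varphi] \in \symout$ lies in the component of $\left(\sal, \operatorname{id}\right)$.

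Next I would use the Whitehead moves. For a symmetric $\Gamma$-Whitehead pair $(\calP, m)$, \emph{Lemma~\ref{lem:CSV17, 3.2}} gives the blowup--collapse path
$$\left(\sal, \operatorname{id}\right) < \left(\salb{\calP}, c_\calP\right) \sim \left(\salb{\calP}, c_\calP \circ h_m\right) = \left(\salb{\calP}, \varphi(\calP, m) \circ c_\calP\right) > \left(\sal, \varphi(\calP, m)\right),$$
and all of its vertices are symmetric marked $\Gamma$-complexes: $\salb{\calP}$ is a symmetric $\Gamma$-complex by \emph{Lemma~\ref{lem:symmetric Gamma-cx iff partitions are symmetric}} (as $\calP$ is a symmetric $\Gamma$-partition), and each marking appearing differs from $c_\calP$ by post-composition with an element of $\symout$. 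Translating this path by $\varphi \in \symout$ --- which preserves the class of symmetric marked $\Gamma$-complexes --- connects $\left(\sal, \varphi\right)$ to $\left(\sal, \varphi \circ \varphi(\calP, m)\right)$ inside $\symspine$. If instead $\alpha$ is an inversion or a graph symmetry, then $\alpha$ is realised by a cube-complex automorphism of $\sal$ (as in \emph{Lemma~\ref{lem:action on symmetric spine is proper}}), so $\left(\sal, \varphi\right)$ and $\left(\sal, \varphi \circ \alpha\right)$ are the same vertex of $\symspine$. Now, by \emph{Proposition~\ref{prop:generating set for symmetric automorphism group}}, any $\varphi \in \symout$ factors as $\varphi = \alpha_1 \cdots \alpha_k$ with each $\alpha_i$ an inversion, a graph symmetry, or a $\Gamma$-Whitehead automorphism of a symmetric $\Gamma$-partition; applying the above to the consecutive partial products $\alpha_1 \cdots \alpha_{i-1}$ and $\alpha_1 \cdots \alpha_i$ yields a path in $\symspine$ from $\left(\sal, \operatorname{id}\right)$ to $\left(\sal, \varphi\right)$. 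Combined with the reduction step, this proves $\symspine$ is connected.

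The step I expect to be most delicate is verifying that the blowup--collapse path stays inside the symmetric subcomplex --- that is, that every intermediate $\Gamma$-complex is symmetric and every intermediate marking is symmetric. This is exactly where \emph{Lemma~\ref{lem:symmetric Gamma-cx iff partitions are symmetric}} and the $\symout$-invariance of the symmetry condition are used; the rest is essentially a transcription of the argument of \cite{CharneyStambaughVogtmann17}.
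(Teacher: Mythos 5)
Your proof is correct and uses the same core ideas as the paper — the generating set from \emph{Proposition~\ref{prop:generating set for symmetric automorphism group}}, Whitehead-move paths for symmetric $\Gamma$-Whitehead automorphisms, and the observation that inversions and graph symmetries do not change the vertex $\left(\sal, \cdot\right)$ — but it sidesteps one intermediate step. The paper first proves that the subgroup $N \leq \symout$ generated by partial conjugations is normal, and uses normality to refactor $\varphi = \varphi_\Omega \circ \varphi_N$ with $\varphi_\Omega$ a product of inversions and graph symmetries; this pushes all the ``same-vertex'' moves to the front, after which a single chain of Whitehead moves handles $\varphi_N$. You avoid the normality computation entirely by processing an arbitrary factorisation $\varphi = \alpha_1 \cdots \alpha_k$ in sequence: the partial product $\beta_{i-1} = \alpha_1 \cdots \alpha_{i-1}$ always lies in $\symout$, so the translated Whitehead move at $\left(\sal, \beta_{i-1}\right)$ stays inside $\symspine$ when $\alpha_i$ is a symmetric $\Gamma$-Whitehead automorphism, while an inversion or graph symmetry leaves the vertex unchanged. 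This interleaving is a modest but genuine simplification. One minor point on your reduction step: for a non-canonical treelike set, the more direct citation is \emph{Lemma~\ref{lem:symmetric blowup-collapses induce symmetric automorphisms}} rather than the Factorisation Lemma, though either suffices.
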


\begin{proof}
    Let~$N \leq \symout$ be the subgroup generated by symmetric~$\Gamma$-Whitehead automorphisms (that is, generated by partial conjugations), and write~$\Omega(\raag)$ for the (finite) subgroup generated by graph symmetries and inversions. 

    Let~$\omega \in \Omega(\raag)$, and let~$\rho$ be a partial conjugation based at~$m \in V$. We show that~$\omega^{-1}\rho\omega$ is also a partial conjugation. Clearly, for any~$v \in V$, if~$\rho(\omega(v)) = \omega(v)$, then~$\left( \omega^{-1} \circ \rho \circ \omega\right) (v) = v$. The other possibility is that~$\rho(\omega(v)) = m \omega(v) m^{-1}$. Then 
    \[\left(\omega^{-1} \circ \rho \circ \omega\right) (v) = \omega^{-1}\left(m \omega(v) m^{-1}\right) = \omega^{-1}(m)v\omega^{-1}\left(m^{-1}\right).\] 
    Hence we recognise~$\omega^{-1}\rho\omega$ as a partial conjugation based at~$\omega^{-1}(m)$. Note that if~$\rho$ is symmetric, then~$\omega^{-1} \rho \omega$ is too.

    We conclude that~$N \triangleleft \symout$ is normal. Therefore, any~$\varphi \in \symout$ can be factored as a product~$\varphi = \varphi_\Omega \circ \varphi_N$, where~$\varphi_\Omega$ is a product of graph symmetries and inversions and~$\varphi_N$ is an element of~$N$.

    Since~$\varphi_\Omega$ can be represented by an isomorphism~$\sal \to \sal$, we have~$\left(\sal, \operatorname{\id}\right) \sim \left(\sal, \varphi_\Omega\right)$. By \emph{Corollary \ref{cor:factorisation lemma (i.e. CSV17, 4.19)}},~$\left(\sal, \varphi_\Omega\right)$ is connected by a path in~$\symspine$ to~$\left(\sal, \varphi_\Omega \circ \varphi_N\right)$. That is,~$\left(\sal, \operatorname{\id}\right)$ is connected to~$\left(\sal, \varphi\right)$ by a path in~$\symspine$. But by the definition of the poset of (equivalence classes of) symmetric marked~$\Gamma$-complexes, every vertex of~$\symspine$ lies in the star of some marked Salvetti, so we're done.
\end{proof}

The quotient of~$\symspine$ by the action of~$\symout$ is compact, so we now have a proper and cocompact action of~$\symout$ on the connected complex~$\symspine$. It remains to show that~$\symspine$ is in fact contractible.

\section{Discussion of proof of contractibility of~$\symspine$}\label{sec:discussion of proof}

As discussed in the introduction (\S\ref{subsec:discussion of proof strategy}), our proof of contractibility of~$\symspine$ proceeds in two broad steps, inspired by Collins \cite{CollinsSymSpine89}. We now outline our strategy in more detail and explain its relation to the main relevant literature. 

Culler--Vogtmann \cite{CullerVogtmann86} proved that Outer space~$CV_n$ is contractible by showing that its spine~$K_n$, a deformation retract of~$CV_n$, is contractible. Culler and Vogtmann defined a norm~$\lVert \cdot \rVert_W$ on marked rose graphs (which are the marked Salvettis in the free group case) for an arbitrary finite set~$W$ of conjugacy classes of the free group. They then proved that the subcomplex~$K_{\operatorname{min}(W)} \subseteq K_n$, defined as the union of all marked roses which are minimal with respect to the norm~$\lVert \cdot \rVert_W$, is in fact a deformation retract of~$K_n$. Their final step was to pick a set~$W$ for which~$K_{\operatorname{min}(W)}$ is obviously contractible, whence~$K_n$ is contractible. 

This means that~$K_{\operatorname{min}(W)}$ is contractible for any choice of~$W$. Collins \cite{CollinsSymSpine89} notes that by wisely choosing~$W$ so that the symmetric spine~$K_n^\Sigma$ is a subcomplex of~$K_{\operatorname{min}(W)}$, contractibility of~$K_n^\Sigma$ follows by proving that~$K_n^\Sigma$ is a retract of~$K_{\operatorname{min}(W)}$. This last part forms most of the technical work in \cite{CollinsSymSpine89}.

We set out to mimic this strategy, with marked Salvettis replacing marked roses, and the untwisted spine~$\spine$ replacing~$K_n$. However, Charney--Stambaugh--Vogtmann's proof \cite{CharneyStambaughVogtmann17} of the contractibility of~$\spine$ is more direct than the analogous proof of Culler--Vogtmann \cite{CullerVogtmann86}: they picked a single norm which totally well-orders the marked Salvettis, and built~$\spine$ star-of-marked-Salvetti by star-of-marked-Salvetti, with stars attached in increasing order according to this norm. Specialised to the free group case, this is almost a backwards version of Culler--Vogtmann's original proof: fixing a judicious choice of conjugacy classes at the start, so therefore starting with the star of a single marked Salvetti (which is obviously contractible), and building up to the entire spine. This has been explained in detail by Vogtmann \cite{VogtmannCVnReprise17}. The upshot is that the proof of contractibility of~$\spine$ in \cite{CharneyStambaughVogtmann17} does not automatically give analogous contractible intermediary subspaces such as the~$K_{\operatorname{min}(W)}$ in the free group case. 

Consequently, our main technical contribution is to define, and prove contractibility of, analogous spaces~$\Kmin{\calW}$ for~$\calW$ an arbitrary finite set of conjugacy classes in~$\raag$. To do this, we combine the idea of a norm~$\lVert \cdot \rVert_W$ from \cite{CullerVogtmann86} with the `build upwards' strategy of \cite{CharneyStambaughVogtmann17}. Once we have proven that this new norm gives a total well-order of the marked Salvettis, we can replicate Charney--Stambaugh--Vogtmann's inductive gluing argument. Stopping at the right height will build precisely the intermediary complex~$\Kmin{\calW}$. Hence, each of these intermediary complexes is contractible. Choosing~$\calW$ to be the set of conjugacy classes of length at most two `recovers' (or more precisely, simply is) Charney--Stambaugh--Vogtmann's proof of contractibilty of~$\spine$. On the other hand, choosing~$\calW$ to be the set of conjugacy classes of length one yields a complex~$\Kmin{\calW}$ which contains our symmetric spine~$\symspine$. This will be the space~$\smash{K_\Gamma^{\text{sym}}}$ as referred to in \S\ref{subsec:discussion of proof strategy}.

The main edit we make to Charney--Stambaugh--Vogtmann's proof of contractibility of~$\spine$ is to change the norm on marked Salvettis. All that remains is then to show that the same proof goes through with the new norm. Essentially, we must check that all of \S6 of \cite{CharneyStambaughVogtmann17} works with our norm. Some results in (\cite{CharneyStambaughVogtmann17}, \S6) do not depend on the norm and pass to our setting with no work. Others require edits or rewriting. 

In order to keep track of what needs to be transferred to our setting, and to keep track of the logical dependencies amongst (\cite{CharneyStambaughVogtmann17}, \S6), we have provided \emph{Figure \ref{fig:dependence diagram in CSV17 Ch. 6}}, with the following clarifying remarks.
\begin{enumerate}[(i)]
    \item 6.24 is the main theorem: contractibility of~$\spine$.
    \item 6.25 is the Poset Lemma (\emph{Lemma \ref{lem:poset lemma}}). 
    \item 6.18 is `Peak Reduction', while 6.19 is `Strong Peak Reduction'. 6.18 is a good comparison point to earlier literature, such as \cite{CullerVogtmann86}, but only 6.19 is used in later proofs in \cite{CharneyStambaughVogtmann17}. Their proofs are very similar.
\end{enumerate}

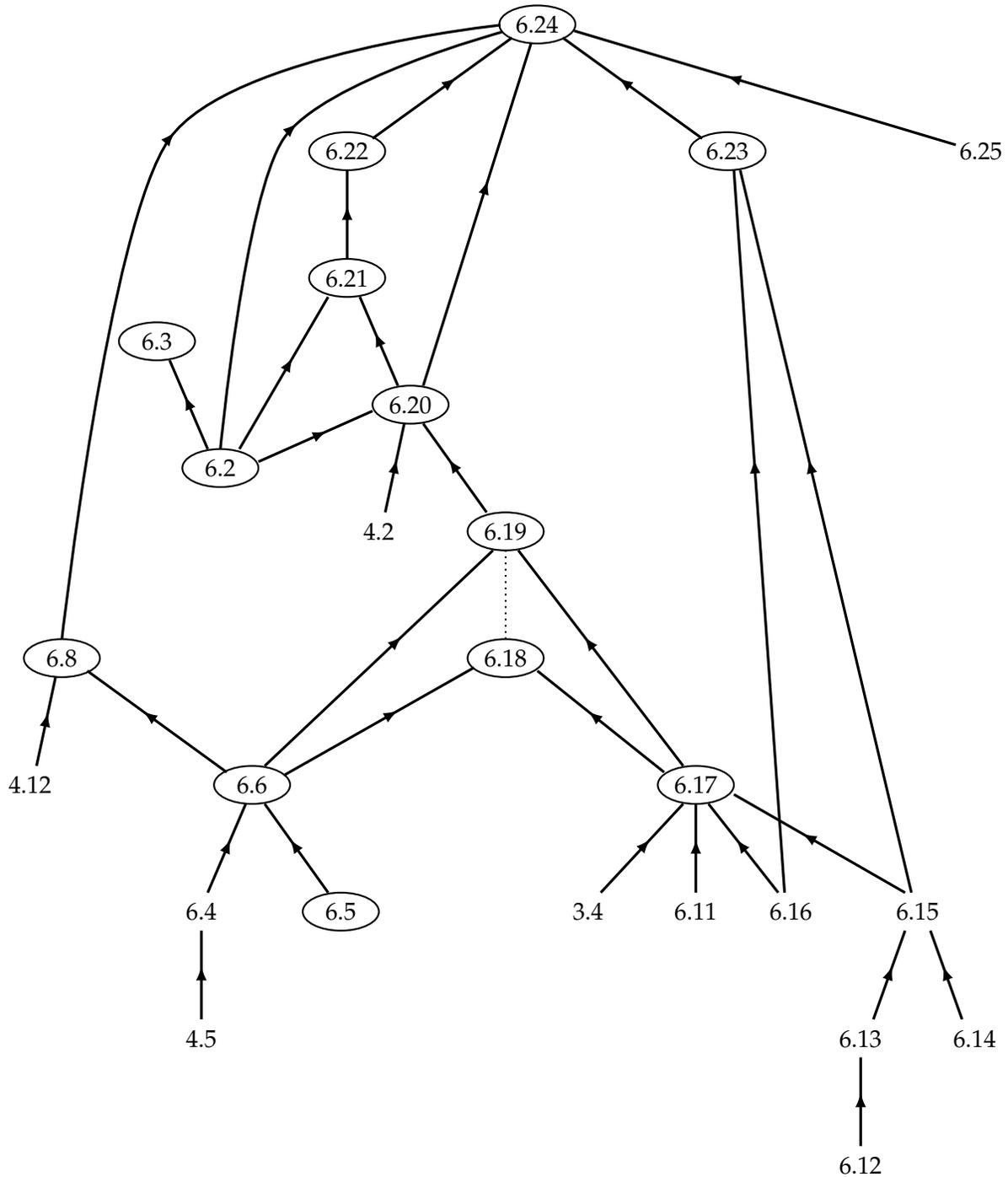
\begin{figure}
    \centering
    \begin{tikzpicture}
        %
        \node at (0, 0) [black, thick]{6.24};
        \node at (-3, -2) [black, thick]{6.22};
        \node at (3, -2) [black, thick]{6.23};
        \node at (7, -2) [black, thick]{6.25};
        \node at (-3, -4) [black, thick]{6.21};
        \node at (-5, -7) [black, thick]{6.2};
        \node at (-2, -6) [black, thick]{6.20};
        \node at (-2.5, -8) [black, thick]{4.2};
        \node at (-0.5, -8) [black, thick]{6.19};
        \node at (-6, -5) [black, thick]{6.3};
        \node at (-0.5, -10) [black, thick]{6.18};
        \node at (-4.5, -12) [black, thick]{6.6};
        \node at (-5.3, -14) [black, thick]{6.4};
        \node at (-3.1, -14) [black, thick]{6.5};
        \node at (-5.3, -16) [black, thick]{4.5};
        \node at (-7.5, -10) [black, thick]{6.8};
        \node at (-8, -12) [black, thick]{4.12};
        \node at (2.5, -12) [black, thick]{6.17};
        \node at (0.8, -14) [black, thick]{3.4};
        \node at (2.5, -14) [black, thick]{6.11};
        \node at (4, -14) [black, thick]{6.16};
        \node at (6, -14) [black, thick]{6.15};
        \node at (5.1, -16) [black, thick]{6.13};
        \node at (6.9, -16) [black, thick]{6.14};
        \node at (5.1, -18) [black, thick]{6.12};
        %
        %
        \draw[black, thick] (0, 0) ellipse (0.6 and 0.3); 
        \draw[black, thick] (-3, -2) ellipse (0.6 and 0.3); 
        \draw[black, thick] (3, -2) ellipse (0.6 and 0.3); 
        \draw[black, thick] (-3, -4) ellipse (0.6 and 0.3); 
        \draw[black, thick] (-5, -7) ellipse (0.6 and 0.3); 
        \draw[black, thick] (-2, -6) ellipse (0.6 and 0.3); 
        \draw[black, thick] (-0.5, -8) ellipse (0.6 and 0.3); 
        \draw[black, thick] (-6, -5) ellipse (0.6 and 0.3); 
        \draw[black, thick] (-0.5, -10) ellipse (0.6 and 0.3); 
        \draw[black, thick] (-4.5, -12) ellipse (0.6 and 0.3); 
        \draw[black, thick] (-3.1, -14) ellipse (0.6 and 0.3); 
        \draw[black, thick] (-7.5, -10) ellipse (0.6 and 0.3); 
        \draw[black, thick] (2.5, -12) ellipse (0.6 and 0.3); 
        %
        %
        \draw[black, very thick, ->-] (-2.6, -1.8) -- (-0.4, -0.21); 
        \draw[black, very thick, ->-] (2.6, -1.8) -- (0.4, -0.21); 
        \draw[black, very thick, ->-] (6.6, -1.9) -- (0.58, -0.1); 
        \draw[black, very thick, ->-] (-3, -3.7) -- (-3, -2.3); 
        \draw[black, very thick, ->-] (-2.2, -5.7) -- (-2.8, -4.3); 
        \draw[black, very thick, ->-] (-1.8, -5.7) -- (-0.1, -0.3); 
        \draw[black, very thick, ->-] (-4.7, -6.7) -- (-3.3, -4.3); 
        \draw[black, very thick, ->-] (-5.2, -6.7) -- (-5.8, -5.3); 
        \draw[smooth, tension=0.1, black, very thick, ->-] plot coordinates {(-5, -6.7) (-4, -1.8) (-0.56, -0.12)}; 
        \draw[black, very thick, ->-] (-4.4, -6.9) -- (-2.6, -6.1); 
        \draw[black, very thick, ->-] (-2.4, -7.7) -- (-2.1, -6.3); 
        \draw[black, very thick, ->-] (-0.8, -7.7) -- (-1.8, -6.3); 
        \draw[smooth, tension=0.1, black, very thick, ->-] plot coordinates {(-7.5, -9.7) (-5.8, -1.8) (-0.58, -0.01)}; 
        \draw[black, very thick, ->-] (-7.9, -11.7) -- (-7.6, -10.3); 
        \draw[black, very thick, ->-] (-4.9, -11.8) -- (-7.1, -10.2); 
        \draw[black, very thick, ->-] (-4, -11.85) -- (-1, -10.15); 
        \draw[black, very thick, ->-] (-5.2, -13.7) -- (-4.6, -12.3); 
        \draw[black, very thick, ->-] (-3.3, -13.7) -- (-4.3, -12.3); 
        \draw[black, very thick, ->-] (-5.3, -15.7) -- (-5.3, -14.3); 
        \draw[black, very thick, ->-] (-4.3, -11.7) -- (-0.7, -8.3); 
        \draw[black, very thick, ->-] (2.3, -11.7) -- (-0.3, -8.3); 
        \draw[black, very thick, ->-] (2, -11.8) -- (0, -10.2); 
        \draw[black, thick, dotted] (-0.5, -9.7) -- (-0.5, -8.3); 
        \draw[black, very thick, ->-] (1, -13.7) -- (2.3, -12.3); 
        \draw[black, very thick, ->-] (2.5, -13.7) -- (2.5, -12.3); 
        \draw[black, very thick, ->-] (3.8, -13.7) -- (2.7, -12.3); 
        \draw[black, very thick, ->-] (5.8, -13.7) -- (3.1, -12.15); 
        \draw[black, very thick, ->-] (3.9, -13.7) -- (3.1, -2.3); 
        \draw[black, very thick, ->-] (5.9, -13.7) -- (3.2, -2.3); 
        \draw[black, very thick, ->-] (5.3, -15.7) -- (5.8, -14.3); 
        \draw[black, very thick, ->-] (6.7, -15.7) -- (6.2, -14.3); 
        \draw[black, very thick, ->-] (5.1, -17.7) -- (5.1, -16.3); 
    \end{tikzpicture}
    \caption{Each node is a numbered item from \cite{CharneyStambaughVogtmann17}. A directed edge from a node~$A$ to a node~$B$ encodes that~$A$ is used in the proof of~$B$. A circled label signifies that the norm defined in \cite{CharneyStambaughVogtmann17} is directly used at some point in the statement or proof of that item; an uncircled label corresponds to an item for which this is not the case.}
    \label{fig:dependence diagram in CSV17 Ch. 6}
\end{figure}

On the face of it, we need to be able to reproduce \emph{Figure \ref{fig:dependence diagram in CSV17 Ch. 6}}---i.e., we need to prove that all the results corresponding to circled labels carry through with our new norms. However, we will be able to avoid depending on Corollary 6.20 from \cite{CharneyStambaughVogtmann17}. This is because Corollary 6.20 is used in two ways in \cite{CharneyStambaughVogtmann17}, both of which can be circumvented in our case, as follows.
\begin{itemize}
    \item In the proof of Corollary 6.21, and therefore (indirectly) in proving Proposition 6.22, which states that Charney--Stambaugh--Vogtmann's norm well-orders the set of marked Salvettis. However, we will prove that our norms well-order the set of marked Salvettis as a corollary of (\cite{CharneyStambaughVogtmann17}, Corollary 6.21 \& Proposition 6.22). This means that we do not need to go to the effort of proving results analogous to these in our setting---we can simply use Charney--Stambaugh--Vogtmann's as they stand, and hence (\cite{CharneyStambaughVogtmann17}, Corollary 6.20) also unaltered.
    \item In the proof of Theorem 6.24, where it is used to guarantee that when one inductively glues the star of a marked Salvetti~$\sigma$ to the union of the stars of marked Salvettis with norm less than~$\lVert \sigma \rVert$, the intersection along with one glues is nonempty. We skip this step in our corresponding proof, which means that we can only conclude that the spine~$\spine$ is a union of contractible components. However, since we know that~$\spine$ is connected, this knowledge is enough to deduce contractibility. 
\end{itemize}

Therefore, we are actually able to avoid adapting Corollary 6.20 to our setting. As can be seen from \emph{Figure \ref{fig:dependence diagram in CSV17 Ch. 6}}, this also obviates the need to transfer (Strong) Peak Reduction (Theorems 6.18, 6.19) and the Higgins--Lyndon Lemma (Lemma 6.17) to our setting. Whenever relevant, we will point out exactly where we avoid the use of Corollary 6.20 (and the results used in its proof). In \emph{Figure \ref{fig:dependence diagram for us}}, we lay out the various results, and their relation to each other, which together prove contractibility of~$\Kmin{\calW}$, and also indicate the results from \cite{CharneyStambaughVogtmann17} which they adapt.

We make one final comment regarding our choice of norm. In \cite{CullerVogtmann86}, the norm on marked roses is defined as follows. For an arbitrary finite set~$W$ of conjugacy classes in~$F_n$, and for~$\rho$ a marked rose, define 
\[\lVert \rho \rVert \coloneqq n \sum_{w \in W} l(w),\]
where~$l(w)$ is the length function naturally associated to the marking on~$\rho$. Then, for any fixed~$W$, one defines~$K_{\operatorname{min}(W)}$ as the union of the stars of all marked roses which are minimal with respect to this norm. The final step of the proof of contractibility of the spine~$K_n$ is to notice that if one chooses~$W$ to be the set of conjugacy classes of length at most two, then~$K_{\operatorname{min}(W)}$ consists of exactly one (star of a) marked rose~$\rho = (R_n, \operatorname{id})$, and so is contractible.

A slightly more complicated norm, this time on marked Salvettis instead of marked roses, is used in \cite{CharneyStambaughVogtmann17}. Let~$\calG = \left(g_1, g_2, \dots\right)$ be a list of all the conjugacy classes of a RAAG~$\raag$, and let~$\calG_0$ be the set of all conjugacy classes which can be represented by a word of length at most two. For any marked Salvetti~$\sigma = \left(\sal, \alpha\right)$ and any conjugacy class~$g$ of~$\raag$, define~$\ell_\sigma(g)$ to be the minimal length of a word in the free group~$F\langle V(\Gamma)\rangle$ representing an element of the conjugacy class~$\alpha^{-1}(g)$ in~$\raag$ (on roses, this is precisely the length function used in \cite{CullerVogtmann86} as above). They now define the norm of~$\sigma$ to be 
\[\lVert \sigma \rVert \coloneqq \left(\lVert \sigma \rVert_0,\; \lVert \sigma \rVert_1,\; \lVert \sigma \rVert_2,\; \dots\right),\]
where 
\[\lVert \sigma \rVert_0 \coloneqq \sum_{g \in \calG_0} \ell_\sigma(g) \;\;;\quad \quad \quad \lVert \sigma \rVert_i \coloneqq \ell_\sigma(g_i).\]
The norm is taken to live in the ordered abelian group~$\mathbb{Z} \times \mathbb{Z}^{\calG}$, ordered by the lexicographical order. In view of the suite of norms used in \cite{CullerVogtmann86}, it would seem natural to define our new norms by allowing~$\calG_0$ to be any finite set of conjugacy classes. However, we will actually do something slightly different, which will allow us to use more of the properties of Charney--Stambaugh--Vogtmann's norm~$\lVert \cdot \rVert$. For a finite set of conjugacy classes~$\calW$, we define 
\[\lVert \sigma \rVert_\calW \coloneqq \sum_{g \in \calW} \ell_\sigma(g),\] 
and then we define our new norm~$\lVert \cdot \rVert'$ by 
\[\lVert \sigma \rVert' \coloneqq \left( \lVert \sigma \rVert_\calW,\; \lVert \sigma \rVert\right) \in \mathbb{Z} \times \mathbb{Z} \times \mathbb{Z}^{\calG}.\] 
This trick enables us to avoid proving an analogue of Corollary 6.20, as discussed above. 

In \S\ref{sec:contractibility of Kmin}, we perform this adaptation of (\cite{CharneyStambaughVogtmann17}, \S6), proving contractibility of~$\Kmin{\calW}$ for any set of conjugacy classes~$\calW$. Then in \S\ref{sec:retraction of Kmin to symspine}, we adapt Collins's work \cite{CollinsSymSpine89}, proving that~$\symspine$ is a deformation retract of a particular~$\Kmin{\calW}$, which completes the proof.

We finish this section by presenting the dependence diagram for \S\ref{sec:contractibility of Kmin} (see \emph{Figure \ref{fig:dependence diagram for us}}), with the following accompanying explanatory notes.
\begin{enumerate}[(i)]
    \item \emph{Theorem 6.12} is the main theorem, that~$\Kmin{\calW}$ is contractible.
    \item \emph{Lemma \ref{lem:poset lemma}} is the Poset Lemma.
    \item We call \emph{Lemma \ref{lem:pushing lemma (for us)}} the `Pushing Lemma', in reference to the comparable result in \cite{CharneyStambaughVogtmann17}.
\end{enumerate}

\begin{figure}
    \centering
    \begin{tikzpicture}
        %
        \node at (0, 0) [black, thick]{\emph{Theorem \ref{thm:Kmin is contractible}}};
        \node at (0, 0) [black, thick, below = 3pt]{(cf. 6.24)};
        \node at (-7, -3) [black, thick]{\emph{Corollary \ref{cor:analogue of [CSV17], Cor 6.8}}};
        \node at (-7, -3) [black, thick, below = 3pt]{(cf. 6.8)};
        \node at (-7, -6) [black, thick]{\emph{Corollary \ref{cor:analogue of [CSV17], Cor 6.6}}};
        \node at (-7, -6) [black, thick, below = 3pt]{(cf. 6.6)};
        \node at (-7, -9) [black, thick]{\emph{Lemma \ref{lem:CSV17 Lem 6.4}}};
        \node at (-7, -9) [black, thick, below = 3pt]{(= 6.4)};
        \node at (-2, -3) [black, thick]{\emph{Proposition \ref{prop:new norm is a strict total well-ordering}}};
        \node at (-2, -3) [black, thick, below = 3pt]{(cf. 6.22)};
        \node at (-3.5, -5) [black, thick]{(\cite{CharneyStambaughVogtmann17}, 6.3)};
        \node at (0, -5) [black, thick]{(\cite{CharneyStambaughVogtmann17}, 6.21)};
        \node at (3, -3) [black, thick]{\emph{Lemma \ref{lem:pushing lemma (for us)}}};
        \node at (3, -3) [black, thick, below = 3pt]{(cf. 6.23)};
        \node at (2, -7.5) [black, thick]{\emph{Lemma \ref{lem:[CSV17], Lemma 6.16}}};
        \node at (2, -7.5) [black, thick, below = 3pt]{(= 6.16)};
        \node at (5, -6) [black, thick]{\emph{Corollary \ref{cor:Lemma 6.15 + Lemma 6.16}}};
        \node at (5, -9) [black, thick]{\emph{Lemma \ref{lem:[CSV17], Lemma 6.15 special case}}};
        \node at (5, -11) [black, thick]{(\cite{CharneyStambaughVogtmann17}, 6.15)};
        \node at (7, -3) [black, thick]{\emph{Lemma \ref{lem:poset lemma}}};
        \node at (7, -3) [black, thick, below = 3pt]{(= 6.25)};
        %
        %
        \draw[black, very thick, ->-] (-6, -2.6) -- (-1, -0.4); 
        \draw[black, very thick, ->-] (-7, -5.6) -- (-7, -3.6); 
        \draw[black, very thick, ->-] (-7, -8.6) -- (-7, -6.6); 
        \draw[black, very thick, ->-] (-1.5, -2.6) -- (-0.4, -0.6); 
        \draw[black, very thick, ->-] (-3.2, -4.6) -- (-2.5, -3.6); 
        \draw[black, very thick, ->-] (-0.3, -4.6) -- (-1.4, -3.6); 
        \draw[black, very thick, ->-] (2.6, -2.6) -- (0.6, -0.6); 
        \draw[black, very thick, ->-] (2.2, -7.1) -- (2.8, -3.6); 
        \draw[black, very thick, ->-] (4.6, -5.6) -- (3.4, -3.6); 
        \draw[black, very thick, ->-] (5, -8.6) -- (5, -6.4); 
        \draw[black, very thick, ->-] (3, -7.2) -- (4, -6.3); 
        \draw[black, very thick, ->-] (5, -10.6) -- (5, -9.4); 
        \draw[black, very thick, ->-] (6, -2.6) -- (1, -0.4); 
    \end{tikzpicture}
    \caption{Dependency diagram for \S\ref{sec:contractibility of Kmin} of this document. Each node either contains a result from this paper (in italics) or a result from \cite{CharneyStambaughVogtmann17}. All nodes containing a result from the present document also indicate the comparable (indicated by `cf.') or equivalent (indicated by `=') result from \cite{CharneyStambaughVogtmann17} (upright text). A directed edge from a node~$A$ to a node~$B$ indicates that~$A$ implies, or is used in the proof of,~$B$.}
    \label{fig:dependence diagram for us}
\end{figure}
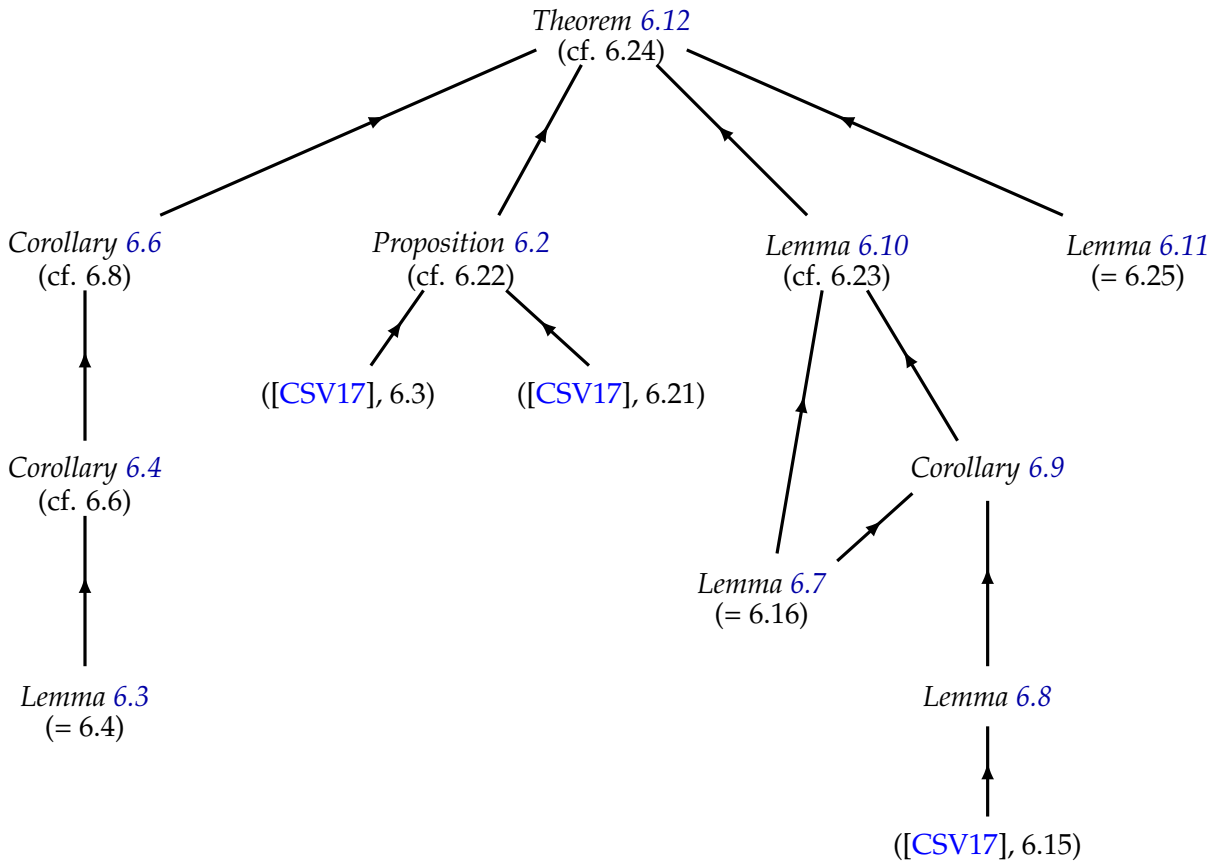

\section{Contractibility of~$\Kmin{\calW}$}\label{sec:contractibility of Kmin}

Fix~$\Gamma$. In this subsection, we prove that~$\Kmin{\calW}$ is contractible, for any choice of finite set of conjugacy classes~$\calW$.

\subsection{The~$\calW$-norm of a marked Salvetti}\label{subsec:the W-norm of a marked Salvetti}

In this section we define our adaptations of the norm defined in \cite{CharneyStambaughVogtmann17}. This subsection is a direct transfer of (\cite{CharneyStambaughVogtmann17}, \S6.1) to our setting.

For a marked Salvetti~$\sigma = \left(\sal, \alpha\right)$ and a conjugacy class~$g$ of~$\raag$, denote by~$\ell_\sigma(g)$ the minimal length of a word in the free group~$F\langle V(\Gamma)\rangle$ representing an element of the conjugacy class~$\alpha^{-1}(g)$ in~$\raag$. In particular, if~$\alpha = \operatorname{id}$, then~$\ell_\sigma(g)$ is the minimal word length of an element of~$g$.

Identifying elements of~$V(\Gamma)$ with the 1-skeleton of~$\sal$,~$\ell_\sigma(g)$ can be thought of as the minimal length of an edge-path in the 1-skeleton of~$\sal$ representing~$\alpha^{-1}(g)$. As observed in \cite{CharneyStambaughVogtmann17}, normal form for RAAGs (see \cite{CharneyRAAGsSurvey}) implies that this length function is well-defined. Moreover, if~$\alpha$ is an isometry of~$\sal$, then a minimal edge-path in the 1-skeleton of~$\sal$ representing~$\alpha^{-1}(g)$ is the same length as one representing~$g$---which matches the fact that~$\left(\sal, \alpha\right) \sim \left(\sal, \operatorname{id}\right)$.

Let~$\calG = \left(g_1,\; g_2,\;, \dots\right)$ be a list of the conjugacy classes of~$\raag$, and fix~$\calW$, an arbitrary finite subset of~$\calG$. Let~$\calG_0 \subseteq \calG$ be the subset of conjugacy classes represented by a length of at most two. 

\begin{definition}
    For a marked Salvetti~$\sigma = \left(\sal, \alpha\right)$, define the \emph{$\calW$-norm} of~$\sigma$ to be 
    \[\lVert \sigma \rVert' \coloneqq \left(\lVert \sigma \rVert_{\calW},\; \lVert \sigma \rVert_0,\; \lVert \sigma \rVert_1,\; \lVert \sigma \rVert_2,\; \dots\right) \;\; \in \mathbb{Z} \times \mathbb{Z} \times \mathbb{Z}^{\calG},\] 
    where 
    \begin{eqnarray*}
        \lVert \sigma \rVert_{\calW} & \coloneqq & \sum_{g \in \calW} \ell_\sigma(g); \\
        \lVert \sigma \rVert_0 & \coloneqq & \sum_{g \in \calG_0} \ell_\sigma(g); \\
        \lVert \sigma \rVert_i & \coloneqq & \ell_\sigma\left(g_i\right) \quad \text{for~$i \geq 1$},
    \end{eqnarray*}
    and we consider~$\mathbb{Z} \times \mathbb{Z} \times \mathbb{Z}^{\calG}$ as an ordered abelian group with the lexicographical ordering; write~$\mathbf{0} \coloneqq \left(0,\; 0,\; \dots\right)$ for its identity element.
\end{definition}

This follows the notation laid out in (\cite{CharneyStambaughVogtmann17} \S6); our only addition is the dependence on~$\calW$. In particular,~$\lVert \cdot \rVert' = \left(\lVert \cdot \rVert_{\calW},\; \lVert \cdot \rVert\right)$, where~$\lVert \cdot \rVert$ is the norm employed by Charney--Stambaugh--Vogtmann.

\begin{proposition}\label{prop:new norm is a strict total well-ordering}
   ~$\lVert \cdot \rVert'$ is a strict total well-ordering on the set of marked Salvettis.
\end{proposition}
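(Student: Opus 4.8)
The plan is to deduce this essentially for free from the corresponding statement for Charney--Stambaugh--Vogtmann's norm $\lVert\cdot\rVert$, namely that $\lVert\cdot\rVert$ is a strict total well-ordering on the set of (equivalence classes of) marked Salvettis (\cite{CharneyStambaughVogtmann17}, Corollary 6.21 and Proposition 6.22). The only point is that $\lVert\cdot\rVert' = \bigl(\lVert\cdot\rVert_{\calW},\,\lVert\cdot\rVert\bigr)$ is obtained from $\lVert\cdot\rVert$ by prepending the single coordinate $\lVert\cdot\rVert_{\calW}$, and since $\ell_\sigma(g)\geq 0$ for every conjugacy class $g$, this coordinate takes values in $\Z_{\geq 0}$: for every marked Salvetti $\sigma$ we have $\lVert\sigma\rVert_{\calW} = \sum_{g\in\calW}\ell_\sigma(g)\in\Z_{\geq 0}$. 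Prepending a coordinate valued in a well-ordered set preserves both the ``strict total order'' and the ``well-order'' properties, so the whole statement follows by an elementary descent through the coordinate blocks.

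First I would check that the relation $\sigma\prec'\sigma'$ defined by $\lVert\sigma\rVert'<\lVert\sigma'\rVert'$ is a strict total order on marked Salvettis. The target $\Z\times\Z\times\Z^{\calG}$ is totally ordered by the lexicographic order, so it suffices to see that $\lVert\cdot\rVert'$ separates marked Salvettis, i.e.\ $\lVert\sigma\rVert' = \lVert\sigma'\rVert'$ forces $\sigma=\sigma'$; but equality of the $\lVert\cdot\rVert'$-norms implies equality of the last two coordinates, i.e.\ $\lVert\sigma\rVert=\lVert\sigma'\rVert$, and then $\sigma=\sigma'$ by (\cite{CharneyStambaughVogtmann17}, Proposition 6.22). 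Next I would verify the well-ordering property: given a nonempty set $\mathcal{S}$ of marked Salvettis, the set $\{\lVert\sigma\rVert_{\calW}:\sigma\in\mathcal{S}\}$ is a nonempty subset of $\Z_{\geq 0}$ and hence has a least element $n_0$; put $\mathcal{S}_0 = \{\sigma\in\mathcal{S}:\lVert\sigma\rVert_{\calW}=n_0\}\neq\varnothing$. Applying the fact that $\lVert\cdot\rVert$ well-orders the marked Salvettis (\cite{CharneyStambaughVogtmann17}, Corollary 6.21 and Proposition 6.22) to $\mathcal{S}_0$ yields $\sigma_0\in\mathcal{S}_0$ minimal for $\lVert\cdot\rVert$ within $\mathcal{S}_0$. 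Then $\sigma_0$ is $\prec'$-minimal in $\mathcal{S}$: for any $\sigma\in\mathcal{S}$, either $\lVert\sigma\rVert_{\calW}>n_0$, so $\lVert\sigma\rVert'>\lVert\sigma_0\rVert'$ already in the first coordinate, or $\lVert\sigma\rVert_{\calW}=n_0$, so $\sigma\in\mathcal{S}_0$, hence $\lVert\sigma\rVert\geq\lVert\sigma_0\rVert$ and therefore $\lVert\sigma\rVert'\geq\lVert\sigma_0\rVert'$.

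I do not expect a genuine obstacle here: the entire substance is imported from \cite{CharneyStambaughVogtmann17}, and this is the whole reason for defining the norm this way---splitting off $\lVert\cdot\rVert_{\calW}$ as a separate leading coordinate (rather than folding $\calW$ into the block $\calG_0$) makes the tail of $\lVert\cdot\rVert'$ literally Charney--Stambaugh--Vogtmann's norm, so that Proposition 6.22 can be quoted verbatim instead of re-derived, and in particular the analogues of Peak Reduction and of (\cite{CharneyStambaughVogtmann17}, Corollary 6.20) need never be established in our setting. The one point requiring a moment's care is that $\Z$ itself is \emph{not} well-ordered, so one cannot simply say ``a lexicographic product of well-orders is a well-order''; the argument works because on the image of $\lVert\cdot\rVert'$ the leading coordinate lands in $\Z_{\geq 0}$, so well-foundedness is available for the first block, and for the remaining block one invokes (\cite{CharneyStambaughVogtmann17}, Proposition 6.22) rather than well-foundedness of $\Z\times\Z^{\calG}$.
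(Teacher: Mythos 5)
Your proof is correct and takes essentially the same approach as the paper's: both exploit that the prepended coordinate $\lVert\cdot\rVert_\calW$ lands in $\Z_{\geq 0}$ and import the strict-total-well-order property of $\lVert\cdot\rVert$ from \cite{CharneyStambaughVogtmann17} (the paper cites Corollary 6.3 for the strict total order and Corollary 6.21 for the finiteness fact underlying well-foundedness, where you cite Corollary 6.21 and Proposition 6.22 directly). The only cosmetic difference is that the paper proves well-foundedness by contradiction, turning an infinite $\lVert\cdot\rVert'$-descending chain into an infinite strictly decreasing sequence of nonnegative integers, whereas you construct the minimal element of a nonempty subset directly by first minimising $\lVert\cdot\rVert_\calW$ and then $\lVert\cdot\rVert$; these are equivalent.
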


\begin{proof}
    Due to (\cite{CharneyStambaughVogtmann17}, Corollary 6.3),~$\lVert \cdot \rVert$ is a strict total order on the set of marked Salvettis. It follows that~$\lVert \cdot \rVert'$ is also a strict total order, by construction. 
    
    It remains to show that~$\lVert \cdot \rVert'$ is a well-ordering. Suppose, for contradiction, that it is not, so there exists an infinite decreasing chain of marked Salvettis with respect to~$\lVert \cdot \rVert'$. Let~$\sigma$ be some member of this chain; we can then choose another,~$\tau$, such that~$\lVert \tau \rVert' < \lVert \sigma \rVert'$. Then either~$\lVert \tau \rVert_\calW < \lVert \sigma \rVert_\calW$, or~$\lVert \tau \rVert_\calW = \lVert \sigma \rVert_\calW$ and~$\lVert \tau \rVert_0 \leq \lVert \sigma \rVert_0$.

    By (\cite{CharneyStambaughVogtmann17}, Corollary 6.21), we know that for all~$N \geq \lVert \left(\sal, \operatorname{id}\right)\rVert_0$, there exist only finitely many marked Salvettis~$\sigma$ with~$\lVert \sigma \rVert_0 \leq N$. Since there are infinitely many~$\tau$ with~$\lVert \tau \rVert' < \lVert \sigma \rVert'$, we can pick~$\tau$ such that~$\lVert \tau \rVert_\calW < \lVert \sigma \rVert_\calW$. We can now repeat this argument \emph{ad infinitum}, obtaining an infinite sequence of marked Salvettis which are strictly decreasing with respect to~$\lVert \cdot \rVert_\calW$. In other words, we have an infinite strictly decreasing sequence of positive integers, a contradiction.
\end{proof}

In particular, there is a minimal marked Salvetti with respect to the norm~$\lVert \cdot \rVert'$.

As discussed in \S\ref{sec:discussion of proof}, this is one of the places in which we circumvent needing an analogue of (\cite{CharneyStambaughVogtmann17}, Corollary 6.20). 

\subsection{Effect of a Whitehead move on the~$\calW$-norm}\label{subsec:effect of Whitehead move on W-norm}

We now aim to transplant what we need from (\cite{CharneyStambaughVogtmann17}, \S6.2). We refer the reader to the introduction of that subsection for a discussion of the geometric interpretation of~$\ell_\sigma(g)$, and the motivation behind the following notation. 

Fix a marked Salvetti~$\left(\sal, \alpha\right)$. For a conjugacy class~$g \in \calG$, let~$w$ be a cyclically-reduced word for~$\alpha^{-1}(g)$. For a~$\Gamma$-partition~$\calP = \left(P \;\vert\;\overline{P}\;\vert\;\link{\calP}\right)$, let~$\vert \calP \vert_w$ denote the number of cyclically adjacent letters~$u_iu_{i+1}$ in~$w$ such that~$u_i$ and~$u_{i+1}^{-1}$ do not both lie in~$P \cup \link{\calP}$ or both lie in~$\overline{P} \cup \link{\calP}$. For any vertex~$v \in V(\Gamma)$, let~$\vert v \vert_w$ denote the number of occurrences of~$v$ or~$v^{-1}$ in~$w$.

We will use the following lemma from \cite{CharneyStambaughVogtmann17}. Recall the notation before \emph{Corollary \ref{cor:factorisation lemma (i.e. CSV17, 4.19)}}: for a~$\Gamma$-Whitehead automorphism~$\alpha = \varphi\left(\calP, m\right)$, and a marked Salvetti~$\sigma = \left(\sal, \varphi\right)$, we write~$\sigma_m^\calP$ for the Whitehead move at~$\sigma$ associated to~$\left(\calP, m\right)$; that is,~$\sigma_m^\calP = \left(\sal, \varphi \circ \alpha\right)$.

\begin{lemma}[\cite{CharneyStambaughVogtmann17}, Lemma 6.4]\label{lem:CSV17 Lem 6.4}
    Let~$\sigma = \left(\sal, \alpha\right)$ be a marked Salvetti, let~$\varphi = \left(\calP, m\right)$ be a~$\Gamma$-Whitehead automorphism, let~$g$ be a conjugacy class in~$\raag$, and let~$w$ be a minimal length word representing~$\alpha^{-1}(g)$. Then 
    \[\ell_{\sigma_m^\calP}(g) = \ell_\sigma(g) + \vert \calP \vert_w - \vert v \vert_w.\]

    More generally, if~$\sigma'$ is obtained from~$\sigma$ by blowing up a compatible set~$\Pi = \{\calP_1,\;\dots,\;\calP_k\}$ of~$\Gamma$-partitions and collapsing a treelike set~$\calH = \{H_1,\;\dots,\;H_k\}$ of hyperplanes dual to edges labelled~$e_{v_i}$, then 
    \[\ell_{\sigma'}(g) = \ell_\sigma(g) + \sum_{i=1}^k\vert\calP_i\vert_w - \sum_{i=1}^k\vert v_i\vert_{w}.\]
\end{lemma}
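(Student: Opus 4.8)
The plan is to realise the conjugacy class $g$ as a combinatorial edge-loop, first in the blowup $\salb{\calP}$ and then in the collapsed complex underlying $\sigma_m^\calP$, and to read off the change in length directly from the geometry. First I would unwind the Whitehead move: by \emph{Lemma \ref{lem:CSV17, 3.2}} the move at $\sigma = (\sal, \alpha)$ associated to $(\calP, m)$ is realised by blowing up $\calP$ and then collapsing the hyperplane $H_m$ dual to the $m$-labelled edges of $\salb{\calP}$, with the two intermediate markings differing by the automorphism $h_m$; so it suffices to understand (a) the minimal length of an edge-loop in $\salb{\calP}$ representing the relevant class, and (b) the effect of the collapse along $H_m$.

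For (a): let $w = u_1 \cdots u_n$ be a cyclically reduced minimal-length edge-loop in the $1$-skeleton of $\sal$ representing $\alpha^{-1}(g)$, so $n = \ell_\sigma(g)$, and lift $w$ to an edge-loop $\tilde w$ in $\salb{\calP}$ projecting to $w$ under the collapse $c_\calP$. Using the description of the two-vertex complex $\salb{\calP}$ at the end of \S\ref{subsubsec:blowups}, traversing $u_i$ ends at the vertex $x$ if $u_i \in P$ and at $\overline x$ if $u_i \in \overline P$ (and one may remain at either vertex if $u_i \in \link{\calP}$), while $u_{i+1}$ must be started at $x$ if $u_{i+1}^{-1} \in P$ and at $\overline x$ if $u_{i+1}^{-1}\in \overline P$; hence one is forced to insert the edge $e_\calP$ dual to $H_\calP$ between $u_i$ and $u_{i+1}$ exactly when $u_i$ and $u_{i+1}^{-1}$ do not both lie in $P \cup \link{\calP}$ nor both lie in $\overline P \cup \link{\calP}$ — that is, exactly $|\calP|_w$ times in total. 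This is precisely the combinatorial content of the definition of $|\calP|_w$, so $\tilde w$ has length $\ell_\sigma(g) + |\calP|_w$, and I claim this is optimal. For the upper bound we have $\tilde w$; for the lower bound, any edge-loop representing the class projects under $c_\calP$ to a loop for $\alpha^{-1}(g)$ of length equal to its own length minus its number of $e_\calP$-crossings, and one must show a minimal such loop crosses $e_\calP$ at least $|\calP|_w$ times — the Whitehead-style no-cancellation step, handled in the cube-complex setting using that $\salb{\calP}$ is special with no separating hyperplanes (\cite{CharneyStambaughVogtmann17}, \S4).

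For (b): collapsing $H_m$ collapses each of the $|m|_w$ $m$-labelled edges traversed by $\tilde w$ — lifting does not change how many times each original edge of $\sal$ is traversed, and the inserted edges $e_\calP$ carry no $m$-label — so the collapsed loop has length $\ell_\sigma(g) + |\calP|_w - |m|_w$, and by the argument of (a) it is again minimal; this gives $\ell_{\sigma_m^\calP}(g) = \ell_\sigma(g) + |\calP|_w - |m|_w$. (Alternatively, one may substitute the generator images of $\varphi(\calP,m)^{-1}$ into $w$, freely reduce, and use $\link{\calP} = \link{m}^\pm$ to check no commutation relation permits further shortening; this is where the defining conditions on a $\Gamma$-Whitehead partition enter.) For the general statement, compatibility of $\Pi = \{\calP_1,\dots,\calP_k\}$ lets one blow up all the $\calP_i$ simultaneously, and the same lifting argument independently inserts $|\calP_i|_w$ copies of $e_{\calP_i}$ for each $i$, giving a minimal loop of length $\ell_\sigma(g) + \sum_i |\calP_i|_w$ in $\salb{\Pi}$; collapsing the treelike set $\calH$ of hyperplanes dual to the $v_i$-edges then removes $\sum_i |v_i|_w$ edges, which is the claimed formula. (One could instead iterate the single-move case via the Factorisation Lemma, \emph{Corollary \ref{cor:factorisation lemma (i.e. CSV17, 4.19)}}.)

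The main obstacle is the minimality claim — that the obvious lift of a minimal loop into the blowup, and the obvious collapse of it, are themselves length-minimal rather than merely furnishing an upper bound for $\ell_{\sigma_m^\calP}(g)$. This is exactly the Whitehead ``no-cancellation'' phenomenon, and it is the place where the structural analysis of blowups of the Salvetti complex from \cite{CharneyStambaughVogtmann17} is essential; everything else is bookkeeping with cyclic words and edge-paths.
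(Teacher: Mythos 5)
The paper does not actually prove this lemma: it is imported verbatim from Charney--Stambaugh--Vogtmann (their Lemma 6.4), cited but never argued for, so there is no ``paper's own proof'' for your sketch to track against. Your geometric reading is the right one, and matches the interpretation of these quantities that CSV set up in their \S 6.2: lift a minimal cyclic word $w$ to a loop $\tilde w$ in $\salb{\calP}$, observe that the forced $e_\calP$-crossings are counted by $|\calP|_w$ because link-letters give flexibility at the two vertices while split-letters do not, and then note that collapsing $H_m$ removes the $|m|_w$ $m$-labelled edges while $e_\calP$ carries no $m$-label. (You also correctly read the $|v|_w$ in the displayed formula as $|m|_w$; the statement as quoted in the paper carries over a notational slip.)

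The genuine gap is the minimality, which you flag but do not close, and I think you slightly misstate what must be shown. You phrase the hard step as showing that a minimal loop in $\salb{\calP}$ crosses $e_\calP$ at least $|\calP|_w$ times, and then assert that minimality of the collapsed loop follows ``by the argument of (a)''. That does not follow: after the collapse along $H_m$, what one must bound below is $|\gamma| - |\gamma|_m$ over all loops $\gamma$ in the blowup representing the class (with $|\gamma|_m$ the number of $m$-edge traversals), and minimizing $|\gamma|$ or $|\gamma| - |\gamma|_\calP$ does not a priori control $|\gamma| - |\gamma|_m$; the two collapses select different quantities. This is exactly the Whitehead no-cancellation phenomenon, as you say, but ``handled in the cube-complex setting using that $\salb{\calP}$ is special with no separating hyperplanes'' is not a proof of it. One needs either the RAAG normal-form / geodesic-word criterion to show that $\varphi^{-1}(w)$, after free reduction, admits no further shortening via shuffling (which is the substance of CSV's verification), or a careful translation-length argument on the universal cover comparing hyperplane-crossing counts under the two restriction quotients $c_{H_\calP}$ and $c_{H_m}$. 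Either way it is a RAAG-specific piece of work rather than bookkeeping, and your sketch leaves it entirely to a black box whose contents you have not described.
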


We now extend this notation as follows. For a marked Salvetti~$\sigma = \left(\sal, \alpha\right)$ and a~$\Gamma$-Whitehead automorphism~$\varphi = \left(\calP, v\right)$, define 
\[\vert \calP \vert'_\sigma \coloneqq \left( \vert\calP\vert_\calW,\; \vert\calP\vert_0,\; \vert\calP\vert_{w(g_1)},\; \vert\calP\vert_{w(g_2)},\;\dots\right),\] 
where
\begin{itemize}
    \item~$w(g)$ is a minimal length word representing~$\alpha^{-1}(g)$, for any conjugacy class~$g$ of~$\raag$;
    \item~$\vert\calP\vert_0 \coloneqq \sum_{g \in \calG_0} \vert \calP \vert_{w(g)}$;
    \item~$\vert\calP\vert_\calW \coloneqq \sum_{g \in \calW} \vert \calP \vert_{w(g)}$.
\end{itemize}

Similarly define~$\vert v \vert'_\sigma \coloneqq \left( \vert v \vert_\calW,\; \vert v \vert_0,\; \vert v \vert_{w(g_1)},\; \vert v \vert_{w(g_2)},\;\dots\right)$. We can now restate \emph{Lemma \ref{lem:CSV17 Lem 6.4}} as follows:

\begin{corollary}\label{cor:analogue of [CSV17], Cor 6.6}
    Let~$\Pi$ and~$\calH$ be as in \emph{Lemma \ref{lem:CSV17 Lem 6.4}}. Then 
    \[\lVert \sigma_\calH^\Pi \rVert' = \lVert \sigma \rVert' + \sum_{i=1}^k \vert \calP_i \vert'_\sigma - \sum_{i=1}^k \vert v_i \vert_\sigma.\]
\end{corollary}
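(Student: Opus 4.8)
The plan is to deduce this directly from the ``more general'' formula in \emph{Lemma \ref{lem:CSV17 Lem 6.4}} by reading it off one coordinate at a time. Since $\mathbb{Z}\times\mathbb{Z}\times\mathbb{Z}^\calG$ is an abelian group under coordinatewise addition (and the lexicographic order plays no role in the equation itself), it suffices to verify the claimed identity separately in the $\calW$-entry, in the $\calG_0$-entry, and in each entry indexed by $g_i$ with $i\geq 1$.

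For the $g_i$-entry I would apply \emph{Lemma \ref{lem:CSV17 Lem 6.4}} to the conjugacy class $g_i$, using the fixed minimal length word $w(g_i)$ representing $\alpha^{-1}(g_i)$ that appears in the definitions of $\vert\calP_j\vert'_\sigma$ and $\vert v_j\vert'_\sigma$; this yields $\ell_{\sigma_\calH^\Pi}(g_i) = \ell_\sigma(g_i) + \sum_{j=1}^k \vert\calP_j\vert_{w(g_i)} - \sum_{j=1}^k \vert v_j\vert_{w(g_i)}$, which is exactly the $g_i$-entry of the right-hand side. For the $\calG_0$-entry I would instead sum the identity of \emph{Lemma \ref{lem:CSV17 Lem 6.4}} over all $g\in\calG_0$ and invoke the definitions $\vert\calP_j\vert_0 = \sum_{g\in\calG_0}\vert\calP_j\vert_{w(g)}$ and $\vert v_j\vert_0 = \sum_{g\in\calG_0}\vert v_j\vert_{w(g)}$; the $\calW$-entry is handled in the same way with $\calG_0$ replaced by $\calW$. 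Assembling these coordinatewise equalities gives the asserted vector identity.

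There is essentially no obstacle here: all of the mathematical content lives in \emph{Lemma \ref{lem:CSV17 Lem 6.4}}, and the corollary is just a bookkeeping repackaging that records the effect of a blow-up/collapse on the whole lexicographic $\calW$-norm rather than on a single length function. The one subtlety worth a sentence is the choice of minimal length word: \emph{Lemma \ref{lem:CSV17 Lem 6.4}} is stated for an arbitrary such word, so by fixing once and for all the words $w(g)$ used to define the primed quantities and applying the lemma with precisely those words, no choice-dependence enters the argument.
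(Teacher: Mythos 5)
Your proposal is correct and matches the paper's intent: the paper simply asserts that the corollary ``is a direct translation of (\cite{CharneyStambaughVogtmann17}, Corollary 6.6) to our new norms,'' and your coordinatewise verification via \emph{Lemma \ref{lem:CSV17 Lem 6.4}} is exactly the unpacking of that phrase. One small remark on the statement itself: the last term is printed as $\vert v_i\vert_\sigma$ but, as your argument makes plain, it must be the primed vector quantity $\vert v_i\vert'_\sigma$ (there is no unprimed $\vert v\vert_\sigma$ defined, and the equation only typechecks in $\mathbb{Z}\times\mathbb{Z}\times\mathbb{Z}^{\calG}$ with the prime); your proof correctly establishes the intended primed version.
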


This is a direct translation of (\cite{CharneyStambaughVogtmann17}, Corollary 6.6) to our new norms.

\begin{definition}\label{def:partition reductive for sigma in new norm}
    Let~$\sigma$ be a marked Salvetti and let~$\calP$ be a~$\Gamma$-partition. We say that~$\calP$ is \emph{$\calW$-reductive} for~$\sigma$ if for some~$v \in \mx{\calP}$ the~$\Gamma$-Whitehead automorphism~$\varphi = \varphi = \left(\calP, v\right)$ reduces~$\lVert \sigma \rVert'$, that is,~$\lVert \sigma_v^\calP\rVert' < \lVert \sigma \rVert'$ (or equivalently,~$\vert \calP \vert'_\sigma < \vert v \vert'_\sigma$).
\end{definition}

\begin{corollary}[cf. \cite{CharneyStambaughVogtmann17}, Corollary 6.8]\label{cor:analogue of [CSV17], Cor 6.8}
    Let~$\Pi$ and~$\calH$ be as in \emph{Lemma \ref{lem:CSV17 Lem 6.4}}. If~$\lVert \sigma_\calH^\Pi \rVert' < \lVert \sigma \rVert'$, then some~$\calP_i \in \Pi$ is~$\calW$-reductive for~$\sigma$.
\end{corollary}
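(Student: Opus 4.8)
The plan is to imitate the proof of (\cite{CharneyStambaughVogtmann17}, Corollary 6.8) essentially verbatim, the only difference being that the tracked quantity is now the $\calW$-norm $\lVert\cdot\rVert'$ rather than Charney--Stambaugh--Vogtmann's norm $\lVert\cdot\rVert$. Since all of the combinatorial content has already been packaged into \emph{Corollary \ref{cor:analogue of [CSV17], Cor 6.6}}, what remains is a short order-theoretic deduction.

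First I would feed the hypothesis $\lVert\sigma_\calH^\Pi\rVert' < \lVert\sigma\rVert'$ into \emph{Corollary \ref{cor:analogue of [CSV17], Cor 6.6}} to obtain
\[
\sum_{i=1}^k \vert\calP_i\vert'_\sigma \;<\; \sum_{i=1}^k \vert v_i\vert'_\sigma
\]
in the ordered abelian group $G = \mathbb{Z}\times\mathbb{Z}\times\mathbb{Z}^\calG$ with the lexicographic order. Next I would invoke the elementary fact that in any ordered abelian group, adding the inequalities $a_i \geq b_i$ over all $i$ yields $\sum_i a_i \geq \sum_i b_i$; applied contrapositively, the displayed strict inequality forces $\vert\calP_j\vert'_\sigma < \vert v_j\vert'_\sigma$ for at least one index $j$. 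Finally I would recall that, by the construction of the blowup--collapse appearing in \emph{Lemma \ref{lem:CSV17 Lem 6.4}} (and underlying \emph{Lemma \ref{lem:CSV17, 3.2}}), the edge-label $v_j$ of the collapsed hyperplane $H_j \in \calH$ is a maximal element of $\splt{\calP_j}$, i.e. $v_j \in \mx{\calP_j}$. Then $\vert\calP_j\vert'_\sigma < \vert v_j\vert'_\sigma$ is exactly the statement that $\calP_j$ is $\calW$-reductive for $\sigma$ (\emph{Definition \ref{def:partition reductive for sigma in new norm}}), which is what is required.

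The step deserving genuine attention, as opposed to mechanical translation, is the last one: one must confirm that the vertex $v_j$ isolated by the order argument really is an admissible base for a $\Gamma$-Whitehead automorphism of $\calP_j$, i.e. is maximal in $\splt{\calP_j}$. This is a statement about the combinatorics of treelike collapses, and it is unaffected by the change of norm, since $\vert\cdot\vert'_\sigma = (\vert\cdot\vert_\calW,\, \vert\cdot\vert_\sigma)$ merely prepends one coordinate to the tuple used in \cite{CharneyStambaughVogtmann17}; so the maximality book-keeping carries over unchanged from that paper, and no new obstacle arises.
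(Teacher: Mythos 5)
Your proof is essentially the paper's proof: feed the hypothesis into \emph{Corollary~\ref{cor:analogue of [CSV17], Cor 6.6}}, and use the fact that a sum of differences in an ordered abelian group being negative forces at least one summand to be negative, which is exactly the $\calW$-reductivity condition. The only place you deviate is in the justification of the ``step deserving genuine attention'': the fact that the $v_i$'s can be chosen so that each $(\calP_i, v_i)$ is a $\Gamma$-Whitehead pair (equivalently, $v_i \in \mx{\calP_i}$) does not come for free from the construction underlying \emph{Lemma~\ref{lem:CSV17 Lem 6.4}} or \emph{Lemma~\ref{lem:CSV17, 3.2}} as you suggest --- a priori the treelike set $\calH$ need not pair up with the $\calP_i$'s that way. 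The paper invokes \emph{Theorem~\ref{thm:CSV17, 4.12}} (the hyperplane-exchange result for treelike sets) to reorder $\calH$ so that this pairing holds; that is the precise device you are implicitly using, and your claim that ``the maximality book-keeping carries over unchanged'' is correct for exactly that reason, but you should cite it rather than attribute it to the blowup-collapse construction itself.
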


\begin{proof}
    We follow the proof of (\cite{CharneyStambaughVogtmann17}, Corollary 6.8). By \emph{Theorem \ref{thm:CSV17, 4.12}}, we can order the elements of~$\calH$ so that if~$e_{v_i}$ is the edge dual to~$H_i$, then~$\left(\calP_i, v_i\right)$ is a~$\Gamma$-Whitehead pair. If~$\lVert \sigma_\calH^\Pi\rVert' < \lVert \sigma \rVert'$, then by \emph{Corollary \ref{cor:analogue of [CSV17], Cor 6.6}}, we have 
    \[\sum_{i=1}^k \vert \calP_i \vert'_\sigma - \sum_{i=1}^k \vert v_i \vert_\sigma < \mathbf{0}.\] 
    Hence we must have some~$i$ with~$\vert\calP_i\vert'_\sigma < \vert v_i\vert'_\sigma$. Hence~$\lVert \sigma_{v_i}^{\calP_i} \rVert' < \lVert \sigma \rVert'$, and we're done.
\end{proof}

\subsection{Proof of contractibility of~$\Kmin{\calW}$}\label{subsec:proof of contractibility of Kmin}

We have now established analogues of Lemma 6.2, Corollary 6.8, and Proposition 6.22 from \cite{CharneyStambaughVogtmann17}. We are skipping Corollary 6.20, and we have the Poset Lemma for free, so (as can be seen from \emph{Figure \ref{fig:dependence diagram in CSV17 Ch. 6}}), all that remains is to obtain an analogue of the `Pushing Lemma', (\cite{CharneyStambaughVogtmann17}, Lemma 6.23).

We will obtain an analogue with minimal work, since Charney--Stambaugh--Vogtmann's argument does not explicitly rely on their norm. However, the statement and proof do use notation and terminology which we have adapted to our new context, so we will give an abbreviation of the proof given in \cite{CharneyStambaughVogtmann17}, with our adapted notation where necessary. 

\emph{Star graphs} have been used to study automorphisms of free groups (see e.g. \cite{HoareCoinitialGraphs79}). Charney--Stambaugh--Vogtmann develop this theory to work for RAAGs (see \cite{CharneyStambaughVogtmann17}, \S6.3). We will not reproduce this theory, but we will need one consequence, which we rephrase slightly (in \emph{Corollary \ref{cor:Lemma 6.15 + Lemma 6.16}}).

Let~$\calP$ and~$\calQ$ be~$\Gamma$-partitions. We call the four side-side intersections~$P \cap Q$,~$P \cap \overline{Q}$,~$\overline{P} \cap Q$,~$\overline{P} \cap \overline{Q}$ \emph{quadrants} (or possibly \emph{$\left(\calP, \calQ\right)$-quadrants}). Two quadrants are \emph{opposite} if one can be obtained from the other by switching sides of both~$\calP$ and~$\calQ$. 

It will now be convenient to allow (as we previously didn't)~$\Gamma$-partitions to have one side a singleton. That is,~$\calP = \left(P \;\vert\; \overline{P} \;\vert\; \link{\calP}\right)$ with~$P = \{v\}$ for some~$v \in V^\pm$. We call such a~$\Gamma$-partition \emph{degenerate}. The associated~$\Gamma$-Whitehead automorphism is an inversion, and for every marked Salvetti~$\sigma$, and any word~$w \in F(V)$ we have~$\vert \calP \vert_w = \vert v \vert_w$---so in particular, a degenerate~$\Gamma$-partition can never be~$\calW$-reductive.

\begin{lemma}[\cite{CharneyStambaughVogtmann17}, Lemma 6.16]\label{lem:[CSV17], Lemma 6.16}
    For any incompatible~$\Gamma$-partitions~$\calP$ and~$\calQ$, with maximal elements~$v$ and~$w$ respectively, there is a pair of opposite quadrants such that each defines a (possibly degenerate)~$\Gamma$-partition with maximal element in~$\{v^\pm, w^\pm\}$.
\end{lemma}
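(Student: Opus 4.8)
The plan is to read off from incompatibility the two facts it really provides, normalise notation, and then reduce to the classical Whitehead combinatorics together with a check that the chosen quadrants are honest $\Gamma$-partitions. First I would take the bases of $\calP$ and $\calQ$ to be the maximal elements $v$ and $w$, so that $\link{v}^\pm=\link{\calP}$ and $\link{w}^\pm=\link{\calQ}$ and, after possibly renaming sides, $v\in P$, $v^{-1}\in\overline P$, $w\in Q$, $w^{-1}\in\overline Q$. Incompatibility means, firstly, that $\calP$ and $\calQ$ are not adjacent, so no base of $\calP$ commutes with a base of $\calQ$; hence neither $v$ nor $v^{-1}$ lies in $\link{\calQ}$, and neither $w$ nor $w^{-1}$ lies in $\link{\calP}$ (in particular the elements $v,v^{-1},w,w^{-1}$ are pairwise distinct). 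Secondly it means no side of $\calP$ is disjoint from a side of $\calQ$, so all four quadrants $P\cap Q$, $P\cap\overline Q$, $\overline P\cap Q$, $\overline P\cap\overline Q$ are non-empty. By relabelling the sides of $\calQ$ if needed I would finally arrange $v\in P\cap Q$.

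The next step is purely combinatorial. By the first paragraph, $v^{-1}$ lies in $\overline P\cap Q$ or $\overline P\cap\overline Q$; $w$ lies in $P\cap Q$ or $\overline P\cap Q$; and $w^{-1}$ lies in $P\cap\overline Q$ or $\overline P\cap\overline Q$. A short case analysis over these positions — identical to the one used for incompatible Whitehead automorphisms of a free group — shows that at least one of the two opposite pairs $\{P\cap Q,\ \overline P\cap\overline Q\}$ and $\{P\cap\overline Q,\ \overline P\cap Q\}$ has the feature that each of its two quadrants contains an element of $\{v^\pm,w^\pm\}$ whose inverse lies outside that quadrant. (Concretely: $P\cap Q$ always works via $v$; if its opposite $\overline P\cap\overline Q$ contains neither $v^{-1}$ nor $w^{-1}$, then necessarily $v^{-1}\in\overline P\cap Q$ and $w^{-1}\in P\cap\overline Q$, and then the other pair works via $v^{-1}$ and $w^{-1}$.) I would fix such a pair $\{R_1,R_2\}$ and witnesses $x_1\in R_1$, $x_2\in R_2$ taken from $\{v^\pm,w^\pm\}$.

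It remains to verify that each $R_j$ is one side of a genuine (possibly degenerate) $\Gamma$-partition whose maximal element may be taken to be $x_j$. The candidate is $\calR_j\coloneqq\left(R_j\;\vert\;(V^\pm\setminus\link{x_j}^\pm)\setminus R_j\;\vert\;\link{x_j}^\pm\right)$, where $\link{x_j}^\pm$ is $\link{\calP}$ or $\link{\calQ}$ according as $x_j$ is a power of $v$ or of $w$; since $R_j$ is contained in a side of the corresponding original partition it is disjoint from this link, and $x_j$ is split by $\calR_j$ by the choice of witness. Thickness of the complementary side is immediate, as that side contains a whole side of $\calP$ or $\calQ$; and if $R_j$ should turn out to be a singleton I would simply record $\calR_j$ as degenerate, which the statement allows. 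The substantive point, which I expect to be the main obstacle, is that each connected component of $\Gamma^\pm\setminus\str{x_j}^\pm$ must lie wholly inside $R_j$ or wholly inside its complement: such a component is already wholesale on one side of the original partition based at (a representative of) $x_j$, so one must show it is not cut by the \emph{other} partition. A component that is disjoint from the other partition's link and does not contain the other base is connected and misses that base's star, hence lies on a single side of the other partition and is not cut; the components that do meet the other link, or contain the other base, are exactly the place where one uses the maximality of $v$ and $w$ (and, if necessary, the degenerate-quadrant escape hatch) to exclude a genuine straddle and to see that $x_j$ is a maximal element of $\calR_j$. Completing this last analysis — which is the content of (\cite{CharneyStambaughVogtmann17}, Lemma 6.16) carried over from free-group Whitehead partitions to $\Gamma$-partitions — finishes the proof.
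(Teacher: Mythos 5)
The paper itself does not reprove this lemma: it cites (\cite{CharneyStambaughVogtmann17}, Lemma 6.16) and records only the observation that the proof there ``is entirely combinatorial, and does not depend on any choice of norm,'' which is what licenses importing it unchanged. Your proposal, by contrast, attempts a proof from scratch, so the comparison is against the original CSV17 argument rather than anything in this paper.

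The first half of your outline is sound. Taking $v, w$ as bases, using non-adjacency to place $v^{\pm}\notin\link{\calQ}$ and $w^{\pm}\notin\link{\calP}$, using the second clause of incompatibility to get all four quadrants non-empty, and then the case analysis showing that $\{P\cap Q, \overline{P}\cap\overline{Q}\}$ works unless $\overline{P}\cap\overline{Q}$ misses both $v^{-1}$ and $w^{-1}$ (in which case $\{P\cap\overline{Q}, \overline{P}\cap Q\}$ works via $w^{-1}$ and $v^{-1}$) is correct, and this matches the shape of the CSV17 argument.

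The gap is in the final paragraph. You correctly observe that a component of $\Gamma^{\pm}\setminus\str{x_j}^{\pm}$ that avoids $\str{}^{\pm}$ of the other base is automatically on one side of the other partition and therefore not cut. But for the components that do meet the other partition's link or contain the other base — which, as you say, is where maximality must be used — you do not actually carry out the argument; you state that ``completing this last analysis, which is the content of (\cite{CharneyStambaughVogtmann17}, Lemma 6.16), finishes the proof.'' As a self-contained proof this is circular: the step you defer is the substantive part of the lemma, not a routine detail, and CSV17's treatment of it is a genuine case analysis (together with a separate verification that $x_j$ is still maximal among vertices split by the new quadrant, which you also have not addressed beyond noting it is needed). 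Until that case analysis is written out, this is a proof sketch with the key verification missing.
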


The proof of this lemma is entirely combinatorial, and does not depend on any choice of norm. 

\begin{lemma}[\cite{CharneyStambaughVogtmann17}, Lemma 6.15, special case]\label{lem:[CSV17], Lemma 6.15 special case}
    Let~$\calP$,~$\calQ$ be incompatible~$\Gamma$-partitions. Suppose that~$P \cap \overline{Q}$ and~$\overline{P} \cap Q$ are the two opposite quadrants generated by \emph{Lemma \ref{lem:[CSV17], Lemma 6.16}}, and let~$\calX$ and~$\calY$ be the corresponding~$\Gamma$-partitions. Then for any minimal length word~$w$ representing a conjugacy class~$g$ of~$\raag$, we have 
    \[\vert \calX \vert_w + \vert \calY \vert_w \leq \vert \calP \vert_w + \vert \calQ \vert_w.\] 
\end{lemma}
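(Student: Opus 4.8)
The plan is to get this as a special case of (\cite{CharneyStambaughVogtmann17}, Lemma 6.15), whose proof is purely combinatorial: it manipulates $\Gamma$-partitions and cyclic words and never invokes the norm $\lVert\cdot\rVert$, so it transfers verbatim to the present setting. I would first record the pointwise reformulation of the relevant quantity. Unwinding the definition of $\lvert\cdot\rvert_w$, using $R\cup\link{\calR}=V^\pm\smallsetminus\overline{R}$ and $\overline{R}\cup\link{\calR}=V^\pm\smallsetminus R$, one sees that for a $\Gamma$-partition $\calR=(R\mid\overline R\mid\link{\calR})$ the number $\lvert\calR\rvert_w$ counts exactly those cyclically adjacent pairs $u_iu_{i+1}$ of $w$ for which precisely one of $u_i,u_{i+1}^{-1}$ lies in $R$ and the other in $\overline R$; call such a pair one that \emph{straddles} $\calR$. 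Hence it suffices to prove, for every $a,b\in V^\pm$ (thought of as $a=u_i$, $b=u_{i+1}^{-1}$ for a cyclically adjacent pair),
\[
[\,a,b\text{ straddle }\calX\,]+[\,a,b\text{ straddle }\calY\,]\;\le\;[\,a,b\text{ straddle }\calP\,]+[\,a,b\text{ straddle }\calQ\,],
\]
and then sum over all cyclically adjacent pairs of $w$.

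To prove this local inequality I would record, for each of $a$ and $b$, its pair of sides $(\text{side w.r.t. }\calP,\ \text{side w.r.t. }\calQ)\in\{P,\overline P,\link{\calP}\}\times\{Q,\overline Q,\link{\calQ}\}$, and then use: (i) $X=P\cap\overline Q$ and $Y=\overline P\cap Q$, so in particular $X\subseteq P$ and $Y\subseteq\overline P$; (ii) the control on $\link{\calX}$ and $\link{\calY}$ supplied by \emph{Lemma \ref{lem:[CSV17], Lemma 6.16}} — namely that $\calX$ and $\calY$ are genuine $\Gamma$-partitions whose maximal elements lie in $\{v^\pm,w^\pm\}$, which pins down $\link{\calX}$ and $\link{\calY}$ as $\link{v}^\pm$ and $\link{w}^\pm$ (up to relabelling) and, as in \cite{CharneyStambaughVogtmann17}, is exactly the amount of information needed to rule out the dangerous configurations (e.g. where $a,b$ straddle $\calX$ but neither $\calP$ nor $\calQ$, which could happen if some letter in $\link{\calP}\cap\link{\calQ}$ "escaped" into strict sides of both $\calX$ and $\calY$). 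With these in hand the inequality reduces to a finite case-check over the (at most) $9\times 9$ position pairs of $(a,b)$; the engine of the check is that $a$ cannot lie in both $X$ and $Y$ (since $X\subseteq P$, $Y\subseteq\overline P$), which already caps the left-hand side at $1$ except when, after swapping $a\leftrightarrow b$ if necessary, $a\in P\cap\overline Q$ and $b\in\overline P\cap Q$ — and there the right-hand side equals $2$ as well.

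The main obstacle is precisely the link bookkeeping in step (ii). In the free-group case there are no links, $\calX,\calY$ are just the complementary partitions of the quadrants, and the displayed inequality is the classical pointwise Whitehead estimate, whose verification is entirely routine. With links present one must genuinely use how the partition attached to a quadrant is constructed in (\cite{CharneyStambaughVogtmann17}, §6.3) together with the maximal-element conclusion of \emph{Lemma \ref{lem:[CSV17], Lemma 6.16}} to exclude the configurations in which a pair straddles $\calX$ or $\calY$ but neither of $\calP$, $\calQ$. Since all of this is carried out in \cite{CharneyStambaughVogtmann17} without any reference to a norm, the cleanest write-up is to reduce to the local inequality as above, note that the remaining combinatorics is identical to that in the proof of (\cite{CharneyStambaughVogtmann17}, Lemma 6.15), and cite it; no new ideas beyond bookkeeping are required.
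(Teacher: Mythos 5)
The paper offers no proof of this lemma; it is imported wholesale from \cite{CharneyStambaughVogtmann17}, and your headline plan of reusing that proof because it is norm-free is exactly what the paper does. However, the detailed sketch you give contains a genuine error at the reduction step. You claim the inequality can be established pair-by-pair: for each cyclically adjacent pair $(a,b)=(u_i,u_{i+1}^{-1})$, the number of partitions among $\calX,\calY$ straddled is at most the number among $\calP,\calQ$ straddled. This pointwise inequality is true when $\Gamma$ has no edges (it is just submodularity of the cut function of the star graph), but it is false once links are present, and the link bookkeeping from \emph{Lemma \ref{lem:[CSV17], Lemma 6.16}} does \emph{not} rule out the dangerous configuration you yourself identified. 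Concretely, suppose \emph{Lemma \ref{lem:[CSV17], Lemma 6.16}} produces $\calX$ with maximal element $m=\max(\calP)$ (so $\link{\calX}=\link{\calP}$) and $\calY$ with maximal element $n=\max(\calQ)$ (so $\link{\calY}=\link{\calQ}$); this case certainly occurs, e.g.\ Case (3) of its proof as used in the Pushing Lemma. Take $a\in P\cap\overline{Q}=X$ and $b\in P\cap\link{\calQ}$, which is nonempty whenever some vertex in a $P$-component commutes with $n$ but not with $m$ (and $b\neq m^{\pm}$ since $m$ and $n$ do not commute). Then $b\in\overline{X}$ because $b\in P\smallsetminus X$ and $b\notin\link{\calP}$, so $(a,b)$ straddles $\calX$; but $a,b$ are on the same side of $\calP$, $b\in\link{\calQ}$, and $b\in\link{\calY}$, so the pair straddles none of $\calP,\calQ,\calY$. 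The local inequality would read $1\leq 0$.

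The global inequality nevertheless survives, but only by cancellation between pairs: e.g.\ for the length-two word $w=ab^{-1}$ the companion pair $(b^{-1},a^{-1})$ contributes $0$ to the left side and $1$ to the right, restoring $1\leq 1$ overall. So the mechanism in \cite{CharneyStambaughVogtmann17} is genuinely global (this is precisely what the star-graph machinery of their \S6.3 is for), and the statement you want to reduce to and then ``cite'' is not actually what is proved there. Your parenthetical that ``the right-hand side equals $2$ as well'' only addresses the LHS$\,=2$ case and leaves the LHS$\,=1$, RHS$\,=0$ case above untreated. The correct and shortest write-up, matching the paper, is to cite \cite{CharneyStambaughVogtmann17}, Lemma 6.15 outright, without attempting a local reformulation.
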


\begin{corollary}\label{cor:Lemma 6.15 + Lemma 6.16}
    With the same setup as in \emph{Lemma \ref{lem:[CSV17], Lemma 6.15 special case}}, we have 
    \[\vert \calX \vert'_\sigma + \vert \calY \vert'_\sigma \leq \vert \calP \vert'_\sigma + \vert \calQ \vert'_\sigma.\]
\end{corollary}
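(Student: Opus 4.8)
The plan is to obtain the displayed inequality coordinatewise in the ordered abelian group $\mathbb{Z} \times \mathbb{Z} \times \mathbb{Z}^{\calG}$, and then observe that coordinatewise dominance forces lexicographic dominance. Recall that, writing $\sigma = \left(\sal, \alpha\right)$ and letting $w(g)$ be a minimal length word representing $\alpha^{-1}(g)$ for each conjugacy class $g$ of $\raag$, we have $\vert \calP \vert'_\sigma = \left( \vert\calP\vert_\calW,\; \vert\calP\vert_0,\; \vert\calP\vert_{w(g_1)},\; \vert\calP\vert_{w(g_2)},\;\dots\right)$ with $\vert\calP\vert_\calW = \sum_{g \in \calW} \vert\calP\vert_{w(g)}$ and $\vert\calP\vert_0 = \sum_{g \in \calG_0} \vert\calP\vert_{w(g)}$, and likewise for $\calQ$, $\calX$, $\calY$ (the choices $w(g)$ depending only on $\sigma$, not on the partition).

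First I would fix a conjugacy class $g$ of $\raag$ and apply \emph{Lemma \ref{lem:[CSV17], Lemma 6.15 special case}} to the minimal length word $w = w(g)$, which represents the conjugacy class $\alpha^{-1}(g)$; this yields $\vert\calX\vert_{w(g)} + \vert\calY\vert_{w(g)} \leq \vert\calP\vert_{w(g)} + \vert\calQ\vert_{w(g)}$ for every $g$. (For the degenerate $\Gamma$-partitions possibly occurring among $\calX, \calY$ the quantities $\vert\cdot\vert_{w(g)}$ are still defined, so this causes no trouble.) Summing this inequality over $g \in \calW$ gives it in the first coordinate of $\vert\cdot\vert'_\sigma$; summing over $g \in \calG_0$ gives it in the second; and taking $g = g_i$ gives it in the $(i+2)$-nd coordinate for each $i \geq 1$. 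Since addition in $\mathbb{Z} \times \mathbb{Z} \times \mathbb{Z}^{\calG}$ is coordinatewise, this shows $\vert \calX \vert'_\sigma + \vert \calY \vert'_\sigma \leq \vert \calP \vert'_\sigma + \vert \calQ \vert'_\sigma$ in every coordinate.

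Finally I would conclude using the elementary fact that if $a, b \in \mathbb{Z} \times \mathbb{Z} \times \mathbb{Z}^{\calG}$ satisfy $a_j \leq b_j$ for all $j$, then $a \leq b$ in the lexicographic order: either $a = b$, or the first coordinate in which they differ has $a_j < b_j$. Applying this with $a = \vert \calX \vert'_\sigma + \vert \calY \vert'_\sigma$ and $b = \vert \calP \vert'_\sigma + \vert \calQ \vert'_\sigma$ gives exactly the claim. I do not expect any genuine obstacle here; the only points requiring care are matching the word $w(g)$ from the definition of $\vert\cdot\vert'_\sigma$ with the hypothesis of \emph{Lemma \ref{lem:[CSV17], Lemma 6.15 special case}} (which is stated for an arbitrary minimal length word representing a conjugacy class of $\raag$), and correctly tracking the three different ways the coordinates of $\vert\cdot\vert'_\sigma$ aggregate the per-word quantities $\vert\cdot\vert_{w(g)}$ — a sum over $\calW$, a sum over $\calG_0$, and then the individual values $\vert\cdot\vert_{w(g_i)}$.
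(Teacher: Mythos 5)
Your proof is correct, and it is the natural derivation the paper clearly has in mind (the corollary is stated without a supplied proof). You apply Lemma \ref{lem:[CSV17], Lemma 6.15 special case} to each word $w(g)$ appearing in the definition of $\vert \cdot \vert'_\sigma$ — noting, as you correctly verify, that $w(g)$ is indeed a minimal length word representing a conjugacy class of $\raag$ (namely $\alpha^{-1}(g)$) — then aggregate by summing over $\calW$, over $\calG_0$, and taking individual values, and finish by observing that coordinatewise $\leq$ implies lexicographic $\leq$. This matches what the paper would say if it had written out the proof.
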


We are now ready for our version of the `Pushing Lemma', (\cite{CharneyStambaughVogtmann17}, Lemma 6.23). 

\begin{lemma}[Pushing Lemma]\label{lem:pushing lemma (for us)}
    Fix a marked Salvetti~$\sigma$. Suppose that~$\left(M, m\right)$ is a~$\Gamma$-Whitehead pair which is~$\calW$-reductive for~$\sigma$ such that
    \begin{enumerate}[(i)]
        \item~$\link{M}$ is maximal among links of~$\Gamma$-partitions which are~$\calW$-reductive for~$\sigma$;
        \item for any~$\Gamma$-Whitehead pair~$\left(Q, w\right)$ with~$\link{\calQ} = \link{\calM}$, we have ~$\vert \calM \vert'_\sigma - \vert m \vert'_\sigma < \vert \calQ \vert'_\sigma - \vert w \vert'_\sigma$.
    \end{enumerate}
    Let~$\calP$ be a~$\Gamma$-partition which is~$\calW$-reductive for~$\sigma$ and which is not compatible with~$\calM$. Then at least one of the~$\left(\calP, \calM\right)$-quadrants involving the side of~$\calM$ containing~$m^{-1}$ determines a~$\Gamma$-partition which has link equal to~$\link{\calP}$ and is~$\calW$-reductive for~$\sigma$.
\end{lemma}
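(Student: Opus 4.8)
The plan is to transplant Charney--Stambaugh--Vogtmann's proof of their Pushing Lemma (\cite{CharneyStambaughVogtmann17}, Lemma 6.23) essentially verbatim, replacing every scalar quantity $\vert\calP\vert_w$ or $\vert v\vert_w$ by the vector-valued $\vert\calP\vert'_\sigma$ or $\vert v\vert'_\sigma$, and using \emph{Corollary \ref{cor:Lemma 6.15 + Lemma 6.16}} wherever their argument invokes their Lemma 6.15. As emphasised in \S\ref{sec:discussion of proof}, their argument uses no arithmetic property of the norm beyond its taking values in an ordered abelian group, and $\mathbb{Z}\times\mathbb{Z}\times\mathbb{Z}^{\calG}$ is one, so every inequality manipulation carries over unchanged. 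Concretely, since $\calP$ and $\calM$ are incompatible, I would first apply \emph{Lemma \ref{lem:[CSV17], Lemma 6.16}}, with maximal elements $v\in\mx{\calP}$ and $m\in\mx{\calM}$, to obtain a pair of opposite $(\calP,\calM)$-quadrants, each determining a (possibly degenerate) $\Gamma$-partition whose maximal element lies in $\{v^\pm,m^\pm\}$. Exactly one quadrant of this opposite pair involves the side of $\calM$ containing $m^{-1}$; let $\calX$ be the $\Gamma$-partition it determines and $\calY$ the one determined by the opposite quadrant (which involves the side containing $m$). Because a non-degenerate $\Gamma$-partition has link equal to $\link{u}^{\pm}$ for any of its maximal split vertices $u$, and $\link{v}^{\pm}=\link{\calP}$ for $v\in\mx{\calP}$ while $\link{m}^{\pm}=\link{\calM}$ for $m\in\mx{\calM}$, the link of $\calX$ must be $\link{\calP}$ or $\link{\calM}$ according as its maximal element has the form $v^\pm$ or $m^\pm$.

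The heart of the argument is then a bookkeeping computation. By \emph{Corollary \ref{cor:Lemma 6.15 + Lemma 6.16}},
\[\vert \calX \vert'_\sigma + \vert \calY \vert'_\sigma \;\leq\; \vert \calP \vert'_\sigma + \vert \calM \vert'_\sigma,\]
while $\calW$-reductivity of $\calP$ and of $\calM$ (Definition \ref{def:partition reductive for sigma in new norm}), together with $\vert v^\pm\vert'_\sigma=\vert v\vert'_\sigma$, gives $\vert \calP \vert'_\sigma<\vert v\vert'_\sigma$ and $\vert \calM \vert'_\sigma<\vert m\vert'_\sigma$. Feeding these into the displayed inequality, exactly as in \cite{CharneyStambaughVogtmann17}, one shows that the maximal element of $\calX$ cannot be of the form $m^\pm$: if it were, then $\link{\calX}=\link{\calM}$, so $(\calX,m)$ is a $\Gamma$-Whitehead pair with the same link as $\calM$, and hypothesis (ii) forces $\vert\calM\vert'_\sigma<\vert\calX\vert'_\sigma$; the displayed inequality then pushes $\vert\calY\vert'_\sigma$ below $\vert\calM\vert'_\sigma$, and a short case split on the maximal element of $\calY$ — invoking hypothesis (ii) when that element is $m^\pm$, and hypothesis (i) (maximality of $\link{\calM}$ among links of $\calW$-reductive partitions) when it is $v^\pm$ — produces a contradiction. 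Hence $\calX$ has maximal element $v^\pm$ and $\link{\calX}=\link{\calP}$.

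A parallel application of the displayed inequality, again together with hypotheses (i) and (ii), then shows $\calX$ is itself $\calW$-reductive for $\sigma$: if not, $\vert\calX\vert'_\sigma\geq\vert v\vert'_\sigma$ forces $\vert\calY\vert'_\sigma<\vert\calM\vert'_\sigma$ (using $\vert\calP\vert'_\sigma<\vert v\vert'_\sigma$), and the same case analysis on the maximal element of $\calY$ yields a contradiction. Since $\calX$ arises from a $(\calP,\calM)$-quadrant involving the side of $\calM$ containing $m^{-1}$, has link $\link{\calP}$, and is $\calW$-reductive for $\sigma$, this is precisely the desired conclusion.

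The genuinely new content — as opposed to mere translation — is twofold. First, one must keep careful track of which of the four $(\calP,\calM)$-quadrants \emph{Lemma \ref{lem:[CSV17], Lemma 6.16}} returns and which side of $\calM$ each involves, so that the partition finally produced really does lie on the $m^{-1}$-side and so that the link-identification above is correct; this is the bookkeeping that replaces the corresponding (norm-free) argument in \cite{CharneyStambaughVogtmann17}. Second, one must check that hypotheses (i) and (ii) slot into Charney--Stambaugh--Vogtmann's case analysis at exactly the points where their norm-specific reasoning excluded a quadrant partition from acquiring link $\link{\calM}$ or from failing to reduce, and that degenerate $\Gamma$-partitions — those with a singleton side, which can never be $\calW$-reductive since $\vert\calP\vert_w=\vert v\vert_w$ for them — are disposed of as there. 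These checks, not any arithmetic, are where the argument needs real verification; the translation of hypothesis (ii) in particular, being a statement about the \emph{difference} $\vert\calM\vert'_\sigma-\vert m\vert'_\sigma$ in the vectorised norm, is the subtlest point.
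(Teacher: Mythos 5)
Your proposal captures the overall architecture correctly — apply Lemma~\ref{lem:[CSV17], Lemma 6.16} to produce opposite quadrants, bound $\vert\calX\vert'_\sigma + \vert\calY\vert'_\sigma$ via Corollary~\ref{cor:Lemma 6.15 + Lemma 6.16}, and close with hypotheses (i) and (ii), all within an ordered abelian group so that CSV's manipulations carry over — but it diverges structurally from the paper's proof in a way that leaves a gap. The paper's proof does not, in its main case, rely only on the \emph{statement} of Lemma~\ref{lem:[CSV17], Lemma 6.16}, which guarantees a \emph{single} pair of opposite quadrants; it reaches inside Case~(3) of the \emph{proof} of (\cite{CharneyStambaughVogtmann17}, Lemma~6.16) to extract \emph{two} pairs of opposite quadrants — one pair $X,Y$ carrying $v, v^{-1}$ and another pair $X', Y'$ carrying $m, m^{-1}$. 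Applying Corollary~\ref{cor:Lemma 6.15 + Lemma 6.16} to each pair separately and feeding in hypothesis (ii) yields $\left(\vert Y \vert'_\sigma - \vert m \vert'_\sigma\right) + \left(\vert Y' \vert'_\sigma - \vert m \vert'_\sigma\right) < \mathbf{0}$, forcing at least one of two specific candidates to be reductive. Your argument, which fixes a single pair and tries to show its $m^{-1}$-side quadrant $\calX$ works, cannot reproduce this combination of two inequalities.

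Concretely, your case analyses do not close. In the first, from $\vert\calM\vert'_\sigma < \vert\calX\vert'_\sigma$ and the Corollary one deduces $\vert\calY\vert'_\sigma < \vert\calP\vert'_\sigma$, not $\vert\calY\vert'_\sigma < \vert\calM\vert'_\sigma$ as you assert; these bounds are comparable only if you already know $\vert\calP\vert'_\sigma\leq\vert\calM\vert'_\sigma$. In the second, if $\calX$ is not reductive so $\vert\calX\vert'_\sigma\geq\vert v\vert'_\sigma$ and hence $\vert\calY\vert'_\sigma<\vert\calM\vert'_\sigma$, the subcase where $\calY$'s maximal element is $v^\pm$ does not yield a contradiction from hypothesis (i): that hypothesis says $\link{\calM}$ is maximal among links of reductive partitions, which does not forbid a reductive partition with link $\link{\calP}$ (indeed $\calP$ itself is one), and $\vert\calY\vert'_\sigma < \vert\calM\vert'_\sigma < \vert m\vert'_\sigma$ does not imply $\vert\calY\vert'_\sigma < \vert v\vert'_\sigma$ when $\link{\calP}\neq\link{\calM}$. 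To repair this you would need to follow the paper and use both pairs of opposite quadrants from Case~(3) of CSV's proof of Lemma~6.16 (together with a separate treatment of the cases where only one pair arises), rather than pinning down a unique $\calX$ from a single pair.
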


\begin{proof}
    The proof of Lemma 6.23 in \cite{CharneyStambaughVogtmann17} transfers exactly over to our case, using \emph{Corollary \ref{cor:Lemma 6.15 + Lemma 6.16}}. We give a short summary. 

   ~$\calP$ is~$\calW$-reductive for~$\sigma$, so~$\vert \calX \vert'_\sigma - \vert v \vert'_\sigma < \mathbf{0}$ for some~$v \in \max{\calP}$. 
    
    First suppose that~$v \in P$,~$v^{-1} \in \overline{P}$,~$m \in M$, and~$m^{-1} \in \overline{M}$. By Case (3) in the proof of \emph{Lemma \ref{lem:[CSV17], Lemma 6.16}} (\cite{CharneyStambaughVogtmann17}, Lemma 6.16), we have opposite~$\left(\calP, \calM\right)$-quadrants~$X$,~$Y$ such that~$\left(X, v\right)$ and~$\left(X, v^{-1}\right)$ are~$\Gamma$-Whitehead pairs, and opposite quadrants~$X'$,~$Y'$ such that~$\left(X', m\right)$ and~$\left(Y', m^{-1}\right)$. Applying \emph{Corollary \ref{cor:Lemma 6.15 + Lemma 6.16}} to both pairs of quadrants, recalling that~$\calP$ is~$\calW$-reductive for~$\sigma$, and using hypothesis (ii), one can deduce that 
    \[\left(\vert \calY \vert'_\sigma - \vert m \vert'_\sigma\right) + \left(\vert \calY' \vert'_\sigma - \vert m \vert'_\sigma\right) < \mathbf{0},\] 
    and so that one of the pairs~$\left(Y, m\right)$,~$\left(Y', m^{-1}\right)$ is~$\calW$-reductive for~$\sigma$ (and has link equal to~$\link{\calP}$), as required.

    In any other case, a similar argument holds. Apply \emph{Lemma \ref{lem:[CSV17], Lemma 6.16}} to find a pair of opposite~$\left(\calP, \calM\right)$-quadrants~$X$,~$Y$, and then using hypotheses (i) \& (ii) along with \emph{Corollary \ref{cor:Lemma 6.15 + Lemma 6.16}}, one can conclude that one or the other of these is reductive and has link equal to~$\link{\calP}$, as required. 
\end{proof}

Finally, we need the \emph{Poset Lemma}, a special case of Quillen's \emph{Theorem A} from \cite{QuillenHigherAlgebraicKTheory73}.

\begin{lemma}[Poset Lemma]\label{lem:poset lemma}
    Let~$P$ be a poset of finite height and let~$f \colon P \to P$ be a poset map such that~$f(p) \leq p$ for all~$p \in P$. Then~$f$ induces a deformation retraction from the geometric realisation of~$P$ to the geometric realisation of the image~$f(P)$.
\end{lemma}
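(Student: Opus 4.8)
The plan is to deduce this from the standard fact that a poset map $f\colon P\to P$ with $f(p)\le p$ for all $p$ (equivalently, the analogous statement with $f(p)\ge p$) is homotopic to the identity on the level of geometric realisations, and moreover the homotopy can be taken to factor through $f(P)$. Concretely, I would argue as follows. Write $|P|$ for the geometric realisation (the order complex, whose simplices are the finite chains of $P$). For each $p\in P$ consider the relation $\le$ together with the comparability $p\sim f(p)$; since $f(p)\le p$, every chain $p_0<p_1<\dots<p_k$ in $P$ together with its image $f(p_0)\le f(p_1)\le\dots\le f(p_k)$ sits inside a single simplex of the order complex of $P$ once we observe $f(p_i)\le p_i$ and $f(p_i)\le f(p_j)\le p_j$ for $i\le j$. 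This gives, chain by chain, a simplicial (on the barycentric subdivision, or directly a cellular) homotopy between $\mathrm{id}_{|P|}$ and $|f|$.

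The key technical step is to upgrade this homotopy to a \emph{deformation retraction onto $|f(P)|$}. First I would note that $f$ restricted to $f(P)$ satisfies $f(f(p))\le f(p)$, and that if $q=f(p)\in f(P)$ then $f(q)\in f(P)$ as well, so $|f(P)|$ is a subcomplex that is itself invariant. Then I would invoke the general principle (Quillen's Theorem A / the usual order-homotopy argument, as in Quillen \cite{QuillenHigherAlgebraicKTheory73}, or Segev's formulation): if $f\colon P\to P$ is a poset map with $f\le\mathrm{id}$, then $|P|$ deformation retracts onto $|f(P)|$, and iterating, onto $|f^n(P)|$; since $P$ has finite height, the chain $P\supseteq f(P)\supseteq f^2(P)\supseteq\cdots$ stabilises, and on the stable image $f$ acts as the identity, so no further collapsing occurs. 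The finite-height hypothesis is exactly what guarantees this stabilisation (and guarantees $|P|$ is finite-dimensional so that the piecewise-linear homotopy is well behaved); I would point this out explicitly. The upshot is that the composite of these deformation retractions is a deformation retraction $|P|\to |f(P)|$ induced by $f$, as claimed.

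The main obstacle—really the only subtle point—is verifying that the explicit homotopy one writes down is continuous and respects the simplicial structure, i.e.\ that for every simplex (chain) of $|P|$ the prism swept out by the homotopy genuinely decomposes into simplices of $|P|$. This is the classical computation underlying the "poset homotopy lemma", and since the statement we need is a direct special case of a result already in the literature \cite{QuillenHigherAlgebraicKTheory73}, I would not reproduce that computation in full; instead I would cite it and limit the written proof to checking the two hypotheses that make it apply here (namely $f(p)\le p$ and finite height), together with the stabilisation argument for $f^n(P)$. In our application $P$ is a poset of marked $\Gamma$-complexes, which has finite height because the $\Gamma$-complexes involved have boundedly many hyperplanes, so the finite-height hypothesis is automatic wherever we invoke the lemma.
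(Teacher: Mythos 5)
The paper does not actually prove this lemma --- it is stated verbatim as ``a special case of Quillen's Theorem A'' with a bare citation to \cite{QuillenHigherAlgebraicKTheory73} --- so you are supplying a proof where the paper gives none, and your overall strategy of deferring the hard combinatorics to Quillen is consistent with what the paper itself does. That said, your sketch contains one genuine error and one muddled step. The error: the assertion that a chain $p_0<\dots<p_k$ together with its image $f(p_0)\le\dots\le f(p_k)$ lies in a single simplex of the order complex is false. You correctly record that $f(p_i)\le p_j$ whenever $i\le j$, but there is no comparability between $f(p_j)$ and $p_i$ when $j>i$; for example with $c<a<b$, $c<d<b$, $a$ and $d$ incomparable, and $f(a)=c$, $f(b)=d$, the set $\{a,b,c,d\}$ is not a chain. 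The correct device is the standard simplicial triangulation of the prism $\Delta^k\times I$ into the $(k{+}1)$ simplices with vertex sets $\{f(p_0),\dots,f(p_i),p_i,\dots,p_k\}$, each of which \emph{is} a chain in $P$ precisely because of the inequalities you listed; you gesture at this subtlety in your last paragraph but the explicit claim you make is stronger than what holds.

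The muddled step: iterating and stabilising produces a deformation retraction of $|P|$ onto $|f^n(P)|$ for the stable $n$, which may be a \emph{proper} subcomplex of $|f(P)|$ when $f$ is not idempotent, so this does not by itself give the statement of the lemma. To close the gap you still need something like: $|f^n(P)|$ is a deformation retract of both $|P|$ and $|f(P)|$ (the latter by applying the same argument to $f|_{f(P)}$), so by two-out-of-three the inclusion $|f(P)|\hookrightarrow|P|$ is a homotopy equivalence, and since this is a CW pair it is the inclusion of a deformation retract. A cleaner route avoids iteration entirely: the order-homotopy lemma applied to $f$ on $P$ gives $i\circ|f|\simeq\operatorname{id}_{|P|}$, and applied to $f|_{f(P)}$ gives $|f|\circ i\simeq\operatorname{id}_{|f(P)|}$, so $i\colon|f(P)|\hookrightarrow|P|$ is a homotopy equivalence between realisations of a CW pair and hence a deformation retraction, with no need for the stabilisation argument at all (finite height is then used only to ensure $|P|$ is finite-dimensional).
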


We are now ready to prove our main result of this section, which was stated as \emph{Theorem \ref{customthm:Kmin is contractible}} in \S\ref{subsec:discussion of proof strategy}.

\begin{theorem}\label{thm:Kmin is contractible}
    For any~$\Gamma$ and any~$\calW$,~$\Kmin{\calW}$ is contractible.
\end{theorem}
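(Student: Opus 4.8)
The plan is to reproduce the inductive gluing argument from the proof of (\cite{CharneyStambaughVogtmann17}, Theorem 6.24), with the $\calW$-norm $\lVert\cdot\rVert'$ in place of the norm used there. By \emph{Proposition \ref{prop:new norm is a strict total well-ordering}}, $\lVert\cdot\rVert'$ well-orders the set of marked Salvettis; let $\sigma_{\min}$ be its unique minimal element. For a marked Salvetti $\sigma$ lying in $\Kmin{\calW}$ --- equivalently, one with $\lVert\sigma\rVert_\calW$ globally minimal --- write $X_{<\sigma}$ for the union of the stars $\str{\tau}$ over all marked Salvettis $\tau$ with $\lVert\tau\rVert' < \lVert\sigma\rVert'$; each such $\tau$ automatically lies in $\Kmin{\calW}$, since $\lVert\tau\rVert_\calW \leq \lVert\sigma\rVert_\calW$ forces equality by minimality. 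Set $X_{\leq\sigma} \coloneqq X_{<\sigma} \cup \str{\sigma}$, so that $\Kmin{\calW} = \bigcup_\sigma X_{\leq\sigma}$, the union ranging over the well-ordered set of marked Salvettis in $\Kmin{\calW}$.

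I would then prove by transfinite induction along this well-order that each $X_{\leq\sigma}$ is a disjoint union of contractible components. The base case is $X_{\leq\sigma_{\min}} = \str{\sigma_{\min}}$, the geometric realisation of the poset of marked $\Gamma$-complexes that collapse to $\sigma_{\min}$; this poset has $\sigma_{\min}$ as least element, so its realisation is contractible. Limit stages are nested unions and cause no trouble. At a successor stage the only issue is the intersection $\str{\sigma}\cap X_{<\sigma}$: a marked $\Gamma$-complex in $\str{\sigma}$ is a blow-up $\sigma^\Pi$ for some compatible set $\Pi$ of $\Gamma$-partitions, and it lies in $X_{<\sigma}$ precisely when it can be re-collapsed, along some other treelike set of hyperplanes, to a marked Salvetti $\tau$ with $\lVert\tau\rVert' < \lVert\sigma\rVert'$; by \emph{Corollary \ref{cor:analogue of [CSV17], Cor 6.8}} (together with the easy converse) this happens exactly when $\Pi$ contains a $\Gamma$-partition that is $\calW$-reductive for $\sigma$. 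If $\sigma$ admits no $\calW$-reductive $\Gamma$-partition, the intersection is empty and $X_{\leq\sigma}$ is $X_{<\sigma}$ with one more contractible component adjoined. Otherwise, following \cite{CharneyStambaughVogtmann17}, I would choose a $\Gamma$-Whitehead pair $(\calM, m)$ that is $\calW$-reductive for $\sigma$, with $\link{\calM}$ maximal among links of $\calW$-reductive $\Gamma$-partitions for $\sigma$ and with $\vert\calM\vert'_\sigma - \vert m\vert'_\sigma$ as small as possible among such pairs with that link; this is precisely the configuration required by the \emph{Pushing Lemma} (\emph{Lemma \ref{lem:pushing lemma (for us)}}). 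Absorbing $\calM$ into an arbitrary such $\Pi$ --- replacing each element incompatible with $\calM$ by the quadrant-$\Gamma$-partition supplied by the Pushing Lemma, which is again $\calW$-reductive with the same link --- defines a poset map $f$ on $\str{\sigma}\cap X_{<\sigma}$ with $f(\tau)\leq\tau$ whose image is the subcomplex of those blow-ups that contain $\calM$; the latter is the realisation of a poset with least element $\sigma^{\{\calM\}}$, hence contractible. By the \emph{Poset Lemma} (\emph{Lemma \ref{lem:poset lemma}}), $f$ induces a deformation retraction of $\str{\sigma}\cap X_{<\sigma}$ onto this contractible subcomplex, so $\str{\sigma}\cap X_{<\sigma}$ is contractible. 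In either case, gluing the contractible complex $\str{\sigma}$ to $X_{<\sigma}$ along an empty or contractible --- hence connected, so contained in a single component --- subcomplex leaves $X_{\leq\sigma}$ a disjoint union of contractible components; taking the union over $\sigma$ gives the same conclusion for $\Kmin{\calW}$.

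It then remains to show $\Kmin{\calW}$ is connected, from which contractibility follows. Every vertex of $\Kmin{\calW}$ lies in $\str{\sigma}$ for some marked Salvetti $\sigma$ minimising $\lVert\cdot\rVert_\calW$, so it suffices to connect each such $\sigma$ to $\sigma_{\min}$ inside $\Kmin{\calW}$. If $\sigma\neq\sigma_{\min}$ then $\sigma$ is not $\lVert\cdot\rVert'$-minimal while $\lVert\sigma\rVert_\calW$ is globally minimal, so $\sigma$ carries a $\calW$-reductive $\Gamma$-partition $\calP$ with multiplier $v$; the associated Whitehead move yields a path $\sigma < \sigma^{\{\calP\}} > \sigma_v^{\calP}$ in the spine, and since $\lVert\sigma_v^{\calP}\rVert' < \lVert\sigma\rVert'$ forces $\lVert\sigma_v^{\calP}\rVert_\calW = \lVert\sigma\rVert_\calW$, both $\sigma^{\{\calP\}}$ and $\sigma_v^{\calP}$ lie in $\Kmin{\calW}$. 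Well-foundedness of $\lVert\cdot\rVert'$ lets us iterate this down to $\sigma_{\min}$. This is the point at which we avoid needing an analogue of (\cite{CharneyStambaughVogtmann17}, Corollary 6.20): instead of extracting non-emptiness of the gluing intersections from peak reduction, we isolate the combinatorial fact that every non-minimal $\sigma\in\Kmin{\calW}$ carries a $\calW$-reductive $\Gamma$-partition and use it only to establish connectedness.

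I expect the successor step of the induction to be the main obstacle: faithfully transporting Charney--Stambaugh--Vogtmann's flow construction to the $\calW$-norm, that is, checking that the Pushing Lemma genuinely produces a well-defined, order-preserving poset map onto a subcomplex with a least element. The Pushing Lemma is already available in our setting (\emph{Lemma \ref{lem:pushing lemma (for us)}}, proved via \emph{Corollary \ref{cor:Lemma 6.15 + Lemma 6.16}}), so the remaining work is largely bookkeeping: verifying that iterating the quadrant replacements terminates in a blow-up structure that still collapses to $\sigma$, that the resulting assignment is monotone, and that the hypotheses of the Poset Lemma hold. A secondary point needing care is the fact used for connectedness --- that a marked Salvetti which is not $\lVert\cdot\rVert'$-minimal admits a norm-reducing Whitehead move --- which one deduces from the corresponding statement for the Charney--Stambaugh--Vogtmann norm together with the global minimality of $\lVert\cdot\rVert_\calW$ on $\Kmin{\calW}$.
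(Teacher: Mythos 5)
Your inductive gluing argument and the treatment of the successor step (restricting $\Pi$ to its $\calW$-reductive partitions, then absorbing a maximal $(\calM,m)$ via the Pushing Lemma, then invoking the Poset Lemma) match the paper's proof. The genuine departure, and the gap, is in how you handle connectedness. You assert that a non-$\lVert\cdot\rVert'$-minimal $\sigma\in\Kmin{\calW}$ carries a $\calW$-reductive $\Gamma$-partition, and claim this follows from the corresponding statement for the Charney--Stambaugh--Vogtmann norm plus global minimality of $\lVert\cdot\rVert_\calW$. That deduction does not go through. If $\sigma\in\Kmin{\calW}$ is not $\lVert\cdot\rVert'$-minimal, it is indeed not $\lVert\cdot\rVert$-minimal, so the analogue of (\cite{CharneyStambaughVogtmann17}, Corollary 6.20) for the CSV norm hands you a $\Gamma$-Whitehead pair $(\calP,v)$ with $\lVert\sigma_v^\calP\rVert<\lVert\sigma\rVert$; but nothing controls $\lVert\sigma_v^\calP\rVert_\calW$, which may strictly increase, in which case $\lVert\sigma_v^\calP\rVert'>\lVert\sigma\rVert'$ and $\calP$ is not $\calW$-reductive. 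The statement you want --- a $\calW$-norm analogue of Corollary 6.20 --- is exactly what the whole construction of $\lVert\cdot\rVert'$ is engineered to avoid having to re-prove, because in \cite{CharneyStambaughVogtmann17} that corollary rests on the Higgins--Lyndon Lemma and Strong Peak Reduction (Lemma 6.17 and Theorem 6.19), neither of which has been transferred to the $\calW$-norm here. So you are not sidestepping Corollary 6.20; you are silently reintroducing it and giving a one-line proof of it that fails.

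The paper's route around this is genuinely different and worth noting. Having shown that every nonempty gluing intersection $\str{\sigma}\cap K_{<\sigma}$ is contractible (in particular connected), it builds all of $\spine$ by this process; connectedness of $\spine$ was proved independently earlier, so $\spine$ is contractible. The same build truncated at the minimal $\lVert\cdot\rVert_\calW$-stratum produces $\Kmin{\calW}$, a union of contractible components. If $\Kmin{\calW}$ were disconnected, then since every later intersection $\str{\sigma}\cap K_{<\sigma}$ is connected, no later gluing can merge two components, contradicting the connectedness of $\spine$. This component-counting argument gets connectedness of $\Kmin{\calW}$ (and nonemptiness of every later gluing intersection) for free from the already-known connectedness of $\spine$, with no appeal to peak reduction for the new norm. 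Your proof could be repaired by replacing your Whitehead-move argument for connectedness with this comparison to $\spine$; as written, the claim supporting connectedness is unproven and the suggested proof of it is wrong.
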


\begin{proof}
    Since~$\lVert \cdot \rVert'$ is a strict total well-order (by \emph{Proposition \ref{prop:new norm is a strict total well-ordering}}), there is a unique minimal marked Salvetti with respect to~$\lVert \cdot \rVert'$. We start with the star of this marked Salvetti, and then inductively add on stars of marked Salvettis in increasing order. Since the star of a marked Salvetti is always contractible, our main job is to show that the intersection along which we glue is contractible, if it is nonempty. 

    To this end, for any marked Salvetti~$\sigma$, let 
    \[K_{<\sigma} \coloneqq \bigcup_{\lVert \tau \rVert' < \lVert \sigma \rVert'} \str{\tau}.\] 
    We prove that if~$\str{\sigma} \cap K_{<\sigma}$ is nonempty, then it is contractible. This intersection consists of blowups~$\sigma^\Pi$ which can be collapsed to a marked Salvetti of~$\calW$-norm smaller than~$\lVert \sigma \rVert'$, where~$\Pi$ is an \emph{ideal forest}---that is, a compatible set of~$\Gamma$-partitions. We can identify~$\str{\sigma} \cap K_{<\sigma}$ as the geometric realisation of the poset of ideal forests ordered by inclusion. We will apply the Poset Lemma to contract this poset to a single point. 

    By \emph{Corollary \ref{cor:analogue of [CSV17], Cor 6.8}}, if~$\Pi$ is in~$\str{\sigma} \cap K_{<\sigma}$, then~$\Pi$ contains a~$\Gamma$-partition~$\calP$ which is~$\calW$-reductive for~$\sigma$. Hence the map which throws out from~$\Pi$ those~$\Gamma$-partitions which are not~$\calW$-reductive for~$\sigma$ is a poset map, to which we can apply the Poset Lemma. Choose a pair~$\left(M, m\right)$ which satisfies the maximality conditions in the Pushing Lemma.

    At this point the rest of the proof as in (\cite{CharneyStambaughVogtmann17}, Theorem 6.24) follows through with no modification. This allows us to conclude that~$\str{\sigma} \cap K_{<\sigma}$ is contractible, if it is nonempty. Hence we can build the entirety of~$\spine$ as a union of contractible components. Since we already know that~$\spine$ is connected, we conclude that it is in fact contractible. 

    Moreover, we also build~$\Kmin{\calW}$ in this fashion, simply by stopping after incorporating all those marked Salvettis with minimal~$\lVert \cdot \rVert_\calW$-component of the~$\calW$-norm. Hence~$\Kmin{\calW}$ is also a union of contractible components. If~$\Kmin{\calW}$ is disconnected, then since~$\spine$ is connected, there must be some marked Salvetti~$\sigma \notin \Kmin{\calW}$ such that~$\str{\sigma} \cap K_{<\sigma}$ consists of two different components, which contradicts its contractibility, as we just proved. Hence~$\Kmin{\calW}$ had better be connected, and thus is contractible, as required. 
\end{proof}

\subsection{A finiteness result for outer automorphisms with respect to a set of conjugacy classes}

We pause the proof of contractibility of the symmetric spine to give a separate application of \emph{Theorem \ref{thm:Kmin is contractible}}. For any finite set~$\calW$ of conjugacy classes of~$\raag$, let~$\outw$ be the subgroup consisting of outer automorphisms that permute the elements of~$\calW$.~$\Kmin{\calW}$ is invariant under the action of this group. We can use this to prove that~$\outw$ is type \emph{VF}, mirroring the proof of (\cite{CullerVogtmann86}, Corollary 6.1.4), which is the analogous result in the free group setting. Recall (\emph{Definition \ref{def:type F}}) that a discrete group~$G$ is \emph{type F} if there exists a~$K(G, 1)$ which is a finite CW-complex, and a group is \emph{type VF} if it is virtually type \emph{F}.

\begin{corollary}\label{cor:outw is type VF}
    Let~$\calW$ be a finite set of conjugacy classes of~$\raag$. Then~$\outw$ is type \emph{VF}.
\end{corollary}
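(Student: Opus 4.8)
The plan is to run the standard argument that a group acting on a contractible complex with good finiteness properties of cell stabilisers is itself (virtually) of type \emph{F}, now applied to the action of $\outw$ on $\Kmin{\calW}$. First I would observe that $\Kmin{\calW}$ is invariant under the action of $\outw$: the $\calW$-norm $\lVert\cdot\rVert'$ has as its first coordinate $\lVert\sigma\rVert_\calW=\sum_{g\in\calW}\ell_\sigma(g)$, and precomposing the marking by an automorphism which permutes $\calW$ does not change this sum, so $\outw$ permutes the marked Salvettis of minimal $\lVert\cdot\rVert_\calW$-component and hence preserves the union of their stars. By \emph{Theorem \ref{thm:Kmin is contractible}}, $\Kmin{\calW}$ is contractible, and it is a subcomplex of $\spine$, so it is a proper cube (hence CW) complex. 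The action is simplicial and cocompact: $\Kmin{\calW}$ is a subcomplex of $\spine$, the $\uag$-action on $\spine$ is cocompact, and $\outw$ has finite index considerations aside, one checks directly that there are only finitely many $\outw$-orbits of simplices in $\Kmin{\calW}$ (the argument mirrors cocompactness of the $\uag$-action on $\spine$, since any marked $\Gamma$-complex collapses to finitely many marked Salvettis and there are finitely many orbits of those).

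Next I would invoke \emph{Lemma \ref{lem:sufficient condition for type F}}: if a group acts simplicially and cocompactly on a contractible simplicial complex with all cell stabilisers of type \emph{F}, then the group is of type \emph{F}. To apply this to $\outw$ I would first pass to a finite-index torsion-free subgroup $G\leq\outw$ — such a subgroup exists because $\out$ is virtually torsion-free by Charney--Vogtmann \cite{CharneyVogtmannFinitenessProperties09}, and finite-index subgroups of virtually torsion-free groups are virtually torsion-free. Then $G$ acts on the contractible complex $\Kmin{\calW}$ simplicially, cocompactly (finite index in $\outw$ preserves cocompactness), and, crucially, \emph{freely}: the stabiliser in $\out$ of any marked Salvetti is the finite group $\Omega(\raag)$ generated by graph symmetries and inversions (as in the proof of \emph{Lemma \ref{lem:action on symmetric spine is proper}}), more generally the stabiliser of any marked $\Gamma$-complex is finite, so a torsion-free group acts with trivial cell stabilisers. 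By the final sentence of \emph{Lemma \ref{lem:sufficient condition for type F}} (a group acting freely, simplicially and cocompactly on a contractible simplicial complex is of type \emph{F}), $G$ is of type \emph{F}. Since $G$ has finite index in $\outw$, the group $\outw$ is of type \emph{VF} by definition.

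The main obstacle, and the only point requiring genuine care, is the freeness/cocompactness bookkeeping: one must confirm that $\Kmin{\calW}$, although defined as a union of \emph{stars} of certain marked Salvettis rather than as a full subcomplex cut out by a norm-level, is genuinely $\outw$-cocompact, and that stabilisers of all its cells (not just of marked Salvettis) are finite. Cocompactness follows because every vertex of $\Kmin{\calW}$ lies in the star of a marked Salvetti with minimal $\lVert\cdot\rVert_\calW$ and these form finitely many $\outw$-orbits; finiteness of stabilisers follows from properness of the $\uag$-action on $\spine$ established in \cite{CharneyStambaughVogtmann17}, restricted to the subgroup $\outw$ and the subcomplex $\Kmin{\calW}$. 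Everything else is a direct transcription of the proof of (\cite{CullerVogtmann86}, Corollary 6.1.4), replacing $K_{\operatorname{min}(W)}$ by $\Kmin{\calW}$ and $\operatorname{Out}(F_n)$ by $\out$.
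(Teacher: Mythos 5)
Your overall skeleton matches the paper's: pass to a torsion-free finite-index subgroup $H \leq \outw$, observe it acts freely on the contractible complex $\Kmin{\calW}$, and invoke \emph{Lemma \ref{lem:sufficient condition for type F}} once cocompactness is established. The gap is precisely where you wave your hands: the cocompactness of $\outw \acts \Kmin{\calW}$.

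You justify cocompactness by saying the argument "mirrors cocompactness of the $\uag$-action on $\spine$, since any marked $\Gamma$-complex collapses to finitely many marked Salvettis and there are finitely many orbits of those." But the $\uag$-action on $\spine$ is cocompact for the trivial reason that $\uag$ acts \emph{transitively} on marked Salvettis; that argument does not transfer. Here $\outw$ is typically a much smaller group (potentially of infinite index in $\uag$), and there is no \emph{a priori} reason for the set of marked Salvettis achieving the minimal $\lVert\cdot\rVert_\calW$-value to be finite --- in general it is infinite. The assertion that $\outw$ acts with finitely many orbits on those marked Salvettis is the real content of the corollary, and it is not immediate. The paper's proof devotes itself almost entirely to this point: one fixes a marked Salvetti $\sigma_0 \in \Kmin{\calW}$, observes that only finitely many finite sets $\calW'$ of conjugacy classes satisfy $\lVert \sigma_0 \rVert_{\calW'} = \lVert \sigma_0 \rVert_\calW$ (because only finitely many conjugacy classes have $\ell_{\sigma_0}$-length below a given bound), uses the equivariance identity $\lVert \varphi\cdot\sigma\rVert_\calW = \lVert\sigma\rVert_{\varphi^{-1}(\calW)}$, and concludes that every marked Salvetti $\sigma \in \Kmin{\calW}$ is equivalent modulo $\outw$ to one of finitely many $\varphi_i\cdot\sigma_0$. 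Without this argument (or some substitute), your proof is incomplete; the bookkeeping you flag as the "only point requiring genuine care" is in fact the theorem.
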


\begin{proof}
    The stabiliser of each cell under the action of~$\outw$ consists of graph symmetries and inversions, so we can pass to a torsion-free finite-index normal subgroup~$H \leq \outw$ which acts freely on~$\Kmin{\calW}$. By \emph{Lemma \ref{lem:sufficient condition for type F}}, it then suffices to prove that~$\faktor{\Kmin{\calW}}{\outw}$ is finite, as that implies that~$\faktor{\Kmin{\calW}}{H}$ is.

    Fix a marked Salvetti~$\sigma = (\sal, \alpha)$. For any~$k \in \mathbb{Z}_{\geq 1}$, there are only finitely many conjugacy classes~$g$ of~$\raag$ such that~$\ell_\sigma(g) \leq k$. Hence there can only be finitely many finite sets~$\calW'$ of conjugacy classes of~$\raag$ such that~$\lVert \sigma \rVert_\calW = \lVert \sigma \rVert_{\calW'}$.

    Choose a marked Salvetti~$\sigma_0$ in~$\Kmin{\calW}$. Let~$\calW = \calW_0, \calW_1, \dots, \calW_m$ be all the finite sets~$\calW'$ of conjugacy classes such that~$\lVert \sigma_0 \rVert_\calW = \lVert \sigma_0 \rVert_{\calW'}$ which are images of~$\calW$ under an outer automorphism of~$\raag$. Choose elements~$\operatorname{id} = \varphi_0, \varphi_1, \dots, \varphi_m \in \out$ such that~$\varphi_i(\calW_i) = \calW$. 

    For any marked Salvetti~$\sigma$ and any~$\varphi \in \out$, we have 
    \begin{eqnarray*}
        \lVert \varphi \cdot \sigma \rVert_\calW & = & \sum_{g \in \calW} \ell_{\varphi \cdot \sigma}(g) \\
        & = & \sum_{g \in \calW} \ell_\sigma(\varphi^{-1}(g)) \\
        & = & \sum_{h \in \varphi^{-1}(\calW)} \ell_\sigma(h) \\
        & = & \lVert \sigma \rVert_{\varphi^{-1}(\calW)}.
    \end{eqnarray*}
    Hence for each~$i$, we have 
    \[\lVert \varphi_i \cdot \sigma_0 \rVert_\calW = \lVert \sigma_0 \rVert_{\varphi_i^{-1}(\calW)} = \lVert \sigma_0 \rVert_{\calW_i} = \lVert \sigma_0 \rVert_\calW,\]
    so~$\varphi_i \cdot \sigma_0$ is a marked Salvetti in~$\Kmin{\calW}$.

    Let~$\sigma$ be any marked Salvetti in~$\Kmin{\calW}$, and choose an automorphism~$\varphi \in \out$ such that~$\varphi \cdot \sigma_0 = \sigma$. Then we have 
    \[\lVert \sigma_0 \rVert_\calW = \lVert \sigma \rVert_\calW = \lVert \varphi \cdot \sigma_0 \rVert_\calW = \lVert \sigma_0 \rVert_{\varphi^{-1}(\calW)},\]
    and so~$\varphi^{-1}(\calW) = \calW_i$ for some~$i$. Thus~$\calW = (\varphi_i \circ \varphi^{-1})(\calW)$. Now,~$(\varphi_i \circ \varphi^{-1}) \cdot \sigma = \varphi_i \cdot \sigma_0$, which means that~$\sigma$ is equivalent modulo~$\outw$ to one of a finite list of marked Salvettis. Hence the quotient~$\faktor{\Kmin{\calW}}{\outw}$ is finite, and we're done.
\end{proof}

\section{Retraction of~$\Kmin{\calW}$ to~$\symspine$}\label{sec:retraction of Kmin to symspine}

In this subsection, we fix~$\calW$ to be the set of conjugacy classes of~$\raag$ of length one and do Step (iii) as outlined in \S\ref{subsec:discussion of proof strategy}. The content in this subsection is a transparent generalisation of (\cite{CollinsSymSpine89}, \S4).

\subsection{Companion hyperplanes and edges}\label{subsec:companion hyperplanes and edges}

Let~$X$ be a~$\Gamma$-complex, and let~$\calT$ be a treelike set of hyperplanes in~$X$. Let~$H \in \calT$, with dual edges~$e_H$. Recall that~$H$ partitions the vertices of~$X$ into two sets~$\beta(H) \sqcup \overline{\beta}(H)$, and all edges labelled by~$H$ have one endpoint in~$\beta(H)$ and the other in~$\overline{\beta}(H)$. We call the edges traversing between~$\beta(H)$ and~$\overline{\beta}(H)$ \emph{companion edges} to~$e_H$, and their dual hyperplanes \emph{companion hyperplanes} to~$H$. 

\begin{definition}\label{def:symmetric relative to treelike set}
    We say that~$e_H$ or~$H$ is \emph{symmetric (relative to~$\calT$)} if~$H$ has a unique companion hyperplane (so companion edges to~$e_H$ are labelled by~$H$ or by this unique companion hyperplane). 
\end{definition}

\emph{Figure \ref{fig:illustration of companion edges and hyperplanes}} gives a schematic of this decomposition of the vertices of~$X$ into~$\beta(H)$ and~$\overline{\beta}(H)$ and an illustration of companion edges and hyperplanes.

\begin{figure}[H]
    \centering
    \begin{tikzpicture}
        %
        \draw (-6, 0) circle (2);
        \draw (6, 0) circle (2);
        \node at (-6, 0) [black, thick]{$\beta(H)$};
        \node at (6, 0) [black, thick]{$\overline{\beta}(H)$};
        %
        %
        \draw[black, thick, fill = black!20!white] (-2, 2.4) to (-1.6, 2.7) to (-1.6, 1.7) to (-2, 1.4) to (-2, 2.4);
        \draw[black, thick, fill = black!20!white] (-1, 1.4) to (-0.6, 1.7) to (-0.6, 0.7) to (-1, 0.4) to (-1, 1.4);
        \draw[black, thick, fill = black!20!white] (-1.5, -2.7) to (-1.1, -2.4) to (-1.1, -1.4) to (-1.5, -1.7) to (-1.5, -2.7);
        %
        %
        \draw[smooth, tension=0.5, black, thick] plot coordinates {(-4.8, 1.6) (0, 2.3) (4.8, 1.6)};
        \draw[smooth, tension=0.5, black, thick] plot coordinates {(-4.7, 1.52) (0, 2.2) (4.7, 1.52)};
        \draw[smooth, tension=0.5, black, thick] plot coordinates {(-4.57, 1.4) (0, 2) (4.57, 1.4)};
        \fill (0, 2.05) circle (0.3pt);
        \fill (0, 2.1) circle (0.3pt);
        \fill (0, 2.15) circle (0.3pt);
        \draw[smooth, tension=0.5, black, thick, ->-] plot coordinates {(-4.17, 0.8) (0, 1.3) (4.17, 0.8)};
        \draw[smooth, tension=0.5, black, thick, ->-] plot coordinates {(-4.13, 0.7) (0, 1.2) (4.13, 0.7)};
        \draw[smooth, tension=0.5, black, thick, ->-] plot coordinates {(-4.08, 0.55) (0, 0.95) (4.08, 0.55)};
        \fill (0, 1) circle (0.3pt);
        \fill (0, 1.075) circle (0.3pt);
        \fill (0, 1.15) circle (0.3pt);
        \draw[smooth, tension=0.5, black, thick, ->-] plot coordinates {(-4.8, -1.6) (0, -2.3) (4.8, -1.6)};
        \draw[smooth, tension=0.5, black, thick, ->-] plot coordinates {(-4.7, -1.52) (0, -2.2) (4.7, -1.52)};
        \draw[smooth, tension=0.5, black, thick, ->-] plot coordinates {(-4.57, -1.4) (0, -2) (4.57, -1.4)};
        \fill (0, -2.05) circle (0.3pt);
        \fill (0, -2.1) circle (0.3pt);
        \fill (0, -2.15) circle (0.3pt);
        %
        %
        \fill (0, -0.2) circle (0.6pt);
        \fill (0, -0.5) circle (0.6pt);
        \fill (0, -0.8) circle (0.6pt);
        %
        %
        \node at (-0.2, 2.45) [black, thick]{$e_H$};
        \node at (-1.9, 2.75) [black, thick]{$H$};
        \node at (0.6, 0.6) [black, thick]{$e_{H_1}$};
        \node at (-0.5, 0.4) [black, thick]{$H_1$};
        \node at (0.5, -1.7) [black, thick]{$e_{H_m}$};
        \node at (-1, -2.75) [black, thick]{$H_m$};
    \end{tikzpicture}
    \caption{A schematic of the decomposition of the vertices of a~$\Gamma$-complex~$X$ into two sets~$\beta(H)$ and~$\overline{\beta}(H)$, one either side of a hyperplane~$H$ in a treelike set~$\calT$. Here~$H_1,\; \dots,\; H_m$ are the companion hyperplanes to~$H$, while the edges labelled~$e_{H_i}$ are the companion edges to those labelled~$e_H$. Note that~$m \geq 1$ since~$\Gamma$-complexes have no separating hyperplanes.~$H$ is symmetric relative to~$\calT$ if and only if~$m = 1$.}
    \label{fig:illustration of companion edges and hyperplanes}
\end{figure}
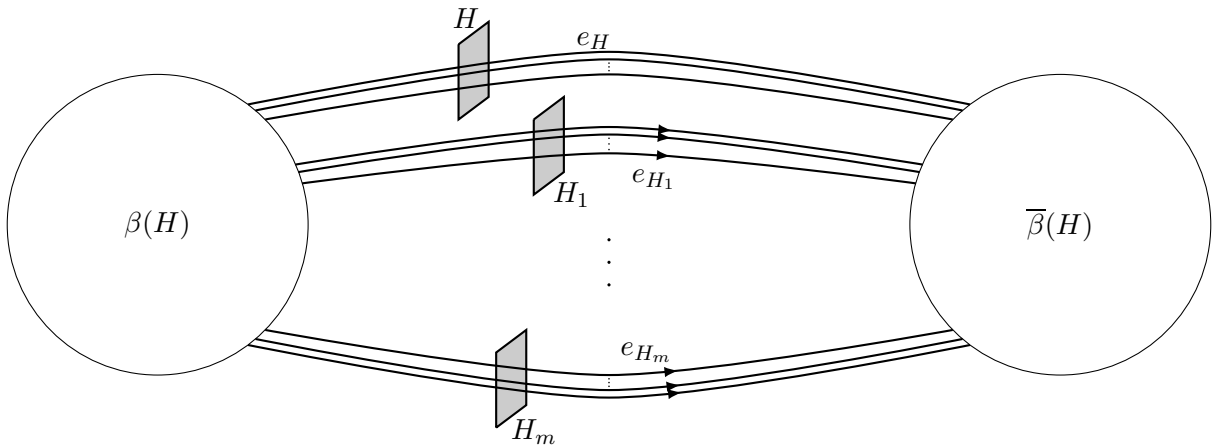

\begin{lemma}\label{lem:no two elements of treelike set are companions}
     Let~$\calT$ be a treelike set of hyperplanes in a~$\Gamma$-complex~$X$. Let~$H'$ be a companion hyperplane to~$H \in \calT$. Then~$H' \notin \calT$.
\end{lemma}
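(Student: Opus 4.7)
The plan is to exploit the explicit description of the sets $\beta(H)$ and $\overline{\beta}(H)$ given in the proof of Proposition~\ref{prop:treelike sets in Gamma-complexes}. Recall that these arise from the cube subcomplex $C \subseteq X$ consisting of all cubes whose edges are dual to hyperplanes in $\calT$: the subcomplex $C$ contains all vertices of $X$, and the hyperplane $H \in \calT$ disconnects $C$ into two components whose vertex sets are precisely $\beta(H)$ and $\overline{\beta}(H)$. The central observation I would isolate is that any edge $e$ of $C$ which is not dual to $H$ lies entirely in one component of $C$ with the carrier of $H$ removed, and hence has both of its endpoints on the same side of $H$.

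Granting this, the lemma follows by contradiction in a single step. Suppose $H' \in \calT$ were a companion hyperplane to $H$. The statement of the lemma is understood with the implicit convention that $H' \neq H$ (otherwise $H$ would be a trivial self-companion and the conclusion would fail). By the definition of companion hyperplane, there is an edge $e$ dual to $H'$ with one endpoint in $\beta(H)$ and the other in $\overline{\beta}(H)$. Since $H' \in \calT$, every edge dual to $H'$ lies in $C$, so in particular $e$ is an edge of $C$. But then $e$ is an edge of $C$, not dual to $H$, whose endpoints lie on opposite sides of $H$, contradicting the observation above. Hence $H' \notin \calT$.

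The argument is purely a matter of unpacking the definitions, so I do not anticipate any real obstacle. The only subtlety worth flagging is the convention just mentioned: a companion hyperplane to $H$ in the statement of the lemma is tacitly distinct from $H$. This is consistent with the phrasing "unique companion hyperplane" in Definition~\ref{def:symmetric relative to treelike set}, where the unique companion is explicitly contrasted with $H$ itself in the accompanying parenthetical.
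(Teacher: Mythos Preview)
Your argument is correct and is genuinely simpler than the paper's. The paper proceeds by contradiction: assuming $H' \in \calT$, it collapses $\overline{\calT} = \calT \setminus \{H, H'\}$ to reduce to the explicit double blowup $\salb{\{\calP_H, \calP_{H'}\}}$, and then checks in two cases (according to whether $\calP_H$ and $\calP_{H'}$ are adjacent) that edges dual to $H'$ do not traverse between $\beta(H)$ and $\overline{\beta}(H)$. You instead go straight to the construction in the proof of Proposition~\ref{prop:treelike sets in Gamma-complexes}: since $\beta(H)$ and $\overline{\beta}(H)$ are by definition the vertex sets of the two components obtained by cutting the subcomplex $C$ (of $\calT$-labelled cubes) along $H$, any edge of $C$ not dual to $H$ must have both endpoints on the same side, and edges dual to $H' \in \calT$ are edges of $C$. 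This is cleaner and avoids the case split entirely. The paper's more hands-on route does have the incidental benefit of setting up the explicit picture of double blowups, which is re-examined in the proof of Lemma~\ref{lem:analogue of [Col89] Lem 4.3}(i); but as a proof of this lemma alone, your approach is preferable. Your remark on the convention that a companion hyperplane to $H$ is tacitly distinct from $H$ is well taken and matches the paper's usage (as visible in Figure~\ref{fig:illustration of companion edges and hyperplanes} and Definition~\ref{def:symmetric relative to treelike set}).
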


\begin{proof}
    Suppose that~$H' \in \calT$. Let~$\overline{\calT} \coloneqq \calT \setminus \{H, H'\}$, and perform the hyperplane collapse
    \[c_{\overline{\calT}} \colon X \to X \ssslash \overline{\calT} \cong \salb{\{\calP_H, \calP_{H'}\}}.\]
    Note that the image of~$H'$ (which we shall still denote~$H'$) will be a companion hyperplane to the image of~$H$ under this hyperplane collapse. We now explicitly examine the structure of this blowup. 

    \textbf{Case I:}~$\calP_H$ and~$\calP_{H'}$ are adjacent.

    In this case,~$\salb{\{\calP_H, \calP_{H'}\}}$ has four vertices, each labelled by a choice of sides~$\{P_H^\times, P_{H'}^\times\}$, and we have the subgraph in the 1-skeleton of~$\salb{\{\calP_H, \calP_{H'}\}}$ shown in \emph{Figure \ref{fig:portion of 1-skeleton of double blowup}}. 
    \begin{figure}
        \centering
        \begin{tikzpicture}
            \fill (-2, 2) circle (2pt);
            \fill (2, 2) circle (2pt);
            \fill (-2, -2) circle (2pt);
            \fill (2, -2) circle (2pt);
            \draw [black, thick] (-2, 2) -- (2, 2);
            \draw [black, thick] (-2, -2) -- (2, -2);
            \draw [black, thick] (-2, 2) -- (-2, -2);
            \draw [black, thick] (2, 2) -- (2, -2);
            \node at (-2, 2) [above = 1pt, black, thick]{\small$\{P_H, P_{H'}\}$};
            \node at (2, 2) [above = 1pt, black, thick]{\small$\{\overline{P_H}, P_{H'}\}$};
            \node at (-2, -2) [below = 1pt, black, thick]{\small$\{P_H, \overline{P_{H'}}\}$};
            \node at (2, -2) [below = 1pt, black, thick]{\small$\{\overline{P_H}, \overline{P_{H'}}\}$};
            \node at (0, 2) [above = 2pt, black, thick]{\small$e_H$};
            \node at (0, -2) [below = 2pt, black, thick]{\small$e_H$};
            \node at (-2, 0) [left = 2pt, black, thick]{\small$e_{H'}$};
            \node at (2, 0) [right = 2pt, black, thick]{\small$e_{H'}$};
            \draw[red, dotted, very thick, rounded corners] (-1, 2) -- (-1, 2.7) -- (-3, 2.7) -- (-3, -2.7) -- (-1, -2.7) -- (-1, 1.96);
            \draw[red, dotted, very thick, rounded corners] (1, 2) -- (1, 2.7) -- (3, 2.7) -- (3, -2.7) -- (1, -2.7) -- (1, 1.96);
            \node at (-2, -2.6) [below = 2pt, red, thick]{\textbf{$\beta(H)$}};
            \node at (2, -2.6) [below = 2pt, red, thick]{\textbf{$\overline{\beta}(H)$}};
        \end{tikzpicture}
        \caption{A portion of the 1-skeleton of~$\salb{\{\calP_H, \calP_{H'}\}}$, as considered in the proof of \emph{Lemma \ref{lem:no two elements of treelike set are companions}}.}
        \label{fig:portion of 1-skeleton of double blowup}
    \end{figure}

    Here it is evident that the edges labelled by~$H'$ do not have one endpoint in~$\beta(H)$ and the other in~$\overline{\beta}(H)$---so~$H'$ is not a companion to~$H$, a contradiction. 

    \textbf{Case II:}~$\calP_H$ and~$\calP_{H'}$ are not adjacent.

    Since~$\calP_H$ and~$\calP_{H'}$ are compatible, there is some side of~$\calP_H$ which has empty intersection with some side of~$\calP_{H'}$. Without loss of generality, suppose that~$P_H \cap \overline{P_{H'}} = \emptyset$. Then~$\salb{\{\calP_H, \calP_{H'}\}}$ has three vertices, labelled by the three other pairs of sides of~$\calP_H$ and~$\calP_{H'}$; that is, by~$\{P_H, P_{H'}\}$,~$\{\overline{P_H}, P_{H'}\}$, and~$\{\overline{P_H}, \overline{P_{H'}}\}$. We have~$\beta(H) = \{P_H, P_{H'}\}$, and no edges incident to this vertex are labelled by~$H'$, so once again,~$H$ and~$H'$ cannot be companions. 
\end{proof}

\begin{corollary}\label{cor:analogue of [Col89] Lem 4.2}
    A~$\Gamma$-complex is symmetric if and only if for every treelike set~$\calT$ of hyperplanes of~$X$ and every~$H \in \calT$,~$H$ is symmetric relative to~$\calT$.
\end{corollary}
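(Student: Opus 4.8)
The plan is to deduce the corollary directly from the two definitions involved, the only nontrivial input being \emph{Lemma \ref{lem:no two elements of treelike set are companions}}, which guarantees that a companion hyperplane of a member of a treelike set cannot itself lie in that treelike set.

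First I would treat the forward implication. Assuming $X$ is symmetric, I fix an arbitrary treelike set $\calT$ and $H \in \calT$. By \emph{Definition \ref{def:symmetric Gamma-complex}} the edges traversing between $\beta(H)$ and $\overline{\beta}(H)$ are dual to exactly two hyperplanes, namely $H$ and some fixed $H' \notin \calT$; in particular $H$ has a single companion hyperplane, so $H$ is symmetric relative to $\calT$ in the sense of \emph{Definition \ref{def:symmetric relative to treelike set}}. Since $\calT$ and $H$ were arbitrary, this direction is complete.

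For the converse I assume that every $H$ in every treelike set $\calT$ of $X$ is symmetric relative to $\calT$, and again fix such a $\calT$ and $H \in \calT$. By hypothesis $H$ has a unique companion hyperplane $H'$; as the edges dual to $H$ already traverse between $\beta(H)$ and $\overline{\beta}(H)$, and $H' \neq H$, the traversing edges are dual to precisely the two hyperplanes $H$ and $H'$. The one remaining point is that $H' \notin \calT$, and this is exactly \emph{Lemma \ref{lem:no two elements of treelike set are companions}} applied to the companion hyperplane $H'$ of $H \in \calT$. Hence the condition in \emph{Definition \ref{def:symmetric Gamma-complex}} holds for $\calT$ and $H$, and since these were arbitrary, $X$ is symmetric.

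The hard part is not really in the corollary at all — it is entirely absorbed into \emph{Lemma \ref{lem:no two elements of treelike set are companions}}, whose proof is the short case check on the possible shapes of $\salb{\{\calP_H, \calP_{H'}\}}$ already carried out above. What I would be careful about is only the bookkeeping between the two very similar definitions, and the small observation that ``unique companion hyperplane'' genuinely yields two distinct dual hyperplanes for the traversing edges (the possibility of \emph{no} companion hyperplane being ruled out, since $\Gamma$-complexes have no separating hyperplanes, as noted after \emph{Definition \ref{def:symmetric relative to treelike set}}).
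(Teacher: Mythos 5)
Your proof is correct and takes essentially the same route as the paper: unwind the two definitions, observe the forward direction is immediate, and in the converse use \emph{Lemma \ref{lem:no two elements of treelike set are companions}} to supply the one fact that \emph{Definition \ref{def:symmetric relative to treelike set}} doesn't already record, namely that the unique companion hyperplane lies outside $\calT$. Your closing remark that a companion hyperplane exists at all because $\Gamma$-complexes have no separating hyperplanes is a sensible bit of bookkeeping that the paper leaves implicit.
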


\begin{proof}
    This is essentially a restatement of the definition of a symmetric~$\Gamma$-partition. By \emph{Lemma \ref{lem:no two elements of treelike set are companions}}, if~$H \in \calT$ is symmetric relative to~$\calT$ with unique companion hyperplane~$H'$, then we cannot have~$H' \in \calT$.
\end{proof}

\begin{lemma}\label{lem:analogue of [Col89] Lem 4.3}
    Let~$X$ be a~$\Gamma$-complex and~$\calT$ be a treelike set of hyperplanes of~$X$.
    \begin{enumerate}[(i)]
        \item Let~$H \in \calT$ be symmetric relative to~$\calT$ with unique companion hyperplane~$H'$. Define a new treelike set~$\widehat{\calT} \coloneqq \left( \calT \setminus H\right) \cup H'$. Then for any~$K \in \calT \setminus H$,~$K$ is symmetric relative to~$\calT$ if and only if~$K$ is symmetric relative to~$\widehat{\calT}$. 
        \item Let~$\calT'$ be another treelike set of hyperplanes of~$X$. Every hyperplane (or edge dual to one) in~$\calT$ is symmetric relative to~$\calT$ if and only if every hyperplane (or edge dual to one) in~$\calT'$ is symmetric relative to~$\calT'$.
    \end{enumerate}
\end{lemma}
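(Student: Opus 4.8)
The plan is to prove Lemma \ref{lem:analogue of [Col89] Lem 4.3} in the order (i) then (ii), since (ii) will follow from (i) together with Theorem \ref{thm:CSV17, 4.12}.

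\textbf{Part (i).} Fix $H \in \calT$ symmetric relative to $\calT$ with unique companion hyperplane $H'$, and set $\widehat{\calT} = (\calT \setminus H) \cup H'$. The first thing to verify is that $\widehat{\calT}$ really is treelike: collapsing $\calT$ gives $\sal$, and since $H$ and $H'$ are the only two hyperplanes whose dual edges traverse between $\beta(H)$ and $\overline\beta(H)$, collapsing $(\calT \setminus H) \cup H'$ is the same operation combinatorially — one checks directly that $c_{\widehat\calT}$ still yields $\sal$ (this is exactly the move that realises a $\Gamma$-Whitehead automorphism, and is the kind of "replace one hyperplane of a treelike set" operation covered by Theorem \ref{thm:CSV17, 4.12}). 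Now take $K \in \calT \setminus H$. The key observation is that the decomposition of the vertices of $X$ induced by $K$ does not depend on whether we view $K$ inside the blowup structure determined by $\calT$ or the one determined by $\widehat\calT$: in both cases $\beta(K) \sqcup \overline\beta(K)$ is obtained by collapsing $X$ along the remaining hyperplanes of the treelike set down to the single blowup $\salb{\{\calP_K\}}$ and reading off which vertices land on which side (as in the paragraph after Definition \ref{def:symmetric Gamma-complex}). The companion edges to $e_K$ — the edges traversing between $\beta(K)$ and $\overline\beta(K)$ — are an intrinsic feature of $X$ and of this vertex-partition, and hence are literally the same set of edges regardless of which treelike set $K$ sits in. Therefore the number of companion hyperplanes to $K$ is the same whether computed relative to $\calT$ or relative to $\widehat\calT$, and "$K$ symmetric relative to $\calT$" $\iff$ "$K$ symmetric relative to $\widehat\calT$". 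The one subtlety to address carefully is that $\beta(K)$ genuinely is independent of the ambient treelike set; but this is precisely what Proposition \ref{prop:treelike sets in Gamma-complexes} and the discussion following Definition \ref{def:symmetric Gamma-complex} give us — the vertex-partition $\beta(K) \sqcup \overline\beta(K)$ makes sense for any hyperplane in any treelike set, and is determined by $K$ and the collapse map, not by the labels on hyperplanes outside the treelike set.

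\textbf{Part (ii).} Suppose every hyperplane in $\calT$ is symmetric relative to $\calT$; let $\calT'$ be another treelike set, and take $K \in \calT'$. By Theorem \ref{thm:CSV17, 4.12}, we can pass from $\calT$ to $\calT'$ by a finite sequence of single-hyperplane swaps, each of the form $\calS \rightsquigarrow (\calS \setminus H) \cup H'$ with the resulting set treelike. I would first argue that each such swap can be taken to be of the form appearing in part (i) — i.e., $H'$ is the (necessarily unique, by symmetry relative to $\calS$) companion hyperplane of $H$ — whenever every element of $\calS$ is symmetric relative to $\calS$: indeed if $H \in \calS$ is symmetric relative to $\calS$ its only possible replacements keeping treelikeness are $H$ itself or its unique companion hyperplane $H'$ (any replacement must have dual edges traversing the same vertex-cut, and by symmetry there is only one other such hyperplane). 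Then part (i) shows that after the swap, every element of $(\calS \setminus H) \cup H'$ is again symmetric relative to the new set, so the hypothesis propagates along the chain. Chaining these swaps from $\calT$ to $\calT'$ yields that every hyperplane of $\calT'$ is symmetric relative to $\calT'$. The converse is identical by symmetry of the roles of $\calT$ and $\calT'$.

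\textbf{Main obstacle.} The step I expect to require the most care is establishing, in part (i), that the companion-edge structure of $K$ is genuinely intrinsic — unchanged when we replace $H$ by $H'$ in the treelike set — and, relatedly in part (ii), that every treelike-preserving single-hyperplane swap out of a set all of whose hyperplanes are symmetric is forced to be a companion-swap of the type handled in (i). Both hinge on the claim that the vertex partition $\beta(K)\sqcup\overline\beta(K)$ and hence the set of edges crossing it depend only on $K$ and $X$, not on the rest of the blowup structure; once that is pinned down cleanly (via Proposition \ref{prop:treelike sets in Gamma-complexes} and Lemma \ref{lem:no two elements of treelike set are companions}), the rest is bookkeeping.
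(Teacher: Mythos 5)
Your proof of part (i) has a genuine gap. You assert that the vertex partition $\beta(K)\sqcup\overline\beta(K)$ is ``an intrinsic feature of $X$'', determined by $K$ alone and not by the ambient treelike set, so that the companion edges of $K$ are literally the same set whether computed relative to $\calT$ or to $\widehat\calT$. This is false, and it fails exactly in the hard case of the lemma, namely when $K$ shares the companion hyperplane $H' = K'$ with $H$. To see the failure, collapse $X$ along $\calT\setminus\{H,K\}$ to reach $\salb{\{\calP_H,\calP_K\}}$, a three-vertex complex (this is the situation of Figures~\ref{fig:edges of Theta_L labelled by H and K} and \ref{fig:all companions to H and K}) with vertices $1,2,3$, and $e_H$ between $1$ and $2$, $e_K$ between $2$ and $3$, and the single crossing edge $e_{H'}=e_v$ between $1$ and $3$. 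Relative to $\calT$ (which contains $H$ and $K$), removing $e_K$ leaves $1,2$ connected by $e_H$, so $\beta(K)=\{1,2\}$, $\overline\beta(K)=\{3\}$, and the crossing edges are $e_K$ and $e_v$. Relative to $\widehat\calT$ (which contains $H'$ and $K$), removing $e_K$ leaves $1,3$ connected by $e_{H'}$, so $\beta(K)=\{2\}$, $\overline\beta(K)=\{1,3\}$, and the crossing edges are $e_K$ and $e_H$. The partition and the companion-edge set both change; the conclusion of the lemma survives (in each case there is exactly one companion hyperplane), but it does not follow from your intrinsicness claim. You yourself write that $\beta(K)$ is ``determined by $K$ and the collapse map'' --- but the collapse map is $c_{\calT\setminus K}$, which depends on $\calT$, so the conclusion that it is independent of $\calT$ doesn't follow. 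The paper's proof confronts this exactly: it splits into the easy case $K'\neq H'$ (where the swap genuinely does not touch $K$'s companion structure) and the case $K'=H'$, which requires the $\Theta_L\times\mathbb{S}_L$ carrier analysis to conclude that $K$ remains symmetric, now with companion $H$ instead of $H'$.

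For part (ii), your outline is in the right spirit but skips the step that makes the induction terminate. You claim that when $H\in\calS$ is symmetric relative to $\calS$, the only hyperplanes that can replace $H$ and keep treelikeness are $H$ or its unique companion $H'$; you would need to actually establish this (it is plausible, via the two-vertex picture after collapsing $\calS\setminus H$, but it is not a direct consequence of Theorem~\ref{thm:CSV17, 4.12}, which only gives existence of a replacement, not uniqueness). The paper sidesteps this by a more direct induction: for $H\in\calT\setminus\calT'$ it forms $\widehat\calT=(\calT\setminus H)\cup H'$, argues via the $\Theta_{\link{v}}\times\mathbb{S}_{\link{v}}$ decomposition that $H'$ must lie in $\calT'$ (since the $\calT'$-edges form a maximal tree in $\Theta_{\link{v}}$, which is a theta-graph, hence contain $e_{H'}$ once $e_H$ is excluded), and deduces $|\widehat\calT\setminus\calT'|<|\calT\setminus\calT'|$. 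If you make your uniqueness-of-replacement claim precise your part (ii) would work, but the cleanest route is still to show directly, as the paper does, that the companion $H'$ must already sit in $\calT'$.
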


\begin{proof}
    \begin{enumerate}[(i)]
        \item Let~$K \in \calT \setminus H$ (so in particular~$K \in \widehat{\calT}$) be symmetric relative to~$\calT$ with unique companion hyperplane~$K'$. By \emph{Lemma \ref{lem:no two elements of treelike set are companions}}, we have~$K' \notin \calT$. Similarly,~$H' \notin \calT$.

        If~$K' \neq H'$, then exchanging~$H$ and~$H'$ does not alter the companion edges of~$K$, in which case~$K$ is also symmetric relative to~$\widehat{\calT}$.

        Hence, we are reduced to the case~$K' = H'$; write~$K' = H' = H_v$, for some~$v \in V(\Gamma)$. Consider the hyperplane collapse~$c_{\overline{\calT}} \colon X \to X \ssslash \overline{\calT} \cong \salb{\{\calP_H, \calP_K\}}$, where~$\overline{\calT} \coloneqq \calT \setminus \{H, K\}$. By \emph{Lemma \ref{lem:no two elements of treelike set are companions}}, this hyperplane collapse does not collapse any of the companion edges to either~$H$ or~$K$, so if the image of~$K$ under this collapse (which we still denote~$K$) is symmetric with respect to~$\{H', K\}$, then~$K$ is symmetric with respect to~$\widehat{\calT}$.

        Since~$H$ and~$K$ each have~$H_v$ as a companion hyperplane, the partitions~$\calP_H$ and~$\calP_K$ must each split~$v$ (and only~$v$, since~$H$ and~$K$ are both symmetric with respect to~$\{H, K\}$). Hence we have~$\link{\calP_H} = \link{\calP_K} = \link{v}^\pm$; denote~$\link{v} = L$. Thus (cf. \cite{CharneyStambaughVogtmann17}, Example 4.8)~$\salb{\{\calP_H, \calP_K\}}$ contains a subcomplex isomorphic to~$\Theta_L \times \mathbb{S}_L$, where~$\mathbb{S}_L$ is the Salvetti complex for~$A_L$ and~$\Theta_L$ is the subgraph of the 1-skeleton of~$\salb{\{\calP_H, \calP_K\}}$ formed of the edges labelled by~$H$,~$K$, and all vertices~$w \in V(\Gamma)$ with~$\link{w} = L$. This subcomplex is precisely the union of the carriers of the hyperplanes of~$\salb{\{\calP_H, \calP_K\}}$ labelled by~$H$,~$K$, or~$w \in V(\Gamma)$ with~$\link{w} = L$. 

        Now,~$H$ and~$K$ are not adjacent, so~$\salb{\{\calP_H, \calP_K\}}$ has three vertices. Without loss of generality, suppose that~$P_H \subsetneq P_K$, so that these three vertices are labelled by~$\{P_H, P_K\}$,~$\{\overline{P_H}, P_K\}$, and~$\{\overline{P_H}, \overline{P_K}\}$, joined by edges labelled by~$H$ and~$K$ as shown in \emph{Figure \ref{fig:edges of Theta_L labelled by H and K}}:
        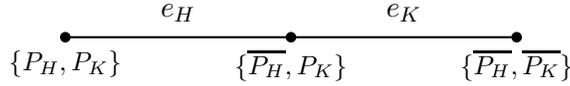
\begin{figure}[H]
            \centering
            \begin{tikzpicture}
                \fill (-3, 0) circle (2pt);
                \fill (0, 0) circle (2pt);
                \fill (3, 0) circle (2pt);
                \draw[black, thick] (-3, 0) -- (0, 0);
                \draw[black, thick] (0, 0) -- (3, 0);
                \node at (-3, 0) [below = 1pt, black, thick]{\small$\{P_H, P_K\}$};
                \node at (0, 0) [below = 1pt, black, thick]{\small$\{\overline{P_H}, P_K\}$};
                \node at (3, 0) [below = 1pt, black, thick]{\small$\{\overline{P_H}, \overline{P_K}\}$};
                \node at (-1.5, 0) [above = 2pt, black, thick]{$e_H$};
                \node at (1.5, 0) [above = 2pt, black, thick]{$e_K$};
            \end{tikzpicture}
            \caption{The edges of~$\Theta_L$ labelled by~$H$ and~$K$.}
            \label{fig:edges of Theta_L labelled by H and K} 
        \end{figure}

        Precisely one of~$v$ and~$v^{-1}$ is an element of~$P_H$; without loss of generality suppose~$v \in P_H$. Then we must have~$v^{-1} \in \overline{P_H} \cap \overline{P_K}$. Hence, the only edge labelled by~$v$ traverses from the vertex labelled~$\{P_H, P_K\}$ to the vertex labelled~$\{\overline{P_H}, \overline{P_K}\}$. Since~$H$ and~$K$ are both symmetric relative to~$\{H, K\}$, any edge labelled by~$w \in V(\Gamma)$ with~$\link{w} = L$ must be a loop at one of the three vertices above. Therefore, the complete set of edges labelled by~$H$ and~$K$ and their companions forms the graph shown in \emph{Figure \ref{fig:all companions to H and K}}:
        \begin{figure}[H]
            \centering
            \begin{tikzpicture}
                \fill (-3, 0) circle (2pt);
                \fill (0, 0) circle (2pt);
                \fill (3, 0) circle (2pt);
                \draw[black, thick] (-3, 0) -- (0, 0);
                \draw[black, thick] (0, 0) -- (3, 0);
                \draw[smooth, tension=1, black, thick] plot coordinates {(-3, 0) (-1, 1) (1, 1) (3, 0)};
                \node at (-3, 0) [below = 1pt, black, thick]{\small$\{P_H, P_K\}$};
                \node at (0, 0) [below = 1pt, black, thick]{\small$\{\overline{P_H}, P_K\}$};
                \node at (3, 0) [below = 1pt, black, thick]{\small$\{\overline{P_H}, \overline{P_K}\}$};
                \node at (-1.5, 0) [above = 2pt, black, thick]{$e_H$};
                \node at (1.5, 0) [above = 2pt, black, thick]{$e_K$};
                \node at (0, 1.1) [above = 2pt, black, thick]{$e_{H'} = e_v$};
            \end{tikzpicture}
            \caption{The edges labelled by~$H$ and~$K$, and their companions. The edges $e_H$ and $e_K$ are not oriented but note that (although not depicted)~$e_v$ is oriented depending on whether~$v \in P_H \cap P_K$ or~$v \in \overline{P_H} \cap \overline{P_K}$.}
            \label{fig:all companions to H and K} 
        \end{figure}
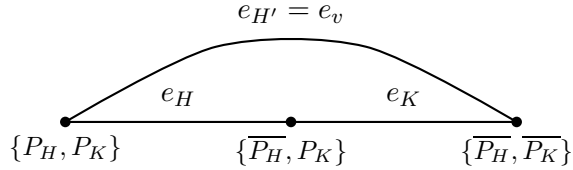

        Hence~$K$ is symmetric relative to~$\{H', K\}$ in~$\salb{\{\calP_H, \calP_K\}}$, and so is also symmetric relative to~$\widehat{\calT}$ in~$X$, as required.

        \item Suppose all hyperplanes in~$\calT$ are symmetric relative to~$\calT$ and let~$H \in \calT \setminus \calT'$. Define the treelike set~$\widehat{\calT} \coloneqq \left(\calT \setminus H\right) \cup H'$ as in (i), where~$H'$ is the unique companion hyperplane of~$H$. By (i), every hyperplane in~$\widehat{\calT}$ is symmetric relative to~$\widehat{\calT}$. Suppose that~$H'$ is labelled by some vertex~$v \in V(\Gamma)$; then~$\link{\calP_H} = \link{v}^\pm$. The union of the carriers of the hyperplanes labelled by vertices or partitions which have link~$\link{v}$ forms a subcomplex of~$X$ isomorphic to~$\Theta_{\link{v}} \times \mathbb{S}_{\link{v}}$, and the edges labelled by any treelike set form a maximal tree in~$\Theta_{\link{v}}$ (cf. \cite{CharneyStambaughVogtmann17}, Example 4.9). Therefore, since~$\calT'$ is treelike, we must have~$H' \in \calT'$. Therefore we have~$\vert \widehat{\calT} \setminus \calT'\vert < \vert \calT \setminus \calT'\vert$, and now the result follows by induction.
    \end{enumerate}
\end{proof}

\emph{Lemma \ref{cor:analogue of [Col89] Lem 4.2}} said that a~$\Gamma$-complex~$X$ is symmetric if and only if for every treelike set~$\calT$ in~$X$ and every~$H \in \calT$,~$H$ is symmetric relative to~$\calT$. \emph{Lemma \ref{lem:analogue of [Col89] Lem 4.3}} improves this to say that actually it suffices to find only one such treelike set~$\calT$.

Recall that we have fixed~$\calW$ to be the set of conjugacy classes of~$\raag$ of length one.

We can now define our retraction~$r \colon \Kmin{\calW} \to \symspine$. Let~$\xi = (X, \alpha) \in \Kmin{\calW}$, where~$X$ is a~$\Gamma$-complex and~$\alpha \colon X \to \sal$ is a marking. Define
\begin{eqnarray*}
    r \colon \Kmin{\calW} & \to & \symspine \\
    \left(X, \alpha\right) & \mapsto & \left(X^\Sigma, \alpha \circ \left(c_\calT^\Sigma\right)^{-1}\right)
\end{eqnarray*}
where~$c_\calT \colon X \to X \ssslash \calT \cong \sal$ is the collapse along a treelike set~$\calT$ such that
\[(c_\calT \circ \alpha^{-1})_\ast \colon \pi_1\left(\sal\right) \to \pi_1\left(\sal\right)\] 
is a symmetric automorphism, and~$c_\calT^\Sigma \colon X \to X^\Sigma$ is given by collapsing along all hyperplanes in~$\calT$ which are \emph{not} symmetric relative to~$\calT$.  

We need to show that this is a well-defined map; in particular, that~$X^\Sigma$ is actually a symmetric~$\Gamma$-complex, and that~$r(\xi)$ is independent of the choices of~$X$,~$\alpha$, and~$\calT$.

\subsection{Change of treelike set (CTS) automorphisms}\label{subsec:CTS auts}

To do this, we introduce \emph{change of treelike set (CTS) automorphisms}. These are our analogue of the `change of maximal tree (CMT) automorphisms' introduced by Gersten \cite{GerstenFixedPointsAutFn84} and utilised by Collins \cite{CollinsSymSpine89}.  

Note that for any treelike set of hyperplanes in a~$\Gamma$-complex~$X$, the edges dual to the hyperplanes in the treelike set form no non-null-homotopic loops. 

Let~$\calT$ be a treelike set of hyperplanes in a~$\Gamma$-complex~$X$. Let~$X$ have basepoint~$x_0$ in the 0-skeleton of~$X$. For each hyperplane~$\overline{H}$ of~$\sal$, there is a unique hyperplane~$H$ of~$X$ such that~$c_\calT(H) = \overline{H}$. Moreover, for each edge~$e_{\overline{H}}$ dual to~$\overline{H}$ in~$\sal$, there is a unique edge~$e_H$ dual to~$H$ in~$X$. There are paths in the 1-skeleton of~$X$ along edges dual to hyperplanes in~$\calT$ to each of the endpoints of each such edge~$e_H$, since the edges labelled by elements of~$\calT$ span the 0-skeleton of~$X$. We claim that these paths are unique up to homotopy. Indeed, let~$\gamma_1$,~$\gamma_2$ be two paths along edges dual to hyperplanes in~$\calT$, from~$x_0$ to the same endpoint of some edge~$e_H$. Then the composition~$\gamma_1\gamma_2^{-1}$ is a loop, which (since~$\calT$ is treelike) must be null-homotopic. Hence~$\gamma_1$ is homotopic to~$\gamma_2$. 

Recall that since~$H$ is labelled by a vertex of~$\Gamma$, all edges dual to~$H$ are oriented. Fix some such edge~$e_H$, and denote its two endpoints by~$s(e_H)$ and~$t(e_H)$, where the orientation travels along~$e_H$ from~$s(e_H)$ to~$t(e_H)$. We have a path~$\gamma$ along edges dual to hyperplanes in~$\calT$ from~$x_0$ to~$s(e_H)$, and a similar path~$\delta$ from~$x_0$ to~$t(e_H)$. Then~$\gamma e_H \delta^{-1}$ is a loop in the 1-skeleton of~$X$. The collapse~$c_\calT$ has canonical homotopy inverse given on~$e_{\overline{H}}$ by~$c_\calT^{-1}(e_{\overline{H}}) = \gamma e_H \delta^{-1}$. 

Now let~$c_{\calT}$ and~$c_{\calT'}$ be collapses~$X \to \sal$ along treelike sets~$\calT$ and~$\calT'$ respectively. This induces an automorphism~$\left(c_{\calT'}\right)_\ast \circ \left(c_{\calT}^{-1}\right)_\ast \colon \pi_1\left(\sal\right) \to \pi_1\left(\sal\right)$, which we call a \emph{change of treelike set (CTS) automorphism}:
\[\begin{tikzcd}
	& X && {} \\
	{X \ssslash \calT} & \sal & {X \ssslash \calT'}
	\arrow["{c_\calT}"', from=1-2, to=2-1]
	\arrow["{c_{\calT'}}", from=1-2, to=2-3]
	\arrow["{=}"{description}, draw=none, from=2-1, to=2-2]
	\arrow["{=}"{description}, draw=none, from=2-2, to=2-3]
\end{tikzcd}\]

This can be calculated by evaluating~$c_{\calT'}$ on the closed paths~$c_\calT^{-1}(e_{\overline{v}})$ for each edge~$e_{\overline{v}}$ of~$\sal$.

It will be helpful to briefly examine when a CTS automorphism~$\left(c_{\calT'}\right)_\ast \circ \left(c_{\calT}^{-1}\right)_\ast$ is symmetric. Recall that a symmetric automorphism is one where each generator~$v \in V(\Gamma)$ is sent to a conjugate of (possibly an inverse of) another generator. Therefore the paths~$c_{\calT}^{-1}(e_{\overline{v}})$ for~$e_{\overline{v}}$ an edge of~$\sal$ must be homotopic to a loop which contains only one edge dual to a hyperplane not in~$\calT'$. Indeed, since the subcomplex~$C^\Pi$ formed of cubes with labels only in~$\calT$ (cf. \cite{BregmanCharneyVogtmann23}, \S3.1) is~$\operatorname{CAT}(0)$, it is contractible, so we can adjust the paths~$\gamma, \delta \subseteq C^\Pi$ so that each does not repeat any labels. Moreover, if~$\delta$ shares a label with~$\gamma$, we can take the loop~$\gamma e_v \delta^{-1}$ to be formed of two loops, one of which is entirely contained in~$C^\Pi$, so is null-homotopic. So we may assume that each label appears only once in the path~$\gamma e_v \delta^{-1}$. Now evaluating this path on the collapse~$c_{\calT'}$, if this loop passes through more than one hyperplane not in~$\calT'$, the corresponding CTS automorphism will not be symmetric. 

\subsection{Proof that~$r$ is a retraction}\label{subsec:proof that r is a retraction}

In this subsection we will show that~$X^\Sigma$, as in the definition of the retraction~$r \colon \Kmin{\calW} \to \symspine$, is a symmetric~$\Gamma$-complex. 

\begin{lemma}\label{lem:analogue of [Col89] Lem 4.4}
    Let~$X$ be a~$\Gamma$-complex, and let~$\calT$,~$\calT'$ be treelike sets such that the corresponding CTS automorphism is symmetric. If~$H \in \calT \setminus \calT'$, then~$H$ is symmetric relative to~$\calT$. 
\end{lemma}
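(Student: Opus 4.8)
The plan is to recast the conclusion as a statement about $\Gamma$-partitions, reduce to the case $\calT\cap\calT'=\varnothing$, and then extract the needed information from the edge-path description of the CTS automorphism in \S\ref{subsec:CTS auts}.

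\textbf{Reinterpreting the conclusion.} Fix the blowup structure on $X$ for which $\calT$ is the set of hyperplanes labelled by $\Gamma$-partitions, and let $\calP_H$ be the $\Gamma$-partition labelling $H$. As established inside the proof of \emph{Lemma \ref{lem:symmetric Gamma-cx iff partitions are symmetric}}, an edge of $X$ has one end in $\beta(H)$ and the other in $\overline{\beta}(H)$ exactly when its dual hyperplane is $H$, or is $H_u$ for some $u\in\operatorname{split}(\calP_H)$; by \emph{Lemma \ref{lem:no two elements of treelike set are companions}} the latter are never in $\calT$. Hence the companion hyperplanes of $H$ relative to $\calT$ are precisely $\{H_u : u\in\operatorname{split}(\calP_H)\}$, so ``$H$ is symmetric relative to $\calT$'' is equivalent to ``$\lvert\operatorname{split}(\calP_H)\rvert=1$''. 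Since every $\Gamma$-partition splits its base, it suffices to prove $\calP_H$ splits no vertex other than its base.

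\textbf{Reducing to disjoint treelike sets.} Let $\calS=\calT\cap\calT'$. Since $\calS\subseteq\calT$ and $\calS\subseteq\calT'$, both $c_\calT$ and $c_{\calT'}$ factor through $c_\calS\colon X\to Y:=X\ssslash\calS$, and the induced automorphism of $\pi_1(\sal)$ is unchanged: the given CTS automorphism coincides with that of the treelike sets $\calT\smallsetminus\calS$ and $\calT'\smallsetminus\calS$ in $Y$ (still symmetric). As $H\notin\calT'\supseteq\calS$, and no companion of $H$ lies in $\calT\supseteq\calS$, the collapse $c_\calS$ destroys neither $H$ nor its companions, and only coarsens $\beta(H)\sqcup\overline{\beta}(H)$ within each part; so $H$ is symmetric relative to $\calT$ in $X$ iff its image is symmetric relative to $\calT\smallsetminus\calS$ in $Y$. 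Replacing $X,\calT,\calT'$ accordingly, we may assume $\calT\cap\calT'=\varnothing$; then $\calT'$ consists entirely of vertex-hyperplanes of the $\calT$-structure, so in particular $e_H$ is not an edge of the $\calT'$-tree.

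\textbf{Reading off the CTS automorphism.} As $H\notin\calT'$, in the $\calT'$-structure $H$ is labelled by a vertex $v$, and by \S\ref{subsec:CTS auts} the CTS automorphism $\psi$ from $\calT'$ to $\calT$ (symmetric, being the inverse of the given one) sends $v$ to the word obtained by reading the $\calT$-labels along a loop $\gamma\, e_H\, \delta^{-1}$, with $\gamma,\delta$ edge-paths in the $\calT'$-tree joining the basepoint to the two ends of $e_H$, normalised so that no hyperplane is crossed twice. Collapsing $\calT$ kills $e_H$ but leaves every other edge of the loop (all dual to vertex-hyperplanes), so $\psi(v)$ is a reduced word in $F(V(\Gamma))$ with pairwise-distinct letters. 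Because $e_H$ joins $\beta(H)$ to $\overline{\beta}(H)$ while $e_H$ is not in the $\calT'$-tree, the path $\gamma\delta^{-1}$ crosses between the two parts an odd, hence positive, number of times; each crossing traverses a companion edge of $H$, whose $\calT$-label is a vertex of $\operatorname{split}(\calP_H)$, and the intervening subwords are supported off $\operatorname{split}(\calP_H)$. Symmetry of $\psi$ forces $\psi(v)$ to be conjugate in $\raag$ to a single generator; using that a symmetric automorphism permutes the length-one conjugacy classes, so normal form in the RAAG reduces $\psi(v)$ to one letter, one concludes the path crosses $\calP_H$ exactly once. Finally, a split vertex $u'$ of $\calP_H$ with $H_{u'}\notin\calT'$ is ruled out by swapping $H$ for $H_{u'}$ in $\calT$ (valid by \emph{Theorem \ref{thm:CSV17, 4.12}}, as $u'\in\operatorname{split}(\calP_H)$): this changes the CTS automorphism only by the elementary factor $\varphi(\calP_H,u')^{\pm1}$, which by \emph{Lemma \ref{lem:CSV17, 3.2}} is symmetric precisely when $\calP_H$ is, and re-running the analysis on the modified pair gives $u'=v$. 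Hence $\lvert\operatorname{split}(\calP_H)\rvert=1$, as required.

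\textbf{Main obstacle.} The genuinely delicate point is the last paragraph: deducing from ``$\psi(v)$ is $\raag$-conjugate to a generator'' that the $\calT'$-tree path crosses $\calP_H$ exactly once — so that several crossings cannot conspire to cancel through commutation relations in the RAAG — and simultaneously handling split vertices of $\calP_H$ whose companion hyperplanes are absent from $\calT'$. I expect the cleanest handle is a length argument (a symmetric automorphism is length-preserving on length-one conjugacy classes, so $\psi(v)$ cyclically reduces to a single letter), combined with the ``no repeated hyperplane'' normalisation of the loop and, in the reduced disjoint single-partition situation, the explicit computation of \emph{Lemma \ref{lem:CSV17, 3.2}} together with the fact that $\varphi(\calP,m)$ is symmetric if and only if $\calP$ is.
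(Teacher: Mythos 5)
Your proof runs in the opposite direction from the paper's, and the change of direction creates a gap that the final paragraph does not close. You analyse the CTS automorphism $\psi$ from $\calT'$ to $\calT$ on the single generator $v$ labelling $H$ in the $\calT'$-structure, and correctly deduce (after the no-repeated-hyperplanes normalisation) that the $\calT'$-tree path from $s(e_H)$ to $t(e_H)$ meets exactly one companion edge, dual to some $H_u$ with $u\in\operatorname{split}(\calP_H)$. But this only detects companions of $H$ lying \emph{on that one loop}. A companion $H_{u'}$ with $u'\neq u$ need not appear there at all — whether $H_{u'}\in\calT'$ (so its dual edges are $\calT'$-tree edges crossing the wall somewhere else in the spanning tree) or $H_{u'}\notin\calT'$ — and in neither case does your main computation say anything about it. The swap-and-rerun in your last paragraph is not a valid repair: the set $(\calT\setminus H)\cup H_{u'}$ is treelike only if $u'$ can serve as a base for $\calP_H$ (i.e.\ $\link{u'}=\link{\calP_H}$), which is not guaranteed for an arbitrary split vertex, and \emph{Theorem~\ref{thm:CSV17, 4.12}} does not apply because $H_{u'}$ is not supplied as a member of an existing treelike set. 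Moreover, the clause ``$\varphi(\calP_H,u')^{\pm1}$ is symmetric precisely when $\calP_H$ is'' cannot be used as a lever, since whether $\calP_H$ is symmetric is exactly what is being proved, and ``re-running the analysis gives $u'=v$'' is not a contradiction — $v$ is the $\calT'$-label of $H$, while the re-run would produce the $\calT'$-label of a different hyperplane.

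The paper's proof avoids this asymmetry by working from $\calT$ to $\calT'$ and looking at \emph{two} companions simultaneously. Supposing $H_v$ and $H_w$ are companions with $v\neq w$, it applies the symmetry constraint to both loops $\gamma_v e_v\delta_v^{-1}$ and $\gamma_w e_w\delta_w^{-1}$: each loop must cross the wall of $H$, every wall-crossing $\calT$-edge is $e_H$-type, and $H\notin\calT'$, so the ``at most one non-$\calT'$ hyperplane'' constraint forces both $H_v,H_w\in\calT'$. Then $e_v$ and $e_w$ together with $\calT'$-paths form a $\calT'$-loop, hence a null-homotopic one, forcing $e_{\overline v}\simeq e_{\overline w}$ in $\sal$ — impossible since $v\neq w$. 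That argument treats all companions uniformly, which is precisely what your single-loop computation cannot do. If you want to salvage your direction, you would need to apply the symmetry constraint to enough other generators to see every wall-crossing $\calT'$-tree edge, and then also eliminate companions whose vertex-hyperplane lies in neither treelike set; both steps require further work beyond what you have written.
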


\begin{proof}
    Suppose that~$H$ has companion hyperplanes~$H_v$,~$H_w$ for~$v \neq w \in V(\Gamma)$. Note that~$H_v$,~$H_w$ are not elements of~$\calT$, by \emph{Lemma \ref{lem:no two elements of treelike set are companions}}. Let~$c_{\calT}(H_v) = H_{\overline{v}}$,~$c_\calT(H_w) = H_{\overline{w}} \in \sal$. As in the definition of CTS automorphisms above, fix a basepoint~$x_0$ in the 0-skeleton of~$X$, and pick edges~$e_v$,~$e_w$ in~$X$. Without loss of generality, take~$x_0 \in \beta(H)$. We use the following notation:
    \begin{itemize}
        \item let~$\gamma_v$ be a path from~$x_0$ to~$s(e_v)$;
        \item let~$\delta_v$ be a path from~$x_0$ to~$t(e_v)$;
        \item let~$\gamma_w$ be a path from~$x_0$ to~$s(e_w)$;
        \item let~$\delta_v$ be a path from~$x_0$ to~$t(e_w)$.
    \end{itemize}

    By \emph{Lemma \ref{lem:no two elements of treelike set are companions}}, no hyperplane which is a companion to~$H$ is an element of~$\calT$. Hence, since the loops~$\gamma_v e_v \delta_v^{-1}$ and~$\gamma_w e_w \delta_w^{-1}$ each pass between~$\beta(H)$ and~$\overline{\beta}(H)$, they must traverse along edges labelled by~$H$. Since the CTS automorphism~$\left(c_{\calT'}\right)_\ast \circ \left(c_{\calT}^{-1}\right)_\ast$ is symmetric, we may assume that each of these loops contains only one edge labelled by an element not in~$\calT'$. Since~$H \notin \calT'$, in each case it must be the edge labelled by~$H$. This implies that the hyperplanes dual to~$e_v$ and~$e_w$ are elements of~$\calT'$. Also, there is a path between vertices of~$\beta(H)$ along edges dual to hyperplanes in~$\calT'$ from~$s(e_v)$ to~$s(e_w)$, and a similar path containing only vertices of~$\overline{\beta}(H)$ along edges dual to hyperplanes in~$\calT'$ from~$t(e_v)$ to~$t(e_w)$. These two paths, along with the edges~$e_v$,~$e_w$, thus form a loop labelled by elements of~$\calT'$. Since~$\calT'$ is treelike, this loop must be null-homotopic, which implies that~$e_v$ is homotopic to~$e_w$. Therefore~$e_{\overline{v}}$ is homotopic to~$e_{\overline{w}}$ in~$\sal$, which is a contradiction as~$v \neq w$.
\end{proof}

For a treelike set~$\calT$ in a~$\Gamma$-complex~$X$, let~$\calT_{-\Sigma} \coloneqq \{H \in \calT \;\colon\; \text{$H$ is not symmetric relative to~$\calT$}\}$. 

\begin{proposition}\label{prop:analogue of [Col89] Prop 4.5}
    Let~$X$ be a~$\Gamma$-complex, and let~$\calT$,~$\calT'$ be treelike sets in~$X$ such that the corresponding CTS automorphism is symmetric. Then~$\calT_{-\Sigma} = \calT'_{-\Sigma}$.
\end{proposition}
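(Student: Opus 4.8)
The plan is to argue by induction on $d \coloneqq |\calT \setminus \calT'|$, repeatedly performing a ``companion swap'' that decreases $d$ while leaving the set $\calT_{-\Sigma}$ of non‑symmetric hyperplanes unchanged. Throughout we use that symmetric automorphisms of $\raag$ form a group: in particular the CTS automorphism $\calT' \to \calT$, being the inverse of the given one, is again symmetric, and any composite of symmetric CTS automorphisms is symmetric.

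First I would record the companion‑swap move. Suppose $G$ is symmetric relative to a treelike set $\calS$ with unique companion hyperplane $G'$, and set $\widehat{\calS} \coloneqq (\calS \setminus G) \cup G'$. By \emph{Lemma \ref{lem:analogue of [Col89] Lem 4.3}}(i), $\widehat{\calS}$ is treelike, and a hyperplane of $\calS \setminus G$ is symmetric relative to $\calS$ if and only if it is symmetric relative to $\widehat{\calS}$. Moreover $G'$ is symmetric relative to $\widehat{\calS}$ with unique companion $G$: the partition $\beta(G') \sqcup \overline{\beta}(G')$ of the vertices of $X$ taken relative to $\widehat{\calS}$ is determined by connectivity through edges dual to $\widehat{\calS} \setminus G' = \calS \setminus G$, which is exactly the connectivity defining $\beta(G) \sqcup \overline{\beta}(G)$ relative to $\calS$; hence these two vertex‑partitions coincide, and so do the sets of edges crossing them --- namely the edges dual to $G$ or to $G'$ (and $G \notin \widehat{\calS}$ since $G \neq G'$). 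Combining these facts yields $\calS_{-\Sigma} = \widehat{\calS}_{-\Sigma}$. Finally, passing from collapsing $G$ to collapsing $G'$ realises the CTS automorphism $\calS \to \widehat{\calS}$ as a blowup‑collapse along the single $\Gamma$-partition $\calP_G$, which is symmetric because $G$ is symmetric relative to $\calS$ (so $\calP_G$ splits only its base); by \emph{Lemma \ref{lem:CSV17, 3.2}}, applied with multiplier the base of $\calP_G$ (maximal in $\operatorname{split}(\calP_G)$, as this set is a singleton), this automorphism is, up to an inner automorphism, a symmetric $\Gamma$-Whitehead automorphism, hence symmetric.

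Now the induction. If $d = 0$ then $\calT \subseteq \calT'$, and since both are treelike this forces $\calT = \calT'$ (a proper containment would exhibit $\sal \cong X \ssslash \calT'$ as a nontrivial further collapse of $\sal \cong X \ssslash \calT$), so $\calT_{-\Sigma} = \calT'_{-\Sigma}$. If $d \geq 1$, choose $G \in \calT \setminus \calT'$. By \emph{Lemma \ref{lem:analogue of [Col89] Lem 4.4}}, applied to the symmetric CTS automorphism $\calT \to \calT'$, $G$ is symmetric relative to $\calT$; let $G'$ be its unique companion, so $G' \notin \calT$ by \emph{Lemma \ref{lem:no two elements of treelike set are companions}}. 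I claim $G' \in \calT'$; granting this, perform the companion swap to obtain $\widehat{\calT} \coloneqq (\calT \setminus G) \cup G'$. Then $\calT_{-\Sigma} = \widehat{\calT}_{-\Sigma}$ by the previous paragraph; the CTS automorphism $\widehat{\calT} \to \calT'$ is the composite of the symmetric CTS automorphisms $\widehat{\calT} \to \calT$ and $\calT \to \calT'$, hence symmetric; and $|\widehat{\calT} \setminus \calT'| = d - 1$, since we removed $G \notin \calT'$ and adjoined $G' \in \calT'$. Applying the inductive hypothesis to the pair $(\widehat{\calT}, \calT')$ gives $\widehat{\calT}_{-\Sigma} = \calT'_{-\Sigma}$, whence $\calT_{-\Sigma} = \calT'_{-\Sigma}$.

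The one real obstacle is the claim $G' \in \calT'$, and I expect it to follow by isolating a step already present in the proof of \emph{Lemma \ref{lem:analogue of [Col89] Lem 4.4}}. There, for two distinct companions $H_v, H_w$ of $G$, one shows that the loops $\gamma_v e_v \delta_v^{-1}$ and $\gamma_w e_w \delta_w^{-1}$ must each traverse an edge dual to $G$, and then --- using that the CTS automorphism is symmetric, so each such loop is homotopic to one with a single edge outside $\calT'$, which must be the $G$-edge because $G \notin \calT'$ --- that the companion edges $e_v, e_w$ lie in $\calT'$. That deduction uses only one companion at a time, so it equally shows that the unique companion $G'$ of $G$ lies in $\calT'$. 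The remaining verification is just to confirm that this step can be extracted cleanly as a standalone assertion; everything else above is bookkeeping with \emph{Lemmas \ref{lem:analogue of [Col89] Lem 4.3} and \ref{lem:analogue of [Col89] Lem 4.4}}.
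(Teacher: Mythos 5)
Your proposal is correct and follows essentially the same route as the paper: induction on $\lvert\calT \setminus \calT'\rvert$ via companion swaps, using Lemmas~\ref{lem:no two elements of treelike set are companions}, \ref{lem:analogue of [Col89] Lem 4.3} and \ref{lem:analogue of [Col89] Lem 4.4}, with the Poset-style bookkeeping you describe. For the one step you flag as unresolved ($G' \in \calT'$), the paper uses a simpler direct argument that avoids re-opening the proof of Lemma~\ref{lem:analogue of [Col89] Lem 4.4}: since $G$ is symmetric relative to $\calT$, the only edges of $X$ traversing between $\beta(G)$ and $\overline{\beta}(G)$ are dual to $G$ or to $G'$, and since the edges dual to the treelike set $\calT'$ must span the $0$-skeleton of $X$, at least one crossing edge is dual to an element of $\calT'$; as $G \notin \calT'$, this forces $G' \in \calT'$. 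The only other divergence is cosmetic: where you invoke Lemma~\ref{lem:CSV17, 3.2} to see that the companion-swap CTS automorphism is a symmetric $\Gamma$-Whitehead automorphism, the paper instead computes its action on generators explicitly and observes that it is a partial conjugation, which amounts to the same conclusion.
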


\begin{proof}
    We induct on~$\vert \calT \setminus \calT' \vert$. If~$\vert \calT \setminus \calT' \vert = 0$, then there is nothing to prove. 

    Let~$H \in \calT \setminus \calT'$. \emph{Lemma \ref{lem:analogue of [Col89] Lem 4.4}} implies that~$H$ is symmetric relative to~$\calT$; let~$H'$ be its unique companion hyperplane. Clearly we have~$H' \in \calT'$, as otherwise~$c_{\calT'}$ would fail to collapse the 0-skeleton of~$X$ to the single vertex in~$\sal$. 
    
    Define~$\widehat{\calT} \coloneqq \left( \calT \setminus H \right) \cup H'$. This is a treelike set, and the collapse map~$c_{\widehat{\calT}}$ differs from~$c_\calT$ by the isomorphism of~$X$ which exchanges the hyperplanes~$H$ and~$H'$ (more precisely, the isomorphism~$h_L$ induced by exchanging the edges~$e_H$ and~$e_{H'}$ in the graph~$\Theta_L$, where the union of the carriers of the hyperplanes corresponding to vertices or partitions with link~$L$ decomposes as~$\Theta_L \times \mathbb{S}_L$). We now consider the CTS automorphism from~$\calT$ to~$\widehat{\calT}$. To do so, for each edge~$e_{\overline{v}} \in \sal$ we evaluate the closed loops~$c_\calT^{-1}(e_{\overline{v}})$ on~$c_{\widehat{\calT}}$. 

    For each edge~$e_{\overline{v}}$ of~$\sal$, let~$\gamma_v e_v \delta_v^{-1}$ be the (unique up to homotopy) loop along edges dual to elements of~$\calT$ (excepting~$e_v$, the unique edge mapped to~$e_{\overline{v}}$ by~$c_\calT$) based at a basepoint~$x_0$, an element of~$\beta(H)$. Let~$B(H)$ be the full subcomplex of~$X$ with 0-skeleton~$\beta(H)$; similarly define~$\overline{B}(H)$.

    \begin{figure}[H]
        \centering
        \begin{tikzpicture}[scale = 0.8]
            %
            \draw (-6, 0) circle (2);
            \draw (6, 0) circle (2);
            \node at (-6, -2.5) [black, thick]{$B(H)$};
            \node at (6, -2.5) [black, thick]{$\overline{B}(H)$};
            \fill (-6.5, 0.5) circle (2pt);
            \node at (-6.5, 0.5) [black, thick, below = 2pt] {$x_0$};
            %
            %
            \draw[black, thick, fill = black!20!white] (-2, 2.4) to (-1.6, 2.7) to (-1.6, 1.7) to (-2, 1.4) to (-2, 2.4);
            \draw[black, thick, fill = black!20!white] (-1.5, -2.7) to (-1.1, -2.4) to (-1.1, -1.4) to (-1.5, -1.7) to (-1.5, -2.7);
            %
            %
            \draw[smooth, tension=0.5, black, thick, ->-] plot coordinates {(-4.8, 1.6) (0, 2.3) (4.8, 1.6)};
            \draw[smooth, tension=0.5, black, thick, ->-] plot coordinates {(-4.7, 1.52) (0, 2.2) (4.7, 1.52)};
            \draw[smooth, tension=0.5, black, thick, ->-] plot coordinates {(-4.57, 1.4) (0, 2) (4.57, 1.4)};
            \fill (0, 2.05) circle (0.3pt);
            \fill (0, 2.1) circle (0.3pt);
            \fill (0, 2.15) circle (0.3pt);
            \draw[smooth, tension=0.5, black, thick, ->-] plot coordinates {(-4.8, -1.6) (0, -2.3) (4.8, -1.6)};
            \draw[smooth, tension=0.5, black, thick, ->-] plot coordinates {(-4.7, -1.52) (0, -2.2) (4.7, -1.52)};
            \draw[smooth, tension=0.5, black, thick, ->-] plot coordinates {(-4.57, -1.4) (0, -2) (4.57, -1.4)};
            \fill (0, -2.05) circle (0.3pt);
            \fill (0, -2.1) circle (0.3pt);
            \fill (0, -2.15) circle (0.3pt);
            %
            %
            \node at (-0.2, 2.45) [black, thick]{$e_H$};
            \node at (-1.9, 2.75) [black, thick]{$H$};
            \node at (0.5, -1.7) [black, thick]{$e_{H'}$};
            \node at (-1, -2.75) [black, thick]{$H'$};
        \end{tikzpicture}
        \caption{Separation of~$X$ into~$B(H)$ and~$\overline{B}(H)$ with respect to~$H \in \calT$, which has unique companion hyperplane~$H' \in \calT'$.}
        \label{fig:hyperplanes H, H'}
    \end{figure}
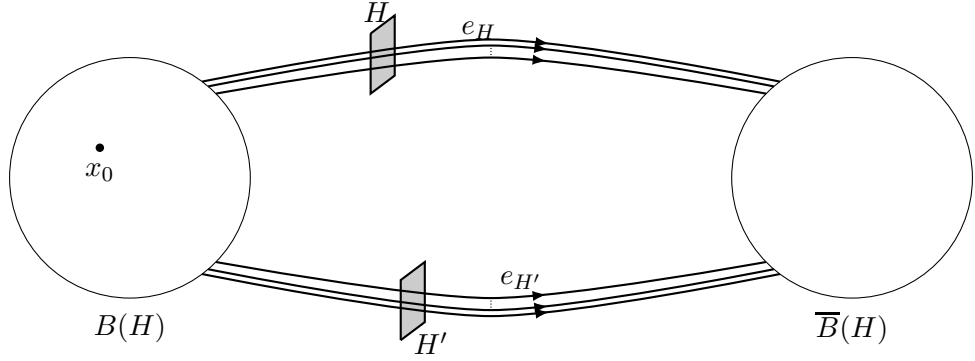

    First of all, we have~$c_{\widehat{\calT}}(e_H) = c_\calT\left(h_L(e_H)\right) = c_\calT(e_{H'})$.

    Suppose that~$e_v \subseteq B(H)$. Suppose that the loop~$\gamma_v e_v \delta_v^{-1}$ passes into~$\overline{B}(H)$. If so, it must leave~$B(H)$ along an edge~$e_H$ labelled by~$H$, and then come back from~$\overline{B}(H)$ along an edge~$e_H'$ labelled by~$H$.~$C^\Pi$ is contractible, so there exists a homotopy between~$e_H$ and~$e_H'$. Hence we may assume that~$e_H = e_H'$, and so since~$\calT$ is treelike, the resulting loop is null-homotopic. Therefore, we may assume the~$\gamma_v e_v \delta_v^{-1}$ is entirely contained in~$B(H)$. Hence every element of~$\calT$ dual to an edge of this loop is also an element of~$\widehat{\calT}$, so~$\gamma_v e_v \delta_v^{-1}$ is also the (unique up to homotopy) homotopy inverse image of~$e_{\overline{v}}$ under~$c_{\widehat{\calT}}$. In other words, 
    \[\left(c_{\widehat{\calT}}\right)_\ast \left(\left(c_\calT^{-1}\right)_\ast(v)\right) = v.\]

    Finally, consider some~$e_v \subseteq \overline{B}(H)$. We can homotope the loop~$\gamma_v \e_v \delta_v^{-1}$ so that it traverses once from~$B(H)$ to~$\overline{B}(H)$ along an edge labelled~$H$, and then after having traversed~$e_v$, passes back into~$B(H)$ along an edge labelled~$H$. Apart from~$e_v$ and these edges dual to~$H$, every edge of this loop is dual to an element of~$\widehat{\calT}$. Hence 
    \begin{eqnarray*}
        c_{\widehat{\calT}} \left(c_\calT^{-1}(v)\right) & = & c_{\widehat{\calT}} \left(e_H e_v e_H^{-1}\right) \\
        & = & c_{\widehat{\calT}}(e_H) e_{\overline{v}} \left(c_{\widehat{\calT}}(e_H)\right)^{-1} \\
        & = & c_\calT(e_{H'}) e_{\overline{v}} \left( c_\calT(e_{H'}) \right)^{-1}. 
    \end{eqnarray*} 
    
    Hence the CTS automorphism from~$\calT$ to~$\widehat{\calT}$ is symmetric. By induction, it is now sufficient to show that~$\calT_{-\Sigma} = \widehat{\calT}_{-\Sigma}$. Since~$H'$ is symmetric relative to~$\widehat{\calT}$, this follows by \emph{Lemma \ref{lem:analogue of [Col89] Lem 4.3}, (i)}, and we have the result.
\end{proof}

\begin{lemma}\label{lem:analogue of [Col89] Lem 4.6}
    Let~$\calT$ be a treelike set of hyperplanes in a~$\Gamma$-complex~$X$.
    \begin{enumerate}[(i)]
        \item Let~$c_H \colon X \to X \ssslash H$ collapse a single hyperplane~$H \in \calT$. Then~$H' \in \calT \setminus H$ is symmetric relative to~$\calT$ if and only if~$c_H(H')$ is symmetric in~$c_H(X)$ relative to~$c_H(\calT)$.
        \item Let~$c_\calT^\Sigma \colon X \to X^\Sigma$ be the hyperplane collapse along~$\calT_{-\Sigma}$. Then~$X^\Sigma$ is a symmetric~$\Gamma$-complex.
    \end{enumerate}
\end{lemma}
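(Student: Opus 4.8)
The plan is to prove~(i) directly, by comparing the companion structure of $H'$ before and after the collapse $c_H$, and then to deduce~(ii) by iterating~(i) and finishing with \emph{Lemma \ref{lem:analogue of [Col89] Lem 4.3}~(ii)} and \emph{Corollary \ref{cor:analogue of [Col89] Lem 4.2}}.

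For~(i): since $H$ lies in the treelike set $\calT$, the collapse $c_H \colon X \to X \ssslash H$ again yields a $\Gamma$-complex---namely the blowup obtained from a blowup structure on $X$ adapted to $\calT$ (\emph{Proposition \ref{prop:treelike sets in Gamma-complexes}}) by deleting the single $\Gamma$-partition labelling $H$---and $c_H$ induces a bijection $H'' \mapsto c_H(H'')$ from the hyperplanes of $X$ other than $H$ to the hyperplanes of $X \ssslash H$, sending $\calT \setminus H$ onto the treelike set $c_H(\calT)$; on the $0$-skeleton $c_H$ identifies exactly the pairs of endpoints of edges dual to $H$. The crucial point is \emph{Lemma \ref{lem:no two elements of treelike set are companions}}: since $H, H' \in \calT$, the hyperplane $H$ is not a companion hyperplane of $H'$, so no edge dual to $H$ traverses between $\beta(H')$ and $\overline{\beta}(H')$. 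Hence each edge dual to $H$ has both endpoints on the same side of $H'$, so the vertex partition $V(X) = \beta(H') \sqcup \overline{\beta}(H')$ descends under $c_H$ to the partition $V(X \ssslash H) = \beta(c_H(H')) \sqcup \overline{\beta}(c_H(H'))$. An edge of $X$ traverses the former partition if and only if its image traverses the latter (no such edge is dual to $H$, so none is collapsed, and conversely every preimage of a traversing edge of $X \ssslash H$ is a traversing edge of $X$). Therefore $c_H$ restricts to a bijection between the companion edges of $e_{H'}$ and the companion edges of $e_{c_H(H')}$, hence between the companion hyperplanes of $H'$ and those of $c_H(H')$; in particular these sets have equal cardinality, which is exactly the statement that $H'$ is symmetric relative to $\calT$ if and only if $c_H(H')$ is symmetric relative to $c_H(\calT)$.

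For~(ii): write $\calT_\Sigma \coloneqq \calT \setminus \calT_{-\Sigma}$, so that each $H' \in \calT_\Sigma$ is symmetric relative to $\calT$ by definition of $\calT_{-\Sigma}$, and enumerate $\calT_{-\Sigma} = \{H_1, \dots, H_k\}$. Collapse $H_1, \dots, H_k$ one at a time; at the $j$-th step apply~(i) with ``$H$'' the current image of $H_j$ (which still lies in the current treelike set, the image of $\calT$) and ``$H'$'' the current image of an arbitrary element of $\calT_\Sigma$ (these images remain pairwise distinct, since a hyperplane collapse does not identify distinct hyperplanes and $H_j \notin \calT_\Sigma$). By induction, the image $\calS \coloneqq c_\calT^\Sigma(\calT_\Sigma)$ of $\calT_\Sigma$ in $X^\Sigma$ is a set of hyperplanes each of which is symmetric relative to $\calS$. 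Now $X^\Sigma$ is again a $\Gamma$-complex (it is the blowup obtained from a blowup structure on $X$ adapted to $\calT$ by deleting the $\Gamma$-partitions labelling $\calT_{-\Sigma}$), and since $X^\Sigma \ssslash \calS = X \ssslash \calT \cong \sal$ the set $\calS$ is treelike in $X^\Sigma$. Thus $X^\Sigma$ possesses a treelike set all of whose hyperplanes are symmetric relative to it, so by \emph{Lemma \ref{lem:analogue of [Col89] Lem 4.3}~(ii)} the same is true of \emph{every} treelike set of $X^\Sigma$, and \emph{Corollary \ref{cor:analogue of [Col89] Lem 4.2}} then gives that $X^\Sigma$ is a symmetric $\Gamma$-complex.

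The main obstacle is~(i), and in particular the claim that $c_H$ neither creates nor destroys a traversing edge for $H'$: this is precisely where \emph{Lemma \ref{lem:no two elements of treelike set are companions}} is indispensable, since it is the fact that $H$ is not a companion of $H'$ that makes the vertex partition induced by $H'$ descend along $c_H$ without interference. With~(i) in hand, part~(ii) is a formal bookkeeping argument assembling the results of \S\ref{subsec:companion hyperplanes and edges}.
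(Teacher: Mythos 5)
Your proof is correct and follows essentially the same approach as the paper's: part~(i) rests on \emph{Lemma \ref{lem:no two elements of treelike set are companions}} to show that collapsing $H$ does not alter the companion structure of any other $H' \in \calT$, and part~(ii) iterates~(i) over $\calT_{-\Sigma}$ and then invokes \emph{Corollary \ref{cor:analogue of [Col89] Lem 4.2}} and \emph{Lemma \ref{lem:analogue of [Col89] Lem 4.3}~(ii)}. Your version of~(i) is a bit more careful than the paper's (which contains a spurious ``without loss of generality'' reducing to the case that all edges dual to $H$ lie on one side of $H'$; as you note, each edge dual to $H$ has both endpoints on a single side of $H'$ but different edges may lie on different sides, and this does not affect the conclusion).
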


\begin{proof}
    \begin{enumerate}[(i)]
        \item Let~$K \in \calT$ have companions~$K_1, \;\dots,\; K_m$, none of which are in~$\calT$, by \emph{Lemma \ref{lem:no two elements of treelike set are companions}}. By the same lemma,~$H$ is not a companion hyperplane to~$K$, so the collapse~$c_H$ collapses edges between vertices of either~$\beta(K)$ or~$\overline{\beta}(K)$; without loss of generality suppose the former. Hence~$c_H(K)$ has companions~$c_H(K_1), \;\dots, \; c_H(K_m)$. The result follows.
        
        \item By \emph{Lemmas \ref{cor:analogue of [Col89] Lem 4.2} \& \ref{lem:analogue of [Col89] Lem 4.3}},~$X^\Sigma$ is symmetric if and only if every element of~$c_\calT^\Sigma(\calT)$ is symmetric relative to~$c_\calT^\Sigma(\calT)$ (which is treelike in~$X^\Sigma$). 

        If~$\vert \calT_{-\Sigma} \vert = 0$, then there is nothing to prove. Otherwise, apply (i) to~$H' \in \calT_{-\Sigma}$; the result follows by induction on~$\vert \calT_{-\Sigma} \vert$.
    \end{enumerate}
\end{proof}

Let~$\xi = (X, \alpha) \in \Kmin{\calW}$---in other words, there is a treelike set~$\calT$ such that
\[\left(c_\calT \circ \alpha^{-1}\right)_\ast \colon \pi_1\left(\sal\right) \to \pi_1\left(\sal\right)\] 
is a symmetric automorphism. Then after collapsing all non-symmetric hyperplanes relative to~$\calT$,~$\left(X^\Sigma, \alpha \circ (c_\calT^\Sigma)^{-1}\right)$ is a symmetric marked~$\Gamma$-complex. In other words, we have now proved that~$r(\xi)$ is always an element of~$\symspine$. To prove that~$r$ is well-defined, it remains to show that this image~$r(\xi)$ is independent of the choices of~$X$,~$\alpha$, and~$\calT$. 

To this end, let~$\xi' = (X', \alpha') \in \Kmin{\calW}$ with a treelike set of hyperplanes~$\calT'$ of~$X'$ such that~$r(\xi) = r(\xi')$. We reduce to considering the case that~$\vert \calT \vert = \vert \calT' \vert$, as if (say)~$\vert \calT' \vert < \vert \calT \vert$, then we can collapse~$(\vert \calT \vert - \vert \calT' \vert)$ hyperplanes of~$\calT$ to obtain some~$\overline{\calT}$ with~$\vert \overline{\calT} \vert = \vert \calT' \vert$. The result for two equally-sized treelike sets, pre-composed with the hyperplane collapse~$X \to X \ssslash (\calT \setminus \overline{\calT})$, will give~$r(\xi) = r(\xi')$, as required. 

Since~$\vert \calT \vert = \vert \calT' \vert$, there is a cube complex isomorphism~$h \colon X \xrightarrow{\cong} X'$ such that
\[\begin{tikzcd}
	X && {X'} \\
	& \sal
	\arrow["h", "\cong"', from=1-1, to=1-3]
	\arrow["\alpha"', from=1-1, to=2-2]
	\arrow["{\alpha'}", from=1-3, to=2-2]
\end{tikzcd}\]
commutes.

Since~$\left(c_\calT \circ \alpha^{-1}\right)_\ast$ and~$\left(c_{\calT'} \circ \alpha'^{-1}\right)_\ast$ are symmetric automorphisms of~$\pi_1\left(\sal\right)$, and the cube complex isomorphism~$h$ preserves symmetric hyperplanes, the CTS automorphism from~$h(\calT)$ to~$\calT'$ is also symmetric. Applying \emph{Proposition \ref{prop:analogue of [Col89] Prop 4.5}}, we obtain~$\calT'_{-\Sigma} = \left(h(\calT)\right)_{-\Sigma} = h\left(\calT_{-\Sigma}\right)$, and so~$h$ induces an isomorphism~$h^\Sigma \colon X^\Sigma \to X'^\Sigma$ such that 
\[\begin{tikzcd}
	X^\Sigma && {X'^\Sigma} \\
	& \sal
	\arrow["h^\Sigma", "\cong"', from=1-1, to=1-3]
	\arrow["\alpha \circ (c_\calT^\Sigma)^{-1}"', from=1-1, to=2-2]
	\arrow["{\alpha' \circ (c_{\calT'}^\Sigma)^{-1}}", from=1-3, to=2-2]
\end{tikzcd}\]
commutes. Hence
\[\left(X^\Sigma, \alpha \circ (c_\calT^\Sigma)^{-1}\right) \sim \left(X'^\Sigma, \alpha' \circ (c_{\calT'}^\Sigma)^{-1}\right);\] i.e.,~$r(\xi) = r(\xi')$. Thus~$r \colon \Kmin{\calW} \to \symspine$ is well-defined. 

In order to finish Step (iii) as outlined in \S\ref{subsec:discussion of proof strategy}, we prove that~$r$ is a deformation retraction, using the Poset Lemma (\emph{Lemma \ref{lem:poset lemma}}).

\newpage 

\begin{theorem}\label{thm:symspine is a retract of Kmin}
   ~$r \colon \Kmin{\calW} \to \symspine$ is a deformation retraction.
\end{theorem}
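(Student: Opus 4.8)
The plan is to deduce the statement from the Poset Lemma (\emph{Lemma \ref{lem:poset lemma}}). Let $\mathsf{P}$ denote the poset of (equivalence classes of) marked $\Gamma$-complexes lying in $\Kmin{\calW}$, so that $\Kmin{\calW}$ is its geometric realisation; recall that in this poset $\sigma_\calH = \left(X\ssslash\calH,\;\alpha\circ c_\calH^{-1}\right)$ satisfies $\sigma_\calH \le \sigma$. First I would record the elementary observation that, because $\calW$ is the set of conjugacy classes of length one, a marked Salvetti $\sigma = \left(\sal,\alpha\right)$ has $\ell_\sigma\!\left([v]\right)\ge 1$ for every $v\in V(\Gamma)$, with equality for all $v$ exactly when $\alpha^{-1}$ (equivalently $\alpha$) is a symmetric automorphism; hence $\lVert\cdot\rVert_\calW$ is minimised precisely at the symmetric marked Salvettis, $\symspine$ is the full subcomplex of $\Kmin{\calW}$ spanned by the symmetric marked $\Gamma$-complexes, and in particular $\symspine\subseteq\Kmin{\calW}$. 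With the notation of \S\ref{subsec:companion hyperplanes and edges}, $r(\xi) = \xi_{\calT_{-\Sigma}}$ for any treelike set $\calT$ of $X$ with $\left(c_\calT\circ\alpha^{-1}\right)_\ast$ symmetric (such $\calT$ exists as $\xi\in\Kmin{\calW}$); since $\calT_{-\Sigma}\subseteq\calT$, this $r(\xi)$ is a genuine element of $\mathsf{P}$ with $r(\xi)\le\xi$, and it lies in $\symspine$ by the well-definedness argument already given. So $r$ is a self-map of $\mathsf{P}$ with $r(\xi)\le\xi$ and image contained in the vertex set of $\symspine$.

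Next I would verify that $r$ restricts to the identity on $\symspine$: if $\xi = (X,\alpha)$ is symmetric then, by \emph{Corollary \ref{cor:analogue of [Col89] Lem 4.2}}, for any treelike set $\calT$ of $X$ every $H\in\calT$ is symmetric relative to $\calT$, so $\calT_{-\Sigma}=\emptyset$ and $r(\xi)=\xi$. Combined with the previous paragraph this shows the image $r(\mathsf{P})$ is exactly the vertex poset of $\symspine$, hence $|r(\mathsf{P})| = \symspine$, and $r$ is the identity there.

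The crux is showing that $r$ is order-preserving. Suppose $\xi' = \xi_\calH \le \xi$, where $X' = X\ssslash\calH$ and $\calH$ is contained in a treelike set of $X$. Since $\xi'\in\Kmin{\calW}$, choose a treelike set $\calT'$ of $X'$ with $\left(c_{\calT'}\circ(\alpha')^{-1}\right)_\ast$ symmetric, and put $\calT := c_\calH^{-1}(\calT')\cup\calH$. Collapsing $\calT$ is the same as collapsing $\calH$ and then $\calT'$, so $\calT$ is treelike in $X$, contains $\calH$, realises $\xi$ as symmetric, and $\calT' = c_\calH(\calT\setminus\calH)$. I would then iterate \emph{Lemma \ref{lem:analogue of [Col89] Lem 4.6}(i)} over the hyperplanes of $\calH$, collapsing one at a time, to conclude that a hyperplane of $\calT\setminus\calH$ is symmetric relative to $\calT$ if and only if its image is symmetric relative to $\calT'$; equivalently $(\calT')_{-\Sigma} = c_\calH\!\left(\calT_{-\Sigma}\setminus\calH\right)$. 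Since collapsing $\calH$ followed by $c_\calH(\mathcal{A})$ agrees with collapsing $\calH\cup\mathcal{A}$, this yields $r(\xi') = \xi'_{(\calT')_{-\Sigma}} = \xi_{\calH\cup\calT_{-\Sigma}}$, which is obtained from $r(\xi)=\xi_{\calT_{-\Sigma}}$ by a further collapse, so $r(\xi')\le r(\xi)$. This is the step I expect to be the main obstacle: arranging that the treelike set used to compute $r(\xi)$ can be taken to contain $\calH$, and then bookkeeping the non-symmetric set $\calT_{-\Sigma}$ through the collapse $c_\calH$ (this is where the work of \S\ref{subsec:companion hyperplanes and edges}–\S\ref{subsec:CTS auts} pays off).

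Finally, $\mathsf{P}$ has finite height because $\Kmin{\calW}$ is a finite-dimensional subcomplex of $\spine$. Applying \emph{Lemma \ref{lem:poset lemma}} to the poset map $r\colon\mathsf{P}\to\mathsf{P}$ gives a deformation retraction of $\Kmin{\calW} = |\mathsf{P}|$ onto $|r(\mathsf{P})| = \symspine$; since $r$ is the identity on $\symspine$, this is a deformation retraction onto $\symspine$, as required.
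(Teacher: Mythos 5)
Your proof is correct and follows essentially the same route as the paper: both verify the Poset Lemma hypotheses and reduce the order-preservation of $r$ to iterating \emph{Lemma \ref{lem:analogue of [Col89] Lem 4.6}(i)} across the collapsed hyperplanes $\calH$, after pulling back a treelike set realising the smaller complex symmetrically. The only difference is presentational: you carry out the argument at the level of sets of hyperplanes, deducing $r(\xi') = \xi_{\calH \cup \calT_{-\Sigma}} \leq \xi_{\calT_{-\Sigma}} = r(\xi)$ directly, whereas the paper constructs the collapse map $c_{21}^\Sigma$ explicitly and checks commutativity of the corresponding triangle.
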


\begin{proof}
    We check that~$r$ satisfies the conditions of the Poset Lemma.~$\spine$ is finite-dimensional, so~$\Kmin{\calW}$ is a poset of finite height;~$\symspine$ is a subposet of~$\Kmin{\calW}$. Since~$r(\xi)$ is obtained from~$\xi$ by a hyperplane collapse, we always have~$r(\xi) \leq \xi$. 

    It remains to show that~$r$ is actually a poset map; that is, given~$\xi_1 = (X_1, \alpha_1) \leq \xi_2 = (X_2, \alpha_2)$, we have~$r(\xi_1) \leq r(\xi_2)$. Since~$\xi_1 \leq \xi_2$, there is a collapse~$c_{21} \colon X_2 \to X_1$ such that 
    \[\begin{tikzcd}
	X_2 && {X_1} \\
	& \sal
	\arrow["c_{21}", from=1-1, to=1-3]
	\arrow["\alpha_2"', from=1-1, to=2-2]
	\arrow["{\alpha_1}", from=1-3, to=2-2]
    \end{tikzcd}\]
    commutes.

    Choose a collapse~$c_1 \colon X_1 \to \sal$ such that~$\left( c_1 \circ \alpha_1^{-1} \right)_\ast$ is a symmetric automorphism (this can always be done, since~$\xi_1 \in \Kmin{\calW}$). Define~$c_2 \coloneqq c_1 \circ c_{21}$, and then~$\left( c_2 \circ \alpha_2^{-1} \right)_\ast$ is also symmetric; moreover~$r(\xi_1) = \left(X_1^\Sigma, \alpha_1 \circ (c_1^\Sigma)^{-1}\right)$ and~$r(\xi_2) = \left(X_2^\Sigma, \alpha_2 \circ (c_2^\Sigma)^{-1}\right)$. Also, if~$\calT_1$ is the treelike set in~$X_1$ corresponding to the collapse~$c_1$, then the treelike set~$\calT_2$ for~$c_2$ is the inverse image of~$\calT_1$ under~$c_{21}$. Hence we are now just looking for~$c_{21}^\Sigma \colon X_2^\Sigma \to X_1^\Sigma$ such that 
    \[\begin{tikzcd}
	X_2^\Sigma && {X_1^\Sigma} \\
	& \sal
	\arrow["c_{21}^\Sigma", from=1-1, to=1-3]
	\arrow["\alpha_2 \circ (c_2^\Sigma)^{-1}"', from=1-1, to=2-2]
	\arrow["{\alpha_1 \circ (c_1^\Sigma)^{-1}}", from=1-3, to=2-2]
    \end{tikzcd}\]
    commutes. 

    Let~$C^{\calT_2}$ be the subcomplex of~$X_2$ which has all cubes labelled only by elements of~$\calT_2$ (cf. \cite{BregmanCharneyVogtmann23}, \S3.1). Let~$x$ be a vertex of~$X_2^\Sigma$; then its inverse image under~$c_2^\Sigma$ is a subcomplex~$C_x^{\calT_2}$ of~$C^{\calT_2}$. If a hyperplane of~$C_x^{\calT_2}$ is symmetric relative to~$\calT_2$, then~$c_2^\Sigma$ would not collapse this hyperplane, which is a contradiction since~$c_2^\Sigma\left(C_x^{\calT_2}\right) = x$. Hence every hyperplane in~$C_x^{\calT_2}$ is not symmetric relative to~$\calT_2$, and so by \emph{Lemma \ref{lem:analogue of [Col89] Lem 4.6}, (i)}, each hyperplane of~$c_{21}\left(C_x^{\calT_2}\right)$ is not symmetric relative to~$\calT_1$. Hence~$c_1^\Sigma \left( c_{21} \left( C_x^{\calT_2} \right)\right) = y$, a vertex of~$X_1^\Sigma$. Define~$c_{21}^\Sigma(x) \coloneqq y$.

    Similarly, for each edge~$\overline{e}$ of~$X_2^\Sigma$, there is a unique edge~$e$ of~$X_2$ such that~$c_2^\Sigma(e) = \overline{e}$. Therefore, define~$c_{21}^\Sigma(e) \coloneqq c_1^\Sigma \left( c_{21}(e)\right)$. We need to check that~$\alpha_1 \circ \left(c_1^\Sigma\right)^{-1} \circ c_{21}^\Sigma = \alpha_2 \circ \left(c_2^\Sigma\right)^{-1} \coloneqq X_2^\Sigma \to \sal$. By definition,~$c_{21}^\Sigma \circ c_2^\Sigma = c_1^\Sigma \circ c_{21}$, so 
    \begin{eqnarray*}
        \alpha_1 \circ \left(c_1^\Sigma\right)^{-1} \circ c_{21}^\Sigma \circ c_2^\Sigma & = & \alpha_1 \circ \left(c_1^\Sigma\right)^{-1} \circ c_1^\Sigma \circ c_{21} \\
        & = & \alpha_1 \circ \alpha_{21} = \alpha_2,
    \end{eqnarray*}
    since~$\xi_1 \leq \xi_2$. Hence the diagram above commutes with this choice of~$c_{21}^\Sigma$, and so~$r(\xi_1) \leq r(\xi_2)$, as required.
\end{proof}

Thus~$\symspine$ is a deformation retract of~$\Kmin{\calW}$, and Step (iii) is complete, which proves \emph{Theorem \ref{customthm:there exists a symmetric spine}}.

\begin{corollary}\label{cor:symspine is contractible}
    The symmetric spine~$\symspine$ is contractible.
\end{corollary}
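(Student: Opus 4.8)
The plan is simply to chain together the two main results of the preceding two sections. By \emph{Theorem \ref{thm:Kmin is contractible}}, the complex $\Kmin{\calW}$ is contractible for \emph{every} finite set $\calW$ of conjugacy classes of $\raag$; in particular this applies to the specific $\calW$ fixed at the start of \S\ref{sec:retraction of Kmin to symspine}, namely the set of conjugacy classes of length one. By \emph{Theorem \ref{thm:symspine is a retract of Kmin}}, for that same $\calW$ the map $r \colon \Kmin{\calW} \to \symspine$ is a deformation retraction. Since a deformation retraction is in particular a homotopy equivalence, and since $\Kmin{\calW}$ is contractible, we conclude that $\symspine$ is contractible.

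There is no real obstacle left at this stage: all of the content has been absorbed into \emph{Theorems \ref{thm:Kmin is contractible}} and \emph{\ref{thm:symspine is a retract of Kmin}}. The only point that deserves a word of care is bookkeeping of the parameter $\calW$ — one must check that the $\calW$ in the hypothesis of \emph{Theorem \ref{thm:Kmin is contractible}} is instantiated to the same $\calW$ (length-one conjugacy classes) for which \emph{Theorem \ref{thm:symspine is a retract of Kmin}} produces the retraction, and that, for this choice, $\symspine$ really does sit inside $\Kmin{\calW}$ as a subposet. Both of these are exactly the reasons $\calW$ was chosen to consist of length-one classes in the first place, so nothing new is required; the corollary follows in one line.

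If one prefers to avoid invoking ``a deformation retract of a contractible space is contractible'' as a black box, the same conclusion is reached by noting that $r$ induces an isomorphism on all homotopy groups (being part of a deformation retraction), so $\symspine$ has trivial homotopy groups, and, being a CW-complex, is contractible by Whitehead's theorem. Either formulation gives the result immediately.

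\begin{proof}
    By \emph{Theorem \ref{thm:Kmin is contractible}}, $\Kmin{\calW}$ is contractible, where $\calW$ is the set of conjugacy classes of $\raag$ of length one. By \emph{Theorem \ref{thm:symspine is a retract of Kmin}}, $\symspine$ is a deformation retract of $\Kmin{\calW}$, hence homotopy equivalent to it. Therefore $\symspine$ is contractible.
\end{proof}
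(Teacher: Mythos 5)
Your proof is correct and is exactly the argument the paper intends: the corollary is stated immediately after Theorem~\ref{thm:symspine is a retract of Kmin} precisely because it follows by combining that deformation retraction with the contractibility of $\Kmin{\calW}$ from Theorem~\ref{thm:Kmin is contractible}. Your bookkeeping remark about matching the choice of $\calW$ (length-one conjugacy classes) in both theorems is the only point worth noting, and you handle it correctly.
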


\subsection{Equivalent definition of symmetric~$\Gamma$-complex}\label{subsec:equivalent definition of symmetric Gamma-complex}

We can also deduce an equivalent condition for a~$\Gamma$-complex to be symmetric, as mentioned after \emph{Definition \ref{def:symmetric Gamma-complex}}. We call a non-null-homotopic loop \emph{simple} if it cannot be decomposed as a concatenation of more than one non-null-homotopic loop.

\begin{proposition}\label{prop:equivalence of definitions of symmetric Gamma-complex}
    A~$\Gamma$-complex is symmetric if and only if every edge lies in a unique homotopy class of simple non-null-homotopic loops. 
\end{proposition}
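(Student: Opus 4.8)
The plan is to fix a blowup structure $X\cong\salb\Pi$, let $\calT$ be the treelike set of hyperplanes labelled by the $\Gamma$-partitions of $\Pi$, and sort the edges of $X$ by their dual hyperplane: call an edge a \emph{tree edge} if it is dual to some $H\in\calT$ (these span the $0$-skeleton, and the cube subcomplex $C^\Pi$ they carry is $\operatorname{CAT}(0)$, hence contractible, cf.\ \cite{BregmanCharneyVogtmann23}, \S3.1), and a \emph{non-tree edge} if it is dual to some vertex hyperplane $H_v$, $v\in V(\Gamma)$. First I would record the structural fact that the homotopy classes of simple non-null-homotopic loops in $X$ are exactly the classes $[\ell_e]$ obtained from a non-tree edge $e$ by closing it up through $C^\Pi$ --- equivalently, the classes sent to a conjugate of a generator by the collapse $c_\calT\colon X\to\sal$. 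Indeed, a reduced loop lying inside the contractible complex $C^\Pi$ is null-homotopic, so a non-null-homotopic loop runs over at least one non-tree edge; and if it ran over non-tree edges $e_1,\dots,e_k$ with $k\ge 2$, then (using contractibility of $C^\Pi$ to reroute the intervening segments through a fixed basepoint) one could write it as a concatenation $\ell_{e_1}\ast\cdots\ast\ell_{e_k}$ of $k\ge 2$ loops, each non-null-homotopic since it maps to a standard generator under $c_\calT$ --- so it would not be simple. Conversely each $\ell_e$ is simple, and crucially $[\ell_e]$ depends only on the hyperplane dual to $e$, being the conjugacy class of $v^{\pm 1}$ when $e$ is dual to $H_v$.

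Next I would prove, for each edge $e_0$, that the homotopy classes of simple non-null-homotopic loops containing $e_0$ (meaning that the reduced representative of the class runs over $e_0$) correspond precisely to the companion hyperplanes of $e_0$ distinct from its own dual hyperplane, in the sense of \emph{Definition~\ref{def:symmetric relative to treelike set}}. If $e_0$ is a non-tree edge dual to $H_v$, this set is the singleton $\{[v]\}$: a simple loop through $e_0$ has a single non-tree edge by the previous paragraph, which must then be $e_0$, while the reduced representative of any other simple class runs only over tree edges and its own, different, non-tree edge. If $e_0$ is a tree edge dual to $H\in\calT$, a simple class $[\ell_e]$ contains $e_0$ exactly when the geodesic in $C^\Pi$ closing up $e$ crosses $H$ along $e_0$; since a geodesic in a $\operatorname{CAT}(0)$ cube complex crosses each hyperplane at most once, this forces the endpoints of $e$ onto opposite sides of $H$, i.e.\ $e$ is a companion edge of $e_0$, and by \emph{Lemma~\ref{lem:no two elements of treelike set are companions}} its dual hyperplane $H_v$ is then a companion hyperplane of $H$ (and not itself in $\calT$); conversely each companion hyperplane of $H$ is realised this way. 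Distinct companion hyperplanes $H_v\ne H_w$ give distinct classes, because distinct generators of a right-angled Artin group --- and a generator and its inverse --- lie in distinct conjugacy classes, as one sees from the abelianisation. Hence $e_0$ lies in a unique homotopy class of simple non-null-homotopic loops if and only if $H$ has a unique companion hyperplane, i.e.\ is symmetric relative to $\calT$.

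To conclude I would combine these: every edge of $X$ lies in a unique homotopy class of simple non-null-homotopic loops if and only if every $H\in\calT$ is symmetric relative to $\calT$, and by \emph{Lemma~\ref{cor:analogue of [Col89] Lem 4.2}} --- with \emph{Lemma~\ref{lem:analogue of [Col89] Lem 4.3}} ensuring that testing a single treelike set suffices --- this holds if and only if $X$ is a symmetric $\Gamma$-complex. (The ``only if'' direction could alternatively be read off from \emph{Lemma~\ref{lem:symmetric Gamma-cx iff partitions are symmetric}}.)

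The main obstacle will be the structural claim of the first paragraph and its refinement for tree edges in the second: one must be careful about the precise meaning of ``simple'' (a reduced, non-decomposable loop) and of a simple loop ``containing'' a given edge, and must control which of the possibly several edges dual to a hyperplane $H\in\calT$ is crossed by a given closing-up geodesic. These are exactly the points at which we transcribe Collins's handling of embedded circuits and of fundamental cycles relative to a maximal tree (\cite{CollinsSymSpine89}, \S4) into the language of hyperplanes in $\Gamma$-complexes.
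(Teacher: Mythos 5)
Your outline takes a genuinely different route from the paper's. The paper argues both directions directly: for the ``only if'' direction it picks an $H_0\in\calT$ that is not symmetric relative to $\calT$ and exhibits two explicit simple loops through a dual edge $e_0$ that map under $c_\calT$ to distinct generators; for the ``if'' direction it starts from an arbitrary simple loop $\lambda'$ through $e_0$, homotopes it to $e_0\,\delta'\,e_1^{-1}\,\gamma'$ with $\gamma'\subseteq B(H_0)$ and $\delta'\subseteq\overline{B}(H_0)$, and compares $\gamma',\delta'$ with the canonical tree paths $\gamma,\delta$: an edge of $\delta$ absent from $\delta'$ is dual to a hyperplane whose unique companion must reappear in $\lambda$, hence can only be $H_1$, and this forces $\delta'$ to traverse that edge after all. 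You instead aim to first classify all simple homotopy classes globally --- those pushing forward to conjugates of generators under $c_\calT$, equivalently those with a simple representative traversing exactly one non-tree edge --- and then, for each edge $e_0$, to identify the classes that ``contain'' $e_0$ with the companion hyperplanes of $e_0$'s dual hyperplane $H$.

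The gap you flag in your final paragraph is genuine, and is not merely a transcription issue. Collins works in a graph, where $C^\Pi$ is a tree; there a homotopy class has a unique reduced representative, so ``the'' closing-up path through $C^\Pi$ is well-defined and crosses $H$ at most once. For a general $\Gamma$-complex, $C^\Pi$ is a higher-dimensional $\operatorname{CAT}(0)$ cube complex and neither of these holds: geodesics joining the endpoints of $e$ are not unique, and different geodesics may cross $H$ at \emph{different} edges dual to $H$, so ``the geodesic'' and ``the reduced representative of the class'' are ill-defined; worse, a reduced closing-up path $\mu\subseteq C^\Pi$ need not be a geodesic and can cross $H$ more than once. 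With the paper's definition of ``simple'' (cannot be decomposed as a concatenation of $\geq 2$ non-null-homotopic loops), a representative $\mu\cdot e'$ with $\mu$ non-geodesic and $e'$ not a companion edge of $e_0$ is still simple --- it has exactly one non-tree edge, so any sub-loop with no non-tree edge lies in the contractible $C^\Pi$ and is null-homotopic --- yet it can traverse $e_0$. So the existence of a simple representative of $[\ell_e]$ through $e_0$ does not let you invoke ``a geodesic crosses each hyperplane at most once'' to conclude that $e$ is a companion edge of $e_0$; this is exactly the step your second paragraph needs. Closing the gap requires either a sharper notion of ``simple'' than the one stated (one that excludes such representatives), or an argument like the paper's, which pins down an arbitrary simple loop through $e_0$ by comparison with the canonical one rather than through a global classification of simple loops.
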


\begin{proof}
    We do the backwards direction first, by contrapositive. Let~$\calT$ be a treelike set in a non-symmetric~$\Gamma$-complex~$X$. Pick~$H_0 \in \calT$, and suppose that~$H_0$ is not symmetric relative to~$\calT$; let~$H_1$ and~$H_2$ be two of its companion hyperplanes, labelled by distinct vertices~$v_1 \neq v_2 \in V(\Gamma)$. For each~$i$, pick an edge~$e_i$ dual to~$H_i$, and write~$s(e_i)$ (resp.~$t(e_i)$) for the endpoint in~$\beta(H_0)$ (resp.~$\overline{\beta}(H_0)$). Let~$\gamma_{0i}$ be the unique (up to homotopy) paths contained in~$B(H_0)$ along edges dual to elements of~$\calT$ from~$s(e_0)$ to~$s(e_i)$, for~$i = 1, 2$. Similarly, let~$\delta_{0i}$ be the unique (up to homotopy) paths contained in~$\overline{B}(H_0)$ along edges dual to elements of~$\calT$ from~$t(e_0)$ to~$t(e_i)$. Now the paths~$e_0 \delta_{0i} e_i^{-1} \gamma_{0i}^{-1}$ for~$i = 1, 2$ are both non-null-homotopic loops. Indeed, if one were null-homotopic, then its image under the homotopy equivalence~$c_\calT$ would also be null-homotopic, which is a contradiction, as this image is the generator of~$\pi_1\left(\sal\right)$ corresponding to the vertex~$v_i \in V(\Gamma)$. For the same reason, these two loops~$e_0 \delta_{0i} e_i^{-1} \gamma_{0i}^{-1}$ cannot be homotopic to each other. Hence the edge~$e_0$ lies in two non-homotopic non-null-homotopic loops in~$X$, whence this direction is complete.

    Conversely, suppose that~$X$ is symmetric. Then every hyperplane~$H \in \calT$ in a treelike set~$\calT$ in~$X$ is symmetric relative to~$X$. Let~$e_0$ be an arbitrary edge in~$X$; we will prove that~$e_0$ lies in exactly one non-null-homotopic loop (up to homotopy). If~$e_0$ is a loop, then we're done, so we may suppose this is not the case. Let~$H_0$ be the hyperplane dual to~$e_0$. Since~$e_0$ is not a loop in~$X$, we can take a treelike set~$\calT$ containing~$e_0$; by our assumption,~$e_0$ is symmetric relative to~$\calT$. Let~$H_1$ be the unique companion hyperplane to~$H_0$, and let~$v \in V(\Gamma)$ be the label of~$H_1$.  

    For~$i = 0, 1$, let~$s(e_i)$ (resp.~$t(e_i)$) denote the endpoint of~$e_i$ contained in~$\beta(H_0)$ (resp.~$\overline{\beta}(H_0)$). Let~$\gamma$ (resp.~$\delta$) denote the unique (up to homotopy) path contained in~$B(H_0)$ (resp.~$\overline{B}(H_0)$) along edges dual to elements of~$\calT$ from~$s(e_1)$ to~$s(e_0)$ (resp. from~$t(e_0)$ to~$t(e_1)$). Then~$\lambda \coloneqq e_0 \delta e_1^{-1} \gamma$ is a loop, and its image under~$c_\calT$ is a non-null-homotopic loop in~$\sal$ corresponding to the generator~$v \in \pi_1\left(\sal\right)$. Hence~$\lambda$ is a non-null-homotopic loop in~$X$. 

    It remains to prove that any other simple non-null-homotopic loop involving~$e_0$ is homotopic to~$\lambda$. Let~$\lambda'$ be one such; homotope~$\lambda'$ so that it is of the form~$\lambda' = e_0 \delta' e_1^{-1} \gamma'$, with~$\gamma' \subseteq B(H_0)$ and~$\delta' \subseteq \overline{B}(H_0)$. [For clarity, we draw attention to the fact that~$\gamma'$ and~$\delta'$ are not required to consist entirely of edges dual to elements of~$\calT$.] If we can homotope~$\gamma$ onto~$\gamma'$ and~$\delta$ onto~$\delta'$, then we are done, so without loss of generality suppose that~$\delta$ is not homotopic to~$\delta'$ in~$X$. If~$\delta$ is a loop, then it is null-homotopic, (and~$t(e_0) = t(e_1)$) so~$\delta'$ is a non-null-homotopic loop, in which case~$\lambda'$ is not simple, which is a contradiction. Hence~$\delta$ is not a loop, so it is some non-empty path. If every edge of~$\delta$ appears in (a curve homotopic to)~$\delta'$, then once again~$\lambda'$ is not simple. Therefore,~$\delta$ contains some edge~$e$ which does not appear in (a curve homotopic to)~$\delta'$.

    Let~$e$ be dual to a hyperplane~$H \in \calT$. Since~$X$ is symmetric,~$H$ has a unique companion hyperplane. Since~$e$ forms part of the simple non-null-homotopic loop~$\lambda$, this unique companion hyperplane must have a dual edge somewhere in~$\lambda$. By \emph{Lemma \ref{lem:no two elements of treelike set are companions}}, the only candidate edge is~$e_1$, since it is the only one not dual to an element of~$\calT$. Hence~$\overline{B}(H) \subseteq \overline{B}(H_0)$, and
    \[B(H) = \left(\overline{B}(H_0) \setminus \overline{B}(H)\right) \cup B(H_0).\]
    But now the path~$\delta'$ begins in~$\overline{B}(H_0) \cap B(H)$ and ends in~$\overline{B}(H)$ without ever leaving~$\overline{B}(H_0)$, which necessitates it involving~$e$, which is a contradiction. 
\end{proof}

In the case that~$\raag$ is free, this is precisely Collins's original definition \cite{CollinsSymSpine89} that a graph is \emph{symmetric} if every edge lies in a unique circuit. 

\section{Dimension of the symmetric spine}\label{sec:dimension of sym spine}

In this section we study the relationship between the virtual cohomological dimension ($\textsc{vcd}$) of~$\symout$ and the dimension of~$\symspine$. We recall the following fundamental theorem from \S\ref{subsubsec:vcd}.

\begin{theorem}[\cite{BrownCohomologyofGroups}, \emph{Theorem VIII.11.1}]
    Let~$G$ be a discrete group acting properly and cocompactly on a proper contractible CW-complex~$X$. Then~$\textsc{vcd}(G) \leq \dim(X)$.
\end{theorem}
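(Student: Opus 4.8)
The plan is to reduce everything to a torsion-free finite-index subgroup and then read off a short free resolution from the cellular chains of $X$. Since $\textsc{vcd}(G)$ is only defined when $G$ is virtually torsion-free — which is precisely the standing hypothesis whenever we invoke this theorem (via Charney--Vogtmann for $\out$, and hence for $\symout$) — we are free to fix a torsion-free subgroup $H \leq G$ of finite index. By the definition of virtual cohomological dimension together with Serre's theorem (quoted above), $\textsc{vcd}(G) = \textsc{cd}(H)$, so it suffices to bound $\textsc{cd}(H)$ by $\dim(X)$.

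First I would arrange, after an equivariant subdivision if necessary, that the $G$-action on $X$ is cellular, so that $X$ is a $G$-CW-complex; this does not disturb contractibility, cocompactness, or the finiteness of point stabilisers, and does not increase the dimension. Next observe that $H$ acts \emph{freely} on $X$: for any cell $x$, the stabiliser $H_x = H \cap G_x$ is a finite subgroup of the torsion-free group $H$, hence trivial. Therefore $X$ is a free contractible $H$-CW-complex, and its cellular chain complex $C_\bullet(X;\mathbb{Z})$ is a complex of free $\mathbb{Z}H$-modules, one basis element in degree $k$ for each $H$-orbit of $k$-cells, which by contractibility is acyclic in positive degrees with $H_0 = \mathbb{Z}$. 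Thus $C_\bullet(X;\mathbb{Z})$ is a free resolution of the trivial $\mathbb{Z}H$-module $\mathbb{Z}$ of length at most $\dim(X)$, giving $\textsc{cd}(H) \leq \dim(X)$ and hence $\textsc{vcd}(G) = \textsc{cd}(H) \leq \dim(X)$, as claimed. I would remark that cocompactness is not actually needed for this inequality — it only guarantees that the resolution is finitely generated in each degree, which upgrades the conclusion to $H$ (hence $G$) being of type $FP$; it is included in the statement because that is the form in which the result is applied here.

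The one place any care is required — the main obstacle, such as it is — is the reduction to a cellular ($G$-CW) action and the consequent identification of $C_\bullet(X;\mathbb{Z})$ as a complex of \emph{free} $\mathbb{Z}H$-modules: one must know that a cocompact action with finite stabilisers on a (proper) contractible CW-complex can be taken to be a $G$-CW action without affecting the dimension or contractibility. In all the applications relevant to this paper the space is a simplicial complex and the action is simplicial, so this step is automatic and the argument goes through verbatim with $\spine$, $\symspine$, or $\Kmin{\calW}$ in place of $X$.
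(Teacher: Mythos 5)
The paper cites this as Brown, \emph{Theorem VIII.11.1}, and does not prove it, so there is no in-paper argument to compare against; your proof is the standard one (essentially the argument in Brown), passing to a torsion-free finite-index subgroup $H$, observing that $H$ acts freely, and reading off a free $\mathbb{Z}H$-resolution of $\mathbb{Z}$ of length at most $\dim X$ from the cellular chains. Your flagging of the $G$-CW subtlety, and the observation that it is vacuous for the simplicial actions on $\spine$, $\symspine$, and $\Kmin{\calW}$ actually used here, are both correct; one small tidying remark is that once you have passed to the freely acting $H$ you do not even need the full $G$-CW condition, since a cell setwise-stabilised by some $h \in H$ would give $h$ a fixed point in the closed cell by Brouwer, forcing $h = 1$.
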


Therefore, since~$\symout$ acts properly and cocompactly on~$\symspine$, we have 
\[\vcdsymout \leq \dim\left(\symspine\right).\]

Work of Millard--Vogtmann \cite{MillardVogtmann2019} shows that the untwisted group of automorphisms,~$\uag$, contains free abelian subgroups, with `natural' generating sets. The rank of such a subgroup is a lower bound on~$\vcduag$. We summarise Theorems 4.25 \& 4.28 from \cite{MillardVogtmann2019} in \emph{Theorem \ref{thm:[MV19] 4.25 & 4.28 summary}} below; we first establish some notation. 

For a subset~$W \subseteq V = V(\Gamma)$, denote by~$M(W)$ the largest possible size of a compatible set of~$\Gamma$-partitions, all of which are based at an element of~$W$. Let~$L$ denote the set of principal vertices of~$\Gamma$. 

\subsection{Lower bound for~$\vcdsymout$}\label{subsec:lower bound for vcdsymout}

\begin{theorem}[\cite{MillardVogtmann2019}]\label{thm:[MV19] 4.25 & 4.28 summary}
    Fix a finite simplicial graph~$\Gamma$.
    \begin{enumerate}[(i)]
        \item For any set of principal~$\Gamma$-partitions, multipliers can be chosen for the corresponding~$\Gamma$-Whitehead automorphisms so that they generate a free abelian subgroup of~$\uag$.
        \item There exists a free abelian subgroup of~$\uag$ of rank~$M(L)$ generated by~$\Gamma$-Whitehead automorphisms corresponding to principal~$\Gamma$-partitions.
        \item Any free abelian subgroup of~$\uag$ generated by~$\Gamma$-Whitehead automorphisms has rank at most~$M(L)$.
    \end{enumerate} 
\end{theorem}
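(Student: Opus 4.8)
The statement bundles three things: (i) a construction of commuting $\Gamma$-Whitehead automorphisms out of a compatible family of principal $\Gamma$-partitions, together with freeness of the subgroup they generate; (ii) that this construction is optimal among principal partitions, with maximal rank $M(L)$; and (iii) a matching upper bound valid for \emph{all} free abelian subgroups of $\uag$ generated by $\Gamma$-Whitehead automorphisms. In the paper this is simply the content of (\cite{MillardVogtmann2019}, Theorems 4.25 \& 4.28), which we invoke directly; the plan below indicates how those arguments run.

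For (i) and (ii) I would argue constructively. Fix a compatible set $\Pi = \{\mathcal{P}_1,\dots,\mathcal{P}_k\}$ of $\Gamma$-partitions, each based at a principal vertex, and form the blowup $\salb{\Pi}$. By the construction of (\cite{CharneyStambaughVogtmann17}, Lemma 3.2), for a suitable maximal multiplier $m_i \in \splt{\mathcal{P}_i}$ the automorphism $\varphi(\mathcal{P}_i, m_i)$ is realised by conjugating a hyperplane automorphism $h_{m_i}$ of $\salb{\Pi}$ by the collapse $c_\Pi$. Since the hyperplanes $H_{\mathcal{P}_i}$ are pairwise disjoint in $\salb{\Pi}$, the $h_{m_i}$ act on ``independent'' parts of the blowup and commute, so the $\varphi(\mathcal{P}_i,m_i)$ commute in $\out$; principality of the base vertices is what guarantees that such multipliers can be chosen simultaneously. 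To see the subgroup is \emph{free} abelian of rank $k$, I would exhibit a surjection onto $\Z^k$: each $\mathcal{P}_i$ splits some vertex $v_i \neq m_i$, so $\varphi(\mathcal{P}_i,m_i)^n$ changes the abelianised (equivalently, the minimal-length) image of $v_i$ by a controlled multiple of $n$, and — using that the bases are principal, hence the witnesses $v_i$ are not ``absorbed'' by the other generators — one reads off each exponent independently. Part (ii) is then immediate: apply (i) to a compatible set of principal partitions of the largest possible size, which is $M(L)$ by the definitions of $M(\cdot)$ and $L$.

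For (iii), the upper bound, let $A = \langle \varphi(\mathcal{P}_1,m_1),\dots,\varphi(\mathcal{P}_r,m_r)\rangle \cong \Z^r$ inside $\uag$. The plan has two moves. First, commutativity should force the $\mathcal{P}_i$ to be (essentially) pairwise compatible: two incompatible partitions produce a ``peak'' in the sense of peak reduction which cannot occur in a relation of an abelian group, so after discarding redundancies we may assume $\{\mathcal{P}_1,\dots,\mathcal{P}_r\}$ is a compatible set. Second, I would straighten the bases to principal vertices: if some $m_i$ is non-principal then $m_i <_\circ w$ for some $w$, and one uses the resulting domination relation either to replace $\varphi(\mathcal{P}_i,m_i)$ by an automorphism whose base is principal (keeping the supporting compatible set the same size) or to show $\varphi(\mathcal{P}_i,m_i)$ was redundant. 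After both moves $\{\mathcal{P}_1,\dots,\mathcal{P}_r\}$ is a compatible set of $\Gamma$-partitions based at elements of $L$, so $r \le M(L)$.

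\textbf{Main obstacle.} The delicate part is (iii). Deducing compatibility from commutativity is where one genuinely needs peak-reduction-type input, and the reduction to principal bases is subtler still, since a non-principal base could \emph{a priori} supply a new algebraic direction — ruling this out requires exploiting the partial order $\le_\circ$ together with the combinatorics of how $\Gamma$-partitions overlap. A further source of friction is that everything must be done in $\out$ rather than $\aut$: a single outer class can have several $\Gamma$-Whitehead representatives, inner automorphisms and degenerate (inversion) partitions intervene, and these must be tracked carefully. In the present paper all of this is absorbed into the citation to \cite{MillardVogtmann2019}.
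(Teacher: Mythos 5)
This theorem is a verbatim summary of results imported from Millard--Vogtmann; the paper gives no proof beyond the citation, so there is no internal argument to compare your sketch against. Your write-up correctly recognises that, but since you do offer a sketch of how the cited arguments run, let me flag where it drifts from the mechanism that the paper itself later leans on.

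For (i)--(ii), the claim that ``the hyperplanes $H_{\calP_i}$ are pairwise disjoint in $\salb{\Pi}$, [so] the $h_{m_i}$ act on independent parts of the blowup and commute'' is not right: when two compatible $\Gamma$-partitions are \emph{adjacent} (bases commute), their hyperplanes in the blowup intersect rather than being disjoint, and disjointness is in any case neither necessary nor sufficient for the corresponding outer automorphisms to commute. The actual criterion -- which the present paper quotes as (\cite{MillardVogtmann2019}, Theorem 4.2) in \emph{Theorem \ref{thm:[MV19], Thm 4.2}} and uses verbatim in the proof of \emph{Proposition \ref{prop:symout contains free abelian subgp of rank MSigma(L)}} -- is that for non-commuting bases $m, n$, $\varphi(\calP,m)$ and $\varphi(\calQ,n)$ commute if and only if $\calP, \calQ$ are compatible \emph{and} $\calP$ does not split $n$ and $\calQ$ does not split $m$. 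Compatibility alone is not enough; the careful choice of multipliers in (i) exists precisely to arrange the non-splitting conditions (for symmetric partitions they hold automatically, which is why the symmetric version in this paper is shorter). Your sketch omits the non-splitting half entirely.

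For (iii), the ``peak-reduction forces compatibility'' idea is probably not what Millard--Vogtmann do, and it risks being a red herring: the same commutativity criterion already gives compatibility directly (a commuting pair of $\Gamma$-Whitehead automorphisms with non-adjacent bases has compatible underlying partitions, no peak-reduction needed), and the harder content in their Theorem 4.28 is the combinatorial replacement of non-principal bases by principal ones while preserving the size of the compatible set -- exactly the kind of surgery that the present paper carries out for the symmetric case in \emph{Proposition \ref{prop:MSigma(L) = MSigma(V)}}. If you want to flesh out (iii), the model to follow is that proposition, not a peak-reduction argument.
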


Here, (i) \& (ii) follow from (\cite{MillardVogtmann2019}, Theorem 4.25) and its proof, while (iii) is Theorem 4.28. In particular, it follows that~$M(L)$ is a natural lower bound for~$\vcduag$. We will prove a similar result in the 'symmetric' setting, adapting some theory developed in (\cite{MillardVogtmann2019}, \S4). 

For any vertex~$v \in V$, write~$[v]_0 \coloneqq \{w \in V \;\vert\; \link{v} = \link{w}\}$. If~$[v]_0$ is a singleton, then call~$[v]$ an \emph{abelian} equivalence class (because in this case all elements of~$[v]$ commute). Otherwise, we say that~$[v]$ is a \emph{nonabelian} equivalence class. We will also write~$[v]_\ast \coloneqq \{w \in V \;\vert\; \str{w} = \str{v}\}$. 

For~$\Pi$ a compatible set of~$\Gamma$-partitions, and~$v \in V$, write~$\Pi_v$ for the subset of~$\Pi$ based at~$v$, and write~$\Pi_{[v]}$ for the subset of~$\Pi$ based at elements of~$[v]$.

For any side~$P$ of a partition~$\calP$ based at a vertex~$v$, we write~$\mathring{P}$ for the intersection of~$P$ with~$V^\pm \setminus \str{m}^\pm$. 

The next proposition is an adaptation of (\cite{MillardVogtmann2019}, 4.22 \& 4.23) to our setting. It relies on the following lemma.

\begin{lemma}[\cite{MillardVogtmann2019}, Lemmas 4.12 and 4.21]\label{lem:[MV19], Lem 4.12 and Lem 4.21}
    Let~$m$ be a principal vertex of~$\Gamma$ and let~$\Pi$ be a compatible set of~$\Gamma$-partitions. Let~$\calP_1, \dots, \calP_k \in \Pi_{[m]_\ast}$. Then there is a nest 
    \[\emptyset \eqqcolon \mathring{P_0} \subsetneq \mathring{P_1} \subsetneq \dots \subsetneq \mathring{P_k} \subsetneq \mathring{P}_{k+1} \coloneqq V^\pm \setminus \str{m}^\pm.\] 
    Suppose that~$\calQ \in \Pi \setminus \Pi_{[m]_\ast}$ is based at~$n$. If~$m$ does not commute with~$n$, then there is a side~$Q$ of~$\calQ$ with~$Q \setminus \mathring{\overline{P_{i-1}}} \cap \mathring{P_i}$ for some~$i \in \{1, \; \dots, \; k+1\}$.
\end{lemma}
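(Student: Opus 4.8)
The statement is purely about the combinatorics of $\Gamma$-Whitehead partitions in $\Gamma^\pm$ — it involves no automorphisms, markings, or cube complexes — so the plan is to follow the proofs of Lemmas~4.12 and~4.21 of \cite{MillardVogtmann2019} essentially verbatim, my task being to isolate the ingredients and confirm that nothing uses the free-group hypothesis. First I would fix notation: since every $\calP_i$ is based at some $w_i$ with $\str{w_i}=\str{m}$, all the $\calP_i$ share the link $\link{\calP_i}=\link{m}^\pm$, hence all partition the common set $S\coloneqq V^\pm\setminus\str{m}^\pm$ into reduced sides $\mathring{P_i}\sqcup\mathring{\overline{P_i}}$, and $S$ is the disjoint union of the connected components of $\Gamma^\pm\setminus\str{m}^\pm$. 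An easy first observation is that each reduced side is a nonempty proper union of these components; nonemptiness and properness follow from the thickness requirement that each side of a $\Gamma$-partition has at least two elements.

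For the nest, I would induct on $k$ and split on whether a given pair $\calP_i,\calP_j$ shares a base. If $\calP_i,\calP_j$ are based at the same vertex they are not adjacent, so compatibility supplies a side of $\calP_i$ disjoint from a side of $\calP_j$; intersecting with $S$ and using $\mathring{P_i}\sqcup\mathring{\overline{P_i}}=S=\mathring{P_j}\sqcup\mathring{\overline{P_j}}$ forces one reduced side of $\calP_i$ to be contained in one reduced side of $\calP_j$, strictly by the previous paragraph. If they are based at distinct vertices of $[m]_\ast$, those vertices commute and hence $\calP_i,\calP_j$ are adjacent; the same comparability of reduced sides still holds, and this is the combinatorial core of \cite[Lemma 4.12]{MillardVogtmann2019}, whose argument I would reproduce — it rests on the fact that a connected component of $\Gamma^\pm\setminus\str{m}^\pm$ lies wholly on one side of each such partition, together with the fact that compatibility rules out the ``crossing'' configuration. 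Pairwise comparability of the reduced sides then yields, after a consistent reorientation of sides, a chain $\emptyset=\mathring{P_0}\subsetneq\mathring{P_1}\subsetneq\cdots\subsetneq\mathring{P_k}\subsetneq\mathring{P}_{k+1}=S$.

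For the external partition $\calQ$ based at $n$, I would first record that $m\notin\str{n}$ and $n\notin\str{m}$ (since $m$ and $n$ do not commute and $n\neq m$), so in particular $\{n,n^{-1}\}\subseteq S$ and the connected component $C$ of $\Gamma^\pm\setminus\str m^\pm$ containing $n$ is well defined; and that $\calQ$ is \emph{not} adjacent to any $\calP_i$, because no vertex of $[m]_\ast$ commutes with $n$ (any such vertex has star $\str m\not\ni n$). Hence for each $i$, compatibility of $\calQ$ with $\calP_i$ gives a side of $\calQ$ disjoint from a side of $\calP_i$, which after restriction to $S$ says a reduced side of $\calQ$ lies inside $\mathring{P_i}$ or inside $\mathring{\overline{P_i}}$. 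Letting $i$ be the nest index of $C$, so $C\subseteq\mathring{P_i}\setminus\mathring{P_{i-1}}=\mathring{\overline{P_{i-1}}}\cap\mathring{P_i}$, I would then show that one whole side $Q$ of $\calQ$ satisfies $\mathring{Q}\subseteq\mathring{\overline{P_{i-1}}}\cap\mathring{P_i}$: if instead the reduced side of $\calQ$ carrying $n$ met $\mathring{P_{i-1}}$ or $\mathring{\overline{P_i}}$, then — using the nest relations $\mathring{P_{i-1}}\subseteq\mathring{P_i}$ and $\mathring{\overline{P_i}}\subseteq\mathring{\overline{P_{i-1}}}$ — neither side of $\calQ$ could be disjoint from a side of $\calP_{i-1}$, respectively $\calP_i$, contradicting compatibility. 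Assembling these observations produces the claimed side and index.

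The step I expect to be the real obstacle is this last one: passing between the two different decompositions into connected components — those of $\Gamma^\pm\setminus\str{m}^\pm$, relative to which the nest is defined, and those of $\Gamma^\pm\setminus\str{n}^\pm$, of which the sides of $\calQ$ are unions — and confirming that a union of the latter can be trapped inside a single annulus $\mathring{\overline{P_{i-1}}}\cap\mathring{P_i}$ of the former. This is precisely the technical heart of \cite[Lemma 4.21]{MillardVogtmann2019}; I would follow their bookkeeping, which uses only adjacency in $\Gamma^\pm$ and the definition of a $\Gamma$-partition and is therefore insensitive to whether $\raag$ is free. Everything else (thickness, unwinding compatibility, the inductive reorientation for the nest) is routine once the notation is set up.
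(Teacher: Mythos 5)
The paper does not actually prove this lemma; it is quoted directly from Millard--Vogtmann without argument, so there is no internal proof to compare against. Your plan to reproduce their argument is reasonable, but your sketch of the nest step contains a genuine gap. You treat the two cases (same base, distinct bases in $[m]_\ast$) as if compatibility forces non-crossing reduced sides in both, asserting that ``compatibility rules out the crossing configuration.'' That is only true in the non-adjacent case. When $w_i\neq w_j$ both lie in $[m]_\ast$ they have the same star, hence are adjacent, hence $\calP_i$ and $\calP_j$ are adjacent $\Gamma$-partitions; adjacency by itself makes them compatible and imposes no constraint at all on the quadrants. Concretely, let $\Gamma$ have vertices $v,w,a,b,c,d$ and the single edge $\{v,w\}$, so $\str{v}=\str{w}=\{v,w\}$, $v$ is principal, $[v]_\ast=\{v,w\}$, and $\Gamma^\pm\setminus\str{v}^\pm=\{a^{\pm},b^{\pm},c^{\pm},d^{\pm}\}$ consists of eight singleton components. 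The $\Gamma$-partitions $\calP$ based at $v$ with $P=\{v,a^{\pm},b^{\pm}\}$ and $\calQ$ based at $w$ with $Q=\{w,a^{\pm},c^{\pm}\}$ are both valid, $\{\calP,\calQ\}\subseteq\Pi_{[v]_\ast}$ is compatible (because adjacent), yet $\mathring{P}=\{a^{\pm},b^{\pm}\}$ and $\mathring{Q}=\{a^{\pm},c^{\pm}\}$ cross and no choice of orientation nests them.

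The comparability argument you give is watertight when all the $\calP_i$ are based at the \emph{same} vertex $m$: then they are pairwise non-adjacent, compatibility really does give a side of $\calP_i$ disjoint from a side of $\calP_j$, and intersecting with $V^\pm\setminus\str{m}^\pm$ upgrades disjointness to containment because the reduced sides partition that common set. This is also exactly how the lemma is invoked elsewhere in the paper: Proposition~\ref{prop:[MV19], Prop 4.22 adaptation} and Lemma~\ref{lem:[MV19], Cor 4.23 adaptation} feed it partitions $\calP_1,\dots,\calP_k\in\Pi_m$, not all of $\Pi_{[m]_\ast}$. So either restrict the statement to $\Pi_m$ --- in which case the remainder of your sketch (non-adjacency of $\calQ$ to each $\calP_i$ because no vertex of $[m]_\ast$ commutes with $n$, then trapping a side of $\calQ$ in a single annulus using the nest inclusions) goes through as you describe --- or, if you want the version with $\Pi_{[m]_\ast}$, you must first supply an argument replacing partitions based at other elements of $[m]_\ast$ by compatible ones based at $m$, which your sketch does not provide.
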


\begin{proposition}\label{prop:[MV19], Prop 4.22 adaptation}
    Let~$m$ be a principal vertex of~$\Gamma$ and let~$\Pi$ be a compatible set of symmetric~$\Gamma$-partitions. Let~$\calP_1$,~$\calP_2 \in \Pi_m$ and choose sides~$P_1$,~$P_2$ respectively such that~$P_1 \subsetneq P_2$. Suppose that~$n \leq_\circ m$ is contained in~$\overline{P_1} \cap P_2$. Let~$Q$ be a largest subset of~$\overline{P_1} \cap P_2$ which is a side of a partition in~$\Pi$ and is based at some~$v \sim n$; if there are no such subsets then take~$v = n$ and set~$Q = \{n\}$. Define~$\calP$ to be the symmetric~$\Gamma$-partition determined by the side~$P = P_1 \cup Q \cup \{v, v^{-1}\}$. 

    If~$\calR \in \Pi \setminus \Pi_{[m]_\ast}$, based at a vertex~$s$, is not compatible with~$\calP$, then some side~$R$ of~$\calR$ is contained in~$\overline{P_1} \cap P_2$, and either 
    \begin{itemize}
        \item~$s = v$ and~$\{v, v^{-1}\} \subseteq R \cup Q$; or
        \item~$s >_\circ n$ and~$R$ contains~$Q \cup \{v, v^{-1}\}$.
    \end{itemize}
\end{proposition}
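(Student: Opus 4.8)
This proposition is a direct adaptation of Proposition 4.22 in Millard–Vogtmann, so my plan is to follow the structure of that proof, importing the combinatorial lemma (Lemma~\ref{lem:[MV19], Lem 4.12 and Lem 4.21}) wherever the original does, and carefully tracking the extra hypothesis that everything in~$\Pi$ is a \emph{symmetric}~$\Gamma$-partition.

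\begin{proof}[Proof proposal]
The plan is as follows. First I would set up notation: since~$n \leq_\circ m$ means~$\link{n} \subseteq \link{m}$, the vertex~$m$ dominates~$n$ (unless~$n \sim m$, which is excluded by~$n$ lying in~$\overline{P_1} \cap P_2$, disjoint from~$\str{m}^\pm$), so~$\link{\calP} = \link{\calP_1} = \link{\calP_2} = \link{m}^\pm$ and the side~$P = P_1 \cup Q \cup \{v,v^{-1}\}$ genuinely determines a symmetric~$\Gamma$-partition based at~$m$ — here I must check that~$P$ and its complement each have at least two elements and that~$P \supseteq \{v, v^{-1}\}$ with~$v \sim n$ forces symmetry (no vertex other than~$m$ is split, because~$Q$ is a side of a symmetric partition so contains no split vertex, and we have explicitly thrown in both~$v$ and~$v^{-1}$). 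Then, given~$\calR \in \Pi \setminus \Pi_{[m]_\ast}$ based at~$s$ and incompatible with~$\calP$, I would argue in two cases according to whether~$s$ commutes with~$m$ or not.

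If~$s$ does \emph{not} commute with~$m$: apply Lemma~\ref{lem:[MV19], Lem 4.12 and Lem 4.21} with the nest~$\emptyset \subsetneq \mathring{P_1} \subsetneq \mathring{P} \subsetneq \mathring{P_2} \subsetneq V^\pm \setminus \str{m}^\pm$ (using~$\calP_1, \calP, \calP_2 \in \Pi_{[m]_\ast}$ after possibly replacing~$\calP_2$'s side — one checks~$\mathring{P_1} \subsetneq \mathring P \subsetneq \mathring{P_2}$ holds since~$n \in \overline{P_1}\cap P_2$). The lemma produces a side~$R$ of~$\calR$ with~$\mathring R \subseteq \mathring{\overline{P_{i-1}}} \cap \mathring{P_i}$ for one of the four nest-steps. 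Incompatibility of~$\calR$ with~$\calP$ rules out the two steps at either end (those would give a side disjoint from~$P$ or from~$\overline P$, contradicting incompatibility), leaving~$\mathring R \subseteq \mathring{\overline{P_1}} \cap \mathring P$ or~$\mathring R \subseteq \mathring{\overline P} \cap \mathring{P_2}$; I would then show both force~$R \subseteq \overline{P_1} \cap P_2$ and, combined with the maximality of~$Q$ among sides inside~$\overline{P_1}\cap P_2$ based at elements~$\sim n$ and the symmetry of~$\calR$, deduce the dichotomy: either~$s = v$ with~$\{v,v^{-1}\} \subseteq R \cup Q$, or~$s$ is a genuinely larger vertex, so~$s >_\circ n$ and (again by maximality of~$Q$)~$R \supseteq Q \cup \{v, v^{-1}\}$. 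The case where~$s$ \emph{does} commute with~$m$ is handled as in Millard–Vogtmann: commuting with~$m$ forces~$s \in \link{m} = \link{\calP}$, but then adjacency of the bases would make~$\calR$ adjacent hence compatible with~$\calP$ — unless~$s \notin \str{m}$, i.e. this sub-case cannot actually arise under the stated hypotheses, or collapses into the previous analysis; I would spell out why incompatibility is impossible here, mirroring the original argument.

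The main obstacle I anticipate is the bookkeeping in the incompatibility-to-containment step: translating "$\calR$ not compatible with~$\calP$" into "some side of~$\calR$ sits inside the quadrant~$\overline{P_1} \cap P_2$" requires using that~$\calP$ shares its link and base-class with~$\calP_1$ and~$\calP_2$, so that the three partitions are simultaneously "nested", and then invoking Lemma~\ref{lem:[MV19], Lem 4.12 and Lem 4.21} with exactly the right nest. The symmetry hypothesis should only help — it guarantees that~$Q$, being a side of a symmetric partition, contains no pair~$\{u, u^{-1}\}$ with~$u \neq v$ split, which is what lets me conclude cleanly that the extra elements of~$R$ beyond~$Q$ are precisely~$\{v, v^{-1}\}$ in the first bullet and that the base~$s$ genuinely dominates~$n$ in the second. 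I would double-check the edge case where no nonempty~$Q$ exists (so~$v = n$,~$Q = \{n\}$): the argument goes through verbatim with~$Q = \{n\}$, the first bullet becoming~$s = n$ and~$\{n, n^{-1}\} \subseteq R \cup \{n\}$, i.e.~$n^{-1} \in R$.
\end{proof}
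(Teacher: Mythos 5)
There is a genuine gap in the main case of your argument, centred on the nest you feed into Lemma \ref{lem:[MV19], Lem 4.12 and Lem 4.21}. You propose to apply the lemma to the three-partition nest $\emptyset \subsetneq \mathring{P_1} \subsetneq \mathring{P} \subsetneq \mathring{P_2} \subsetneq V^\pm \setminus \str{m}^\pm$, asserting $\calP_1, \calP, \calP_2 \in \Pi_{[m]_\ast}$. But the lemma requires the nest members to lie in $\Pi$, and $\calP$ cannot lie in $\Pi$: you are assuming $\calR \in \Pi$ is \emph{incompatible} with $\calP$, and $\Pi$ is by hypothesis a pairwise-compatible set, so $\calP \notin \Pi$. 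Moreover, even if one could invoke the lemma with that nest, it would place a side $R$ of $\calR$ into one of the four regions $\mathring{P_1}$, $\mathring{\overline{P_1}} \cap \mathring{P}$, $\mathring{\overline{P}} \cap \mathring{P_2}$, $\mathring{\overline{P_2}}$ — but each of these is contained in $\mathring{P}$ or in $\mathring{\overline{P}}$, hence in a side of $\calP$, which would force $\calR$ compatible with $\calP$ and contradict the running hypothesis. So your reduction to ``the two middle steps'' does not survive scrutiny; the whole nest collapses to a contradiction rather than the desired dichotomy.

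The fix is to do what the paper does: apply the lemma only to the two-partition nest $\{\calP_1, \calP_2\} \subseteq \Pi_{[m]_\ast}$ (which is legitimate), discard the end regions because they force compatibility with $\calP$, and so deduce $R \subseteq \mathring{\overline{P_1}} \cap \mathring{P_2}$. The step you are missing after that is a second, separate compatibility analysis — this time of $\calR$ against $\calQ$, not against any nest partition. Since $s$ and $v$ do not commute (because $\link{v} \subseteq \link{m}$ and $s$ does not commute with $m$), compatibility of $\calR$ and $\calQ$ forces one of the four $(\calR,\calQ)$-quadrants to be empty. Ruling out $\overline{R} \cap \overline{Q} = \emptyset$ (both sides live in $\mathring{\overline{P_1}} \cap \mathring{P_2}$) and $R \cap \overline{Q} = \emptyset$ (that would put $R \subseteq Q \subseteq P$, contradicting incompatibility) leaves $R \cap Q = \emptyset$ or $\overline{R} \cap Q = \emptyset$; these two branches, pushed through the symmetry of $\calR$ and the maximality of $Q$, produce exactly the two bullet points. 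Your write-up gestures at ``maximality of $Q$'' doing this work, but without the quadrant argument against $\calQ$ you have no mechanism to extract which branch holds or to show $s >_\circ n$ in the second case — in the paper that domination is obtained by observing that $\calR$ separates $m$ from $v$, a step your outline never reaches.

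As a smaller point, the preamble worrying about whether $\calP$ is a genuine $\Gamma$-partition with sides of size at least two, while not wrong to check, is not where the content lives; the setup in the statement (choice of $Q$ and $v$, with the explicit fallback $Q = \{n\}$) is designed so that this is immediate, and the paper simply records that $\calP$ is based at $m$ and moves on.
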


\begin{proof}
    Note that~$\calP$ is based at~$m$; let~$\calR$ be based at~$s$. Since~$s \notin [m]_\ast$ and~$\calR$ is not compatible with~$\calP$,~$s$ and~$m$ do not commute.

    Hence by \emph{Lemma \ref{lem:[MV19], Lem 4.12 and Lem 4.21}},~$\calR$ has a side~$R$ contained in~$\mathring{P_1}$,~$\mathring{\overline{P_1}} \cap \mathring{P_2}$, or~$\mathring{\overline{P_2}}$. In the first and third cases,~$\calR$ is compatible with~$\calP$, so we must have~$R \subseteq \mathring{\overline{P_1}} \cap \mathring{P_2}$. We have~$\link{v} \subseteq \link{m}$ and know that~$s$ does not commute with~$m$, so~$d_\Gamma(s, v) \neq 1$. Hence, compatibility of~$\calR$ with~$\calQ$ implies that one of the four quadrants~$R \cap Q$,~$R \cap \overline{Q}$,~$\overline{R} \cap Q$,~$\overline{R} \cap \overline{Q}$ is empty. Since both~$R$ and~$Q$ are contained in~$\mathring{\overline{P_1}} \cap \mathring{P_2}$, we have~$\overline{R} \cap \overline{Q} \neq \emptyset$. If~$R \cap \overline{Q} = \emptyset$, then~$R \subseteq Q \subseteq P$  and so~$\calR$ is compatible with~$\calP$, which is a contradiction. Hence either~$R \cap Q$ or~$\overline{R} \cap Q$ is empty. 
    
    Without loss of generality, suppose that~$v \in Q$. If~$R \cap Q = \emptyset$, then~$Q \subseteq \overline{R}$, so~$v \in \overline{R}$. If~$\calR$ is not based at~$v$, then we have~$\{v, v^{-1}\} \subseteq \overline{R}$ (since~$\calR$ is symmetric), which implies that~$v^{-1} \notin R$. Combined with the facts that~$R \cap Q = \emptyset$ and~$R \subseteq \mathring{\overline{P_1}} \cap \mathring{P_2}$, this means that~$\calR$ is compatible with~$\calP$, which is a contradiction. 

    Thus we are left with two cases: either~$R \cap Q = \emptyset$ and~$\calR$ is based at~$v$ (so necessarily~$v^{-1} \in R$), which is one half of the statement, or~$\overline{R} \cap Q = \emptyset$. 

    If we are in the latter case, then~$Q \subseteq R$. Due to the maximality assumption in the choice of~$\calQ$, we must have~$s \notin [v] = [n]$. Hence~$\calR$ is not based at~$v$, and~$\calR$ is symmetric, so we have~$v^{-1} \in R$. If there is some~$x \in \link{v} \subseteq \str{m}$ with~$x \notin \link{s}$, then we would have~$d_\Gamma(x, v) = d_\Gamma(x, m) = 1$, and so~$v$ and~$m$ would be in the same component of~$\Gamma \setminus \str{s}$. This is impossible, as~$\calR$ separates~$m$ from~$v$. Hence~$v <_\circ s$, which completes the other half of our statement.
\end{proof}

\begin{lemma}\label{lem:[MV19], Cor 4.23 adaptation}
    Let~$\Pi$ be a compatible set of symmetric~$\Gamma$-partitions and let~$[m]$ be a nonabelian equivalence class of principal vertices of~$\Gamma$. Then for any~$m \in [m]$,~$\Pi_{[m]}$ can be replaced in~$\Pi$ by a set of the same size consisting of symmetric partitions based only at~$m$. 
\end{lemma}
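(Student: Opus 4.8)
The plan is to adapt the proof of Corollary 4.23 of \cite{MillardVogtmann2019}, replacing their Proposition 4.22 by our symmetric analogue \emph{Proposition \ref{prop:[MV19], Prop 4.22 adaptation}}, and to proceed by induction on the number of partitions of $\Pi_{[m]}$ that are not based at $m$. If this number is zero there is nothing to do. Otherwise, fix $m \in [m]$ and pick a partition $\calQ \in \Pi_{[m]}$ based at some $n \neq m$; first suppose $\link{n} = \link{m}$, so that $n \leq_\circ m$ (the twin case $n \in [m]_\ast$ is postponed to the last paragraph). By \emph{Lemma \ref{lem:[MV19], Lem 4.12 and Lem 4.21}}, the intersections $\mathring{P}$ of the sides of the partitions of $\Pi_m$ with $V^\pm \setminus \str{m}^\pm$, together with the two extreme ``degenerate'' choices $\emptyset$ and $V^\pm \setminus \str{m}^\pm$, form a nest; choose consecutive members $\mathring{P_1} \subsetneq \mathring{P_2}$ of this nest that sandwich the side of $\calQ$ lying outside $\str{m}^\pm$. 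Then \emph{Proposition \ref{prop:[MV19], Prop 4.22 adaptation}} produces a symmetric partition $\calP$ based at $m$, with side $P = P_1 \cup Q \cup \{v, v^{-1}\}$, where $Q$ is a largest side of a partition of $\Pi$ contained in $\overline{P_1} \cap P_2$ and based at a vertex $\sim n$ (with $Q$ the relevant side of $\calQ$ itself when there is no larger choice).

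Next I would verify that $\Pi' := (\Pi \setminus \{\calQ\}) \cup \{\calP\}$ is again a compatible set of symmetric $\Gamma$-partitions of the same cardinality as $\Pi$. Symmetry of $\calP$ is part of the statement of \emph{Proposition \ref{prop:[MV19], Prop 4.22 adaptation}}. For compatibility, a partition $\calR \in \Pi \setminus \Pi_{[m]_\ast}$ is controlled by the dichotomy in that proposition: in each of its two exceptional cases one reads off that $\calR$ must already fail to be compatible with $\calQ$ (or with $\calP_1$ or $\calP_2$), contradicting compatibility of $\Pi$; hence $\calR$ is compatible with $\calP$. A partition of $\Pi_{[m]_\ast}$ stays compatible with $\calP$ because $\calP$ is based at $m$ and its part outside $\str{m}^\pm$ still sits in the nest of \emph{Lemma \ref{lem:[MV19], Lem 4.12 and Lem 4.21}}. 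Distinctness of the partitions produced — hence preservation of cardinality — follows because $P$ lies strictly between two consecutive members of that nest, so it cannot coincide with an existing partition based at $m$, and successive applications yield pairwise distinct sides. Iterating strictly decreases the number of partitions of $\Pi_{[m]}$ not based at $m$, so after finitely many steps we obtain the desired replacement.

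I expect the main obstacle to be the treatment of the ``twin'' vertices $[m]_\ast \setminus \{m\}$, if any: a partition based at such an $n$ commutes with $m$ and does not satisfy the hypothesis $n \leq_\circ m$ of \emph{Proposition \ref{prop:[MV19], Prop 4.22 adaptation}}, and rebasing it at $m$ changes its link. I would handle this by observing that a symmetric partition based at $n$ with $\str{n} = \str{m}$ corresponds, after interchanging the roles of $m$ and $n$, to a symmetric partition based at $m$, and then checking directly that this substitution preserves compatibility with the rest of $\Pi$ and does not disturb the nesting among the partitions based at $m$. Carrying out this bookkeeping, and confirming that the exceptional cases of \emph{Proposition \ref{prop:[MV19], Prop 4.22 adaptation}} genuinely contradict compatibility of the original set, is where the real work lies; the remainder is a faithful transcription of the argument in \cite{MillardVogtmann2019}.
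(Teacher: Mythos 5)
The central gap in your argument is the claim that ``in each of its two exceptional cases one reads off that $\calR$ must already fail to be compatible with $\calQ$ (or with $\calP_1$ or $\calP_2$), contradicting compatibility of $\Pi$''. This is false for case~(i) of \emph{Proposition~\ref{prop:[MV19], Prop 4.22 adaptation}}. In that case the conclusion is that $\calR$ is based at $v=n$ with a side $R\subseteq\overline{P_1}\cap P_2$ satisfying $R\cap Q=\emptyset$ and $\{n,n^{-1}\}\subseteq R\cup Q$. This is perfectly consistent with compatibility of the original $\Pi$: $R\cap Q=\emptyset$ is precisely the condition that makes $\calR$ compatible with $\calQ$. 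So no contradiction is available, and the partition $\calP$ you construct may genuinely be incompatible with such an $\calR\in\Pi$, making $\Pi'$ invalid. The paper's proof devotes most of its effort to exactly this situation: it uses the second-largest side $M\subsetneq Q$ rather than $Q$ itself when building $\calP$, chooses a maximal $R$ among the offending sides, constructs a further partition $\calP'$ with side $P':=P_i\cup M\cup R$, re-analyses compatibility of $\calP'$ via a fresh application of compatibility with $\calM$, $\calR$ and $\calQ$, and shows the resulting chain of adjustments terminates. None of this appears in your proposal, so the main technical content of the lemma is missing. (As a smaller point, case~(ii) is rejected by the \emph{principality} of $n$, not by compatibility of $\Pi$.)

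Your concern about the ``twin'' vertices $[m]_\ast\setminus\{m\}$ is a red herring: since $[m]$ is \emph{nonabelian}, one checks that $[m]_\ast=\{m\}$ (if $\str{n}=\str{m}$ and also some $n'$ with $\link{n'}=\link{m}$ existed, $n'$ would have to be adjacent to $n$ hence lie in $\link{n}=(\link{m}\setminus\{n\})\cup\{m\}$, contradicting $\link{n'}=\link{m}$). So $[m]=[m]_0$ and the twin case you postpone never arises; this is exactly why the paper can invoke the proposition with $[m]_\ast=\{m\}$. Recognising this would have simplified your write-up, but it does not compensate for the missing analysis of case~(i), which is the genuine gap.
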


\begin{proof}
    Fix~$m \in [m] = [m]_0$ such that~$\Pi_m \neq \emptyset$; write~$\Pi_m = \{\calP_1,\;\dots,\;\calP_k\}$. Let~$P_1 \subsetneq \dots \subsetneq P_k$ be the sides of the~$\calP_i$ containing~$m$.

    Suppose that~$\calQ \in \Pi_{[m]} \setminus \Pi_m$ is based at~$n \sim m$. Since~$[m]$ is nonabelian,~$n$ does not commute with~$m$, so by \emph{Lemma \ref{lem:[MV19], Lem 4.12 and Lem 4.21}} there is some~$i$ and a side~$Q$ of~$\calQ$ such that~$Q \subseteq \overline{P_{i-1}} \cap P_i$. Choose~$Q$ to be maximal with respect to inclusion among all such sides in~$\overline{P_{i-1}} \cap P_i$, and take~$M$ to be maximal among all such sides which are properly contained in~$Q$; if there is no such~$M$, then put~$M = \{n\}$.

    Define the~$\Gamma$-partition~$\calP$ by the side~$P \coloneqq P_{i-1} \cup M \cup \{n, n^{-1}\}$. We now apply \emph{Proposition \ref{prop:[MV19], Prop 4.22 adaptation}} to~$[m]_\ast = \{m\}$. If some~$\calR \in \Pi \setminus \Pi_m$ is not compatible with~$\calP$, then~$\calR$ has a side~$R \subseteq \overline{P_{i-1}} \cap P_i$ and either~$R = Q$ or one of the following holds:
    \begin{enumerate}[(i)]
        \item~$s = n$ and~$\{n, n^{-1}\} \subseteq R \cup M$; or
        \item~$s >_\circ n$ and~$R$ contains~$M \cup \{n, n^{-1}\}$.
    \end{enumerate}

    We can reject case (ii), since~$n$ is principal, so either~$\calR = \calQ$ or (i) holds. In the former case, we can replace~$\calQ$ by~$\calP$, and pass to considering another element of~$\Pi_{[m]} \setminus \Pi_m$.

    We are left in case (i). Choose such a partition~$\calR$ so that~$R$ is maximal with respect to inclusion. Define~$P' \coloneqq P_i \cup M \cup R$. Since~$M \subsetneq Q$ and~$\calR$ is compatible with~$\calQ$ (which, since~$s = n$, implies that~$R \cap Q = \emptyset$, as in the proof of \emph{Proposition \ref{prop:[MV19], Prop 4.22 adaptation}}), there must be another component of~$\Gamma^\pm \setminus \str{m}^\pm = \Gamma^\pm \setminus \str{n}^\pm$ contained in~$P_i$ but not in~$P'$. Hence~$P'$ defines a new symmetric~$\Gamma$-partition~$\calP'$, based at~$m$, distinct from any of the~$\calP_j$, and compatible with all of~$\Pi_m$.

    If~$\calP'$ is compatible with all of~$\Pi \setminus \calQ$, then it can replace~$\calQ$. Assume this is not the case; suppose that~$\calR' \in \Pi \setminus \Pi_m$ is incompatible with~$\calP'$. Let~$\calR'$ be based at~$s'$; since~$s'$ does not commute with~$m$, by \emph{Lemma \ref{lem:[MV19], Lem 4.12 and Lem 4.21}}~$\calR'$ has a side~$R'$ contained in~$P_{i-1}$,~$\overline{P_{i-1}} \cap P_i$, or~$\overline{P_i}$. In the first and third cases,~$\calR'$ is compatible with~$\calP'$, so we must have~$R' \subseteq \overline{P_{i-1}} \cap P_i$. We cannot have~$R' \subseteq P'$ or~$R' \subseteq \overline{P'}$, as otherwise~$\calR'$ would be compatible with~$\calP'$.

    Now,~$\calR$ is compatible with~$\calM$, and~$s'$ does not commute with~$n$, (as then~$s'$ would commute with~$m$), so one of the four quadrants~$R' \cap M$,~$R' \cap \overline{M}$,~$\overline{R'} \cap M$,~$\overline{R'} \cap \overline{M}$ is empty. The fourth of these is impossible since~$R, M \subseteq \overline{P_{i-1}} \cap P_i$, and the second of these implies~$R' \subseteq M \subseteq P'$, which is also a contradiction. Applying an identical argument to~$\calR$ instead of~$\calM$ leaves us with four possibilities:
    \begin{itemize}
        \item~$R' \cap M = R' \cap R = \emptyset$. This implies~$R' \cap P' = \emptyset$, so is impossible;
        \item~$R' \cap M = \overline{R'} \cap R = \emptyset$. This implies that~$R \subseteq R'$, so (w.l.o.g.)~$n \in R'$, so (since~$\calR'$ is symmetric and not based at~$n$)~$\{n, n^{-1}\} \in R'$, which contradicts~$R' \cap M = \emptyset$;
        \item~$\overline{R'} \cap M = R' \cap R = \emptyset$ is similarly impossible;
        \item~$\overline{R'} \cap M = \overline{R'} \cap R = \emptyset$ is the only remaining possibility. This implies that~$R \cup M \subseteq R'$.
    \end{itemize}
    Compatibility of~$\calR'$ with~$\calQ$ similarly implies that one of~$R' \cap Q$,~$R' \cap \overline{Q}$,~$\overline{R'} \cap Q$,~$\overline{R'} \cap \overline{Q}$ is empty. The first of these is impossible since~$M \subseteq Q \cap R$; the last is impossible since~$R, Q \subseteq \overline{P_{i-1}} \cap P_i$. We cannot have~$R' \cap \overline{Q} = \emptyset$, as then~$\{n, n^{-1}\} \subseteq R \cup M \subseteq R' \subseteq Q$, which is impossible since~$\calQ$ is based at~$n$. Hence~$\overline{R'} \cap Q = \emptyset$, so~$Q \subseteq R'$. 

    Suppose that~$Q \neq R'$. Since~$\calR'$ separates~$n$ and~$m$,~$n$ is not in the same component of~$\Gamma \setminus \str{s'}$ as~$m$, so there does not exist any~$x \in \link{n}$ with~$x \notin \link{s'}$. Hence~$s' \geq_\circ n$;~$n$ is principal so we must have~$s \in [n]_\ast = [m]_\ast$.

    We now repeat the entire proof, but with~$s'$ replacing~$n$ and~$\calR'$ replacing~$\calQ$. If case (i) holds once again, and we obtain some~$\calR'' \neq \calR'$, based at some~$s'' \in [m]_\ast$, then we will have~$R' \subsetneq R''$, so~$R'' \neq Q$. This ascending chain of sides must terminate at some point. We may therefore assume that in fact~$Q = R'$ above. Hence~$\calP'$ is compatible with all of~$\Pi \setminus \Pi_m$ except for~$\calQ$, and so can replace~$\calQ$. 

    We can now repeat this proof until all of~$\Pi_{[m]_\ast} \setminus \Pi_m$ has been replaced by symmetric partitions based at~$m$, which completes the argument.
\end{proof}

For any subset~$W \subseteq V$, let~$M^\Sigma(W)$ denote the largest possible size of a compatible set of symmetric~$\Gamma$-partitions, all of which are based at an element of~$W$.

We will need the following statement from \cite{MillardVogtmann2019}.

\begin{theorem}[\cite{MillardVogtmann2019}, Theorem 4.2]\label{thm:[MV19], Thm 4.2}
    Let~$\calP$ and~$\calQ$ be~$\Gamma$-partitions, based at~$m$ and~$n$ respectively, with~$[m, n] \neq 1$. The outer automorphisms~$\varphi(\calP, m)$ and~$\varphi(\calQ, n)$ commute if and only if~$\calP$ and~$\calQ$ are compatible,~$\calP$ does not split~$n$, and~$\calQ$ does not split~$m$.
\end{theorem}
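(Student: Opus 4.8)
The plan is to settle both implications with a single explicit computation: choosing the representatives $\varphi(\calP,m),\varphi(\calQ,n)\in\aut$, expand the two composites $\psi\varphi$ and $\varphi\psi$ (write $\varphi=\varphi(\calP,m)$, $\psi=\varphi(\calQ,n)$) on the generating set $V$ via the defining formulas for $\Gamma$-Whitehead automorphisms, and determine exactly when the two resulting automorphisms differ by a single, global inner automorphism --- the condition for commuting in $\out$. The observation that makes this tractable is that $\varphi$ fixes every vertex adjacent to $m$: such a vertex lies in $\link{\calP}$, so it is neither split by $\calP$ nor has both signs in $P$; similarly $\psi$ fixes every vertex adjacent to $n$. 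Hence $\psi\varphi$ and $\varphi\psi$ can only disagree on generators $v$ commuting with neither $m$ nor $n$, and for such $v$ --- using $[m,n]\ne1$ and that $v$ is a ``barrier'' not absorbed by any power of $m$ or $n$ --- the two composite words are in RAAG normal form (each of the shape $m^{a}n^{c}v\,n^{d}m^{b}$ or $n^{c}m^{a}v\,m^{b}n^{d}$, possibly pre- and post-conjugated by a word in $m,n$ of bounded length), so equality, and more generally conjugacy, can be decided combinatorially. Since the $m$-exponents of $\varphi(v)$ depend only on whether $v$ and $v^{-1}$ lie in $P$, the comparison turns into statements about the four quadrants $P\cap Q$, $P\cap\overline Q$, $\overline P\cap Q$, $\overline P\cap\overline Q$ and about whether $\calP$ splits $n$ or $\calQ$ splits $m$.

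For the ($\Leftarrow$) direction, suppose $\calP,\calQ$ are compatible, $\calP$ does not split $n$, and $\calQ$ does not split $m$. As $[m,n]\ne1$ the partitions are not adjacent, so compatibility yields a side of $\calP$ disjoint from a side of $\calQ$. The facts that $m\in P$ and $n\in Q$ are forced (they are the bases), and that $\{m,m^{-1}\}$ and $\{n,n^{-1}\}$ each lie on a single side (of $\calQ$ and of $\calP$ respectively, by the non-splitting hypotheses, the links being excluded by $[m,n]\ne1$), cut the situation down to four configuration cases, organized by which side of $\calQ$ contains $\{m,m^{-1}\}$ and which side of $\calP$ contains $\{n,n^{-1}\}$. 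In each case the values $\varphi(n)\in\{n,mnm^{-1}\}$ and $\psi(m)\in\{m,nmn^{-1}\}$ are pinned down, one checks that the relevant disjoint pair of sides is forced to be a specific one of the four, and a short calculation shows $\psi\varphi(v)$ and $\varphi\psi(v)$ coincide for every $v$ after conjugating by a fixed element of $\{1,m,n,nm\}$ depending only on the case. The disjointness of that pair of sides is exactly what lets the $m$-multiplications from $\varphi$ slide past the $n$-multiplications from $\psi$.

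For the ($\Rightarrow$) direction, suppose $\varphi$ and $\psi$ commute in $\out$, so $\psi\varphi$ and $\varphi\psi$ differ by conjugation by one fixed $g\in\raag$. If $\calP$ splits $n$ (resp. $\calQ$ splits $m$), then $\varphi$ folds $n$ (resp. $\psi$ folds $m$), and evaluating the two composites on $n$, on $m$, and on one more suitably chosen generator forces $g$ to satisfy incompatible length/conjugacy conditions --- a contradiction. So neither partition splits the other's base, whence $\{n,n^{-1}\}\subseteq\overline P$ and $\{m,m^{-1}\}$ sits on one side of $\calQ$. If $\calP$ and $\calQ$ were now incompatible they would be non-adjacent with all four quadrants nonempty; thickness of the sides then supplies a generator $v$ with $v$ or $v^{-1}$ in some quadrant, $v$ commuting with neither $m$ (it avoids $\link{\calP}$) nor $n$ (it avoids $\link{\calQ}$), and the two composites evaluated on $v$ are non-conjugate elements --- contradicting the existence of $g$. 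Hence all three conditions hold.

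The step I expect to be the main obstacle is the case management in the ($\Leftarrow$) direction: correctly enumerating the configuration cases --- in particular recognizing that the four a priori side-disjointness options are narrowed by the forced base positions and the non-splitting hypotheses, so that exactly one disjoint pair is available per case --- and, within each, identifying the single conjugating element and verifying it works uniformly over all positions of $v$ relative to $(\calP,\calQ)$. A lesser difficulty is producing the witness generators in the ($\Rightarrow$) direction, which uses only thickness of $\Gamma$-partitions and the fact that the components of $\Gamma^{\pm}\setminus\str{m}^{\pm}$ lie wholly on one side, nothing about the automorphisms.
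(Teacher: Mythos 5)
The paper does not prove this statement; it simply cites it as Theorem 4.2 of Millard--Vogtmann \cite{MillardVogtmann2019}. So your blind proof is being judged on its own merits, not against an argument in the paper. Your overall strategy (expand both composites on generators, sort by cases, compare up to a single inner automorphism) is a reasonable route, but two of the steps you assert do not survive checking against examples.

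\textbf{The reduction to generators commuting with neither base is incorrect.} You write that since $\varphi$ fixes $\link{m}$ and $\psi$ fixes $\link{n}$, the composites $\psi\varphi$ and $\varphi\psi$ can only disagree on generators commuting with neither $m$ nor $n$. This does not follow. If $v\in\link{m}$ but $v\notin\link{n}$ and, say, $\psi(v)=nvn^{-1}$ and $\{n,n^{-1}\}\subseteq P$, then $\psi\varphi(v)=\psi(v)=nvn^{-1}$ while $\varphi\psi(v)=\varphi(n)v\varphi(n)^{-1}=mnvn^{-1}m^{-1}$ (using $v\in\link{m}$), and these differ. So the two composites can disagree on all generators, and the argument must verify the conjugating element uniformly rather than only on generators outside both links.

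\textbf{The conjugating element can lie outside $\{1,m,n,nm\}$.} Take $\raag=F_4=\langle m,n,v,w\rangle$, $P=\{m,n,n^{-1},v,v^{-1}\}$, $\overline{P}=\{m^{-1},w,w^{-1}\}$, $Q=\{n,m,m^{-1},w,w^{-1}\}$, $\overline{Q}=\{n^{-1},v,v^{-1}\}$. Then $\calP,\calQ$ are compatible ($\overline{P}\cap\overline{Q}=\emptyset$), symmetric, and neither splits the other's base; this is your case $\{n,n^{-1}\}\subseteq P$, $\{m,m^{-1}\}\subseteq Q$. Computing gives $\varphi\psi(m)=mnmn^{-1}m^{-1}$, $\psi\varphi(m)=nmn^{-1}$, $\varphi\psi(n)=mnm^{-1}$, $\psi\varphi(n)=nmnm^{-1}n^{-1}$. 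The unique $g$ with $\varphi\psi=\iota_g\psi\varphi$ is $g=[m,n]=mnm^{-1}n^{-1}$ (one checks $[m,n](nmn^{-1})[m,n]^{-1}=mnmn^{-1}m^{-1}$ and $[m,n](nmnm^{-1}n^{-1})[m,n]^{-1}=mnm^{-1}$, and that no element of $\{1,m,n,nm\}$ works on both $m$ and $n$). So your claimed list of conjugating elements is wrong in exactly the one nontrivial case; the correct witness is the commutator $[m,n]$. The other three configurations do in fact give $g=1$ (commutation already in $\aut$), so the case analysis itself is organized correctly; it is the endgame of the hardest case that is misidentified. Fixing the proof requires discovering that $g=[m,n]$ and then verifying it on every generator, including those in $\link{m}\cup\link{n}$, which undercuts the simplifying reduction you tried to make.
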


\begin{proposition}\label{prop:symout contains free abelian subgp of rank MSigma(L)}
   ~$\symout$ contains a free abelian subgroup of rank~$M^\Sigma(L)$.
\end{proposition}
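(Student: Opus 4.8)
The plan is to transplant the proof of (\cite{MillardVogtmann2019}, Theorem 4.25) into the symmetric setting, substituting the symmetric analogues just established, \emph{Proposition \ref{prop:[MV19], Prop 4.22 adaptation}} and \emph{Lemma \ref{lem:[MV19], Cor 4.23 adaptation}}, for their non-symmetric originals. First I would fix a compatible set $\Pi$ of symmetric $\Gamma$-partitions, all based at principal vertices, with $|\Pi| = M^\Sigma(L)$ (one exists by definition of $M^\Sigma(L)$, and there are only finitely many $\Gamma$-partitions). Then I would normalise $\Pi$: by \emph{Lemma \ref{lem:[MV19], Cor 4.23 adaptation}}, for each nonabelian equivalence class $[m]$ of principal vertices one may replace $\Pi_{[m]}$ by a set of the same size consisting of symmetric partitions based at a single vertex of $[m]$, keeping $\Pi$ compatible and of size $M^\Sigma(L)$. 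Note that a symmetric partition splits no vertex other than its base, so its base is the \emph{unique} admissible multiplier; write $m_\calP$ for the base of $\calP \in \Pi$ and set $\varphi_\calP \coloneqq \varphi(\calP, m_\calP)$.

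Each $\varphi_\calP$ is a partial conjugation, hence a symmetric automorphism (\emph{Proposition \ref{prop:generating set for symmetric automorphism group}} and its proof), so any product of the $\varphi_\calP^{\pm 1}$ is symmetric and the subgroup $A \coloneqq \langle [\varphi_\calP] : \calP \in \Pi \rangle \leq \out$ is contained in $\symout$. It then remains to prove $A \cong \mathbb{Z}^{M^\Sigma(L)}$. Commutativity of the $[\varphi_\calP]$ I would check by cases on the bases, exactly as in \cite{MillardVogtmann2019}: when $m_\calP$ and $m_\calQ$ do not commute this is \emph{Theorem \ref{thm:[MV19], Thm 4.2}}, the two hypotheses "$\calP$ does not split $m_\calQ$" and "$\calQ$ does not split $m_\calP$" being automatic since symmetric partitions split only their own bases; when $m_\calP = m_\calQ$ the interiors of the sides are nested (\emph{Lemma \ref{lem:[MV19], Lem 4.12 and Lem 4.21}}) and the two conjugations commute by a short direct computation; and when $m_\calP \neq m_\calQ$ but $[m_\calP, m_\calQ] = 1$, compatibility of $\calP, \calQ$ is automatic and, since then $m_\calP, m_\calP^{-1} \in \link{\calQ}$ and symmetrically, each of $m_\calP, m_\calQ$ is fixed by the other automorphism, so a direct computation gives that $\varphi_\calP$ and $\varphi_\calQ$ commute on the nose. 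For linear independence — that $\prod_\calP \varphi_\calP^{a_\calP}$ is inner only when all $a_\calP = 0$ — I would follow the argument in the proof of (\cite{MillardVogtmann2019}, Theorem 4.25(i)), which exhibits a homomorphism detecting each generator, inserting \emph{Proposition \ref{prop:[MV19], Prop 4.22 adaptation}} wherever the original invokes their Proposition 4.22, and verifying that the forced (hence canonical) choice of multipliers entails no loss.

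I expect the main obstacle to be precisely this last verification: confirming that Millard--Vogtmann's independence argument survives both the passage to symmetric $\Gamma$-partitions and the fact that the multipliers are not freely choosable. This is the one step that genuinely relies on their full combinatorial machinery (as adapted in \emph{Proposition \ref{prop:[MV19], Prop 4.22 adaptation}} and \emph{Lemma \ref{lem:[MV19], Cor 4.23 adaptation}}) rather than on a soft citation; by contrast, the normalisation of $\Pi$, the symmetry of the generators, and pairwise commutativity should be routine once those adapted lemmas are in place.
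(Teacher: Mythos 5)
Your proposal is correct and follows essentially the same approach as the paper's: normalise a maximal compatible set of symmetric principal partitions via \emph{Lemma \ref{lem:[MV19], Cor 4.23 adaptation}}, check pairwise commutativity case-by-case (invoking \emph{Theorem \ref{thm:[MV19], Thm 4.2}} for non-commuting bases, with the hypotheses automatic for symmetric partitions), and then defer to the independence argument in (\cite{MillardVogtmann2019}, Theorem 4.25). The only divergence is one of emphasis: you flag the independence step as the main obstacle potentially requiring \emph{Proposition \ref{prop:[MV19], Prop 4.22 adaptation}}, whereas the paper asserts the remaining portion of Millard--Vogtmann's proof ``can be replicated without edit''; your observation that the base of a symmetric partition is its unique admissible multiplier is correct and is used implicitly in the paper.
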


\begin{proof}
    We show that there exists a set of symmetric principal~$\Gamma$-partitions of maximal size (that is, of size~$M^\Sigma(L)$) with corresponding~$\Gamma$-Whitehead automorphisms which pairwise commute and are independent, so generate a subgroup of~$\symout$ isomorphic to~$\mathbb{Z}^{M^\Sigma(L)}$. To do this, we follow the outline of the proof of Theorem 4.25 in \cite{MillardVogtmann2019}.

    Let~$\Pi$ be a compatible set of symmetric principal~$\Gamma$-partitions of size~$M^\Sigma(L)$.
    
    By \emph{Lemma \ref{lem:[MV19], Cor 4.23 adaptation}}, we can assume that for any nonabelian equivalence class~$[m]$, all partitions in~$\Pi$ are based at a single~$m \in [m]$. Using~$m$ as the multiplier for each of these partitions, the associated symmetric outer~$\Gamma$-Whitehead automorphisms pairwise commute. 

    If~$\calP$,~$\calQ \in \Pi$ are based at~$m$ and~$n$ respectively with~$[m, n] = 1$, then~$\varphi(\calP, m)$ and~$\varphi(\calQ, n)$ commute. 

    If~$\calP$,~$\calQ \in \Pi$ are based at~$m$ and~$n$ respectively with~$[m, n] \neq 1$, then~$\calP$ does not split~$n$ and~$\calQ$ does not split~$m$, since~$\calP$ and~$\calQ$ are symmetric. Hence by (\cite{MillardVogtmann2019}, Theorem 4.2),~$\varphi(\calP, m)$ and~$\varphi(\calQ, n)$ commute.

    Hence we have a collection of infinite-order pairwise-commuting symmetric outer automorphisms of size~$M^\Sigma(L)$. We are now at a stage in the proof of (\cite{MillardVogtmann2019}, Theorem 4.25) after which can be replicated without edit; this section proves that the elements of this collection are pairwise independent. This gives the result.
\end{proof}

\begin{corollary}\label{cor:vcdsymout geq MSigma(L)}
   ~$\vcdsymout \geq M^\Sigma(L)$.
\end{corollary}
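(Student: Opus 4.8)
The plan is to deduce this immediately from \emph{Proposition \ref{prop:symout contains free abelian subgp of rank MSigma(L)}} together with standard facts about cohomological dimension. By that proposition, $\symout$ contains a subgroup $A \cong \mathbb{Z}^{M^\Sigma(L)}$. Recall from \S\ref{subsubsec:vcd} that $\out$ (and hence $\symout$) is virtually torsion-free, so $\vcdsymout$ is well-defined: fix a torsion-free finite-index subgroup $G \leq \symout$, so that $\vcdsymout = \textsc{cd}(G)$.

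First I would observe that $A \cap G$ is finite-index in $A \cong \mathbb{Z}^{M^\Sigma(L)}$, and every finite-index subgroup of $\mathbb{Z}^{M^\Sigma(L)}$ is again free abelian of rank $M^\Sigma(L)$; hence $A \cap G \cong \mathbb{Z}^{M^\Sigma(L)}$. Then I would invoke the two standard facts (see e.g. \cite{BrownCohomologyofGroups}): cohomological dimension is monotone under passage to subgroups, i.e. $\textsc{cd}(H) \leq \textsc{cd}(G)$ whenever $H \leq G$; and $\textsc{cd}(\mathbb{Z}^k) = k$. Applying these to $A \cap G \leq G$ gives
\[
M^\Sigma(L) = \textsc{cd}\left(A \cap G\right) \leq \textsc{cd}(G) = \vcdsymout,
\]
which is the claim.

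There is essentially no obstacle here: the corollary is a one-line consequence of \emph{Proposition \ref{prop:symout contains free abelian subgp of rank MSigma(L)}} and the elementary behaviour of cohomological dimension under subgroups. (One could equally phrase it as: any torsion-free subgroup of a group $G$ of finite $\textsc{vcd}$ has cohomological dimension at most $\vcdsymout$, applied to the torsion-free group $\mathbb{Z}^{M^\Sigma(L)}$ directly, bypassing the intersection with $G$.) The only point worth stating explicitly is that $\vcdsymout$ is finite and well-defined, which was already recorded in \S\ref{subsubsec:vcd}.
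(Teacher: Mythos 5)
Your proof is correct and is exactly the standard deduction the paper has in mind (which is why the paper records this as a corollary of \emph{Proposition \ref{prop:symout contains free abelian subgp of rank MSigma(L)}} without further comment): pass to a torsion-free finite-index subgroup, intersect with the free abelian subgroup of rank $M^\Sigma(L)$ to retain a $\mathbb{Z}^{M^\Sigma(L)}$, and use monotonicity of $\textsc{cd}$ under subgroups together with $\textsc{cd}(\mathbb{Z}^k) = k$. The parenthetical alternative phrasing is also fine.
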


\subsection{Upper bound for~$\vcdsymout$}\label{subsec:upper bound for vcdsymout}

\begin{proposition}\label{prop:MSigma(L) = MSigma(V)}
   ~$M^\Sigma(L) = M^\Sigma(V)$.
\end{proposition}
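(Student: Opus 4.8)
The inequality $M^\Sigma(L) \leq M^\Sigma(V)$ is immediate, since $L \subseteq V$, so that any compatible set of symmetric $\Gamma$-partitions based at elements of $L$ is in particular based at elements of $V$. The substance is the reverse inequality, and the plan is to show that \emph{any} compatible set $\Pi$ of symmetric $\Gamma$-partitions can be replaced by a compatible set of symmetric $\Gamma$-partitions of the same cardinality, each based at a principal vertex; applying this to a set realising $M^\Sigma(V)$ then gives $M^\Sigma(L) \geq M^\Sigma(V)$.

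The engine is a \emph{pushing-up} step. Suppose $\calP \in \Pi$ is based at a non-principal vertex $v$. Since $\Gamma$ is finite and $<_\circ$ is a strict partial order, following dominations upward from $v$ terminates at a principal vertex $m$ with $v <_\circ m$; fix one such $m$. Because $\calP$ is symmetric it splits only $v$, so each side of $\calP$ is a union of $\{v,v^{-1}\}$ with full connected components of $\Gamma^\pm \setminus \str{v}^\pm$ (each component taken together with its inverse). Since $\link{v} \subseteq \link{m}$, the systems of components of $\Gamma^\pm \setminus \str{v}^\pm$ and of $\Gamma^\pm \setminus \str{m}^\pm$ are related in a controlled way. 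Using this, together with \emph{Lemma \ref{lem:[MV19], Lem 4.12 and Lem 4.21}} and the incompatibility analysis of \emph{Proposition \ref{prop:[MV19], Prop 4.22 adaptation}}, I would construct a symmetric $\Gamma$-partition $\calP^{*}$ based at $m$ — taking as its positive side $\{m,m^{-1}\}$ together with a union of components of $\Gamma^\pm \setminus \str{m}^\pm$ that recovers the relevant part of $\mathring{P}$ and that is forced by the requirement of compatibility with the remaining members of $\Pi$ — so that $(\Pi \setminus \{\calP\}) \cup \{\calP^{*}\}$ is again a compatible set of symmetric $\Gamma$-partitions. One then checks that $\calP^{*}$ is non-degenerate (which uses that $v$ is non-principal, so that there is ``room'' above it) and distinct from every member of $\Pi$, so that the cardinality is genuinely preserved; where several partitions of $\Pi$ are based at vertices of a common nonabelian equivalence class, these substitutions are coordinated exactly as in the proof of \emph{Proposition \ref{prop:symout contains free abelian subgp of rank MSigma(L)}}, invoking \emph{Lemma \ref{lem:[MV19], Cor 4.23 adaptation}}.

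One then iterates: repeatedly apply the pushing-up step to a partition of $\Pi$ based at a $<_\circ$-minimal non-principal vertex among the current bases. Each step strictly decreases the number of members of $\Pi$ based at a non-principal vertex (a coordinated substitution may replace several partitions at once, but always by partitions based at principal vertices), while preserving cardinality, compatibility, and the property that every member is symmetric. After finitely many steps $\Pi$ consists entirely of symmetric $\Gamma$-partitions based at principal vertices, and since $|\Pi| = M^\Sigma(V)$ throughout, this yields $M^\Sigma(L) \geq M^\Sigma(V)$, hence the proposition.

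The main obstacle is the construction of $\calP^{*}$ and the verification that compatibility with \emph{every} other member of $\Pi$ is preserved; this is exactly the place where symmetry is indispensable (in the non-symmetric case one genuinely has $M(L) < M(V)$ for some graphs $\Gamma$). The point is that a symmetric partition carries no more data than a distribution of the components of $\Gamma^\pm \setminus \str{\cdot}^\pm$ between its two sides, and \emph{Lemma \ref{lem:[MV19], Lem 4.12 and Lem 4.21}} together with \emph{Proposition \ref{prop:[MV19], Prop 4.22 adaptation}} control the quadrants $R \cap Q$ and $R \cap \overline{Q}$ tightly enough to reassign these components to the sides of $\calP^{*}$ without creating incompatibilities. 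A secondary difficulty is carrying out the coordinated treatment of all partitions based around a single non-principal class simultaneously, so that afterwards they remain pairwise compatible and pairwise distinct.
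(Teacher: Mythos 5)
Your high-level strategy---start from a maximal compatible set $\Pi$ of symmetric $\Gamma$-partitions and replace, one by one, partitions based at non-principal vertices by compatible symmetric partitions based at principal vertices, preserving cardinality throughout---is exactly the paper's. The gap is in the step you defer: the construction of the replacement partition $\calP^{*}$ and the verification that it remains compatible with the rest of $\Pi$, which is where essentially all the content of the proposition lives.

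Concretely: you take the positive side of $\calP^{*}$ to be $\{m,m^{-1}\}$ together with some components of $\Gamma^\pm\setminus\str{m}^\pm$; but a $\Gamma$-partition based at $m$ places $m$ and $m^{-1}$ on \emph{opposite} sides, so this is not a valid $\Gamma$-partition. More seriously, you propose to determine $\calP^{*}$ from $\calP$ and $m$ alone (``recovering the relevant part of $\mathring{P}$''). There is no canonical such choice, and a naive one will not in general stay compatible with $\Pi$: since $v\notin\str{m}$ and $\link{v}\subsetneq\link{m}$, the component structure of $\Gamma^\pm\setminus\str{m}^\pm$ differs genuinely from that of $\Gamma^\pm\setminus\str{v}^\pm$, and the distribution of those components between the sides of $\calP^{*}$ must be coordinated with the partitions already in $\Pi_m$ and in $\Pi_{[u]}$. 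The paper's replacement is defined via a side $Q$ chosen \emph{maximally} over all of $\Pi_{[u]}$ (subject to containing $\{m,m^{-1}\}$) and a side $M$ chosen \emph{maximally} over $\Pi_m$ with $m^{-1}\in M\subseteq Q$, setting $P=(Q\setminus M)\cup\{u^{-1}\}$, precisely to achieve this coordination; the bulk of the proof is then a case analysis (with a terminating ascending-chain argument) showing any $\calR\in\Pi\setminus\{\calQ\}$ incompatible with this $P$ yields a contradiction. Your appeal to \emph{Lemma~\ref{lem:[MV19], Lem 4.12 and Lem 4.21}} and \emph{Proposition~\ref{prop:[MV19], Prop 4.22 adaptation}} as ``forcing'' the choice does not fill this gap: those results constrain which quadrants of $(\calR,\calQ)$ can be empty, but the construction and the compatibility check still have to be carried out. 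As you yourself note, $M(L)=M(V)$ \emph{fails} without the symmetry hypothesis, so there is no soft reason for a pushed-up partition to remain compatible; that verification is the theorem.
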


\begin{proof}
    Our strategy is to start with a maximal compatible set~$\Pi$ of symmetric~$\Gamma$-partitions (that is, one of size~$M^\Sigma(V)$), and replace non-principal partitions with symmetric principal partitions, one-by-one.

    If there are no non-principal partitions in~$\Pi$, then we are done, so suppose this is not the case. By \emph{Lemma \ref{lem:[MV19], Cor 4.23 adaptation}} we may assume that for each principal nonabelian equivalence class and choice of representative~$m$, we have~$\Pi_{[m]} = \Pi_m$.

    Let~$u$ be non-principal with~$\Pi_u \neq \emptyset$. Choose a principal~$m >_\circ u$. Since~$d_\Gamma(u, m) = 2$, both~$m$ and~$m^{-1}$ are always on the same side of an element of~$\Pi_{[u]}$. Choose~$Q$ to be maximal among sides of elements of~$\Pi_{[u]}$ containing~$\{m, m^{-1}\}$. Without loss of generality, assume that~$\calQ$ is based at~$u$ and moreover that~$u \in Q$.

    Since~$u$ and~$m$ do not commute, any partition based at~$m$ has a side either containing or contained in~$Q$. Let~$M$ be the maximal side of an element of~$\Pi_m$ such that~$m^{-1} \in M \subseteq Q$. 

    Define~$P \coloneqq \left(Q \setminus M\right) \cup \{u^{-1}\}$. This defines a symmetric~$\Gamma$-partition~$\calP$ based at~$m$ which is incompatible with~$\calQ$, and so is not an element of~$\Pi$. We aim to show that it can replace~$\calQ$. Note that~$m \in P$.

    If there does not exist~$\calR \in \Pi \setminus \calQ$ which is incompatible with~$\calP$ then we can replace~$\calQ$ with~$\calP$, and pass to considering another non-principal partition in~$\Pi$. So suppose this is not the case; let~$\calR \in \Pi$ be incompatible with~$\calP$. Suppose that~$\calR$ is based at~$n$.

    \textbf{Case I}:~$n = u$.

    If~$\calR$ is based at~$u$, then it has one side~$R$ which either contains or is contained in~$Q$. By the maximality assumption in the choice of~$\calQ$, this side does not contain~$\{m, m^{-1}\}$, so we must have~$R \subseteq Q \setminus M$. Hence~$R \subseteq P$, and so~$\calR$ is compatible with~$\calP$, which is a contradiction. 

    \textbf{Case II}:~$n \neq u$.
    
    Since~$\calR$ is incompatible with~$\calP$, we have~$d_\Gamma(n, m) \in \{0, 2\}$. We cannot have~$d_\Gamma(n, u) = 1$, as~$m >_\circ u$, so that would mean~$d_\Gamma(n, m) = 1$. Hence~$\calR$ is not adjacent to~$\calQ$, so there exists a choice of sides~$R^\times$,~$Q^\times$ such that~$R^\times \cap Q^\times = \emptyset$.

    Suppose that~$Q^\times = Q$. This means that~$u \notin R^\times$. We have dispensed with the case~$n = u$, so since~$\calR$ is symmetric, we also have~$u^{-1} \notin R^\times$. Hence~$R^\times \cap P = \emptyset$, which is a contradiction to the incompatibility of~$\calR$ with~$\calP$. 

    Hence~$Q^\times = \overline{Q}$, which means that~$R^\times \subseteq Q$, which in turn means (since~$\calQ$ is symmetric and not based at~$n$) that~$\{n, n^{-1}\} \subseteq Q$.

    \textbf{Case IIa}:~$m = n$.

    If~$R^\times \ni m$, then~$P \supseteq R^\times$, and we have a contradiction as~$\calP$ and~$\calR$ are incompatible. Hence~$R^\times$ contains~$m^{-1}$. By the maximality assumption in the choice of~$M$, we must have~$R^\times \subseteq M$, whence~$R^\times \cap P = \emptyset$, which is another contradiction. 

    \textbf{Case IIb}:~$m \neq n$.
    
    Since~$d_\Gamma(m, n) \neq 1$, if~$\{m, m^{-1}\} \subsetneq R^\times$, then~$\left(M \cup \{m\}\right) \cap R^\times = \emptyset$, in which case~$R^\times \subseteq P$ and we contradict incompatibility of~$\calR$ and~$\calP$. Therefore~$\left(M \cup \{m\}\right) \subseteq R^\times$. Since~$Q$ was chosen to be maximal among sides of elements of~$\Pi_{[u]}$ containing~$\{m, m^{-1}\}$, we must have~$n \notin [u]$. Hence either~$n >_\circ u$ or there exists~$x \in \link{u} \subseteq \str{m}$ such that~$x \notin \link{n}$. Then~$d_\Gamma(x, u) = d_\Gamma(x, m) = 1$, which implies that~$u$ and~$m$ are in the same component of~$\Gamma^\pm \setminus \str{n}^\pm$. But this is impossible, since~$\calR$ separates~$u$ and~$m$.

    Hence~$n >_\circ u$. In this case, repeat the argument with~$n$ replacing~$m$, and with some~$N \supseteq R^\times$ based at~$n$ replacing~$M$. If we reach the same point in this second iteration, finding some~$n' >_\circ u$ and~$\calR'$ based at~$n'$ playing the same role as~$\calR$ here, then we cannot have~$n' = m$. Indeed, if~$n' = m$ then~$R \cap R' \ni n$,~$\overline{R} \cap R' \ni n^{-1}$,~$R \cap \overline{R'} \ni m$ or~$m^{-1}$, and~$\overline{R} \cap \overline{R'} \ni u$, which implies that~$\calR'$ is incompatible with~$\calR$ (as~$d_\Gamma(n, m) \neq 1)$; this is a contradiction. Hence these iterations must terminate at some point, at which we have exhausted all ways of finding an element of~$\Pi \setminus \calQ$ which is incompatible with~$\calP$. Thus, we may replace~$\calQ$ in~$\Pi$ with~$\calP$.  

    We can now repeat this argument until every non-principal element of~$\Pi$ has been replaced by a symmetric principal partition, at which point we're done.
\end{proof}

\begin{theorem}\label{thm:vcdsymout = dim(symspine)}
   ~$\vcdsymout = \dim(\symspine)$.
\end{theorem}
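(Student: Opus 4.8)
The plan is to pin $\vcdsymout$ between a geometric upper bound and an algebraic lower bound, both of which we then identify with $M^{\Sigma}(V)$, the largest size of a compatible set of symmetric $\Gamma$-partitions (based at any vertex of $\Gamma$). For the upper bound, recall that $\symout$ acts properly (\emph{Lemma \ref{lem:action on symmetric spine is proper}}) and cocompactly (by the discussion following \emph{Proposition \ref{prop:symmetric spine is connected}}) on $\symspine$, which is contractible by \emph{Corollary \ref{cor:symspine is contractible}} and is a locally finite simplicial complex. Hence \emph{Theorem \ref{thm:vcd upper bound realisation theorem}} gives $\vcdsymout \leq \dim(\symspine)$.

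Next I would compute $\dim(\symspine)$ combinatorially. Since $\symspine$ is the geometric realisation of the poset of symmetric marked $\Gamma$-complexes, a top-dimensional simplex corresponds to a maximal chain $\sigma_0 < \sigma_1 < \cdots < \sigma_d$. The top vertex $\sigma_d$ admits no proper blow-up within symmetric marked $\Gamma$-complexes, so by \emph{Lemma \ref{lem:symmetric Gamma-cx iff partitions are symmetric}} it has the form $\left(\salb{\Pi}, \alpha\right)$ with $\Pi$ a maximal compatible set of symmetric $\Gamma$-partitions; the bottom vertex $\sigma_0$ admits no proper collapse, so it is a marked Salvetti; and, for the chain to be maximal, each step collapses exactly one hyperplane, every intermediate complex again being a symmetric blow-up $\salb{\Pi'}$ for some $\Pi' \subseteq \Pi$. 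Therefore $d = |\Pi|$, and letting $\Pi$ vary we obtain $\dim(\symspine) = M^{\Sigma}(V)$.

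For the lower bound, \emph{Corollary \ref{cor:vcdsymout geq MSigma(L)}} (which rests on \emph{Proposition \ref{prop:symout contains free abelian subgp of rank MSigma(L)}}) gives $\vcdsymout \geq M^{\Sigma}(L)$, and \emph{Proposition \ref{prop:MSigma(L) = MSigma(V)}} gives $M^{\Sigma}(L) = M^{\Sigma}(V)$. Assembling everything,
\[
M^{\Sigma}(V) = M^{\Sigma}(L) \leq \vcdsymout \leq \dim(\symspine) = M^{\Sigma}(V),
\]
so every inequality is forced to be an equality; in particular $\vcdsymout = \dim(\symspine)$.

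The substantive ingredients of the argument — \emph{Corollary \ref{cor:vcdsymout geq MSigma(L)}} and \emph{Proposition \ref{prop:MSigma(L) = MSigma(V)}} — are already established, so the one step needing real care is the identification $\dim(\symspine) = M^{\Sigma}(V)$: one must check that maximal chains in the poset are precisely those running from a marked Salvetti up to a blow-up along a maximal compatible set of symmetric partitions, realised one hyperplane at a time while remaining inside the symmetric locus. This is exactly what the $\Gamma$-complex formalism of \S\ref{subsec:Gamma-complexes} together with \emph{Lemma \ref{lem:symmetric Gamma-cx iff partitions are symmetric}} supplies; the remaining steps are routine.
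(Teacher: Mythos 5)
Your proof is correct and follows the same approach as the paper: the lower bound via \emph{Corollary \ref{cor:vcdsymout geq MSigma(L)}}, the upper bound via the proper cocompact action on the contractible spine, and the bridge $M^\Sigma(L) = M^\Sigma(V) = \dim(\symspine)$ from \emph{Proposition \ref{prop:MSigma(L) = MSigma(V)}}. You have simply spelled out the identification $\dim(\symspine) = M^\Sigma(V)$ in terms of maximal chains in the poset, which the paper treats as an observation.
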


\begin{proof}
    By \emph{Corollary \ref{cor:vcdsymout geq MSigma(L)}}, we have~$M^\Sigma(L) \leq \vcdsymout$. Combining this with \emph{Proposition \ref{prop:MSigma(L) = MSigma(V)}} and the observation that~$\vcdsymout \leq M^\Sigma(V) = \dim(\symspine)$ gives the result.
\end{proof}

\footnotesize
\bibliographystyle{alpha}
\bibliography{Bibliography.bib}\medskip

\noindent Gabriel Corrigan \\
University of Glasgow \\
\href{mailto:g.corrigan.1@research.gla.ac.uk}
{g.corrigan.1@research.gla.ac.uk}

\end{document}